\providecommand{\keywords}[1]{\textbf{\textit{Keywords.}} #1}
\providecommand{\AMSclass}[1]{\textbf{\textit{AMS classification.}} #1}
\theoremstyle{plain}
\newtheorem{theo}{Theorem}[section]
\newtheorem{lemma}[theo]{Lemma}
\newtheorem{cor}[theo]{Corollary}
\newtheorem{prop}[theo]{Proposition}
\newtheorem{defi}[theo]{Definition}
\theoremstyle{remark}
\newtheorem{remark}{Remark}[section]
\newtheorem{notation}{Notations}[section]
\newtheorem{example}{Example}[section]
\newcommand{\bfG}{\mathbf{G}}
\newcommand{\bfH}{\mathbf{H}}
\newcommand{\bfM}{\mathbf{M}}
\newcommand{\calE}{\mathcal{E}}
\newcommand{\calF}{\mathcal{F}}
\newcommand{\calH}{\mathcal{H}}
\newcommand{\calP}{\mathcal{P}}
\newcommand{\calQ}{\mathcal{Q}}
\newcommand{\K}{\mathbb{K}}
\renewcommand{\L}{\mathbb{L}}
\newcommand{\N}{\mathbb{N}}
\newcommand{\Z}{\mathbb{Z}}
\newcommand{\sym}{\mathfrak{S}}
\newcommand{\cl}{\mathrm{cl}}
\newcommand{\id}{\mathrm{Id}}
\newcommand{\tdelta}{\tilde{\Delta}}
\renewcommand{\ker}{\mathrm{Ker}}
\newcommand{\rk}{\mathrm{rk}}
\newcommand{\bfbool}{\mathbf{Bool}}
\newcommand{\rmbool}{\mathrm{Bool}}
\newcommand{\parti}{\mathrm{Part}}
\newcommand{\ic}{\mathrm{ic}}
\newcommand{\cc}{\mathrm{cc}}
\newcommand{\rmHG}{\mathrm{HG}}
\newcommand{\bfHG}{\bfH\bfG}
\newcommand{\GM}{\bfG\bfM}
\newcommand{\rmbools}{\rmbool_{\mathrm{r}}}
\newcommand{\bfbools}{\bfbool_{\mathrm{r}}}
\newcommand{\rmboolhs}{\rmbool_{\mathrm{hr}}}
\newcommand{\bfboolhs}{\bfbool_{\mathrm{hr}}}
\newcommand{\rmboolc}{\rmbool_{\mathrm{cou}}}
\newcommand{\bfboolc}{\bfbool_{\mathrm{cou}}}
\newcommand{\rmboolt}{\rmbool_{\mathrm{t}}}
\newcommand{\bfboolt}{\bfbool_{\mathrm{t}}}
\newcommand{\rmboolm}{\rmbool_{\mathrm{mat}}}
\newcommand{\bfboolm}{\bfbool_{\mathrm{mat}}}
\newcommand{\rmboolgm}{\rmbool_{\mathrm{gmat}}}
\newcommand{\bfboolgm}{\bfbool_{\mathrm{gmat}}}
\newcommand{\rmboollm}{\rmbool_{\mathrm{lmat}_\L}}
\newcommand{\bfboollm}{\bfbool_{\mathrm{lmat}_\L}}
\newcommand{\rmboolmax}{\rmbool_{\mathrm{max}}}
\newcommand{\bfboolmax}{\bfbool_{\mathrm{max}}}
\newcommand{\Phichr}{\Phi_{\mathrm{chr}}}
\newcommand{\starq}{\star_{q_1,q_2}}
\begin{document}

\title{Bialgebraic structures on boolean functions}
\date{}
\author{Lo\"ic Foissy}
\affil{\small{Univ. Littoral Côte d'Opale, UR 2597
LMPA, Laboratoire de Mathématiques Pures et Appliquées Joseph Liouville
F-62100 Calais, France}.\\ Email: \texttt{foissy@univ-littoral.fr}}

\maketitle

\begin{abstract}
We study several bialgebraic structures on boolean functions, that is to say maps defined on the set of subsets of a finite set $X$, taking the value $0$ on $\emptyset$. Examples of boolean functions are given by the indicator function of the hyperedges of a given hypergraph, or the rank function of a matroid. 
We give the species of boolean functions a two-parameters family of products and a coproduct, and this defines a two-parameters family of twisted bialgebras. 

We then try to define a second coproduct on boolean functions, based on contractions, in order to obtain a double bialgebra. We show that this is not possible on the whole species of boolean functions, but that there exists a maximal subspecies where this is possible.
This subspecies being rather mysterious, we introduce rigid boolean functions and show that this subspecies has indeed a second coproduct, as wished, and that it contains rank functions of matroids and indicator functions associated to hypergraphs.
As a consequence, we obtain a unique polynomial invariant on rigid boolean functions, which is a generalization of the chromatic polynomial of graphs. 
\end{abstract}

\keywords{double bialgebra; boolean functions; hypergraphs; matroids}\\

\AMSclass{16T05 16T30 06E30 05B35 05C65}

\tableofcontents

\section*{Introduction}

In the past decade, in the theory of combinatorial Hopf algebras, an important interest has been taken on what we call here double bialgebras, a shorthand for pairs of bialgebras in cointeraction or, equivalently, bialgebra in the category of comodules over another bialgebra. 
These objects are pairs of bialgebras $(A,m,_A\delta)$ and $(B,m_B,\Delta)$, with a right coaction $\rho:B\longrightarrow B\otimes A$ of $A$ over $B$, such that $m_B$, $\Delta$, the unit and the counit of $B$ are comodules morphisms. With more details:
\begin{itemize}
\item $\rho$ is a right coaction:
\begin{align*}
(\rho\otimes\id_A)\circ\rho&=(\id_B\otimes\delta)\circ\rho,&(\id_B\otimes\varepsilon_A)\circ\rho&=\id_B.
\end{align*}
\item $m_B$ is a comodule morphism:
\[\rho\circ m_B=(m_B\otimes m_A)\circ (\id_B\circ\tau_{A,B}\circ\id_A)\circ (\rho\otimes\rho),\]
where $\tau_{A,B}:A\otimes B\longrightarrow B\otimes A$ is the usual flip, sending $a\otimes b$ to $b\otimes a$. 
\item The unit $1_B$ of $B$ is a comodule morphism from the base field $\K$ to $B$:
\[\rho(1_B)=1_B\otimes 1_A.\]
\item $\Delta$ is a comodule morphism:
\[(\Delta\otimes\id_A)\circ\rho= (\id_B\otimes\id_B\otimes m_A)\circ (\id_B\circ\tau_{A,B}\circ\id_A)\circ (\rho\otimes\rho)\circ\Delta.\]
\item The counit $\varepsilon_\Delta$ of $B$ is a comodule morphism from $B$ to $\K$:
\begin{align*}
&\forall b\in B,& (\varepsilon_\Delta\otimes\id_A)\circ\rho(x)=\varepsilon_B(x)1_A.
\end{align*}
\end{itemize}
In particular, the second and third items are equivalent to the fact that $\rho$ is an algebra morphism from $B$ to $B\otimes A$.
The most interesting examples are when $A$ and $B$ share the same underlying space and product, with the coaction given by $\delta$ itself.
The first known non trivial combinatorial example seems to be described in \cite{Calaque2011}: it is based on rooted trees, and $(B,m,\Delta)$ is the Connes-Kreimer Hopf algebra \cite{Connes1998}, whereas $\delta$ is based on a procedure of contraction of edges. 
A similar (but much more intricate, with important decorations on vertices and edges) is used in the theory of regularity structure \cite{Hairer2014,Bruned2019}, used to solve a class of stochastic differential equations. 
Other examples can be found on various families of graphs \cite{Foissy36,Manchon2012}, posets \cite{Foissy37}, hypergraphs \cite{Ebrahimi-Fard2022,Foissy44}, noncrossing partitions \cite{Ebrahimi-Fard2020,Foissy43}, multi-indices \cite{Foissy57}$\ldots$
In all these cases, the product is rather simple (usually, a disjoint union of combinatorial objects), the first coproduct $\Delta$ is given by splitting the objects into two parts through a restriction operation, maybe with constraints on the possible choices,
and the second one is based on contractions, that can be seen as quotienting the objects through an equivalence.\\

So far, matroids did not fit into this frame of double bialgebras, though attempts have been made \cite{Catoire2024}. A matroid is a formalization of linear independence, due to Whitney \cite{Whitney35} in terms of independent sets.
Equivalent formulations in terms of bases or circuits can be found in the literature. We shall refer to the online lecture notes \cite{Goemans,Vyuka} for classical definitions and results on matroids. 
Disjoint unions of matroids and restriction of matroids are classical operations, leading to a first Hopf-algebraic structure on them; the problem is to define properly contractions  (or quotients) of matroids. 

In order to avoid this problem, we choose to work with a larger class of objects, namely boolean functions. If $X$ is a finite set, a boolean function on $X$ is a map $f:\calP(X)\longrightarrow\Z$, such that $f(\emptyset)=0$. 
We shall use here the characterization of matroids by their rank functions. The rank function of a matroid on a finite set $X$ is a map $f:\calP(X)\longrightarrow\N$ such that:
\begin{itemize}
\item For any $A\subseteq X$, $f(A)\leq |A|$. In particular, $f(\emptyset)=0$. 
\item $f$ is increasing: for any $A\subseteq B\subseteq X$, $f(A)\leq f(B)$.
\item $f$ is submodular: for any $A,B\subseteq X$, $f(A\cup B)+f(A\cap B)\leq f(A)+f(B)$.
\end{itemize}
Therefore, matroids can be seen as boolean functions. Another class of boolean functions are associated to hypergraphs: if $H$ is a hypergraph, it is associated to the boolean function $\gamma(H)$ on the set of vertices $V(H)$ of $H$ by
\begin{align}
\label{EQ1}\gamma(H):\left\{\begin{array}{rcl}
\calP(X)&\longrightarrow&\Z\\
A&\longmapsto&|\{Y\in E(H)\mid Y\subseteq A\}|.
\end{array}\right.
\end{align}
These two classes of examples of boolean functions take their value in $\N$, but we choose here to work with boolean functions taking their values in $\Z$ instead, which will allow us to define several transformations on boolean functions without the concern of verifying positivity conditions. 
We work in the frame of (linear) species and twisted bialgebras, which will avoid us tedious discussions on isomorphisms between boolean functions, through the bosonic Fock functor defined in \cite{Aguiar2010}.
The species of boolean functions is denoted by $\rmbool$, and its linearization by $\bfbool$. The correspondences $\gamma$ and $\rk$ define species morphisms from the species of hypergraphs or the species of matroids to $\rmbool$.
We first define a two-parameters family $\starq$ of products on $\bfbool$ in Theorem \ref{theo2.2}. If $X$ and $Y$ are two disjoint finite sets, $f\in\rmbool(X)$ and $g\in\rmbool(Y)$, then $f\starq g$ is the boolean function on $X\sqcup Y$ defined by
\begin{align*}
\forall A\subseteq X\sqcup Y,&&f\starq g(A)&=q_1f(A\cap X)+q_2g(A\cap Y).
\end{align*}
In particular, if $q_1=q_2$, the product $\star_{q_1,q_2}$ is commutative. We give several results on these products. A very classical inductive argument shows that any boolean function $f$ can be decomposed as a product 
\[f=f_1\starq\cdots\starq f_k,\]
where $f_1,\ldots,f_k$ are $(q_1,q_2)$-indecomposable boolean functions. Moreover, this decomposition is unique, up to a set of permutations depending on the commutation relations (for $\starq$) between the $(q_1,q_2)$-indecomposable boolean functions (Lemma \ref{lemma2.7} and Proposition \ref{prop2.8}). 
When $q_1=q_2$, as $\star_{q_1,q_2}$ is commutative, then all the permutations are of course possible. When $q_1\neq q_2$, they are restricted to permutations of boolean functions of the form
\[\left\{\begin{array}{rcl}
\calP(\{x\})&\longrightarrow&\Z\\
\emptyset&\longmapsto&0\\
\{x\}&\longrightarrow&\lambda.
\end{array}\right.\]
We also give a few results of characterizations of  $(q_1,q_2)$-indecomposable boolean functions. For example, if $|X|=1$, any boolean function on $X$ is $(q_1,q_2)$-indecomposable;
if $X=\{x,y\}$ is a pair, then $f\in\rmbool(X)$ is $(q_1,q_2)$-indecomposable if, and only if,
\[f(\{x,y\})\notin\{q_1f(\{x\})+q_2f(\{y\}),\:q_2f(\{x\})+q_1f(\{y\})\}.\]
We give more results in the case $q_1=q_2=1$. In this case, we will simply write indecomposable instead of $(1,1)$-indecomposable. We give a sufficient condition for a boolean function to be indecomposable (Proposition \ref{prop2.17}),
which we will often use in to produce examples and counterexamples. We also characterize indecomposable boolean functions on a set of cardinality 3, see Proposition \ref{prop2.18}. 
We then define a restriction coproduct on boolean functions in Theorem \ref{theo2.19}, such that for any $(q_1,q_2)\in\Z^2$, $(\bfbool,\starq,\Delta)$ is a twisted bialgebra. 
The application of the bosonic Fock functor \cite{Aguiar2010} gives a two-parameters family of bialgebras $(\calH_{\bfbool},\starq,\Delta)$, where $\calH_{\bfbool}$ is the space of isoclasses of boolean functions, and
\begin{align*}
\forall f_1\in\rmbool(X),\:\forall f_2\in\rmbool(Y),&&\overline{f_1}\starq\overline{f_2}&=\overline{f_1\starq f_2},\\
\forall f\in\rmbool(X),&&\Delta(\overline{f})&=\sum_{X'\sqcup X''=X}\overline{f_{\mid X'}}\otimes\overline{f_{\mid X''}}.
\end{align*}
Here, for any boolean function $f$, we denote by $\overline{f}$ its isomorphism class.\\

Complications arise when we try to define a second coproduct, based on contractions and restrictions, following the formalism of \cite{Foissy41}. We first define the contraction and the restriction of a boolean function $f$ by an equivalence $\sim$.
More precisely, if $X$ is a finite set, $f\in\rmbool(X)$ and $\sim$ is an equivalence on $X$, we define $f/{\sim}\in\rmbool(X/{\sim})$ by
\begin{align*}
&\forall A\subseteq X/{\sim},&f/{\sim}(A)&=f(\varpi_\sim^{-1}(A)),
\end{align*}
where $\varpi_\sim:X\longrightarrow X/{\sim}$ is the canonical surjection, and we define $f\mid\sim\in\rmbool(X)$ by
\begin{align*}
&\forall A\subseteq X,&f\mid\sim(A)&=\sum_{Y\in X/\sim}f(A\cap Y),
\end{align*}
see Definition \ref{defi3.1}. We look for a coproduct $\delta$ on the form
\[\delta^\calE_\sim(f)=\begin{cases}
f/{\sim}\otimes f\mid\sim\mbox{ if }\sim\in\calE(f),\\
0\mbox{ otherwise},
\end{cases}\]
where $\calE(f)$ is a family of equivalences depending on $f$. We give necessary and sufficient conditions on $\calE$ such that this coproduct $\delta$ is compatible with the product $\star_1$ (Proposition \ref{prop3.7}),
coassociative (Proposition \ref{prop3.9}), compatible with the coproduct $\Delta$ (Proposition \ref{prop3.9}), or counitary (Proposition \ref{prop3.11}).
Two natural families of equivalences occur: 
\begin{itemize}
\item Weak equivalences: for $f\in\rmbool(X)$, $\calE^W(f)$ is the set of equivalences $\sim$ on $X$ such that any $Y\in X/{\sim}$, $f_{\mid Y}$ is indecomposable.
\item Strong equivalences: for $f\in\rmbool(X)$, $\calE^S(f)$ is the set of weak equivalences $\sim$ on $X$ such that $f/{\sim}$ and $f$ have the same number of indecomposable components.
\end{itemize}
The associated coproducts are respectively denoted by $\delta^W$ and $\delta^S$. We obtain in Theorems \ref{theo3.14} and \ref{theo3.17} that:
\begin{itemize}
\item $\delta^W$ is compatible with the product $\star_1$ and the coproduct $\Delta$, but is not coassociative, and not counitary (it only has a right counit).
\item $\delta^S$ is coassociative and counitary, compatible with the product $\star_1$, but not with the coproduct $\Delta$. 
\end{itemize}
So, none of them satisfy all required properties to obtain a double bialgebra on the whole species of boolean functions. In fact, no family of equivalences allow to obtain all these properties, as it is shown in Proposition \ref{prop3.19}.
Let us detail this negative result: we restrict ourselves to a subspecies $\rmboolt$ of $\rmbool$, stable under the product $\star_1$, and the coproduct $\Delta$, in order to obtain a twisted subbialgebra $\bfboolt$ of $(\bfbool,\star_1,\Delta)$.
We also assume that this subspecies is stable by contractions associated to a certain family $\calE$ of equivalences, defining on $\bfboolt$ a contraction-restriction coproduct $\delta^\calE$.
If this coproduct is coassociative, counitary, compatible with the product $\star_1$ and the coproduct $\Delta$, then Proposition \ref{prop3.19} states that for any $f\in\rmboolt$,
\[\calE(f)=\calE^W(f)=\calE^S(f).\]
This explains why we cannot proceed on the whole species $\rmbool$, as there exist boolean functions $f$ with $\calE^S(f)\neq\calE^W(f)$. 
It is tempting to restrict ourselves to the subspecies $\rmboolc$ of boolean functions $f$ such that $\calE^W(f)=\calE^S(f)$. However, this subspecies is not stable under the coproduct $\Delta$.
This leads us to the definition of convenient subspecies of $\rmbool$ (Definition \ref{defi3.21}), which is a subspecies of $\rmbool$ stable under the product $\star_1$, the coproduct $\Delta$, and contractions, and such that $\calE^W(f)=\calE^S(f)$ for any boolean map $f$ in this subspecies. 
We prove that there exists a convenient subspecies $\rmboolmax$, maximal for the inclusion (Proposition \ref{prop3.24}). 
However, the elements of $\rmboolmax$ are difficult to characterize. We introduce two convenient subspecies, of rigid and of hyper-rigid boolean functions (Definition \ref{defi4.5} and \ref{defi4.6}), smaller than $\rmboolmax$ but easier to handle with.
In particular, if $H$ is a hypergraph, with set of vertices $X$, we prove in Proposition \ref{prop4.16} that $\gamma(H)$, defined in (\ref{EQ1}), is a rigid boolean function.
This gives an injective species morphism $\gamma$ from hypergraphs to rigid boolean functions. This map is a twisted bialgebra morphism, but not a twisted double bialgebra morphism (Theorem \ref{theo4.17}), as it is not compatible with $\delta$:
in fact, the subspecies of boolean functions $\gamma(H)$ is not stable under $\delta$. Similarly, rank functions associated to matroids (see Definition \ref{defi4.20}) are rigid boolean functions, which gives a twisted subbialgebra of matroids, 
again not stable under $\delta$ (Theorem \ref{theo4.25}). As particular examples, we consider graphical matroids and linear matroids, which form twisted subbialgebras, see Definitions \ref{defi4.26} and \ref{defi4.30}.

As a consequence, the space $\calH_{\bfboolmax}$ generated by isomorphism classes of boolean functions in $\rmboolmax$ inherits a double bialgebraic structure. This implies that there exists a unique double bialgebra morphism $\Phi$ from $\calH_{\bfboolmax}$ to $\K[T]$,
which is described in Theorem \ref{theo5.1} and Proposition \ref{prop5.2}. For any boolean function $f\in\rmboolmax(X)$, and for any $n\geq 1$, $\Phi(f)(n)$ is the number of maps $c:X\longrightarrow\{1,\ldots,n\}$ such that for any $i$, $f_{\mid c^{-1}(i)}$ is modular (Definition \ref{defi2.16}). In particular:
\begin{itemize}
\item If $f$ comes from a hypergraph $H$, then $\phi(f)$ is the chromatic polynomial of $H$, as defined in \cite{Helgason74}, see also \cite{Borowiecki2000,Dohmen95,Tomescu98,ZhangDong2017}, see Example \ref{ex1.3} for more details.
\item If $f$ is the rank function of a graphical matroid associated to a graph $G$, then $\Phi(f)(n)$ is the number of maps $c$ from the set of edges $E(G)$ of $G$ to $\{1,\ldots,k\}$, such that for any $i$, the graph formed by the edges sent to $i$ by $c$ and the attached vertices is a forest. 
\end{itemize}
This result is extended from $\rmboolmax$ to a larger species $\rmboolc$ on which $\delta^W$ and $\delta^S$ coincide, and we prove in Proposition \ref{prop5.6} that $\Phi$ is still compatible with $\delta^W$ or $\delta^S$.
This allows to give a formula for the antipode of elements of $\rmboolmax$, using the values of $\Phi(f)$ at $-1$, see Proposition \ref{prop5.5}.\\

This paper is organized as follows. The first section contains reminders on double bialgebras and the formalism of species that allows to construct examples of double bialgebras.
In the second section, the first bialgebraic structures on boolean functions are introduced: the two-parameters family of products, and the coproduct $\Delta$. 
Results on these products are also given, including a decomposition into indecomposable boolean functions and the unicity up to the orders of terms of such a decomposition. 
The study of a possible contraction-restriction coproducts is done in the third section, which leads to the notion of convenient subspecies of boolean functions. 
The next section gives several cases of convenient subspecies: rigid or hyper-rigid ones, hypergraphs, rank functions of matroids. In the fifth section, several results on the chromatic polynomial on matroids attached to this construction are given. 
Finally, a proof of a classical result on separators of matroids is made in the appendix.\\

\begin{notation}\begin{enumerate}
\item We denote by $\K$ a commutative field of characteristic zero. All the vector spaces of this text will be taken over this field.
\item For any $n\in\N$, we denote by $[n]$ the set $\{1,\ldots,n\}$. In particular, $[0]=\emptyset$.
\item Let $V,W$ be two vector spaces. We denote by $\tau_{V,W}$ the usual flip
\[\tau_{V,W}:\left\{\begin{array}{rcl}
V\otimes W&\longrightarrow&W\otimes V\\
v\otimes w&\longmapsto&w\otimes v.
\end{array}\right.\]
\item Let $X$ be a finite set. 
\begin{enumerate}
\item We denote by $\calE(X)$ the set of equivalence relations on $X$. If $\sim\in\calE(X)$, we denote by $\varpi_\sim:X\longrightarrow X/{\sim}$ the canonical surjection, and by $\cl(\sim)$ the number of classes of $\sim$, that is to say the cardinality of $X/{\sim}$.
\item For any bijection $\sigma:X\longrightarrow X'$ between two finite sets, and for any $\sim\in\calE(X)$, we define $\calE(\sigma)(\sim)=\sim_\sigma\in\calE(X')$ by
\begin{align*}
&\forall x',y'\in X',&x'\sim_\sigma y'\Longleftrightarrow\sigma^{-1}(x')\sim\sigma^{-1}(y').
\end{align*}
This defines a species $\calE$. 
%\item The set $\calE(X)$ is partially ordered by the inclusion, and $\calE(X)$ is a lattice. If $\sim_1,\ldots,\sim_k\in\calE(X)$, then $\sim_1\wedge\cdots\wedge\sim_k=\sim_1\cap\cdots\cap\sim_k$, 
%and $\sim_1\vee\cdots\vee\sim_k$ is the transitive closure of $\sim_1\cup\cdots\cup\sim_k$. 
\item If $\sim\in\calE(X)$, we identify $\{\sim'\in\calE(X)\mid\sim\subseteq\sim'\}$ and $\calE(X/{\sim})$, via the map sending $\sim'$ to $\overline{\sim'}$ defined by
\begin{align*}
&\forall\overline{x},\overline{y}\in X/{\sim},&\overline{x}\overline{\sim'}\overline{y}&\Longleftrightarrow x\sim' y.
\end{align*}
Note that $\varpi_{\sim'}=\varpi_{\overline{\sim'}}\circ\varpi_\sim$.
\end{enumerate}
\item Let $X$ be a finite set.
\begin{enumerate}
\item We denote by $\calP(X)$ the set of subsets of $X$. For $k\in\N$, we denote by $\calP_k(X)$ the set of subsets of $X$ of cardinality $k$.
\item For any finite set $X$, we denote by $\parti(X)$ the set of partitions of $X$. For example,
\begin{align*}
\parti(\{x\})&=\{\{\{x\}\}\},\\
\parti(\{x,y\})&=\{
\{\{x,y\}\},\{\{x\},\{y\}\}\},\\
\parti(\{x,y,z\})&=\{\{\{x,y,z\}\},\{\{x,y\},\{z\}\},\{\{x,z\},\{y\}\},\{\{y,z\},\{x\}\},\{\{x\},\{y\},\{z\}\}\}.
\end{align*}
\item For any bijection $\sigma:X\longrightarrow X'$ between two finite sets, and for any $\pi\in\parti(X)$, we define $\parti(\sigma)(\pi)\in\parti(X')$ by
\[\parti(\sigma)(\pi)=\{\sigma(b)\mid b\in\pi\}.\]
This defines a species $\parti$. 
\item The following defines a species isomorphism $\Psi$ from $\calE$ to $\parti$:
\begin{align*}
&\forall\sim\in\calE(X),&\Psi(\sim)&=X/{\sim}.
\end{align*}
\end{enumerate}\end{enumerate}\end{notation}

\section{Reminders}

\label{section1}

\subsection{Double bialgebras}

We refer to \cite{Foissy37,Foissy36,Foissy40} for details.

\begin{defi}
A double bialgebra is a family $(H,m,\Delta,\delta)$ such that:
\begin{enumerate}
\item $(H,m,\Delta)$ and $(H,m,\delta)$ are bialgebras. Their common unit is denoted by $1_H$. The counits of $\Delta$ and $\delta$ are respectively denoted by $\varepsilon_\Delta$ and $\epsilon_\delta$. We put
\[\eta_b:\left\{\begin{array}{rcl}
\K&\longrightarrow&H\\
\lambda&\longmapsto&\lambda 1_H.
\end{array}\right.\]
\item $(H,m,\Delta)$ is a bialgebra in the category of right comodules over $(H,m,\delta)$, with the coaction $\delta$, seen as a coaction over itself. This is equivalent to the two following assertions:
\begin{align*}
(\varepsilon_\Delta\otimes\id_H)\circ\delta&=\eta_H\circ\varepsilon_\Delta,\\
(\Delta\otimes\id_H)\circ\delta&=m_{1,3,24}\circ (\delta\otimes\delta)\circ\Delta,
\end{align*} 
where
\[m_{1,3,24}:\left\{\begin{array}{rcl}
H^{\otimes 4}&\longrightarrow& H^{\otimes 3}\\
x_1\otimes x_2\otimes x_3\otimes x_4&\longmapsto&x_1\otimes x_3\otimes x_2x_4.\\
\end{array}\right.\]
\end{enumerate}\end{defi}

\begin{example}\label{ex1.1}
An example of double bialgebra is given by the usual polynomial algebra $\K[T]$\footnote{The indeterminate will be denoted by $T$, as $X$ will be often a finite set in the sequel.}, with its usual product $m$ and the two (multiplicative) coproducts defined by
\begin{align*}
\Delta(T)&=T\otimes 1+1\otimes T,&\delta(T)&=T\otimes T.
\end{align*}
The counits are given by
\begin{align*}
\varepsilon_\Delta:&\left\{\begin{array}{rcl}
\K[T]&\longrightarrow&\K\\
P(T)&\longmapsto&P(0),
\end{array}\right.&
\epsilon_\delta:&\left\{\begin{array}{rcl}
\K[T]&\longrightarrow&\K\\
P(T)&\longmapsto&P(1).
\end{array}\right.\end{align*}\end{example}

The double structure allows to find the antipode for the first structure, whenever it exists:

\begin{theo} \label{theo1.2}\cite[Corollary 2.3]{Foissy40}
Let $(H,m,\Delta,\delta)$ be a double bialgebra. 
\begin{enumerate}
\item Then $(H,m,\Delta)$ is a Hopf algebra if, and only if, the character $\epsilon_\delta$ has an inverse $\mu_H$ for the convolution product $*$ dual to $\Delta$. 
Moreover, if this holds, the antipode of $(H,m,\Delta)$ is given by
\[S=(\mu_H\otimes\id_H)\circ\delta.\]
\item Let $\phi_H:(H,m,\Delta,\delta)\longrightarrow (\K[T],m,\Delta,\delta)$ be a double bialgebra morphism. Then $\epsilon_\delta$ has an inverse for the convolution product $*$, given by
\begin{align*}
\mu_H&:\left\{\begin{array}{rcl}
H&\longrightarrow&\K\\
x&\longmapsto&\phi_H(x)(-1).
\end{array}\right.
\end{align*}\end{enumerate}\end{theo}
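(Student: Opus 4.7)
For part (1), I plan to work directly from the cointeraction compatibility
\[
(\Delta\otimes\id_H)\circ\delta=m_{1,3,24}\circ(\delta\otimes\delta)\circ\Delta
\]
together with the observation that $\epsilon_\delta$ is a character of $(H,m,\Delta)$ (being an algebra morphism, since it is the counit of the second bialgebra structure). Assuming $\mu_H$ is a $*$-inverse of $\epsilon_\delta$, I set $S:=(\mu_H\otimes\id_H)\circ\delta$ and verify the antipode relations $S*\id_H=\id_H*S=1_H\varepsilon_\Delta$. In Sweedler notation $\Delta(x)=\sum x_{(1)}\otimes x_{(2)}$, $\delta(y)=\sum y^{(1)}\otimes y^{(2)}$, the cointeraction reads
\[
\sum x^{(1)}_{(1)}\otimes x^{(1)}_{(2)}\otimes x^{(2)}=\sum x_{(1)}^{(1)}\otimes x_{(2)}^{(1)}\otimes x_{(1)}^{(2)}x_{(2)}^{(2)}.
\]
Applying $\mu_H\otimes\epsilon_\delta\otimes\id_H$ to this identity, the right-hand side collapses via $\sum\epsilon_\delta(x_{(2)}^{(1)})x_{(2)}^{(2)}=x_{(2)}$ (counit of $\delta$) to $\sum\mu_H(x_{(1)}^{(1)})x_{(1)}^{(2)}x_{(2)}=(S*\id_H)(x)$, while the left-hand side collapses via $\mu_H*\epsilon_\delta=\varepsilon_\Delta$ and the counital cointeraction $(\varepsilon_\Delta\otimes\id_H)\circ\delta=1_H\varepsilon_\Delta$ to $\varepsilon_\Delta(x)1_H$. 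A symmetric computation with $\epsilon_\delta\otimes\mu_H\otimes\id_H$ and the other inversion identity $\epsilon_\delta*\mu_H=\varepsilon_\Delta$ handles $\id_H*S$. The converse in part (1) is immediate: if $S$ is an antipode, then $\mu_H:=\epsilon_\delta\circ S$ is a $*$-inverse, since $\epsilon_\delta$ being a character yields $(\mu_H*\epsilon_\delta)(x)=\epsilon_\delta(\sum S(x_{(1)})x_{(2)})=\epsilon_\delta(\varepsilon_\Delta(x)1_H)=\varepsilon_\Delta(x)$, and similarly on the other side.

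For part (2), I would first verify directly in $\K[T]$ that $\mu_{\K[T]}:P\mapsto P(-1)$ is a $*$-inverse of $\epsilon_\delta:P\mapsto P(1)$; since both are characters, it suffices to check on monomials, where $\Delta(T^n)=\sum_k\binom{n}{k}T^k\otimes T^{n-k}$ gives $(\mu_{\K[T]}*\epsilon_\delta)(T^n)=\sum_k\binom{n}{k}(-1)^k=(1-1)^n$, which agrees with $\varepsilon_\Delta(T^n)$. Then, since $\phi_H:H\to\K[T]$ is an algebra morphism intertwining both counits and both coproducts, the composite $\mu_H:=\mu_{\K[T]}\circ\phi_H$, that is the map $x\mapsto\phi_H(x)(-1)$, satisfies
\[
(\mu_H*\epsilon_\delta)(x)=(\mu_{\K[T]}*\epsilon_\delta)(\phi_H(x))=\varepsilon_\Delta(\phi_H(x))=\varepsilon_\Delta(x),
\]
and symmetrically on the other side, so $\mu_H$ is the required convolution inverse; the antipode formula then follows from part (1).

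The main obstacle I anticipate is the Sweedler bookkeeping in the forward half of part (1): two coproducts are interleaved by the cointeraction, and the key idea is to pick the right \emph{test} map to apply to the cointeraction identity so that one side collapses to $\varepsilon_\Delta(x)1_H$ while the other collapses to the desired convolution $S*\id_H$. Once the test maps $\mu_H\otimes\epsilon_\delta\otimes\id_H$ and $\epsilon_\delta\otimes\mu_H\otimes\id_H$ are identified, the verification reduces mechanically to the three defining axioms: the convolution inverse property of $\mu_H$, the counit axiom for $\delta$, and the counital cointeraction $(\varepsilon_\Delta\otimes\id_H)\circ\delta=1_H\varepsilon_\Delta$.
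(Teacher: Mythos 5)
Your argument is correct, and it is essentially the standard proof of this result: the paper itself only quotes the theorem from \cite[Corollary 2.3]{Foissy40} without reproducing a proof, and the cited proof proceeds exactly as you do, defining $S=(\mu_H\otimes\id_H)\circ\delta$ and collapsing the cointeraction identity with the test maps $\mu_H\otimes\epsilon_\delta\otimes\id_H$ and $\epsilon_\delta\otimes\mu_H\otimes\id_H$, then pulling back the inverse $P\mapsto P(-1)$ along $\phi_H$ for part (2).
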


We shall say that a double bialgebra $(H,m,\Delta,\delta)$ is connected if the bialgebra $(H,m,\Delta)$ is connected. If so, we obtain more results:

\begin{theo}\label{theo1.3} \cite[Theorem 3.9]{Foissy40}
\begin{enumerate}
\item Let $(H,m,\Delta)$ be a connected bialgebra, and $\lambda$ be a character of $H$ (that is to say, an algebra morphism from $m$ to $\K$). There exists a unique Hopf algebra morphism $\Phi_\lambda:(H,m,\Delta)\longrightarrow (\K[T],m,\Delta)$
such that for any $x\in H$, 
\[\Phi_\lambda(x)(1)=\lambda(x).\]
Moreover, for any $x\in\ker(\varepsilon_\Delta)$,
\[\Phi_\lambda(x)=\sum_{k=0}^\infty\lambda^{\otimes k}\circ\tdelta^{(k-1)}(x)\frac{T(T-1)\cdots (T-k+1)}{k!},\]
where for any $x\in\ker(\varepsilon_\Delta)$, $\tdelta(x)=\Delta(x)-x\otimes 1_H-1_H\otimes x$ and, for $k\geq 1$,
\[\tdelta^{(k-1)}(x)=\begin{cases}
x\mbox{ if }k=1,\\
(\tdelta^{(k-2)}\otimes\id_H)\circ\tdelta(x)\mbox{ if }k\geq 2.
\end{cases}\]
\item Let $(H,m,\Delta,\delta)$ be a connected double bialgebra.
Then $\Phi_{\epsilon_\delta}$ is the unique double bialgebra morphism from $(H,m,\Delta,\delta)$ to $(\K[T],m,\Delta,\delta)$. 
\end{enumerate}\end{theo}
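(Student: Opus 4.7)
The plan is to treat the two parts in sequence, using the connectedness of $(H,m,\Delta)$ to turn formal series into finite sums. For part (1), I would start from the observation that on $\K[T]$ the evaluation characters satisfy $\mathrm{eval}_n=\mathrm{eval}_1^{*n}$, where $*$ denotes the convolution dual to $\Delta$; both sides are characters, and they agree on the generator $T$. Uniqueness of $\Phi_\lambda$ is then immediate, since any Hopf algebra morphism $\Phi:H\to\K[T]$ with $\mathrm{eval}_1\circ\Phi=\lambda$ must satisfy $\Phi(x)(n)=(\mathrm{eval}_1^{*n}\circ\Phi)(x)=\lambda^{*n}(x)$ for every $n\geq 1$, and a polynomial over a field of characteristic zero is determined by its values on the positive integers.

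For existence, I would define $\Phi_\lambda$ by the displayed formula. Because $H$ is connected, the standard filtration of $\ker(\varepsilon_\Delta)$ by the kernels of $\tdelta^{(k-1)}$ is exhaustive, so for each fixed $x$ only finitely many terms contribute. I would then verify the identity $\Phi_\lambda(x)(n)=\lambda^{*n}(x)$ for all $n\geq 1$ by expanding $\Delta^{(n-1)}(x)$ into reduced components, the coefficient $\binom{n}{k}$ arising from the number of ways to place $k$ non-scalar factors among $n$ slots, and using that $T(T-1)\cdots(T-k+1)/k!$ evaluated at $T=n$ equals $\binom{n}{k}$. Once this holds, multiplicativity of $\Phi_\lambda$ follows because $\lambda^{*n}$ is a character (convolutions of characters remain characters, by multiplicativity of $\Delta$), and compatibility with $\Delta$ follows from $\lambda^{*n}*\lambda^{*m}=\lambda^{*(n+m)}$; both are checked by evaluating at integer points.

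For part (2), uniqueness comes for free from part (1): any double bialgebra morphism $H\to\K[T]$ is in particular a $\Delta$-Hopf algebra morphism with character $\mathrm{eval}_1\circ\cdot=\epsilon_\delta$, so it must equal $\Phi_{\epsilon_\delta}$. The content is to prove that $\Phi:=\Phi_{\epsilon_\delta}$ is actually compatible with $\delta$, that is $(\Phi\otimes\Phi)\circ\delta=\delta_{\K[T]}\circ\Phi$. Evaluating both sides at a pair of positive integers $(n,m)$, the left-hand side equals $(\epsilon_\delta^{*n}\star\epsilon_\delta^{*m})(x)$, where $\star$ denotes the convolution attached to $\delta$, while the right-hand side equals $\Phi(x)(nm)=\epsilon_\delta^{*(nm)}(x)$ because $\delta_{\K[T]}(T)=T\otimes T$. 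The whole task thus reduces to the identity $\epsilon_\delta^{*n}\star\epsilon_\delta^{*m}=\epsilon_\delta^{*(nm)}$ in $\Hom(H,\K)$.

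This last identity is the main obstacle, and it is precisely where the double bialgebra axioms do their work. I would establish it via two ingredients. First, the right distributivity $(f*g)\star h=(f\star h)*(g\star h)$, valid for every character $h$: unfolding the definition of $m_{1,3,24}$ in the axiom $(\Delta\otimes\id_H)\circ\delta=m_{1,3,24}\circ(\delta\otimes\delta)\circ\Delta$ and using the multiplicativity of $h$ gives it directly. Second, $\epsilon_\delta$ is a left unit for $\star$, i.e.\ $\epsilon_\delta\star g=g$ for every linear form $g$, which is a restatement of the left counit axiom $(\epsilon_\delta\otimes\id_H)\circ\delta=\id_H$. An induction on $n$ then yields $\epsilon_\delta^{*n}\star\epsilon_\delta^{*m}=(\epsilon_\delta\star\epsilon_\delta^{*m})^{*n}=(\epsilon_\delta^{*m})^{*n}=\epsilon_\delta^{*(nm)}$, closing the argument.
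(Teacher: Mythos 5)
Your proof is correct, and it is essentially the standard argument: the paper gives no proof of this statement (it is quoted from \cite[Theorem 3.9]{Foissy40}), and your route --- determining $\Phi_\lambda$ by its values $\lambda^{*n}$ at the positive integers, then reducing the $\delta$-compatibility of $\Phi_{\epsilon_\delta}$ to the identity $\epsilon_\delta^{*n}\star\epsilon_\delta^{*m}=\epsilon_\delta^{*(nm)}$ via the distributivity $(f*g)\star h=(f\star h)*(g\star h)$ for characters $h$ together with the unit property of $\epsilon_\delta$ --- is exactly the mechanism of that reference, and it is also the evaluation-at-integers technique the author reuses in Proposition \ref{prop5.6}. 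The only points left implicit are routine: $\epsilon_\delta^{*m}$ is indeed a character of $(H,m)$ (so the distributivity applies), and the counit compatibilities $\Phi_\lambda(x)(0)=\varepsilon_\Delta(x)$ and $\Phi_{\epsilon_\delta}(x)(1)=\epsilon_\delta(x)$ follow at once from the explicit formula, since $\binom{0}{k}=0$ and $\binom{1}{k}=0$ for $k\geq 2$.
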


\subsection{From species to double bialgebras}

We shall work in the context of (linear) species \cite{Joyal1981,Joyal1986}. Recall that a species $\calQ$ is a functor from the category of finite sets, with bijections, to the category of vector spaces, that is to say:
\begin{itemize}
\item For any finite set $X$, $\calQ(X)$ is a vector space.
\item For any bijection $\sigma:X\longrightarrow Y$ between two finite sets, $\calQ(\sigma)$ is a linear map
from $\calQ(X)$ to $\calQ(Y)$.
\item If $\sigma:X\longrightarrow Y$ and $\tau:Y\longrightarrow Z$ are two bijections between finite sets,
then $\calQ(\tau\circ\sigma)=\calQ(\tau)\circ\calQ(\sigma)$.
\item For any finite set $X$, $\calQ(\id_X)=\id_{\calQ(X)}$. 
\end{itemize}
We similarly define set species, as functor from the category of finite sets, with bijections, to the category of sets.

\begin{defi}
A twisted algebra is an algebra in the category of species, with the Cauchy tensor product. 
With more details, a twisted algebra is a pair $(\calQ,m)$, such that:
\begin{itemize}
\item $\calQ$ is a species.
\item For any pair $(X,Y)$ of disjoint finite sets, $m_{X,Y}:\calQ(X)\otimes\calQ(Y)\longrightarrow\calQ(X\sqcup Y)$ is a linear map,
\end{itemize}
with:
\begin{itemize}
\item If $(X,Y)$ and $(X',Y')$ are pairs of disjoint finite sets, $\sigma:X\longrightarrow X'$ and $\tau:Y\longrightarrow Y'$ are bijections, then the following diagram commutes:
\[\xymatrix{\calQ(X)\otimes\calQ(Y)\ar[r]^{m_{X,Y}}\ar[d]_{\calQ(\sigma)\otimes\calQ(\tau)}&\calQ(X\sqcup Y)\ar[d]^{\calQ(\sigma\sqcup\tau)}\\
\calQ(X')\otimes\calQ(Y')\ar[r]_{\hspace{2mm}m_{X',Y'}}&\calQ(X'\sqcup Y')}\]
where $\sigma\sqcup\tau$ is the bijection defined by
\[\sigma\sqcup\tau:\left\{\begin{array}{rcl}
X\sqcup Y&\longrightarrow&X'\sqcup Y'\\
x&\longmapsto&\begin{cases}
\sigma(x)\mbox{ if }x\in X,\\
\tau(x)\mbox{ if }x\in Y.
\end{cases}\end{array}\right.\]
\item If $(X,Y,Z)$ is a triple of pairwise disjoint finite sets, then the following diagram commutes:
\[\xymatrix{\calQ(X)\otimes\calQ(Y)\otimes\calQ(Z)\ar[rr]^{m_{X,Y}\otimes\id_{\calQ(Z)}}
\ar[d]_{\id_{\calQ(X)}\otimes m_{Y,Z}}&&\calQ(X\sqcup Y)\otimes\calQ(Z)\ar[d]^{m_{X\sqcup Y,Z}}\\
\calQ(X)\otimes\calQ(Y\sqcup Z)\ar[rr]_{m_{X,Y\sqcup Z}}&&\calQ(X\sqcup Y\sqcup Z)}\]
\item There exists $1_\calQ\in\calQ(\emptyset)$, such that for any finite set $X$, for any $x\in\calQ(X)$,
\[m_{\emptyset,X}(1_\calQ\otimes x)=m_{X,\emptyset}(x\otimes 1_\calQ)=x.\]
\end{itemize}\end{defi}

Dually:

\begin{defi}
A twisted coalgebra is a coalgebra in the category of species, with the Cauchy tensor product. 
With more details, a twisted coalgebra is a pair $(\calQ,\Delta)$, with:
\begin{itemize}
\item $\calQ$ is a species.
\item For any pair $(X,Y)$ of disjoint finite sets, $\Delta_{X,Y}:\calQ(X\sqcup Y)\longrightarrow\calQ(X)\otimes\calQ(Y)$ is a linear map,
\end{itemize}
such that:
\begin{itemize}
\item If $(X,Y)$ and $(X',Y')$ are pairs of disjoint finite sets, $\sigma:X\longrightarrow X'$ and $\tau:Y\longrightarrow Y'$ are bijections, then the following diagram commutes:
\[\xymatrix{\calQ(X)\otimes\calQ(Y)\ar[d]_{\calQ(\sigma)\otimes\calQ(\tau)}&\calQ(X\sqcup Y)\ar[l]_{\Delta_{X,Y}}\ar[d]^{\calQ(\sigma\sqcup\tau)}\\
\calQ(X')\otimes\calQ(Y')&\calQ(X'\sqcup Y')\ar[l]^{\hspace{2mm}\Delta_{X',Y'}}}\]
\item If $(X,Y,Z)$ is a triple of pairwise disjoint finite sets, then the following diagram commutes:
\[\xymatrix{\calQ(X)\otimes\calQ(Y)\otimes\calQ(Z)
&&\calQ(X\sqcup Y)\otimes\calQ(Z)\ar[ll]_{\Delta_{X,Y}\otimes\id_{\calQ(Z)}}\\
\calQ(X)\otimes\calQ(Y\sqcup Z)\ar[u]^{\id_{\calQ(X)}\otimes\Delta_{Y,Z}}&&\calQ(X\sqcup Y\sqcup Z)\ar[ll]^{\Delta_{X,Y\sqcup Z}}\ar[u]_{\Delta_{X\sqcup Y,Z}}}\]
\item There exists a linear map $\varepsilon_\Delta:\calQ(\emptyset)\longrightarrow\K$, such that for any finite set $X$, 
\[\left(\varepsilon_\Delta\otimes\id_{\calQ(X)}\right)\circ\Delta_{\emptyset,X}=\left(\id_{\calQ(X)}\otimes\varepsilon_\Delta\right)\circ\Delta_{X,\emptyset}=\id_{\calQ(X)}.\]
\end{itemize}\end{defi}

And finally:

\begin{defi}
A twisted bialgebra is a bialgebra in the category of species, with the Cauchy tensor product. 
With more details, a twisted coalgebra is a triple $(\calQ,m,\Delta)$, such that:
\begin{itemize}
\item $(\calQ,m)$ is a twisted algebra.
\item $(\calQ,\Delta)$ is a twisted coalgebra.
\item For any $x,y\in\calQ(\emptyset)$, $\varepsilon_\Delta\circ m_{\emptyset,\emptyset}(x\otimes y)=\varepsilon_\Delta(x)\varepsilon_\Delta(y)$. Moreover, $\varepsilon_\Delta(1_\calQ)=1$.
\item If $(X,Y)$ and $(X',Y')$ are two pairs of disjoint finite sets such that $X\sqcup Y=X'\sqcup Y'$, then
the following diagram commutes:
\[\xymatrix{\calQ(X)\otimes\calQ(Y)\ar[rr]^{m_{X,Y}}\ar[d]_{\Delta_{X\cap X',X\cap Y'}\otimes\Delta_{Y\cap X,Y\cap Y'}}&&\calQ(X\sqcup Y)\ar[d]^{\Delta_{X',Y'}}\\
\calQ(X\cap X')\otimes\calQ(X\cap Y')\otimes\calQ(Y\cap X')\otimes\calQ(Y\cap Y')\ar[d]_{\id_{\calQ(X\cap X')}
\otimes\tau_{\calQ(X\cap Y'),\calQ(Y\cap X')}\otimes\id_{\calQ(Y\cap Y')}}&&\calQ(X')\otimes\calQ(Y')\\
\calQ(X\cap X')\otimes\calQ(Y\cap X')\otimes\calQ(X\cap Y')\otimes\calQ(Y\cap Y')
\ar[rru]_{\hspace{1.6cm}m_{X\cap X',Y\cap X'}\otimes m_{X\cap Y',Y\cap Y'}}&&}\]
\end{itemize}\end{defi}

Let us consider the example of hypergraphs \cite{Ebrahimi-Fard2022,Foissy44}:

\begin{defi}
Let $X$ be a finite set. A hypergraph $H$ on $X$ is a subset $E(H)$ of $\calP(X)$ which does not contain $\emptyset$. The elements of $E(H)$ are called the hyperedges of $H$. 
The set of hypergraphs on $X$ is denoted by $\rmHG(X)$ and the vector space generated by $\rmHG(X)$ is denoted by $\bfHG(X)$: this defines a species $\bfHG$. 
\end{defi}

The species $\bfHG$ is a twisted bialgebra \cite{Foissy44}; let us recall its structure. Let $X,Y$ be two disjoint sets. If $G\in\rmHG(X)$ and $H\in\rmHG(Y)$, then $GH\in\rmHG(X\sqcup Y)$ is given by
\[E(GH)=E(G)\sqcup E(H).\]
If $G\in\rmHG(X)$ and $X'\subseteq X$, we denote by $G_{\mid X'}\in\rmHG(X')$ the hypergraph defined by
\begin{align*}
E(G_{\mid X'})&=E(G)\cap\calP(X')=\{Y\in E(G)\mid Y\subseteq X'\}.
\end{align*}
The coproduct is then given by
\begin{align*}
&\forall G\in\rmHG(X\sqcup Y),&\Delta_{X,Y}(G)&=G_{\mid X}\otimes G_{\mid Y}.
\end{align*}

Other examples can be found on rooted trees \cite{Calaque2011}, various family of graphs, oriented or not \cite{Manchon2012,Foissy36,Foissy40}, mixed graphs \cite{Foissy45}, noncrossing partitions \cite{Foissy43}, posets and quasi-posets \cite{Foissy37}$\ldots$
\begin{defi}
The bosonic Fock functor $\calF$ is defined in \cite{Aguiar2010}. For any species $\calQ$,
\[\calF(\calQ)=\bigoplus_{n=0}^\infty\frac{\calQ([n])}{\langle x-\calQ(\sigma)(x)\mid\sigma\in\sym_n,\: x\in\calQ([n])\rangle}.\]
The class of $x\in\calQ([n])$ in $\calF(\calQ)$ is denoted by $\overline{x}$ and called the isomorphism class (shortly, isoclass) of $x$.
\end{defi}

If $(\calQ,m,\Delta)$ is a twisted bialgebra, then $\calF(\calQ)$ is a bialgebra. Its product is given by
\begin{align*}
&\forall x\in\calQ([k]),\:\forall y\in\calQ([l]),&m(\overline{x}\otimes\overline{y})
&=\overline{\calQ(\sigma_{k,l})\circ m_{[k],[l]}(x\otimes y)},
\end{align*}
where $\sigma_{k,l}:[k]\sqcup [l]\longrightarrow [k+l]$ is the bijection defined by 
\begin{align*}
\sigma_{k,l}(i)&=\begin{cases}
i\mbox{ if }i\in [k],\\
k+i\mbox{ if }i\in [l].
\end{cases}\end{align*}
The unit is $\overline{1_\calQ}$.
Its coproduct is given by
\begin{align*}
&\forall x\in\calQ([n]),&\Delta(\overline{x})&=\sum_{I\subseteq [n]}\overline{\left(\calQ(\sigma_I)\otimes\calQ(\sigma_{[n]\setminus I})\right)\circ\Delta_{I,[n]\setminus I}(x)},
\end{align*}
where for any $I\subseteq\N$, $\sigma_I:I\longrightarrow [|I|]$ is the unique increasing bijection.\\

\begin{example}
We denote by $\calH_{\bfHG}$ the image of $\bfHG$ by the functor $\calF$: this is the vector space generated by isoclasses of hypergraphs. If $H\in\rmHG(X)$ is a hypergraph, we denote by $\overline{H}$ its isoclass,
that is to say the element $\overline{\rmHG(\sigma)(H)}$ in $\calH_{\bfHG}$, where $\sigma:X\longrightarrow [|X|]$ is any bijection. We obtain that $\calH_{\bfHG}$ is a bialgebra, with
\begin{align*}
\forall G\in\rmHG(X),\:\forall H\in\rmHG(Y),&&\overline{G}\cdot\overline{H}&=\overline{GH},\\
\forall H\in\rmHG(X),&&\Delta(\overline{H})&=\sum_{X=Y\sqcup Z}\overline{H_{\mid Y}}\otimes\overline{H_{\mid Z}},\\
&&\varepsilon_\Delta(\overline{H})&=\delta_{G,1}.
\end{align*}\end{example}

Let us now describe a formalism to obtain double bialgebras, exposed in \cite{Foissy41}. 
Let $(\calQ,m,\Delta)$ be a twisted bialgebra. A contraction-restriction coproduct is a family of maps
\[\delta_\sim:\calQ(X)\longrightarrow\calQ(X/\sim)\otimes\calQ(X),\]
for any finite set $X$ and any $\sim\in\calE(X)$, such that:
\begin{itemize}
\item For any bijection $\sigma:X\longrightarrow Y$ between two finite sets, the following diagram commutes:
\[\xymatrix{\calQ(X)\ar[d]_{\calQ(\sigma)}\ar[r]^{\hspace{-5mm}\delta_\sim}&\calQ(X/\sim)\otimes\calQ(X)\ar[d]^{\calQ(\sigma/\sim)\otimes\calQ(\sigma)}\\
\calQ(Y)\ar[r]_{\hspace{-5mm}\delta_{\sim_\sigma}}&\calQ(Y/\sim_\sigma)\otimes\calQ(Y)}\]
\item (Coassociativity of $\delta$). For any finite set $X$, and any $\sim,\sim'\in\calE(X)$, such that $\sim\subseteq\sim'$,
\[\left(\delta_{\overline{\sim}}\otimes\id_{\calQ(X)}\right)\circ\delta_\sim=\left(\id_{\calQ(X/\sim')}\otimes\delta_\sim\right)\circ\delta_{\sim'}.\]
For any finite set $X$, and any $\sim,\sim'\in\calE(X)$, such that we do not have $\sim\subseteq\sim'$,
\[\left(\id_{\calQ(X/\sim')}\otimes\delta_\sim\right)\circ\delta_{\sim'}=0.\]
\item (Counity). For any finite set $X$, there exists a map $\epsilon_X\in\calQ(X)^*$ such that
\begin{align*}
&\forall\sim\in\calE(X),&\left(\id_{\calQ(X/\sim)}\otimes\epsilon_X\right)\circ\delta_\sim
&=\begin{cases}
\id_{\calQ(X)}\mbox{ if $\sim$ is the equality of $X$},\\
0\mbox{ otherwise}.
\end{cases}\\
&&\sum_{\sim\in\calE(X)}\left(\epsilon_{X/\sim}\otimes\id_{\calQ(X)}\right)\circ\delta_\sim&=\id_{\calQ(X)}.
\end{align*}
\item (Compatibility with the product $m$). For any pair $(X,Y)$ of disjoint finite sets, for any $x\in\calQ(X)$, for any $y\in\calQ(Y)$,
for any $\sim\in\calQ(X\sqcup Y)$, putting $\sim_X=\sim\cap X^2$ and $\sim_Y=\sim\cap Y^2$,
\[\delta_\sim (xy)
=\begin{cases}
\delta_{\sim_X}(x)\delta_{\sim_Y}(y)\mbox{ if }\sim=\sim_X\sqcup\sim_Y,\\
0\mbox{ otherwise}.
\end{cases}\]
Moreover, if $\sim_\emptyset$ is the unique equivalence on $\emptyset$, $\delta_{\sim_\emptyset}(1_\calQ)=1_\calQ\otimes 1_\calQ$. 
\item (Compatibility with the coproduct $\Delta$). For any pair $(X,Y)$ of disjoint finite sets, for any $\sim_X\in\calE(X)$ and $\sim_Y\in\calE(Y)$,
\[\left(\Delta_{X/{\sim_X},Y/{\sim_Y}}\otimes\id_{\calQ(X)}\right)\circ\delta_{\sim_X\sqcup \sim_Y}=
m_{1,3,24}\circ\left(\delta_{\sim_X}\otimes\delta_{\sim_Y}\right)\circ\Delta_{X,Y},\] 
where
\[m_{1,3,24}:\left\{\begin{array}{rcl}
\calQ(X/{\sim_X})\otimes\calQ(X)\otimes\calQ(Y/{\sim_Y})\otimes\calQ(Y)&\longrightarrow&\calQ(X/{\sim_X})\otimes\calQ(Y/{\sim_Y})\otimes\calQ(X\sqcup Y)\\
x_1\otimes x_2\otimes x_3\otimes x_4&\longmapsto&x_1\otimes x_3\otimes m_{X,Y}(x_2\otimes x_4).
\end{array}\right.\]
\item (Compatibility with the counit $\varepsilon_\Delta$). For any $x\in\calQ(\emptyset)$, 
\[\left(\varepsilon_\Delta\otimes\id_{\calQ(\emptyset)}\right)\circ\delta_{\sim_\emptyset}(x)=\varepsilon_\Delta(x)1_\calQ.\]
\end{itemize}

If $(\calQ,m,\Delta)$ is given a such contraction-restriction coproduct, then $\calF(\calQ)$ is a double bialgebra, with the second coproduct $\delta$ defined by
\begin{align*}
&\forall x\in\calQ([n]),&\delta(\overline{x})&=\sum_{\sim\in\calE([n])}
\overline{\left(\calQ(\sigma_\sim)\otimes\id_{\calQ([n])}\right)\circ\delta_\sim(x)},
\end{align*}
where $\sigma_\sim:[n]/\sim\longrightarrow [\cl(\sim)]$ is any bijection. 
The counit of this coproduct $\delta$ is given by 
\begin{align*}
&\forall x\in\calQ([n]),&\epsilon(\overline{x})&=\epsilon_{[n]}(x).
\end{align*}

\begin{example}\label{ex1.3}
The species $\bfHG$ has also a contraction-restriction coproduct.
Let $X$ be a finite set, $H\in\rmHG(X)$, and $\sim\in\calE(X)$. 
We define $H/{\sim}\in\rmHG(X/{\sim})$ and $H\mid\sim\in\rmHG(X)$ by
\begin{align*}
E(H/{\sim})&=\{\varpi_\sim(Y)\mid Y\in E(H)\},&E(H\mid\sim)&=\{Y\in E(H)\mid\forall x,y\in Y,\: x\sim y\}. 
\end{align*}
We shall write that $\sim\in\calE^C(H)$ if for any $Y\in X/{\sim}$, $H_{\mid Y}$ is connected. Then
\[\delta_\sim(H)=\begin{cases}
H/{\sim}\otimes H\mid\sim\mbox{ if }\sim\in\calE^C(H),\\
0\mbox{ otherwise}.
\end{cases}\]
Its counit $\epsilon_\delta$ is given by
\begin{align*}
&\forall H\in\rmHG(X),&\epsilon_\delta(H)&=\begin{cases}
1\mbox{ if }\forall Y\in E(H),\: |Y|=1,\\
0\mbox{ otherwise}.
\end{cases}\end{align*}
Then $\calH_{\bfHG}$ is a double bialgebra, with
\begin{align*}
\forall H\in\rmHG(X),&&\delta(\overline{H})&=\sum_{\sim\in\calE^C(H)}\overline{H/{\sim}}\otimes\overline{H\mid\sim},\\
&&\epsilon_\delta(\overline{H})&=\begin{cases}
1\mbox{ if }\forall Y\in E(H),\: |Y|=1,\\
0\mbox{ otherwise}.
\end{cases}\end{align*}
The unique double bialgebra morphism from $\calH_{\bfHG}$ to $\K[T]$ is denoted by$\Phichr$. For any hypergraph $H$ and for any $n\in\N$,
$\Phichr(\overline{H})(n)$ is the number of maps $c:V(G)\longrightarrow [n]$ such that for any $Y\in E(H)$, of cardinality $\geq 2$, $c_{\mid Y}$ is not constant. 
Another way to determine $\Phichr$ is to characterize it as the unique bialgebra morphism from $(\calH_{\bfHG},m,\Delta)$ to $(\K[T],m,\Delta)$ such that for any hypergraph $H$, $\Phichr(\overline{H})(1)=\epsilon_\delta(H)$.
\end{example}

\section{Hopf-algebraic structures on boolean functions}

\subsection{A two-parameters family of products}

\begin{defi}
Let $X$ be a finite set. A boolean function on $X$ is a map $f:\calP(X)\longrightarrow\Z$, such that $f(\emptyset)=0$.
The set of boolean functions on $X$ is denoted by $\rmbool(X)$.
\end{defi}

This defines a set species $\rmbool$: if $\sigma:X\longrightarrow Y$ is a bijection, we define a map 
\[\rmbool(\sigma):\left\{\begin{array}{rcl}
\rmbool(X)&\longrightarrow&\rmbool(Y)\\
f&\longmapsto&\rmbool(\sigma)(f):\left\{\begin{array}{rcl}
\calP(Y)&\longrightarrow&\Z\\
A&\longmapsto&f(\sigma^{-1}(A)).
\end{array}\right.\end{array}\right.\]
Its linearization is denoted by $\bfbool$: for any finite set $X$, $\bfbool(X)$ is the space generated by $\rmbool(X)$.

\begin{theo}\label{theo2.2}
Let $X, Y$ be two disjoint finite sets, $f\in\rmbool(X)$ and $g\in\rmbool(Y)$. 
\begin{enumerate}
\item Let $q_1,q_2\in\Z$. 
We define $f\star_{q_1,q_2}g$ by
\begin{align*}
&\forall A\subseteq X\sqcup Y,&f\star_{q_1,q_2} g(A)&=q_1^{|A\cap Y|}f(A\cap X)+q_2^{|A\cap X|}g(A\cap Y).
\end{align*}
The bilinear extension of $\star_{q_1,q_2}$ to $\bfbool$ makes it an associative twisted algebra. The unit is the unique boolean function $1\in\rmbool(\emptyset)$.
It is commutative if, and only if, $q_1=q_2$. In this case, we shall simply take $q=q_1=q_2$ and write $\star_q$ instead of $\star_{q,q}$. 
\item Let $q\in\Z$. We consider the species morphism $\theta_q:\bfbool\longrightarrow\bfbool$ defined by
\begin{align*}
&\forall f\in\rmbool(X),\:\forall A\subseteq X,&\theta_q(f)(A)&=\sum_{B\subseteq A}q^{|A|-|B|}f(B).
\end{align*}
Then:
\begin{enumerate}
\item For any $q,q'\in\Z$, $\theta_q\circ\theta_{q'}=\theta_{q+q'}$.
\item For any $q_1,q_2,q'_1,q'_2\in\Z$, $\theta_q$ is a twisted algebra isomorphism from $(\bfbool,\star_{q_1,q_2})$ to $(\bfbool,\star_{q_1+q,q_2+q})$. 
\end{enumerate}\end{enumerate}\end{theo}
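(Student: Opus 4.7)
The plan is to verify every assertion by direct computation on subsets. For part 1, equivariance of $\star_{q_1,q_2}$ under bijections is immediate from the definition, so the only substantive point in the twisted-algebra axioms is associativity. I would evaluate $(f\star_{q_1,q_2}g)\star_{q_1,q_2}h$ at a subset $A\subseteq X\sqcup Y\sqcup Z$; two iterations of the defining formula produce
\[q_1^{|A\cap Y|+|A\cap Z|}f(A\cap X)+q_1^{|A\cap Z|}q_2^{|A\cap X|}g(A\cap Y)+q_2^{|A\cap X|+|A\cap Y|}h(A\cap Z),\]
and the same expansion for $f\star_{q_1,q_2}(g\star_{q_1,q_2}h)(A)$ yields the identical three terms. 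The unit property follows from $1\star_{q_1,q_2}f(A)=q_1^{|A|}\cdot 0+q_2^0 f(A)=f(A)$ and its mirror on the right. Commutativity when $q_1=q_2$ is evident from the symmetry of the formula; for the converse, take $X=\{x\}$, $Y=\{y\}$, $f(\{x\})=1$, $g(\{y\})=0$: then $f\star_{q_1,q_2}g(\{x,y\})=q_1$ while $g\star_{q_1,q_2}f(\{x,y\})=q_2$, forcing $q_1=q_2$.

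For part 2(a), expanding $\theta_q(\theta_{q'}(f))(A)$ yields a double sum over pairs $C\subseteq B\subseteq A$; swapping the order and parametrising by $D=B\setminus C\subseteq A\setminus C$, the inner sum
\[\sum_{D\subseteq A\setminus C}q^{|A\setminus C|-|D|}(q')^{|D|}=(q+q')^{|A\setminus C|}\]
collapses by the binomial theorem, and the remaining outer sum is exactly $\theta_{q+q'}(f)(A)$. Since $\theta_0=\id$, it follows in particular that $\theta_q$ is a species automorphism with inverse $\theta_{-q}$.

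For part 2(b), I would check the identity $\theta_q(f\star_{q_1,q_2}g)=\theta_q(f)\star_{q_1+q,q_2+q}\theta_q(g)$ pointwise. Expanding the left side at $A\subseteq X\sqcup Y$ and decomposing each $B\subseteq A$ as $B_X\sqcup B_Y$ with $B_X\subseteq A\cap X$ and $B_Y\subseteq A\cap Y$, the two summands separate. In the $f$-piece, the inner sum over $B_Y$ is $\sum_{B_Y\subseteq A\cap Y}q^{|A\cap Y|-|B_Y|}q_1^{|B_Y|}=(q+q_1)^{|A\cap Y|}$, absorbing the parameter shift $q_1\mapsto q_1+q$ into the coefficient; the $g$-piece is symmetric with $q_2\mapsto q_2+q$. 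Together with (a), which provides the inverse, this yields the claimed twisted algebra isomorphism.

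The work is entirely combinatorial bookkeeping: one must keep careful track of which exponent of $q_1$ or $q_2$ decorates each factor when iterating $\star_{q_1,q_2}$ or interleaving it with $\theta_q$. Beyond this organisational care, only the binomial identity $\sum_{S\subseteq T}a^{|T|-|S|}b^{|S|}=(a+b)^{|T|}$ is needed throughout, so I do not expect any conceptual obstacle.
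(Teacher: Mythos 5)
Your proposal is correct and follows essentially the same route as the paper: pointwise expansion of both associativity brackets into the same three terms, the single-element counterexample for the converse of commutativity, and the binomial identity to collapse the double sums in both $\theta_q\circ\theta_{q'}=\theta_{q+q'}$ and $\theta_q(f\star_{q_1,q_2}g)=\theta_q(f)\star_{q_1+q,q_2+q}\theta_q(g)$, with invertibility coming from $\theta_0=\id$. (Incidentally, your middle term $q_1^{|A\cap Z|}q_2^{|A\cap X|}g(A\cap Y)$ in the associativity expansion is the correct one; the paper's display has the two exponents inadvertently swapped, which does not affect the argument.)
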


\begin{example}
Let $X, Y$ be two disjoint finite sets, $f\in\rmbool(X)$ and $g\in\rmbool(Y)$. Then
\begin{align*}
&\forall A\subseteq X\sqcup Y,&f\star_1 g(A)&=f(A\cap X)+g(A\cap Y),\\
&&f\star_0 g(A)&=\begin{cases}
f(A)\mbox{ if }A\subseteq X,\\
g(A)\mbox{ if }A\subseteq Y,\\
0\mbox{ otherwise},
\end{cases}\\
&&f\star_{0,1}g(A)&=\begin{cases}
f(A)\mbox{ if }A\subseteq X,\\
g(A\cap Y)\mbox{ otherwise}.
\end{cases}\end{align*}\end{example}

\begin{proof}
Observe firstly that $f\star_{q_1,q_2} g$ is indeed a boolean function: $f\star_{q,_1,q_2} g(\emptyset)=0$, as $f(\emptyset)=g(\emptyset)=0$.\\

1. Let $X,Y,Z$ be pairwise disjoint sets, $f\in\rmbool(X)$, $g\in\rmbool(Y)$ and $h\in\rmbool(Z)$. For any $A\subseteq X\sqcup Y\sqcup Z$,
\begin{align*}
(f\star_{q_1,q_2} g)\star_{q_1,q_2} h(A)&=q_1^{|A\cap Y|+|A\cap Z|}f(A\cap X)+q_1^{|A\cap X|}q_2^{|A\cap Z|}g(A\cap Y)
+q_2^{|A\cap X|+|A\cap Y|}h(A\cap Z)\\
&=f\star_{q_1,q_2} (g\star_{q_1,q_2} h)(A),
\end{align*}
so $\star_{q_1,q_2}$ is associative.\\

Obviously, the opposite of $\star_{q_1,q_2}$ is $\star_{q_2,q_1}$. Consequently, if $q_1=q_2$, $\star_{q_1,q_2}$ is commutative.
Let $f\in\rmbool(\{1\})$, defined by $f(\{1\})=1$ and $g\in\rmbool(\{2\})$, defined by $g(\{2\})=0$. Then 
\begin{align*}
f\star_{q_1,q_2} g(\{1,2\})&=q_1f(\{1\})+q_2g(\{2\})=q_1,&
g\star_{q_1,q_2} f(\{1,2\})&=q_1g(\{1\})+q_2f(\{2\})=q_2.
\end{align*}
Therefore, if $\star_{q_1,q_2}$ is commutative, then $q_1=q_2$.\\

2. (a) Let $X$ be a finite set, $f\in\rmbool(X)$, and $q,q'\in\Z$. For any $A\subseteq X$,
\begin{align*}
\theta_q\circ\theta_{q'}(f)(A)&=\sum_{C\subseteq A} q^{|A|-|C|}\theta_{q'}(f)(C)\\
&=\sum_{B\subseteq C\subseteq A} q^{|A|-|C|} q'^{|C|-|B|} f(B)\\
&=\sum_{B\subseteq A}\left(\sum_{B\subseteq C\subseteq A} q^{|A|-|C|} q'^{|C|-|B|}\right) f(B)\\
&=\sum_{B\subseteq A}\left(\sum_{k=0}^{|A|-|B|}\binom{|A|-|B|}{k}q^k q'^{|A|-|B|-k}\right)f(B)\\
&=\sum_{B\subseteq A}(q+q')^{|A|-|B|}f(B)\\
&=\theta_{q+q'}(f)(A),
\end{align*}
so $\theta_q\circ\theta_{q'}=\theta_{q+q'}$. For the fourth equality, $k=|A|-|C|$.\\

Let $X$ be a finite set, and $f\in\rmbool(X)$. For any $A\subseteq X$,
\[\theta_0(f)(A)=\sum_{B\subseteq A} 0^{|A|-|B|} f(B)=f(A)+0,\]
so $\theta_0=\id_{\rmbool}$. As a consequence, for any $q\in\Z$, $\theta_q$ is invertible, of inverse $\theta_{-q}$.\\

2. (b) Let $X,Y$ be two disjoint finite sets, $f\in\rmbool(X)$, $g\in\rmbool(Y)$. For any $A\subseteq X\sqcup Y$,
\begin{align*}
\theta_q(f\star_{q_1,q_2} g)&=\sum_{B\subseteq A} q^{|A|-|B|}f\star_{q_1,q_2} g(A)\\
&=\sum_{B\subseteq A}\left(q^{|A|-|B|}q_1^{|B\cap Y|} f(B\cap X)+ q^{|A|-|B|} q_2^{|B\cap X|}g(B\cap Y)\right)\\
&=\sum_{\substack{B'\subseteq A\cap X,\\ B''\subset A\cap Y}} q^{|A|-|B'|-|B''|}
\left(q_1^{|B''|}f(B')+q_2^{|B'|}g(B'')\right)\\
&=\sum_{B'\subseteq A\cap X}\left(\sum_{B''\subseteq A\cap Y}q^{|A|-|B'|-|B''|}q_1^{|B''|}\right)f(B')\\
&+\sum_{B''\subseteq A\cap Y}\left(\sum_{B'\subseteq A\cap X}q^{|A|-|B'|-|B''|}q_2^{|B'|}\right)g(B'')\\
&=\sum_{B'\subseteq A\cap X}q^{|A\cap X|-|B'|}\left(\sum_{B''\subseteq A\cap Y}q^{|A\cap Y|-|B''|}q_1^{|B''|}\right)f(B')\\
&+\sum_{B''\subseteq A\cap Y}q^{|A\cap Y|-|B''|}\left(\sum_{B'\subseteq A\cap Y}q^{|A\cap X|-|B'|}q_2^{|B'|}\right)g(B'')\\
&=\sum_{B'\subseteq A\cap X}q^{|A\cap X|-|B'|}(q_1+q)^{|A\cap Y|}f(B')\\
&+\sum_{B''\subseteq A\cap Y}q^{|A\cap Y|-|B''|}(q_2+q)^{|A\cap X|}g(B'')\\
&=(q_1+q)^{|A\cap Y|}\theta_q(f)(A\cap X)+(q_2+q)^{|A\cap X|}\theta_q(fg(A\cap Y)\\
&=\theta_q(f)\star_{q_1+q,q_2+q}\theta_q(g)(A).
\end{align*}
For the third equality, $B'=B\cap X$ and $B''=B\cap Y$. 
So $\theta_q(f\star_{q_1,q_2} g)=\theta_q(f)\star_{q_1+q,q_2+q}\theta_q(g)$.
\end{proof}

\begin{notation}
For any $q\in\Z$, we shall use the symbols $\displaystyle\prod^{\star_q}$ for an iterated product of boolean functions using $\star_q=\star_{q,q}$. As it is commutative, the order of the factors does not matter.
\end{notation}

\begin{remark}
As a consequence, if $q_1-q_2=q'_1-q'_2$, then $(\bfbool,\star_{q_1,q_2})$ and $(\bfbool,\star_{q'_1,q'_2})$ are isomorphic, through the morphism $\theta_{q'_1-q_1}$. 
In particular, for any $q,q'\in\Z$, $(\bfbool,\star_q)$ and $(\bfbool,\star_{q'})$ are isomorphic, through $\theta_{q'-q}$. 
\end{remark}

\subsection{Indecomposable boolean functions}

We here study these products $\star_{q_1,q_2}$, before defining the coproduct on boolean functions. 

\begin{defi}
Let $X\subseteq Y$ be two finite sets. For any $f\in\rmbool(Y)$, we shortly write $f_{\mid X}$ instead of $f_{\mid\calP(X)}$.
Observe that $f_{\mid X}\in\rmbool(X)$.
\end{defi}

\begin{lemma}\label{lemma2.4}
Let $X,Y$ be two disjoint finite sets, $q_1,q_2\in\Z$, $f\in\rmbool(X)$, $g\in\rmbool(Y)$ and $A\subseteq X\sqcup Y$. Then
\begin{align*}
(f\star_{q_1,q_2} g)_{\mid A}&=f_{\mid A\cap X}\star_{q_1,q_2} g_{\mid A\cap Y}.
\end{align*}
In particular, 
\begin{align*}
(f\star_{q_1,q_2} g)_{\mid X}&=f,&(f\star_{q_1,q_2} g)_{\mid Y}&=g.
\end{align*}
\end{lemma}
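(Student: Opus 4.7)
The plan is to prove the identity by evaluating both sides pointwise on an arbitrary subset $B \subseteq A$, using only the definition of $\star_{q_1,q_2}$ from Theorem \ref{theo2.2} and the definition of restriction.

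First I would unfold the left-hand side: for $B \subseteq A$, we have $(f\star_{q_1,q_2}g)_{\mid A}(B) = (f\star_{q_1,q_2}g)(B)$, which by definition equals $q_1^{|B\cap Y|}f(B\cap X) + q_2^{|B\cap X|}g(B\cap Y)$. Next I would unfold the right-hand side: since $A\cap X$ and $A\cap Y$ are disjoint, $f_{\mid A\cap X}\star_{q_1,q_2}g_{\mid A\cap Y}$ is a boolean function on $A$, and for $B\subseteq A$ it equals
\[
q_1^{|B\cap(A\cap Y)|}f_{\mid A\cap X}(B\cap(A\cap X)) + q_2^{|B\cap(A\cap X)|}g_{\mid A\cap Y}(B\cap(A\cap Y)).
\]
The key (trivial) observation is that $B\subseteq A$ forces $B\cap(A\cap X) = B\cap X$ and $B\cap(A\cap Y) = B\cap Y$, after which both expressions coincide. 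There is really no obstacle here; the whole content is matching definitions.

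For the two special cases, I would set $A = X$, so that $A\cap X = X$ and $A\cap Y = \emptyset$. Then $g_{\mid\emptyset}$ is the unique element of $\rmbool(\emptyset)$, namely the unit $1$ of $\star_{q_1,q_2}$ provided by Theorem \ref{theo2.2}, hence $(f\star_{q_1,q_2}g)_{\mid X} = f_{\mid X}\star_{q_1,q_2}1 = f$. The case $A = Y$ is symmetric.
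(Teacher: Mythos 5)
Your proof is correct and follows essentially the same route as the paper: both unfold the definitions pointwise on an arbitrary $B\subseteq A$, use $B\cap(A\cap X)=B\cap X$ and $B\cap(A\cap Y)=B\cap Y$, and then obtain the special cases by taking $A=X$ (resp.\ $A=Y$) and invoking that $g_{\mid\emptyset}$ is the unit $1$ of $\star_{q_1,q_2}$. No gaps.
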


\begin{proof}
Let $B\subseteq A$. 
\begin{align*}
(f\star_{q_1,q_2} g)_{\mid A}(B)&=q_1^{|B\cap Y|}f(B\cap X)+q_2^{|B\cap X|}g(B\cap Y)\\
&=q_1^{|B\cap A\cap Y|}f_{\mid A\cap X}(B\cap X)+q_2^{|B\cap A\cap X|}g_{\mid A\cap Y}(B\cap Y)\\
&=f_{\mid A\cap X}\star_{q_1,q_2} g_{\mid A\cap Y}(B),
\end{align*}
so $(f\star_{q_1,q_2} g)_{\mid A}=f_{\mid A\cap X}\star_{q_1,q_2} g_{\mid A\cap Y}$. In particular, if $A=X$,
\[(f\star_{q_1,q_2} g)_{\mid X}=f_{\mid X}\star_{q_1,q_2} g_{\mid\emptyset}=f\star_{q_1,q_2}1_{\bfbool}=f.\]
Similarly, $(f\star_{q_1,q_2} g)_{\mid Y}=g$.
\end{proof}

\begin{defi}
Let $X$ be a nonempty finite set and $f\in\rmbool(X)$. We shall say that $f$ is $(q_1,q_2)$-indecomposable if for any $Y\subseteq X$, for any $f'\in\rmbool[X\setminus Y]$, and for any $f''\in\rmbool[Y]$,
\[f=f'\starq f''\Longrightarrow Y=\emptyset\mbox{ or }Y=X.\]
\end{defi}

If $|X|=1$, then any $f\in\rmbool(X)$ is obviously $(q_1,q_2)$-indecomposable, for any $(q_1,q_2)\in\Z^2$.
Let us consider the case where $|X|=2$.

\begin{prop}\label{prop2.6}
Let $X=\{x,y\}$ be a set of cardinality two, $f\in\rmbool(X)$ and $(q_1,q_2)\in\Z^2$. Then $f$ is $(q_1,q_2)$-indecomposable if, and only if, 
\begin{align*}
&f(\{x,y\})\neq q_1f(\{x\})+q_2f(\{y\})\mbox{ and }f(\{x,y\})\neq q_1f(\{y\})+q_2f(\{x\}).
\end{align*}\end{prop}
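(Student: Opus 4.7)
The proof is a direct computation enumerating the only two candidate decompositions. Since $X=\{x,y\}$ has exactly two nonempty proper subsets, namely $\{x\}$ and $\{y\}$, a nontrivial decomposition $f=f'\starq f''$ must correspond (up to choice of which factor is on which side) to one of two cases: either $f'\in\rmbool(\{x\})$ and $f''\in\rmbool(\{y\})$, or $f'\in\rmbool(\{y\})$ and $f''\in\rmbool(\{x\})$.

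The first step is to use Lemma \ref{lemma2.4}: in any decomposition $f=f'\starq f''$ with $f'\in\rmbool(\{x\})$ and $f''\in\rmbool(\{y\})$, the factors are forced to be $f'=f_{\mid\{x\}}$ and $f''=f_{\mid\{y\}}$. So the only thing to verify is whether the definition of $\starq$ reconstructs $f(\{x,y\})$ correctly. Applying the definition of $\starq$ at $A=\{x,y\}$ gives
\[f_{\mid\{x\}}\starq f_{\mid\{y\}}(\{x,y\})=q_1f(\{x\})+q_2f(\{y\}).\]
Evaluating at $A=\{x\}$ or $A=\{y\}$ returns $f(\{x\})$ or $f(\{y\})$ automatically, and $A=\emptyset$ gives $0$. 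Hence $f=f_{\mid\{x\}}\starq f_{\mid\{y\}}$ holds if and only if $f(\{x,y\})=q_1f(\{x\})+q_2f(\{y\})$.

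The second step is symmetric: swapping the roles of $x$ and $y$, the decomposition $f=f_{\mid\{y\}}\starq f_{\mid\{x\}}$ holds if and only if $f(\{x,y\})=q_1f(\{y\})+q_2f(\{x\})$. Combining the two cases: $f$ admits a nontrivial decomposition if and only if at least one of the two equalities $f(\{x,y\})=q_1f(\{x\})+q_2f(\{y\})$ or $f(\{x,y\})=q_1f(\{y\})+q_2f(\{x\})$ holds, which is the negation of the stated condition.

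There is no real obstacle; the argument is a one-line computation once the uniqueness of the factors provided by Lemma \ref{lemma2.4} is invoked. The only mild care needed is to note that the two possible decompositions are truly the only candidates (since $X$ has cardinality two, the constraint $Y\notin\{\emptyset,X\}$ leaves only the singleton subsets), and to keep track of which factor plays the role of the first argument in $\starq$ when $q_1\neq q_2$ so that both linear combinations must be excluded.
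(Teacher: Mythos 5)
Your proof is correct and follows essentially the same route as the paper: enumerate the only two candidate decompositions, invoke Lemma \ref{lemma2.4} to force the factors to be $f_{\mid\{x\}}$ and $f_{\mid\{y\}}$, and evaluate $\starq$ at $\{x,y\}$ to get the two excluded linear combinations. Nothing to add.
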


\begin{proof}
As $|X|=2$, $f$ is $(q_1,q_2)$-decomposable if, and only if, $f=f'\starq f''$ or $f''\starq f'$, for certain $f'\in\rmbool(\{x\})$ and $f''\in\rmbool(\{y\})$. 
By Lemma \ref{lemma2.4}, necessarily $f'=f_{\mid\{x\}}$ and $f''=f_{\mid\{y\}}$. Therefore, 
\begin{align*}
f'\starq f''(\emptyset)&=0,&f''\starq f'(\emptyset)&=0,\\
f'\starq f''(\{x\})&=f(\{x\}),&f''\starq f'(\{x\})&=f(\{x\}),\\
f'\starq f''(\{y\})&=f(\{y\}),&f''\starq f'(\{y\})&=f(\{y\}),\\
f'\starq f''(\{x,y\})&=q_1f(\{x\})+q_2f(\{y\}),&f''\starq f'(\{x,y\})&q_1f(\{y\})+q_2f(\{x\}).
\end{align*}
Therefore, 
\begin{align*}
f=f'\starq f''&\Longleftrightarrow f(\{x,y\})=q_1f(\{x\})+q_2f(\{y\}),\\
f=f''\starq f'&\Longleftrightarrow f(\{x,y\})=q_1f(\{y\})+q_2f(\{x\}).
\end{align*}
The result follows.
\end{proof}

\begin{example}
Let $f\in\rmbool(\{x,y\})$.
\begin{align*}
f\mbox{ is $(q,q)$-indecomposable}&\Longleftrightarrow f(\{x,y\})\neq q\left(f(\{x\})+f(\{y\})\right),\\
f\mbox{ is $(0,1)$-indecomposable}&\Longleftrightarrow f(\{x,y\})\notin\left\{ f(\{x\}), f(\{y\})\right\}.
\end{align*}\end{example}

\begin{lemma}\label{lemma2.7}
Let $X$ be a nonempty finite set and $f\in\rmbool(X)$. There exists a composition $(X_1,\ldots,X_k)$ of $X$ such that:
\begin{itemize}
\item For any $i\in [k]$, $f_{\mid X_i}$ is $(q_1,q_2)$-indecomposable.
\item $f=f_{\mid X_1}\starq\cdots\starq f_{\mid X_k}$.
\end{itemize}
Such a decomposition will be called a decomposition of $f$ into $(q_1,q_2)$-indecomposable components.
\end{lemma}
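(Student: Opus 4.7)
The plan is a straightforward strong induction on $|X|$, using Lemma \ref{lemma2.4} to identify the factors of any $\starq$-decomposition with restrictions of $f$.

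For the base case $|X|=1$, any $f\in\rmbool(X)$ is $(q_1,q_2)$-indecomposable (this was observed right after Definition 2.5), so the composition $(X)$ with $k=1$ does the job, since $f_{\mid X}=f$.

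For the inductive step, assume the statement holds for every nonempty finite set of cardinality $<n$, and let $f\in\rmbool(X)$ with $|X|=n\geq 2$. Either $f$ is already $(q_1,q_2)$-indecomposable, in which case the composition $(X)$ works, or there exist nonempty disjoint sets $Y,Z$ with $X=Y\sqcup Z$, together with $g\in\rmbool(Y)$ and $h\in\rmbool(Z)$, such that $f=g\starq h$. By Lemma \ref{lemma2.4}, the factors are forced: $g=f_{\mid Y}$ and $h=f_{\mid Z}$. Since $|Y|,|Z|<n$, the induction hypothesis yields compositions $(Y_1,\ldots,Y_p)$ of $Y$ and $(Z_1,\ldots,Z_q)$ of $Z$ such that each $(f_{\mid Y})_{\mid Y_i}$ and each $(f_{\mid Z})_{\mid Z_j}$ is $(q_1,q_2)$-indecomposable, and
\[f_{\mid Y}=(f_{\mid Y})_{\mid Y_1}\starq\cdots\starq (f_{\mid Y})_{\mid Y_p},\qquad f_{\mid Z}=(f_{\mid Z})_{\mid Z_1}\starq\cdots\starq (f_{\mid Z})_{\mid Z_q}.\]
Because $Y_i\subseteq Y\subseteq X$, one has $(f_{\mid Y})_{\mid Y_i}=f_{\mid Y_i}$, and similarly for $Z_j$. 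Substituting into $f=f_{\mid Y}\starq f_{\mid Z}$ and invoking the associativity of $\starq$ established in Theorem \ref{theo2.2}, we obtain
\[f=f_{\mid Y_1}\starq\cdots\starq f_{\mid Y_p}\starq f_{\mid Z_1}\starq\cdots\starq f_{\mid Z_q},\]
so $(Y_1,\ldots,Y_p,Z_1,\ldots,Z_q)$ is the desired composition of $X$.

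There is no real obstacle here beyond bookkeeping: the only point that requires care is the identification of factors as restrictions, and that is precisely what Lemma \ref{lemma2.4} delivers (in particular the equality $(f\starq g)_{\mid X}=f$, $(f\starq g)_{\mid Y}=g$). Unicity of such a decomposition (up to the commutation relations mentioned in the introduction) is deferred to Proposition \ref{prop2.8}; here only existence is claimed.
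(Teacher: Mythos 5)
Your proof is correct and follows essentially the same route as the paper: induction on $|X|$, using Lemma \ref{lemma2.4} to force the factors of any decomposition to be restrictions of $f$, applying the induction hypothesis to each factor, and concatenating the resulting compositions. Nothing is missing; the transitivity of restriction and the associativity of $\starq$ are exactly the points the paper also relies on.
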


\begin{proof}
Note that if $f$ is $(q_1,q_2)$- indecomposable, the result holds with the composition $(X)$. We proceed by induction on $|X|$. The indecomposable case covers the case $|X|=1$. 
Let us assume the result at all ranks $<|X|$. If $f$ is $(q_1,q_2)$-indecomposable, we take the composition $(X)$. Otherwise, there exists a composition $(X_1,X_2)$ such that $f=f_1\starq f_2$, for a pair $(f_1,f_2)\in\rmbool(X_1)\times\rmbool(X_2)$. 
By Lemma \ref{lemma2.4}, $f_i=f_{\mid X_i}$ for $i\in [2]$. 

We apply the induction hypothesis on $f_1$ and $f_2$; we obtain two compositions $(X_{11},\ldots,X_{1k_1})$ and $(X_{21},\ldots,X_{2k_2})$ of respectively $X_1$ and $X_2$, such that, for any $i\in [2]$, for any $j\in [k_i]$, 
\[f_{ij}=\left(f_i\right)_{\mid X_{i,j}}=\left(f_{\mid X_i}\right)_{\mid X_{i,j}}=f_{\mid X_{i,j}}\]
is $(q_1,q_2)$-indecomposable,
\begin{align*}
f_1&=f_{11}\starq\cdots\starq f_{1k_1},&f_2&=f_{21}\starq\cdots\starq f_{2k_2}.
\end{align*}
Then
\[f=f_{11}\starq\cdots f_{1k_1}\starq f_{21}\starq\cdots f_{2k_2}.\]
We then take the composition $(X_{11},\ldots,X_{1k_1},X_{21},\ldots,X_{2k_2})$.\end{proof}

This decomposition is not unique, but it is up to a permutation.

\begin{prop}\label{prop2.8}
Let $X$ be a nonempty finite set, and $f\in\rmbool(X)$. Let us consider two decomposition of $f$ into $(q_1,q_2)$-indecomposable components:
\begin{align*}
f&=f_{\mid X_1}\starq\cdots\starq f_{\mid X_k}=f_{\mid Y_1}\starq\cdots\starq f_{\mid Y_l}.
\end{align*}
Then $k=l$ and there exists $\sigma\in\sym_k$, such that $Y_i=X_{\sigma(i)}$ for any $i\in [k]$.
\end{prop}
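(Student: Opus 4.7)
The plan is to proceed by induction on $|X|$, with base case $|X|=1$ trivial since a one-element set admits only the decomposition $(X)$. Assume the result for all nonempty sets of cardinality strictly less than $|X|$. The key tool throughout is Lemma \ref{lemma2.4}: restriction of a $\starq$-product is the $\starq$-product of restrictions.

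First, I would show that $Y_1=X_{\sigma(1)}$ for some index $\sigma(1)\in[k]$. Applying Lemma \ref{lemma2.4} to the first decomposition restricted to $Y_1$ gives
$$f_{\mid Y_1}=f_{\mid Y_1\cap X_1}\starq f_{\mid Y_1\cap X_2}\starq\cdots\starq f_{\mid Y_1\cap X_k}.$$
Factors indexed by $i$ with $Y_1\cap X_i=\emptyset$ equal the unit $1_{\bfbool}$ and drop out. Let $J=\{i\in[k] : Y_1\cap X_i\neq\emptyset\}$. If $|J|\geq 2$, splitting the surviving product after its first factor writes $f_{\mid Y_1}$ as $g'\starq g''$ with $g',g''$ supported on nonempty proper subsets of $Y_1$, contradicting the $(q_1,q_2)$-indecomposability of $f_{\mid Y_1}$. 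Hence $|J|=1$; denote the unique index by $\sigma(1)$, so $Y_1\subseteq X_{\sigma(1)}$. Running the same argument with the two decompositions exchanged gives $X_{\sigma(1)}\subseteq Y_j$ for some $j\in[l]$, and then $Y_1\subseteq X_{\sigma(1)}\subseteq Y_j$ together with disjointness of the $Y_i$'s forces $j=1$, whence $Y_1=X_{\sigma(1)}$.

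For the induction step, I would set $X'=X\setminus Y_1=X\setminus X_{\sigma(1)}$ and apply Lemma \ref{lemma2.4} once more to both decompositions restricted to $X'$, obtaining
$$f_{\mid X'}=f_{\mid X_1}\starq\cdots\starq\widehat{f_{\mid X_{\sigma(1)}}}\starq\cdots\starq f_{\mid X_k}=f_{\mid Y_2}\starq\cdots\starq f_{\mid Y_l},$$
where the hat denotes omission. These are two decompositions of $f_{\mid X'}$ into $(q_1,q_2)$-indecomposable components, with $k-1$ and $l-1$ factors respectively. Since $|X'|<|X|$, the induction hypothesis yields $k-1=l-1$ and a bijection matching the surviving $X_i$'s with $Y_2,\ldots,Y_l$, which extends by $Y_1\mapsto X_{\sigma(1)}$ to the desired $\sigma\in\sym_k$.

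The main place where things could go wrong is the splitting argument used to contradict indecomposability: non-commutativity of $\starq$ (when $q_1\neq q_2$) prevents arbitrary reordering, but this issue is harmless here because $1_{\bfbool}$ is a two-sided unit for $\starq$, so the factors with empty intersections can be removed without disturbing the order of the remaining ones, which can then be regrouped by associativity into a binary product.
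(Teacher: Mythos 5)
Your proposal is correct and follows essentially the same route as the paper: induction on $|X|$, using Lemma \ref{lemma2.4} to restrict to one component, concluding $Y_1\subseteq X_{\sigma(1)}$ by indecomposability, getting equality by the symmetric argument, and then restricting to the complement to invoke the induction hypothesis (the paper simply works with $Y_l$ instead of $Y_1$ and phrases the equality step as ``by symmetry''). Your extra remark on dropping unit factors without reordering is a fine justification of a point the paper leaves implicit.
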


\begin{proof}
We proceed by induction on $|X|$. If $|X|=1$, then $f$ is $(q_1,q_2)$-indecomposable: necessarily, $k=l=1$, and $X_1=Y_1=X$. Let us assume the result at all ranks $<|X|$.
Restricting to $Y_l$, with Lemma \ref{lemma2.4},
\[f_{\mid Y_l}=f_{\mid X_1\cap Y_l}\starq\cdots\starq f_{\mid X_k\cap Y_l}.\]
As $f_{\mid Y_l}$ is $(q_1,q_2)$-indecomposable, there exists a unique $\sigma(l)\in [k]$, such that $Y_l\subseteq X_{\sigma(l)}$. By symmetry between the $X$'s and the $Y$'s, $Y_l=X_{\sigma(l)}$.
Restricting to 
\[Y_1\sqcup\cdots\sqcup Y_{l-1}=X_1\sqcup\cdots\sqcup X_{\sigma(l)-1}\sqcup X_{\sigma(l)+1}\sqcup\cdots\sqcup X_k,\]
we obtain with Lemma \ref{lemma2.4} that
\[f_{\mid Y_1}\starq\cdots\starq f_{\mid Y_{l-1}}=f_{\mid X_1}\starq\cdots\starq f_{\mid X_{\sigma(l)-1}}\starq f_{\mid X_{\sigma(l)+1}}\starq\cdots\starq f_{\mid X_k}.\]
The induction hypothesis gives that $k-1=l-1$ (so $k=l$) and the image of the permutation $\sigma$ for the values in $[k-1]$. 
\end{proof}

\begin{prop}
Let $X$ be a finite set, $f\in\rmbool(X)$, which we decompose into $(q_1,q_2)$-indecomposable components into two different ways:
\[f=f_1\starq\cdots\star f_k=f_{\sigma(1)}\starq\cdots\starq f_{\sigma(k)}.\]
Then one can go from the first one to the second one by successive permutations of consecutive commuting factors.
\end{prop}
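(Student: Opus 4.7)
The plan is to proceed by induction on $k$. The base case $k=1$ is trivial; for $k=2$ the hypothesis forces $f_1 \starq f_2 = f_2 \starq f_1$ whenever $\sigma$ is nontrivial, so a single adjacent transposition of commuting factors suffices. Write $f_i \in \rmbool(X_i)$ with $X = X_1 \sqcup \cdots \sqcup X_k$, and set $l = \sigma(1)$, so that the second decomposition begins with $f_l$. The crucial step of the inductive argument will be to show that $f_l$ commutes with each of $f_1, \ldots, f_{l-1}$ \emph{individually}; once this is granted, one moves $f_l$ to the front of the first decomposition by $l-1$ adjacent transpositions of commuting factors, and then applies the induction hypothesis to the two tails, which by restriction to $X \setminus X_l$ become two $(q_1,q_2)$-indecomposable decompositions of the same boolean function related by a permutation on $k-1$ elements.

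To produce the pairwise commutations, I would restrict the equality $f_1 \starq \cdots \starq f_k = f_{\sigma(1)} \starq \cdots \starq f_{\sigma(k)}$ twice using Lemma \ref{lemma2.4}. Restricting to $Y := X_1 \cup \cdots \cup X_l$ replaces each factor whose support is not contained in $Y$ by the unit: on the left the surviving product is $f_1 \starq \cdots \starq f_l$, while on the right only the factors $f_{\sigma(j)}$ with $\sigma(j) \in \{1,\ldots,l\}$ survive in the induced order, yielding $f_l \starq f_{\sigma(j_2)} \starq \cdots \starq f_{\sigma(j_l)}$ where $\{\sigma(j_2),\ldots,\sigma(j_l)\} = \{1,\ldots,l-1\}$. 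A second restriction to $X_1 \cup \cdots \cup X_{l-1}$ identifies $F := f_1 \starq \cdots \starq f_{l-1}$ with $f_{\sigma(j_2)} \starq \cdots \starq f_{\sigma(j_l)}$, and substituting back into the first restriction reduces it to $F \starq f_l = f_l \starq F$. Now restrict this equality to $X_i \cup X_l$ for each $i \in \{1,\ldots,l-1\}$: by Lemma \ref{lemma2.4} one has $F_{\mid X_i} = f_i$, and the restriction becomes $f_i \starq f_l = f_l \starq f_i$, the desired pairwise commutation.

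With pairwise commutation in hand, the passage from $(f_1,\ldots,f_k)$ to $(f_l, f_1, \ldots, f_{l-1}, f_{l+1}, \ldots, f_k)$ is realized by $l-1$ adjacent transpositions of commuting factors. Both decompositions now share the prefix $f_l$, and restricting both sides of the original equality to $X \setminus X_l$ gives two indecomposable decompositions of $f_{\mid X \setminus X_l}$ differing by a permutation on $k-1$ elements, to which the induction hypothesis applies (commutation of $f_i$ and $f_j$ being an intrinsic property of the pair, any transposition justified for the tail remains valid inside the full sequence after the $f_l$ prefix). The main obstacle is really the pairwise commutation claim: the two equal full products only directly yield commutation between $f_l$ and the aggregate $F = f_1 \starq \cdots \starq f_{l-1}$, not between $f_l$ and the individual $f_i$. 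The escape is exactly the restriction to the two-factor subset $X_i \cup X_l$, together with the identity $F_{\mid X_i} = f_i$ from Lemma \ref{lemma2.4}, which isolates a single factor from the aggregate.
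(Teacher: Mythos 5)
Your proof is correct, but it takes a genuinely different route from the paper's. The paper fixes a minimal-length factorization $\sigma=\tau_1\circ\cdots\circ\tau_p$ into adjacent transpositions; minimality forces $\sigma(i)>\sigma(i+1)$ for the last transposition $(i\:i+1)$, so $f_{i+1}$ precedes $f_i$ in the second decomposition, and a single restriction to $X_i\sqcup X_{i+1}$ via Lemma \ref{lemma2.4}, read off both decompositions, gives $f_i\starq f_{i+1}=f_{i+1}\starq f_i$; the swap is performed and the argument iterates on the shorter word. You instead induct on the number $k$ of factors: you prove that $f_{\sigma(1)}$ commutes with each factor to its left, bubble it to the front by $l-1$ adjacent swaps, and treat the tail by the induction hypothesis applied to $f_{\mid X\setminus X_{\sigma(1)}}$. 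Both arguments hinge on the same mechanism --- Lemma \ref{lemma2.4} lets one read the commutation of a pair of indecomposable factors off two restrictions of the same product --- and both are valid; your final remark that the tail's swaps remain legitimate after the prefix $f_{\sigma(1)}$ is exactly the point needed to lift the induction hypothesis from the restricted function back to the full word. Two comparative comments: your detour through the aggregate $F=f_1\starq\cdots\starq f_{l-1}$ (first obtaining $F\starq f_l=f_l\starq F$, then restricting to $X_i\cup X_l$) is correct but unnecessary, since restricting the original equality directly to $X_i\cup X_l$ already yields $f_i\starq f_l$ from the first decomposition (as $i<l$) and $f_l\starq f_i$ from the second (as $f_l=f_{\sigma(1)}$ comes first), which is precisely the paper's one-step trick; and the paper's reduced-word induction produces a sequence of swaps whose length is the number of inversions of $\sigma$, whereas your bubble-to-front induction is more elementary bookkeeping at the cost of a slightly longer chain of moves.
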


\begin{proof}
Let us consider a decomposition $\sigma=\tau_1\circ\cdots\circ\tau_p$ of $\sigma$ as a composition of transpositions $(j\: j+1)$, with $j\in [k-1]$, of minimal length. Let us put $\tau_p=(i,\: i+1)$.
As this is a decomposition of minimal length, $\sigma(i)>\sigma(i+1)$. Therefore, 
$f_{i+1}$ appears before $f_i$ in the second decomposition. By Lemma \ref{lemma2.4},
\begin{align*}
f_{\mid X_i\sqcup X_{i+1}}&=(f_1)_{\mid\emptyset}\starq\cdots\starq (f_i)_{\mid X_i}\starq (f_{i+1})_{\mid X_{i+1}}\starq\cdots\starq (f_k)_{\mid\emptyset}\\
&=f_i\starq f_{i+1}.
\end{align*}
On the other hand,
\begin{align*}
f_{\mid X_i\sqcup X_{i+1}}&=(f_{\sigma(1)})_{\mid\emptyset}\starq\cdots\starq (f_{i+1})_{\mid X_{i+1}}\starq\cdots\starq (f_i)_{\mid X_i}\starq\cdots\starq (f_{\sigma(k)})_{\mid\emptyset}\\
&=f_{i+1}\starq f_i.
\end{align*}
So $f_i$ and $f_{i+1}$ commute, and, consequently,
\[f=f_1\starq\cdots\starq f_{i+1}\starq f_i\starq\ldots\starq f_k.\]
Iterating the process, we obtain a way to go from the first decomposition to the second one by using 
commutation relations between two consecutive factors.
\end{proof}

As a consequence:

\begin{cor}
As a twisted algebra, $(\bfbool,\starq)$ is generated by $(q_1,q_2)$-indecomposable elements, with the commutation relations between them. 
\end{cor}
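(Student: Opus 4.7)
The plan is to assemble Lemma \ref{lemma2.7} (existence of a decomposition into $(q_1,q_2)$-indecomposable components), Proposition \ref{prop2.8} (uniqueness of this decomposition up to a permutation), and the proposition immediately preceding the corollary (realization of that permutation as a sequence of transpositions of consecutive commuting factors). First I would handle generation: by Lemma \ref{lemma2.7}, every $f \in \rmbool(X)$ can be written as $f = f_{\mid X_1} \starq \cdots \starq f_{\mid X_k}$ with each $f_{\mid X_i}$ a $(q_1,q_2)$-indecomposable boolean function. Extending linearly to $\bfbool(X)$, every element is a linear combination of $\starq$-products of $(q_1,q_2)$-indecomposable elements, so the family of $(q_1,q_2)$-indecomposable boolean functions generates $(\bfbool,\starq)$ as a twisted algebra.

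Next I would turn to the relations. Suppose we have any identity
\[ f_1 \starq \cdots \starq f_k = g_1 \starq \cdots \starq g_l \]
between $\starq$-products of $(q_1,q_2)$-indecomposable boolean functions. Proposition \ref{prop2.8} forces $k = l$ and the existence of a $\sigma \in \sym_k$ such that $g_i = f_{\sigma(i)}$ for every $i \in [k]$. The preceding proposition then realizes this equality as the effect of a sequence of transpositions of consecutive commuting factors. Consequently, every relation among the generators can be derived, using associativity of $\starq$, from the pairwise commutation relations between the $(q_1,q_2)$-indecomposable factors appearing in it, which is exactly the content of the statement.

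I expect no genuine obstacle: the substantive work has already been carried out in the preceding three results, and the corollary is just their synthesis. The only point requiring any care is noting that, by associativity of $\starq$, the abstract notion of presentation by generators and relations for a twisted algebra reduces precisely to the combination of Lemma \ref{lemma2.7} (generation) and the uniqueness-up-to-commutation statements (Proposition \ref{prop2.8} and its successor).
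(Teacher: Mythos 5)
Your proposal is correct and matches the paper's intent exactly: the corollary is stated there without a separate proof, as an immediate consequence of Lemma \ref{lemma2.7}, Proposition \ref{prop2.8} and the proposition on permuting consecutive commuting factors, which is precisely the synthesis you carry out.
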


Let us now describe the commutation relations. When $q_1=q_2$, the product is commutative. We now consider the case $q_1\neq q_2$. 

\begin{defi}
We assume that $q_1\neq q_2$. Let $X$ be a finite set and $\lambda\in\Z$. We define $f_{X,\lambda}^{(q_1,q_2)}\in\rmbool(X)$ by
\begin{align*}
&\forall A\subseteq X,&f_{X,\lambda}^{(q_1,q_2)}(A)&=\lambda\dfrac{q_1^{|A|}-q_2^{|A|}}{q_1-q_2}.
\end{align*}\end{defi}

\begin{remark}
For any $n\geq 1$,
\[\dfrac{q_1^n-q_2^n}{q_1-q_2}=q_1^{n-1}+q_1^{n-2}q_2+\cdots+q_1q_2^{n-2}+q_2^{n-1}\in\Z.\]
So, indeed, $f_{X,\lambda}^{(q_1,q_2)}\in\rmbool(X)$.
\end{remark}

\begin{lemma}
Let $X,Y$ be two disjoint finite sets, and $\lambda\in\Z$. Then
\[f^{(q_1,q_2)}_{X,\lambda}\starq f^{(q_1,q_2)}_{Y,\lambda}=f^{(q_1,q_2)}_{X\sqcup Y,\lambda}.\]
\end{lemma}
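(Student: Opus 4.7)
The proof is a direct computation from the definitions. The plan is to fix an arbitrary $A \subseteq X \sqcup Y$, write $a = |A \cap X|$ and $b = |A \cap Y|$ so that $|A| = a+b$, and evaluate both sides on $A$.

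For the left-hand side, the definition of $\star_{q_1,q_2}$ (Theorem \ref{theo2.2}) gives
\[
f^{(q_1,q_2)}_{X,\lambda}\starq f^{(q_1,q_2)}_{Y,\lambda}(A)
= q_1^{b}\, f^{(q_1,q_2)}_{X,\lambda}(A\cap X) + q_2^{a}\, f^{(q_1,q_2)}_{Y,\lambda}(A\cap Y),
\]
and then I would substitute the definition of $f^{(q_1,q_2)}_{\bullet,\lambda}$ to rewrite this as
\[
\frac{\lambda}{q_1-q_2}\Bigl(q_1^{b}(q_1^{a}-q_2^{a}) + q_2^{a}(q_1^{b}-q_2^{b})\Bigr).
\]

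The key step is that the cross terms $q_1^{b}q_2^{a}$ and $q_2^{a}q_1^{b}$ cancel, leaving $q_1^{a+b} - q_2^{a+b}$. Since $a+b = |A|$, this equals $\lambda\,\frac{q_1^{|A|}-q_2^{|A|}}{q_1-q_2} = f^{(q_1,q_2)}_{X\sqcup Y,\lambda}(A)$, which is precisely the right-hand side. There is no real obstacle here; the cancellation of the mixed terms is the whole content of the identity, and it works because the definition of $f^{(q_1,q_2)}_{\bullet,\lambda}$ is essentially the geometric-series expression $\frac{q_1^n-q_2^n}{q_1-q_2}$, which multiplies nicely with the weights $q_1^b$ and $q_2^a$ appearing in the product $\starq$. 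Since $A$ was arbitrary, this establishes the claimed equality of boolean functions on $X\sqcup Y$.
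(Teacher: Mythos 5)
Your computation is correct and is exactly the paper's argument: evaluate both sides on an arbitrary $A\subseteq X\sqcup Y$, expand using the definitions of $\starq$ and $f^{(q_1,q_2)}_{\bullet,\lambda}$, and observe that the mixed terms $q_1^{|A\cap Y|}q_2^{|A\cap X|}$ cancel, leaving $\lambda\,\frac{q_1^{|A|}-q_2^{|A|}}{q_1-q_2}$. No differences worth noting.
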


\begin{proof}
Let $A\subseteq X\sqcup Y$.
\begin{align*}
ff^{(q_1,q_2)}_{X,\lambda}\starq f^{(q_1,q_2)}_{Y,\lambda}(A)&=
\lambda q_1^{|A\cap Y|}\dfrac{q_1^{|A\cap X|}-q_2^{|A\cap X|}}{q_1-q_2}+
\lambda q_2^{|A\cap X|}\dfrac{q_1^{|A\cap Y|}-q_2^{|A\cap Y|}}{q_1-q_2}\\
&=\lambda\dfrac{q_1^{|A\cap X|+|A\cap Y|}-q_2^{|A\cap X|+|A\cap Y|}}{q_1-q_2}\\
&=\lambda\dfrac{q_1^{|A|}-q_2^{|A|}}{q_1-q_2}\\
&=f^{(q_1,q_2)}_{X\sqcup Y,\lambda}(A).\qedhere
\end{align*}
\end{proof}

\begin{remark}
As direct consequences:
\begin{enumerate}
\item If $X$ and $Y$ are disjoint finite sets, and $\lambda\in\Z$, $f^{(q_1,q_2)}_{X,\lambda}$ and $f^{(q_1,q_2)}_{Y,\lambda}$ $\starq$-commute.
\item $f^{(q_1,q_2)}_{X,\lambda}$ is $(q_1,q_2)$-indecomposable if, and only if, $|X|=1$.
\end{enumerate}\end{remark}

\begin{prop}
We assume that $q_1\neq q_2$. Let $X,Y$ be two nonempty disjoint finite sets, $f\in\rmbool(X)$ and $g\in\rmbool(Y)$, such that $f\starq g=g\starq f$. Then, there exists $\lambda\in\Z$ such that
$f=f^{(q_1,q_2)}_{X,\lambda}$ and $g=f^{(q_1,q_2)}_{Y,\lambda}$. 
\end{prop}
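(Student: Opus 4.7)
The plan is to unpack the commutativity condition pointwise on subsets of $X \sqcup Y$, derive a scalar identity between values of $f$ and $g$, and then extract everything by specializing to singletons.

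First, for an arbitrary $A \subseteq X \sqcup Y$, set $B = A \cap X$ and $C = A \cap Y$, and write $a = |B|$, $b = |C|$. The definition of $\starq$ in Theorem \ref{theo2.2} gives
\begin{equation*}
f \starq g(A) = q_1^{b} f(B) + q_2^{a} g(C), \qquad g \starq f(A) = q_1^{a} g(C) + q_2^{b} f(B),
\end{equation*}
so the identity $f \starq g = g \starq f$ is equivalent to the scalar relation
\begin{equation*}
(q_1^{b} - q_2^{b})\, f(B) = (q_1^{a} - q_2^{a})\, g(C)
\end{equation*}
holding for every $B \subseteq X$ and $C \subseteq Y$ (the cases $B = \emptyset$ or $C = \emptyset$ are vacuous since both sides vanish).

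Since $X$ and $Y$ are nonempty, I pick $x \in X$ and $y \in Y$ and apply the above with $B = \{x\}$, $C = \{y\}$, i.e.\ $a = b = 1$: this yields $(q_1 - q_2) f(\{x\}) = (q_1 - q_2) g(\{y\})$, and since $q_1 \neq q_2$ one can divide to conclude that $f(\{x\}) = g(\{y\})$ for every $x \in X$ and every $y \in Y$. Call this common value $\lambda \in \Z$.

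Finally, fix $y \in Y$ and take $C = \{y\}$ (so $b = 1$). For an arbitrary $B \subseteq X$ of cardinality $a$, the identity becomes $(q_1 - q_2) f(B) = (q_1^{a} - q_2^{a})\lambda$, whence
\begin{equation*}
f(B) = \lambda\, \frac{q_1^{|B|} - q_2^{|B|}}{q_1 - q_2} = f^{(q_1,q_2)}_{X, \lambda}(B),
\end{equation*}
and the equality at $B = \emptyset$ is automatic from $f(\emptyset) = 0$. Exchanging the roles of $X$ and $Y$ (fix $B = \{x\}$ and let $C$ vary) gives the same conclusion $g = f^{(q_1,q_2)}_{Y, \lambda}$. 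There is essentially no hard step; the only subtlety to notice is that one must test against singletons on either side in order to use the nonzero factor $q_1 - q_2$, and we never need to invert the higher-degree differences $q_1^a - q_2^a$ which may vanish in $\Z$.
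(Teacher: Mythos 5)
Your proposal is correct and takes essentially the same approach as the paper: both unpack $f\starq g=g\starq f$ into the relation $(q_1^{|C|}-q_2^{|C|})\,f(B)=(q_1^{|B|}-q_2^{|B|})\,g(C)$ and then test against a singleton on the opposite side, using $q_1\neq q_2$ to cancel the factor $q_1-q_2$. The only cosmetic difference is that you identify the common value $\lambda$ up front via the singleton--singleton case, whereas the paper first produces $\lambda=g(\{y\})$ and $\mu=f(\{x\})$ separately and checks $\lambda=\mu$ at the end.
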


\begin{proof}
Let us choose $y\in Y$. For any $A\subseteq X$,
\begin{align*}
f\starq g(A\sqcup\{y\})&=q_1 f(A)+q_2^{|A|}g(\{y\})\\
=g\starq f(A\sqcup\{y\})&=q_2 f(A)+q_1^{|A|}g(\{y\}).
\end{align*}
As $q_1\neq q_2$, we obtain that for any $A\subseteq X$,
\[f(A)=g(\{y\})\dfrac{q_1^{|A|}-q_2^{|A|}}{q_1-q_2},\]
so $f=f^{(q_1,q_2)}_{X,\lambda}$, with $\lambda=g(\{y\})$. Similarly, choosing $x\in X$ and putting $\mu=f(\{x\})$,
$g=f^{(q_1,q_2)}_{Y,\mu}$. Moreover, for $A=\{x\}$, we obtain
\[f(\{x\})=\mu=\lambda\dfrac{q_1-q_2}{q_1-q_2}=\lambda,\]
so $\lambda=\mu$. 
\end{proof}

\begin{cor}
If $q_1\neq q_2$, as a twisted algebra, $(\bfbool,\starq)$ is generated by $(q_1,q_2)$-indecomposable boolean maps, with the relations 
\[f^{(q_1,q_2)}_{X,\lambda}\starq f^{(q_1,q_2)}_{Y,\lambda}=f^{(q_1,q_2)}_{Y,\lambda}\starq f^{(q_1,q_2)}_{X,\lambda},\]
where $\lambda\in\Z$ and $X,Y$ are two distinct singletons. 
\end{cor}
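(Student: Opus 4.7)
The plan is to combine four earlier results from this subsection. First, Lemma~\ref{lemma2.7} shows that every boolean function can be written as a $\starq$-product of $(q_1,q_2)$-indecomposable boolean functions; this establishes that the indecomposables generate $(\bfbool,\starq)$ as a twisted algebra. What remains is to describe the relations among them.

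Second, I would invoke the Proposition preceding this corollary (the one stating that one can go from any decomposition into $(q_1,q_2)$-indecomposable components to any other one by successive permutations of consecutive commuting factors). This says that two words in indecomposables represent the same boolean function if and only if one can be obtained from the other by a sequence of transpositions of consecutive commuting factors. Hence the defining ideal of relations in the presentation by indecomposable generators is generated exactly by the commutation relations that actually hold between pairs of $(q_1,q_2)$-indecomposables in $(\bfbool,\starq)$.

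Third, the Proposition immediately before this corollary identifies those commuting pairs: if $f\in\rmbool(X)$ and $g\in\rmbool(Y)$ are nonempty and satisfy $f\starq g=g\starq f$, then $f=f^{(q_1,q_2)}_{X,\lambda}$ and $g=f^{(q_1,q_2)}_{Y,\lambda}$ for some common $\lambda\in\Z$. Finally, by the preceding Remark, $f^{(q_1,q_2)}_{X,\lambda}$ is $(q_1,q_2)$-indecomposable if and only if $|X|=1$. Intersecting these two observations forces the commuting pairs of indecomposables to be of the form $\bigl(f^{(q_1,q_2)}_{\{x\},\lambda},f^{(q_1,q_2)}_{\{y\},\lambda}\bigr)$ for distinct singletons $\{x\},\{y\}$ and a common $\lambda\in\Z$. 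That these relations do hold is supplied by the Lemma showing $f^{(q_1,q_2)}_{X,\lambda}\starq f^{(q_1,q_2)}_{Y,\lambda}=f^{(q_1,q_2)}_{X\sqcup Y,\lambda}$ (which is symmetric in $X$ and $Y$). This yields precisely the presentation claimed by the corollary.

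I expect no substantive obstacle here: the heavy lifting has already been done. The only point requiring a little care is the logical structure of the argument, namely making sure to distinguish generation (from Lemma~\ref{lemma2.7}) from the description of relations (using the ``normal form up to commuting transpositions'' Proposition, combined with the characterization of commuting pairs and the singleton criterion for indecomposability of the $f^{(q_1,q_2)}_{X,\lambda}$). Once these pieces are assembled, the corollary follows immediately.
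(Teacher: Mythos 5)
Your proposal is correct and follows exactly the route the paper intends: the corollary is stated without proof precisely because it is the immediate combination of Lemma~\ref{lemma2.7} (generation), Proposition~\ref{prop2.8} together with the ``consecutive commuting factors'' proposition (relations are generated by commutations of indecomposables), the classification of commuting pairs as $f^{(q_1,q_2)}_{X,\lambda}$, $f^{(q_1,q_2)}_{Y,\lambda}$, and the remark that these are indecomposable only when $|X|=1$. Your assembly of these pieces, including the verification via the lemma $f^{(q_1,q_2)}_{X,\lambda}\starq f^{(q_1,q_2)}_{Y,\lambda}=f^{(q_1,q_2)}_{X\sqcup Y,\lambda}$ that the relations do hold, is the same argument.
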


\subsection{The commutative case $q_1=q_2=1$}

When $q_1=q_2$, by commutativity, we can permute the order in the decomposition into $(q,q)$-indecomposable components in any way. 
As the algebras $(\calH_{\bfbool},\star_q)$ are all isomorphic trough the isomorphisms $\theta_q$, we restrict ourselves to the case $q_1=q_2=1$. We shall simply write indecomposable instead of $(1,1)$-indecomposable.
By Lemma \ref{lemma2.7} and Proposition \ref{prop2.8}:

\begin{prop}
Let $X$ be a nonempty set and let $f\in\rmbool(X)$. There exists a unique $\sim\in\calE(X)$ such that:
\begin{itemize}
\item $\displaystyle f=\prod_{Y\in X/{\sim}}^{\star_1} f_{\mid Y}$.
\item For any $Y\in X/{\sim}$, $f_{\mid Y}$ is indecomposable.
\end{itemize}
This equivalence on $X$ is denoted by $\sim_f^i$. The number of indecomposable components of $f$ is denoted by $\cl(\sim_f^i)=\ic(f)$. The elements of $X/{\sim}_f^i$ are called the indecomposable components of $f$. 
\end{prop}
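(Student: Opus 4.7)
The proposition is essentially a reformulation of Lemma \ref{lemma2.7} and Proposition \ref{prop2.8} in the commutative setting, where we replace the language of compositions of $X$ by the language of unordered partitions (equivalently, equivalence relations). The plan is therefore to pass from ordered to unordered decompositions using the commutativity of $\star_1$, and then invoke the two quoted results for existence and uniqueness respectively.

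For existence, I would apply Lemma \ref{lemma2.7} with $(q_1,q_2)=(1,1)$: this yields a composition $(X_1,\ldots,X_k)$ of $X$ such that each $f_{\mid X_i}$ is indecomposable and
\[f=f_{\mid X_1}\star_1\cdots\star_1 f_{\mid X_k}.\]
Let $\sim$ be the equivalence on $X$ whose classes are exactly $\{X_1,\ldots,X_k\}$; since $\star_1$ is commutative by Theorem \ref{theo2.2}, the iterated product $\prod^{\star_1}_{Y\in X/\sim} f_{\mid Y}$ is unambiguously defined and equals $f_{\mid X_1}\star_1\cdots\star_1 f_{\mid X_k}=f$, while the indecomposability condition is precisely what Lemma \ref{lemma2.7} provides.

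For uniqueness, suppose $\sim$ and $\sim'$ are two equivalences satisfying the stated conditions. Choose arbitrary orderings $X/\sim=\{X_1,\ldots,X_k\}$ and $X/\sim'=\{Y_1,\ldots,Y_l\}$ of their classes. Using commutativity of $\star_1$, we obtain two decompositions
\[f=f_{\mid X_1}\star_1\cdots\star_1 f_{\mid X_k}=f_{\mid Y_1}\star_1\cdots\star_1 f_{\mid Y_l}\]
of $f$ into $(1,1)$-indecomposable components. Proposition \ref{prop2.8} then yields $k=l$ and a permutation $\sigma\in\sym_k$ with $Y_i=X_{\sigma(i)}$ for every $i$. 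Hence the partitions $\{X_1,\ldots,X_k\}$ and $\{Y_1,\ldots,Y_l\}$ coincide as unordered families, so $\sim=\sim'$.

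No step is really an obstacle here: the only subtle point is the well-definedness of an iterated $\star_1$-product indexed by an unordered set, which is precisely where the commutativity of $\star_1=\star_{1,1}$ (established in Theorem \ref{theo2.2}) is used. The notation $\sim_f^i$ and $\ic(f):=\cl(\sim_f^i)$ is then justified, and the indecomposable components of $f$ are by definition the elements of $X/\sim_f^i$.
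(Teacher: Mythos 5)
Your proof is correct and follows exactly the paper's route: the paper states this proposition as an immediate consequence of Lemma \ref{lemma2.7} (existence of a decomposition into indecomposable components) and Proposition \ref{prop2.8} (uniqueness up to permutation), with commutativity of $\star_1$ handling the passage from ordered compositions to the unordered partition defining $\sim_f^i$. You have simply written out the details the paper leaves implicit.
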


\begin{remark}\begin{enumerate}
\item Obviously, if $f\in\rmbool(X)$ and $g\in\rmbool(Y)$, where $X$ and $Y$ are disjoint finite sets, 
then $\ic(f\star_1 g)=\ic(f)+\ic(g)$, and $\sim_{f\star_1 g}^i=\sim_f^i\sqcup\sim_g^i$. 
\item If $f\in\rmbool(X)$, with $X$ nonempty, then $\ic(f)\geq 1$, and $\ic(1)=0$ by convention. 
\end{enumerate}\end{remark}

\begin{defi}\label{defi2.16}
Let $X$ be a finite set and $f\in\rmbool(X)$. The following conditions are equivalent:
\begin{enumerate}
\item $\ic(f)=|X|$.
\item $\sim_f^i$ is the equality $=_X$ of $X$.
\item The indecomposable components of $f$ are the singletons of $X$.
\item For any $A\subseteq X$, $\displaystyle f(A)=\sum_{x\in A} f(\{x\})$.
\item For any $A,B\subseteq X$, $f(A\cup B)=f(A)+f(B)-f(A\cap B)$.
\end{enumerate}
Such a boolean function is called modular.
\end{defi}

\begin{proof}
Left to the reader.
\end{proof}

The two following propositions give sufficient criteria to determine indecomposable boolean functions:

\begin{prop}\label{prop2.17}
Let $X$ be a finite set and $f\in\rmbool(X)$. We assume that there exists $x\in X$ such that
\begin{align*}
&\forall y\in X\setminus\{x\},&f(\{x,y\})&\neq f(\{x\})+f(\{y\}).
\end{align*}
Then $f$ is indecomposable.
\end{prop}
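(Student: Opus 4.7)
The plan is to argue by contrapositive (or equivalently, by contradiction). Suppose $f$ is decomposable, so there exists a nontrivial partition $X = Y \sqcup Z$ with both $Y$ and $Z$ nonempty, and boolean functions $g \in \rmbool(Y)$, $h \in \rmbool(Z)$ such that $f = g \star_1 h$. By Lemma \ref{lemma2.4}, we must have $g = f_{\mid Y}$ and $h = f_{\mid Z}$.

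The distinguished element $x$ belongs to exactly one of $Y$ or $Z$; without loss of generality assume $x \in Y$. Since $Z$ is nonempty, choose any $y \in Z$. Then $y \in X \setminus \{x\}$, and applying the definition of $\star_1$ (i.e. $\star_{q_1,q_2}$ at $q_1=q_2=1$) to the subset $\{x,y\} \subseteq X$ gives
\[
f(\{x,y\}) = f_{\mid Y}(\{x,y\} \cap Y) + f_{\mid Z}(\{x,y\} \cap Z) = f(\{x\}) + f(\{y\}),
\]
contradicting the hypothesis that $f(\{x,y\}) \neq f(\{x\}) + f(\{y\})$ for every $y \in X \setminus \{x\}$. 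Hence no such decomposition exists, and $f$ is indecomposable.

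There is no substantial obstacle here: the argument is essentially a one-line computation once one invokes Lemma \ref{lemma2.4} to identify the factors $g$ and $h$ as restrictions of $f$, together with the explicit formula for $\star_1$ on a two-element subset split between the two blocks.
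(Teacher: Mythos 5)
Your proof is correct and follows essentially the same route as the paper: assume a nontrivial decomposition $f=f_{\mid X_1}\star_1 f_{\mid X_2}$ (using Lemma \ref{lemma2.4} to identify the factors as restrictions), place $x$ in one block, pick $y$ in the other, and evaluate on $\{x,y\}$ to contradict the hypothesis. Nothing is missing.
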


\begin{proof}
Let us assume that $f$ is decomposable. There exists $\{X_1,X_2\}\in\parti(X)$ such that $f=f_{\mid X_1}\star_1 f_{\mid X_2}$. Up to a permutation of $X_1$ and $X_2$, let us assume that $x\in X_1$, 
and let us choose $y\in X_2$. Then
\[f(\{x,y\})=f_{\mid X_1}(\{x\})+f_{\mid X_2}(\{y\})=f(\{x\})+f(\{y\}).\]
This is a contradiction with the hypothesis on $x$. So $f$ is indecomposable. 
\end{proof}
 
The converse is false, as shown by the following example.

\begin{example}
Let $f\in\rmbool([3])$ such that
\begin{align*}
&\forall X\subseteq [3],&f(X)&=\begin{cases}
1\mbox{ if }X=[3],\\
0\mbox{ otherwise}.
\end{cases}\end{align*}
The incoming Proposition \ref{prop2.18} shows that $f$ is indecomposable. However, for any $x,y\in [3]$, with $x\neq y$, $f(\{x,y\})=f(\{x\})+f(\{y\})=0$.
\end{example}

\begin{prop}\label{prop2.18}
\begin{enumerate}
\item Let $X=\{x,y\}$ be a set of cardinality 2 and $f\in\rmbool(X)$. Then
\begin{align*}
f\mbox{ is indecomposable}&\Longleftrightarrow f(\{x,y\})\neq f(\{x\})+f(\{y\}).
\end{align*}
\item Let $X=\{x,y,z\}$ be a set of cardinality 3 and $f\in\rmbool(X)$. Then 
\begin{align*}
f\mbox{ is indecomposable}&\Longleftrightarrow\begin{cases}
\left(\begin{array}{c}
f(\{x,y\})\neq f(\{x\})+f(\{y\})\\
\mbox{ or }f(\{x,z\})\neq f(\{x\})+f(\{z\})\\
\mbox{ or }f(\{x,y,z\})\neq f(\{x\})+f(\{y,z\})
\end{array}\right)\\
\mbox{and }\left(\begin{array}{c}
f(\{x,y\})\neq f(\{x\})+f(\{y\})\\
\mbox{ or }f(\{y,z\})\neq f(\{y\})+f(\{z\})\\
\mbox{ or }f(\{x,y,z\})\neq f(\{y\})+f(\{x,z\})
\end{array}\right)\\
\mbox{and }\left(\begin{array}{c}
f(\{x,z\})\neq f(\{x\})+f(\{z\})\\
\mbox{ or }f(\{y,z\})\neq f(\{y\})+f(\{z\})\\
\mbox{ or }f(\{x,y,z\})\neq f(\{z\})+f(\{x,y\})
\end{array}\right).\\
\end{cases}\end{align*}\end{enumerate}\end{prop}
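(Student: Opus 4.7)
Part 1 is immediate from Proposition \ref{prop2.6}: specializing $(q_1,q_2)=(1,1)$ collapses the two distinct disequalities stated there into the single condition $f(\{x,y\})\neq f(\{x\})+f(\{y\})$, giving the claimed equivalence.

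For part 2, the strategy is to reduce indecomposability of $f$ to the nonexistence of any decomposition of the form $f=f_{\mid A}\star_1 f_{\mid B}$ where $\{A,B\}$ is a partition of $\{x,y,z\}$ with $|A|=2$ and $|B|=1$. There are only three such partitions, corresponding to isolating respectively $\{x\}$, $\{y\}$, or $\{z\}$ as a singleton. I would first argue that this reduction is valid: by Proposition \ref{prop2.8}, $f$ is decomposable if and only if $\ic(f)\geq 2$, and in any decomposition into indecomposable components one can use associativity and commutativity of $\star_1$ to group the factors so that a singleton is separated from the rest; hence $f$ is indecomposable iff none of the three 2-1 splits yields a decomposition. (In particular, if $\ic(f)=3$, i.e. $f$ is modular, all three 2-1 splits trivially work.)

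For each of the three 2-1 partitions I would then unpack the equality $f=f_{\mid A}\star_1 f_{\mid B}$ using the definition $f_{\mid A}\star_1 f_{\mid B}(C)=f(C\cap A)+f(C\cap B)$. By Lemma \ref{lemma2.4}, the restrictions to $A$ and to $B$ are automatic, so the identity reduces to checking $f(C)=f(C\cap A)+f(C\cap B)$ only for those $C$ meeting both parts, namely $C=B\cup\{a\}$ for $a\in A$ and $C=\{x,y,z\}$. Taking $B=\{x\}$ gives the three equalities $f(\{x,y\})=f(\{x\})+f(\{y\})$, $f(\{x,z\})=f(\{x\})+f(\{z\})$, $f(\{x,y,z\})=f(\{x\})+f(\{y,z\})$, whose negation (``at least one fails'') is exactly the first disjunctive clause in the statement. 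Taking $B=\{y\}$ and $B=\{z\}$ produces similarly the second and third clauses. The conjunction of the three negations is therefore equivalent to excluding all three 2-1 decompositions, i.e. to the indecomposability of $f$.

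No substantive obstacle is expected: the argument is a finite case analysis driven entirely by Lemma \ref{lemma2.4} and the explicit formula for $\star_1$. The only point deserving care is ensuring that the modular (completely decomposable) case is not missed, but it is automatically covered since a modular $f$ satisfies every one of the nine equalities and hence is witnessed as decomposable by each of the three 2-1 splits.
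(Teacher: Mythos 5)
Your proposal is correct and follows essentially the same route as the paper: part 1 via Proposition \ref{prop2.6} with $q_1=q_2=1$, and part 2 by reducing decomposability to the three 2--1 splits and checking the equality $f=f_{\mid A}\star_1 f_{\mid B}$ only on the subsets meeting both blocks, which yields exactly the three parenthesized clauses. The detour through Proposition \ref{prop2.8} and $\ic(f)$ is unnecessary (for $|X|=3$ any two-block partition is automatically of type 2--1, which is all the paper uses), but it is harmless.
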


\begin{proof}
1. Comes from Proposition \ref{prop2.6}, with $q_1=q_2=1$.\\

2. $\Longrightarrow$. Let $X_1,X_2$ be two nonempty subsets such that $X=X_1\sqcup X_2$, with $|X_1|=1$ and $|X_2|=2$.
Then $f\neq f_{\mid X_1}\star_1 f_{\mid X_2}$. As they coincide on subsets of $X_1$ and $X_2$, there exists $Y$ such that $X_1\subseteq Y\subseteq X$, with $f(Y)\neq f(X_1)+f(X_2\cap Y)$. 
If $X_1=\{x\}$ and $X_2=\{y,z\}$, then $Y=\{x,y\}$, $\{x,z\}$ or $\{x,y,z\}$, and we obtain the first parenthesis of conditions; if $X_2=\{y\}$ and $X_2=\{z\}$, we obtain the two other parentheses of conditions. 

$\Longleftarrow$. Let us assume that there exists $\{X_1,X_2\}\in\parti(X)$ such that $f\neq f_{\mid X_1}\star_1 f_{\mid X_2}$. 
 Necessarily one of these two sets, say for example $X_1$, is a singleton. The first parenthesis of conditions gives that $X_1\neq\{x\}$, the second one one that $X_1\neq\{y\}$ and the last one that $X_1\neq\{z\}$.
So $f$ is indecomposable. 
\end{proof}

\subsection{The restriction coproduct}

\begin{theo}\label{theo2.19}
We define a coproduct $\Delta$ on $\bfbool$ as follows: if $X,Y$ are disjoint finite sets,
\begin{align*}
&\forall f\in\rmbool(X\sqcup Y),&\Delta_{X,Y}(f)&=f_{\mid X}\otimes f_{\mid Y}.
\end{align*} 
\begin{enumerate}
\item For any $q_1,q_2\in\Z$, $(\bfbool,\star_{q_1,q_2},\Delta)$ is a cocommutative twisted bialgebra. 
\item For any $q,q_1,q_2\in\Z$, $\theta_q$ is a twisted bialgebra isomorphism from $(\bfbool,\star_{q_1,q_2},\Delta)$ to $(\bfbool,\star_{q_1+q,q_2+q},\Delta)$.
\end{enumerate}\end{theo}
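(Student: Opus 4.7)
The plan is to verify the axioms of a twisted bialgebra one by one, with the crucial compatibility between $\star_{q_1,q_2}$ and $\Delta$ reducing immediately to Lemma \ref{lemma2.4}.

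First, I would check that $\Delta$ makes sense as a twisted coproduct. Coassociativity is trivial: for pairwise disjoint $X,Y,Z$ and $f\in\rmbool(X\sqcup Y\sqcup Z)$, both $(\Delta_{X,Y}\otimes\id)\circ\Delta_{X\sqcup Y,Z}(f)$ and $(\id\otimes\Delta_{Y,Z})\circ\Delta_{X,Y\sqcup Z}(f)$ compute to $f_{\mid X}\otimes f_{\mid Y}\otimes f_{\mid Z}$ because restricting $f$ first to $X\sqcup Y$ and then to $X$ yields $f_{\mid X}$ directly. As counit, one sets $\varepsilon_\Delta:\bfbool(\emptyset)\longrightarrow\K$ to be the linear map sending the unique generator $1_{\bfbool}$ to $1$; the counit identities follow from $f_{\mid X}=f$ and from the fact that $\bfbool(\emptyset)=\K\cdot 1_{\bfbool}$. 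Naturality of $\Delta_{X,Y}$ under bijections is immediate from the definition of $\rmbool(\sigma)$. Cocommutativity is just the symmetry $\tau_{\bfbool(Y),\bfbool(X)}\circ\Delta_{Y,X}(f)=f_{\mid X}\otimes f_{\mid Y}=\Delta_{X,Y}(f)$.

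The core of the proof is the bialgebra compatibility. Given two pairs $(X,Y)$ and $(X',Y')$ of disjoint finite sets with $X\sqcup Y=X'\sqcup Y'$, and given $f\in\rmbool(X)$, $g\in\rmbool(Y)$, Lemma \ref{lemma2.4} applied with $A=X'$ and then with $A=Y'$ yields
\begin{align*}
(f\starq g)_{\mid X'}&=f_{\mid X\cap X'}\starq g_{\mid Y\cap X'},\\
(f\starq g)_{\mid Y'}&=f_{\mid X\cap Y'}\starq g_{\mid Y\cap Y'}.
\end{align*}
Tensoring these equalities precisely reproduces the right-hand side of the bialgebra diagram in the definition of a twisted bialgebra, after inserting the flip $\tau_{\bfbool(X\cap Y'),\bfbool(Y\cap X')}$ to sort the factors in the order $X\cap X', Y\cap X', X\cap Y', Y\cap Y'$. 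Compatibility of $\Delta$ with the unit $1_{\bfbool}\in\rmbool(\emptyset)$ and of $\varepsilon_\Delta$ with the product on $\bfbool(\emptyset)$ is trivial because $\rmbool(\emptyset)=\{1_{\bfbool}\}$.

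For part 2, since $\theta_q$ is already a twisted algebra isomorphism by Theorem \ref{theo2.2}, it remains to show it is a coalgebra morphism. For $f\in\rmbool(X\sqcup Y)$ and $A\subseteq X$, every $B\subseteq A$ lies in $X$, so
\[\theta_q(f)_{\mid X}(A)=\sum_{B\subseteq A}q^{|A|-|B|}f(B)=\sum_{B\subseteq A}q^{|A|-|B|}f_{\mid X}(B)=\theta_q(f_{\mid X})(A),\]
and symmetrically $\theta_q(f)_{\mid Y}=\theta_q(f_{\mid Y})$. Hence $\Delta_{X,Y}\circ\theta_q=(\theta_q\otimes\theta_q)\circ\Delta_{X,Y}$. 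The compatibility with $\varepsilon_\Delta$ is automatic on $\bfbool(\emptyset)=\K\cdot 1_{\bfbool}$, since $\theta_q$ fixes $1_{\bfbool}$.

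I do not expect a serious obstacle: the only nontrivial point is the bialgebra compatibility square, and Lemma \ref{lemma2.4} was designed exactly to cover it. The main care is bookkeeping to match the intersections $X\cap X'$, $X\cap Y'$, $Y\cap X'$, $Y\cap Y'$ with the two applications of Lemma \ref{lemma2.4}, and inserting the flip in the correct place.
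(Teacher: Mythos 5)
Your proposal is correct and follows essentially the same route as the paper: coassociativity, counit and cocommutativity from the elementary properties of restriction, the compatibility square via Lemma \ref{lemma2.4} applied with $A=X'$ and $A=Y'$, and part 2 via the identity $\theta_q(f)_{\mid Y}=\theta_q(f_{\mid Y})$ combined with Theorem \ref{theo2.2}. No gaps.
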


\begin{proof}
1. Let us prove the coassociativity of $\Delta$. Let us consider three pairwise disjoint sets $X,Y,Z$ and $f\in\rmbool(X\sqcup Y\sqcup Z)$.
\begin{align*}
\left(\Delta_{X,Y}\otimes\id_{\bfbool(Z)}\right)\circ\Delta_{X\sqcup Y,Z}(f)&=(f_{\mid X\sqcup Y})_{\mid X}\otimes (f_{\mid X\sqcup Y})_{\mid Y}\otimes f_{\mid Z}\\
&=f_{\mid X}\otimes f_{\mid Y}\otimes f_{\mid Z}\\
&=f_{\mid X}\otimes (f_{\mid Y\sqcup Z})_{\mid Y}\otimes (f_{\mid Y\sqcup Z})_{\mid Z}\\
&=\left(\id_{\bfbool(X)}\otimes\Delta_{Y,Z}\right)\circ\Delta_{X,Y\sqcup Z}(f).
\end{align*}
so $\Delta$ is coassociative. For any $f\in\rmbool(X)$,
\begin{align*}
\Delta_{\emptyset,X}(f)&=1\otimes f,&\Delta_{X,\emptyset}(f)&=f\otimes 1.
\end{align*}
So $\Delta$ is counitary, and the counit $\varepsilon_\Delta$ sends $1\in\rmbool(\emptyset)$ to $1$. Moreover, if $X,Y$ are two disjoint sets and $f\in\rmbool(X\sqcup Y)$,
\begin{align*}
\tau_{\bfbool(X),\bfbool(Y)}\circ\Delta_{X,Y}(f)&=f_{\mid Y}\otimes f_{\mid X}=\Delta_{Y,X}(f),
\end{align*}
so $\Delta$ is cocommutative.\\

Let $X,Y,X',Y'$ be four finite sets such that $X\sqcup Y=X'\sqcup Y'$. For any $f\in\rmbool(X)$ and $g\in\rmbool(Y)$. By Lemma \ref{lemma2.4},
\begin{align*}
\Delta_{X',Y'}(f\star_{q_1,q_2} g)&=(f\star_{q_1,q_2} g)_{\mid X'}\otimes (f\star_{q_1,q_2} g)_{\mid Y'}\\
&=f_{\mid X\cap X'}\star_{q_1,q_2} g_{\mid Y\cap X'}\otimes f_{\mid X\cap Y'}\star_{q_1,q_2} g_{\mid Y\cap Y'}\\
&=\Delta_{X\cap X',X\cap Y'}(f)\star_{q_1,q_2}\Delta_{Y\cap X',Y\cap Y'}(g).
\end{align*}
So $(\bfbool,\star_{q_1,q_2},\Delta)$ is a twisted bialgebra.\\

2. By Theorem \ref{theo2.2}, it remains to show that $\theta_q$ is a twisted coalgebra morphism. 
Let $Y\subseteq X$ be two finite sets and $f\in\rmbool(X)
$. For any $A\subseteq Y$,
\begin{align*}
\theta_q(f)_{\mid Y}(A)&=\theta_q(f)(A)=\sum_{B\subseteq A}q^{|A|-|B|}f(B)=\sum_{B\subseteq A}q^{|A|-|B|}f_{\mid Y}(A)=\theta_q(f_{\mid Y})(A),
\end{align*}
so $\theta_q(f)_{\mid Y}=\theta_q(f_{\mid Y})$. This directly implies that $\theta_q$ is a coalgebra morphism.
\end{proof}

\subsection{Application of the bosonic Fock functor}

Consequently, the application of the bosonic Fock functor $\calF$ of \cite{Aguiar2010} gives a two-parameters family of bialgebras sharing the same coproduct. Let us describe then. The space $\calH_\bfbool$ has for basis the set of isoclasses of boolean functions.
If $f\in\rmbool(X)$, its isoclass is denoted by $\overline{f}$. The products $\starq$, for $q_1,q_2\in\Z$, the coproduct and the counit are given by
\begin{align*}
\forall f_1\in\rmbool(X),\:\forall f_2\in\rmbool(Y),&&\overline{f_1}\starq\overline{f_2}&=\overline{f_1\starq f_2},\\
\forall f\in\rmbool(X),&&\Delta(\overline{f})&=\sum_{X'\sqcup X''=X}\overline{f_{\mid X'}}\otimes\overline{f_{\mid X''}},\\
&&\varepsilon_\Delta(\overline{f})&=\delta_{f,1}.
\end{align*}
The unit is the isoclass of $1$, also denoted by $1$. Applying the functor $\calF$ to $\theta_q$, we obtain bialgebra isomorphisms, also denoted by $\theta_q$:
\begin{align*}
\theta_q&:\left\{\begin{array}{rcl}
(\calH_\bfbool,\starq,\Delta)&\longrightarrow&(\calH_\bfbool,\star_{q_1+q,q_2+q},\Delta)\\
\overline{f}&\longmapsto&\overline{\theta_q(f)}.
\end{array}\right.\end{align*}
The inverse of $\theta_q$ is $\theta_{-q}$.

\section{Coproduct associated to a family of equivalences}

\subsection{Contraction and restriction}

\begin{defi}\label{defi3.1}
Let $X$ be a finite set, $f\in\rmbool(X)$ and $\sim\in\calE(X)$. We put
\begin{align*}
f/{\sim}&:\left\{\begin{array}{rcl}
\calP(X/{\sim})&\longrightarrow&\Z\\
A&\longmapsto&f\left(\varpi_\sim^{-1}(A)\right),
\end{array}\right.&
f\mid\sim&:\left\{\begin{array}{rcl}
\calP(X)&\longrightarrow&\Z\\
A&\longmapsto&\displaystyle\sum_{Y\in X/{\sim}} f(A\cap Y).
\end{array}\right.\end{align*}
Then $f/{\sim}\in\rmbool(X/{\sim})$ and $f\mid\sim\in\rmbool(X)$.
\end{defi}

\begin{remark} 
For any $f\in\rmbool(X)$ and $\sim\in\calE(X)$, $\displaystyle f\mid\sim=\prod^{\star_1}_{Y\in X/{\sim}} f_{\mid Y}$.
\end{remark}

\begin{lemma}\label{lemma3.2}
Let $X$ be a finite set, $\sim\subseteq\sim'\in\calE(X)$ and $f\in\rmbool(X)$. 
\begin{align*}
(f/{\sim})/\overline{\sim'}&=f/{\sim}',&(f\mid\sim')\mid\sim&=f\mid\sim,&(f/{\sim})\mid\overline{\sim'}&=(f\mid\sim')/{\sim}.
\end{align*}\end{lemma}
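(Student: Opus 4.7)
The proof plan is to establish each of the three identities by evaluating both sides on an arbitrary subset of the appropriate domain, then identifying them directly from the definitions of restriction ($f \mid \sim$) and contraction ($f/{\sim}$). The crucial structural fact I will use throughout is the factorization $\varpi_{\sim'} = \varpi_{\overline{\sim'}} \circ \varpi_\sim$ recalled in the notations, together with the observation that because $\sim \subseteq \sim'$, every class of $\sim'$ in $X$ is a disjoint union of classes of $\sim$, and the classes of $\overline{\sim'}$ in $X/{\sim}$ are exactly the images $\varpi_\sim(Z)$ for $Z \in X/{\sim'}$ (with $\varpi_\sim^{-1}(\varpi_\sim(Z)) = Z$).

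For the first identity, given $A \subseteq X/{\sim'}$, I compute
\[
(f/{\sim})/\overline{\sim'}(A) = (f/{\sim})\bigl(\varpi_{\overline{\sim'}}^{-1}(A)\bigr) = f\bigl(\varpi_\sim^{-1}\varpi_{\overline{\sim'}}^{-1}(A)\bigr) = f\bigl(\varpi_{\sim'}^{-1}(A)\bigr) = f/{\sim'}(A),
\]
using the factorization of $\varpi_{\sim'}$. For the second identity, given $A \subseteq X$, I expand
\[
(f \mid \sim') \mid \sim \,(A) = \sum_{Y \in X/{\sim}} \sum_{Z \in X/{\sim'}} f(A \cap Y \cap Z)
\]
and observe that for each $Y \in X/{\sim}$, the inclusion $\sim \subseteq \sim'$ forces $Y$ to be contained in a unique $Z \in X/{\sim'}$, so only that term contributes and $Y \cap Z = Y$; the sum collapses to $\sum_{Y \in X/{\sim}} f(A \cap Y) = f \mid \sim(A)$.

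For the third identity, given $A \subseteq X/{\sim}$, I use the bijection between $X/{\sim'}$ and $(X/{\sim})/\overline{\sim'}$ sending $Z$ to $\varpi_\sim(Z)$, so that
\[
(f/{\sim}) \mid \overline{\sim'}\,(A) = \sum_{Z \in X/{\sim'}} (f/{\sim})\bigl(A \cap \varpi_\sim(Z)\bigr) = \sum_{Z \in X/{\sim'}} f\bigl(\varpi_\sim^{-1}(A) \cap Z\bigr),
\]
because $\varpi_\sim^{-1}(A \cap \varpi_\sim(Z)) = \varpi_\sim^{-1}(A) \cap Z$. The right-hand sum is exactly $(f \mid \sim')(\varpi_\sim^{-1}(A)) = ((f \mid \sim')/{\sim})(A)$, finishing the proof.

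The only place where some care is needed is the third identity, since one has to keep track of how subsets on the quotient $X/{\sim}$ pull back under $\varpi_\sim$ and intersect with $\sim'$-classes; the first two identities are immediate from the definitions together with the factorization of $\varpi_{\sim'}$. No further ingredients beyond the notational conventions already fixed in the paper are required.
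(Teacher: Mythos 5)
Your proof is correct and follows essentially the same route as the paper: evaluate each side on an arbitrary subset, use the factorization $\varpi_{\sim'}=\varpi_{\overline{\sim'}}\circ\varpi_\sim$ for the first and third identities (with the identification of classes of $\overline{\sim'}$ with $\sim'$-classes and $\varpi_\sim^{-1}(A\cap\varpi_\sim(Z))=\varpi_\sim^{-1}(A)\cap Z$), and collapse the double sum for the second. In fact your handling of the second identity is the accurate one: each $\sim$-class lies in a unique $\sim'$-class, so the sum collapses to $\sum_{Y\in X/{\sim}}f(A\cap Y)=f\mid\sim(A)$, which is what the lemma asserts, whereas the paper's displayed computation ends with $f\mid\sim'(A)$ and states the inclusion of classes backwards — an evident slip that your argument silently corrects.
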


\begin{proof}
Let $A\subseteq X/{\sim}'=(X/{\sim})/\overline{\sim'}$. 
\begin{align*}
(f/{\sim})/\overline{\sim'}(A)&=f/{\sim}\left(\varpi_{\overline{\sim'}}^{-1}(A)\right)\\
&=f\left(\varpi_\sim^{-1}\left(\varpi_{\overline{\sim'}}^{-1}(A)\right)\right)\\
&=f\left((\varpi_{\overline{\sim'}}\circ\varpi_\sim)^{-1}(A)\right)\\
&=f\left(\varpi_{\sim'}^{-1}(A)\right)=f/{\sim}'(A).
\end{align*}

Let $A\subseteq X$. 
\begin{align*}
(f\mid\sim')\mid\sim(A)&=\sum_{Y\in X/{\sim}} f\mid\sim'(A\cap Y)\\
&=\sum_{Y\in X/{\sim}}\sum_{Y'\in X/{\sim}'}f(A\cap Y\cap Y')\\
&=\sum_{Y'\in X/{\sim}'} f(A\cap Y')=f\mid\sim'(A).
\end{align*}
For the third equality, observe that for any $Y'\in X/{\sim}'$, as $\sim\subseteq\sim'$, there exists a unique $Y\in X/{\sim}$ such that $Y'\subseteq Y$.\\

Let $A\subseteq X/{\sim}$.
\begin{align*}
(f/{\sim})\mid\overline{\sim'}(A)&=\sum_{\overline{Y}\in (X/{\sim})/\overline{\sim'}} f/{\sim}(A\cap\overline{Y})\\
&=\sum_{\overline{Y}\in (X/{\sim})/\overline{\sim'}} f\left(\varpi_\sim^{-1}(A\cap\overline{Y})\right)\\
&=\sum_{\overline{Y}\in (X/{\sim})/\overline{\sim'}} f\left(\varpi_\sim^{-1}(A)\cap\varpi_\sim^{-1}(\overline{Y})\right)\\
&=\sum_{Y\in X/{\sim}'}f\left(\varpi_\sim^{-1}(A)\cap Y\right)\\
&=f\mid\sim'\left(\varpi_\sim^{-1}(A)\right)=(f\mid\sim')/{\sim}(A).\qedhere
\end{align*}
\end{proof}

\begin{lemma}
Let $X$ be a finite set, $f\in\rmbool(X)$, $\sim\in\calE(X)$, and $Y\subseteq X/{\sim}$. Then
\[(f/{\sim})_{\mid Y}=\left(f_{\mid\varpi_\sim^{-1}(Y)}\right)/{\sim_Y},\]
where $\sim_Y=\sim\cap\varpi_\sim^{-1}(Y)^2$.
\end{lemma}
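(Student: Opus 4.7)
The plan is a direct unfolding of the two sides on an arbitrary test set $A\subseteq Y$, after first pinning down the identification implicit in the right-hand side. The only real setup is to observe that, because $\sim_Y$ is the restriction of $\sim$ to $\varpi_\sim^{-1}(Y)$, its equivalence classes are exactly those $\sim$-classes of $X$ which lie inside $\varpi_\sim^{-1}(Y)$. Consequently, the canonical surjection $\varpi_{\sim_Y}:\varpi_\sim^{-1}(Y)\longrightarrow\varpi_\sim^{-1}(Y)/{\sim_Y}$ factors through $Y$ in the obvious way: the map sending a $\sim_Y$-class $[x]$ to $\varpi_\sim(x)\in Y$ is a bijection $\varpi_\sim^{-1}(Y)/{\sim_Y}\longrightarrow Y$, and under this identification $\varpi_{\sim_Y}$ is nothing but the restriction of $\varpi_\sim$ to $\varpi_\sim^{-1}(Y)$.

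With this identification in place, the left-hand side is immediate: for $A\subseteq Y$,
\[(f/{\sim})_{\mid Y}(A)=(f/{\sim})(A)=f\!\left(\varpi_\sim^{-1}(A)\right),\]
by the definition of restriction and of $f/{\sim}$ (Definition \ref{defi3.1}). For the right-hand side, applying the definition of $f/{\sim_Y}$ and then of restriction,
\[\left(f_{\mid\varpi_\sim^{-1}(Y)}\right)/{\sim_Y}(A)=f_{\mid\varpi_\sim^{-1}(Y)}\!\left(\varpi_{\sim_Y}^{-1}(A)\right)=f\!\left(\varpi_{\sim_Y}^{-1}(A)\right),\]
where the last equality uses that $\varpi_{\sim_Y}^{-1}(A)\subseteq\varpi_\sim^{-1}(Y)$ by construction, so the restriction of $f$ agrees with $f$ on it. The identification above gives $\varpi_{\sim_Y}^{-1}(A)=\varpi_\sim^{-1}(A)$ as subsets of $X$ (since $A\subseteq Y$ forces $\varpi_\sim^{-1}(A)\subseteq\varpi_\sim^{-1}(Y)$), and the two expressions coincide.

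I do not expect any serious obstacle: the whole content is bookkeeping about the canonical bijection $\varpi_\sim^{-1}(Y)/{\sim_Y}\simeq Y$. The only thing to be careful about is to make this bijection explicit before writing $\varpi_{\sim_Y}^{-1}(A)=\varpi_\sim^{-1}(A)$, since otherwise the two preimages live a priori in different ambient sets. Once this identification is stated, the two computations above close the proof.
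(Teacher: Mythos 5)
Your proposal is correct and follows essentially the same route as the paper: evaluate both sides on an arbitrary subset of $Y$ and use the canonical identification of $\varpi_\sim^{-1}(Y)/{\sim_Y}$ with $Y$ to see that the two preimages agree. The paper's proof is a one-line computation that leaves this identification implicit; your version just makes it explicit, which is harmless.
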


\begin{proof}
Let $Z\subseteq Y$. 
\begin{align*}
(f/{\sim})_{\mid Y}(Z)&=f/{\sim}(Z)=f(\varpi_\sim^{-1}(Z))=f_{\mid\varpi_\sim^{-1}(Y)}(\varpi_{\sim_Y}^{-1}(Z))
=\left(f_{\mid\varpi_\sim^{-1}(Y)}\right)/{\sim_Y}(Z).\qedhere
\end{align*}\end{proof}

\begin{lemma}\label{lemma3.4}
Let $X_1,X_2$ be two disjoint finite sets, and for $i\in [2]$, $\sim_i\in\calE(X_i)$ and $f_i\in\rmbool(X_i)$. Then
\begin{align*}
(f_1\star_1 f_2)/{(\sim_1\sqcup\sim_2)}&=(f_1/{\sim}_1)\star_1 (f_2/{\sim}_2),&(f_1\star_1 f_2)|(\sim_1\sqcup\sim_2)&=(f_1|\sim_1)\star_1 (f_2|\sim_2).
\end{align*}
\end{lemma}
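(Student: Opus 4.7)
The plan is to verify both identities directly by unfolding the definitions of contraction $f/{\sim}$, restriction $f\mid\sim$, and the product $\star_1$. Everything is a matter of correctly identifying the equivalence classes of $\sim_1\sqcup\sim_2$ and how they interact with the intersections $A\cap X_1$, $A\cap X_2$ arising in the definition of $\star_1$.

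First I would record the key structural observation: putting $\sim=\sim_1\sqcup\sim_2$, the quotient set $(X_1\sqcup X_2)/{\sim}$ canonically identifies with $(X_1/{\sim_1})\sqcup (X_2/{\sim_2})$, and under this identification the canonical surjection decomposes as $\varpi_\sim=\varpi_{\sim_1}\sqcup\varpi_{\sim_2}$. Consequently, for $A\subseteq (X_1/{\sim_1})\sqcup (X_2/{\sim_2})$ and $A_i=A\cap (X_i/{\sim_i})$, one has $\varpi_\sim^{-1}(A)\cap X_i=\varpi_{\sim_i}^{-1}(A_i)$. Moreover, the equivalence classes of $\sim$ are precisely the disjoint union of the classes of $\sim_1$ (each contained in $X_1$) and those of $\sim_2$ (each contained in $X_2$).

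For the first identity, I would compute $(f_1\star_1 f_2)/{\sim}(A)$ by definition, apply the observation above to split $\varpi_\sim^{-1}(A)$ into its parts in $X_1$ and $X_2$, and then invoke the formula for $\star_1$ to obtain $f_1(\varpi_{\sim_1}^{-1}(A_1))+f_2(\varpi_{\sim_2}^{-1}(A_2))$, which is exactly $(f_1/{\sim_1})\star_1(f_2/{\sim_2})(A)$. For the second identity, given $A\subseteq X_1\sqcup X_2$, I would split the defining sum
\[
(f_1\star_1 f_2)\mid\sim(A)=\sum_{Y\in (X_1\sqcup X_2)/{\sim}}(f_1\star_1 f_2)(A\cap Y)
\]
into contributions from $Y\in X_1/{\sim_1}$ and $Y\in X_2/{\sim_2}$. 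For $Y\subseteq X_i$, the term $(f_1\star_1 f_2)(A\cap Y)=f_1(A\cap Y\cap X_1)+f_2(A\cap Y\cap X_2)$ reduces to $f_i(A\cap Y)$ since the opposite intersection is empty and $f_j(\emptyset)=0$. Rewriting $A\cap Y$ as $(A\cap X_i)\cap Y$ then identifies the two partial sums as $f_i\mid\sim_i(A\cap X_i)$, giving $(f_1\mid\sim_1)\star_1(f_2\mid\sim_2)(A)$.

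There is no real obstacle here; both computations are purely mechanical. The only mild bookkeeping concern is making sure one is consistent with the identification $(X_1\sqcup X_2)/{(\sim_1\sqcup\sim_2)}\cong (X_1/{\sim_1})\sqcup (X_2/{\sim_2})$, but once that is in place, the product $\star_1$ distributes neatly across the two sides precisely because its definition splits $A$ along $A\cap X_1$ and $A\cap X_2$.
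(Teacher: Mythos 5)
Your proposal is correct and follows essentially the same route as the paper: both hinge on the identification $(X_1\sqcup X_2)/(\sim_1\sqcup\sim_2)=(X_1/{\sim_1})\sqcup(X_2/{\sim_2})$ and the splitting of $\varpi_{\sim}^{-1}(A)$ along $X_1$ and $X_2$, and your computation of the contraction identity is the paper's verbatim. The only cosmetic difference is in the restriction identity, where you unfold the defining sum directly while the paper instead rewrites $f\mid\sim$ as the $\star_1$-product of the restrictions $f_{\mid Y}$ and invokes Lemma \ref{lemma2.4}; the two manipulations are equivalent.
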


\begin{proof}
We put $X=X_1\sqcup X_2$, $\sim=\sim_1\sqcup\sim_2$ and $f=f_1\star_1 f_2$. Note that 
\[X/{\sim}=(X_1/{\sim}_1)\sqcup (X_2/{\sim}_2).\] 
Let $A\subseteq X/{\sim}$.
\begin{align*}
f/{\sim}(A)&=f\left(\varpi_\sim^{-1}(A)\right)\\
&=f\left(\varpi_{\sim_1}^{-1}(A\cap X_1/{\sim}_1)\sqcup (\varpi_{\sim_2}^{-1}(A\cap X_2/{\sim}_2)\right)\\
&=f_1\left(\varpi_{\sim_1}^{-1}(A\cap X_1/{\sim}_1)\right)+f_2\left(\varpi_{\sim_2}^{-1}(A\cap X_2/{\sim}_2)\right)\\
&=f_1/{\sim}_1(A\cap X_1/{\sim}_1)+f_2/{\sim}_2(A\cap X_2/{\sim}_2)\\
&=(f_1/{\sim}_1)\star_1 (f_2/{\sim}_2)(A).
\end{align*}
As $X/{\sim}=(X_1/{\sim}_1)\sqcup (X_2/{\sim}_2)$,
\begin{align*}
f\mid\sim&=\prod^{\star_1}_{Y_1\in X_1/{\sim}_1} f_{\mid Y_1}\star_1\prod^{\star_1}_{Y_2\in X_2/{\sim}_2}f_{\mid Y_1}
=\prod^{\star_1}_{Y_1\in X_1/{\sim}_1} {f_1}_{\mid Y_1}\star_1\prod^{\star_1}_{Y_2\in X_2/{\sim}_2}{f_2}_{\mid Y_1}
=(f_1|\sim_1)\star_1 (f_2|\sim_2).\qedhere
\end{align*}\end{proof}

\begin{lemma}\label{lemma3.5}
Let $X$ be a finite set, $\sim\in\calE(X)$ and $f\in\rmbool(X)$. 
\begin{enumerate}
\item $\ic(f\mid\sim)\geq\cl(\sim)$. It is an equality if, and only if, for any $Y\in X/{\sim}$, $f_{\mid Y}$ is indecomposable.
\item If $\ic(f\mid\sim)=\cl(\sim)$, then $\sim\subseteq\sim_f^i$ and $\ic(f/{\sim})\geq\ic(f)$.
\end{enumerate}\end{lemma}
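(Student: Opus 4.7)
The plan is to exploit the factorization $f\mid\sim=\prod^{\star_1}_{Y\in X/{\sim}} f_{\mid Y}$ noted immediately after Definition \ref{defi3.1}, combined with the additivity $\ic(f\star_1 g)=\ic(f)+\ic(g)$ from the remark following the definition of $\sim_f^i$, plus the conventions $\ic(1)=0$ and $\ic(h)\geq 1$ for $h$ on a nonempty set. For item 1, I would simply apply these to get
\[\ic(f\mid\sim)=\sum_{Y\in X/{\sim}}\ic(f_{\mid Y})\geq\sum_{Y\in X/{\sim}}1=\cl(\sim),\]
using that each $Y$ is nonempty. Equality then holds if and only if $\ic(f_{\mid Y})=1$ for every $Y$, i.e.\ each $f_{\mid Y}$ is indecomposable.

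For item 2, I would first establish the inclusion $\sim\subseteq\sim_f^i$. Starting from the indecomposable decomposition $f=\prod^{\star_1}_{Z\in X/{\sim_f^i}} f_{\mid Z}$ and iterating Lemma \ref{lemma2.4}, one obtains, for each $Y\in X/{\sim}$, the identity
\[f_{\mid Y}=\prod^{\star_1}_{Z\in X/{\sim_f^i}} f_{\mid Y\cap Z}.\]
Since $f_{\mid Y}$ is indecomposable (by the hypothesis of item 2 and the characterization from item 1), and any factor with $Y\cap Z=\emptyset$ is the unit, exactly one $Y\cap Z$ is nonempty. Hence every class of $\sim$ is contained in a single class of $\sim_f^i$, which is precisely $\sim\subseteq\sim_f^i$.

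This inclusion yields the decomposition $\sim=\bigsqcup_{Z\in X/{\sim_f^i}}\sim_Z$ with $\sim_Z:=\sim\cap Z^2$, so that $\sim$ respects the partition of $X$ into the indecomposable components of $f$. Iterating Lemma \ref{lemma3.4} then gives
\[f/{\sim}=\prod^{\star_1}_{Z\in X/{\sim_f^i}}(f_{\mid Z})/{\sim_Z},\]
and the additivity of $\ic$ combined with $\ic((f_{\mid Z})/{\sim_Z})\geq 1$ (since each $Z/{\sim_Z}$ is nonempty) delivers $\ic(f/{\sim})\geq|X/{\sim_f^i}|=\ic(f)$. The only point demanding routine verification is the passage from two to many factors in Lemmas \ref{lemma2.4} and \ref{lemma3.4}, handled by a straightforward induction using associativity of $\star_1$; I do not foresee any genuine obstacle beyond organizing these pieces carefully.
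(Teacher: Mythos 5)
Your proof is correct and follows essentially the same route as the paper: part 1 via the factorization $f\mid\sim=\prod^{\star_1}_{Y\in X/{\sim}}f_{\mid Y}$ and additivity of $\ic$, and part 2 by restricting the indecomposable decomposition of $f$ to each class of $\sim$ (Lemma \ref{lemma2.4}) to get $\sim\subseteq\sim_f^i$, then splitting $\sim$ along the indecomposable components and applying Lemma \ref{lemma3.4} to bound $\ic(f/{\sim})$. The iterated versions of Lemmas \ref{lemma2.4} and \ref{lemma3.4} are indeed just routine inductions, exactly as you anticipate.
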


\begin{proof}
1. By definition,
\[f\mid\sim=\prod_{Y\in X/{\sim}}^{\star_1} f_{\mid Y},\]
so
\[\ic(f\mid\sim)=\sum_{Y\in X/{\sim}}\ic(f_{\mid Y})\geq |X/{\sim}|=\cl(\sim).\]
Moreover, it is an equality if, and only if, for any $Y\in X/{\sim}$, $\ic(f_{\mid Y})=1$.\\

2. Let us assume that $\ic(f\mid\sim)=\cl(\sim)$. Then, for any $Y\in X/{\sim}$, $f_{\mid Y}$ is indecomposable. 
Let us denote by $X_1,\ldots,X_k$ the indecomposable components of $f$. Then, for any $Y\in X/{\sim}$,
\[f_{\mid Y}=(f_{\mid X_1}\star_1\cdots\star_1 f_{\mid X_k})_{\mid Y}=f_{\mid X_1\cap Y}\star_1\cdots\star_1 f_{\mid X_k\cap Y}.\]
As $f_{\mid Y}$ is indecomposable, one, and only one, of the sets $X_i\cap Y$ is nonempty. This implies that $\sim\subseteq\sim^i_f$. We then put $\sim_i=\sim\cap X_i^2$ for any $i\in [k]$, such that $\sim=\sim_1\sqcup\cdots\sqcup\sim_k$. By Lemma \ref{lemma3.4}, 
\[f/{\sim}=(f_1/{\sim}_1)\star_1\cdots\star_1 (f_k/{\sim}_k),\]
so $\ic(f/{\sim})=\ic(f_1/{\sim}_1)+\cdots+\ic(f_k/{\sim}_k)\geq k=\ic(f)$.
\end{proof}

\begin{lemma}\label{lemma3.6}
Let $X,Y$ be two disjoint finite set, $\sim_X\in\calE(X)$, $\sim_Y\in\calE(Y)$, $f\in\rmbool(X\sqcup Y)$. We put $\sim=\sim_X\sqcup\sim_Y$. Then
\begin{align*}
f_{\mid X}/{\sim}_X&=(f/{\sim})_{\mid X/{\sim}_X},&f_{\mid Y}/{\sim}_Y&=(f/{\sim})_{\mid Y/{\sim}_Y},&
f\mid\sim&=f_{\mid X}\mid\sim_X\star_1 f_{\mid Y}\mid\sim_Y.
\end{align*}\end{lemma}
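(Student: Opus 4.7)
The proof plan is to verify each of the three identities by direct computation from the definitions of $f/{\sim}$, $f\mid\sim$, and the restriction $f_{\mid\cdot}$ introduced in Definition \ref{defi3.1}. The crucial observation that makes everything routine is that, since $\sim=\sim_X\sqcup\sim_Y$, the set of $\sim$-classes decomposes as a disjoint union
\[(X\sqcup Y)/{\sim}=(X/{\sim}_X)\sqcup (Y/{\sim}_Y),\]
and for any $A\subseteq X/{\sim}_X$ (viewed inside this disjoint union), one has $\varpi_\sim^{-1}(A)=\varpi_{\sim_X}^{-1}(A)\subseteq X$.

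For the first identity, I would fix $A\subseteq X/{\sim}_X$ and evaluate both sides. The left-hand side gives $f_{\mid X}/{\sim}_X(A)=f_{\mid X}(\varpi_{\sim_X}^{-1}(A))=f(\varpi_{\sim_X}^{-1}(A))$, the last equality holding because $\varpi_{\sim_X}^{-1}(A)\subseteq X$. The right-hand side gives $(f/{\sim})_{\mid X/{\sim}_X}(A)=f/{\sim}(A)=f(\varpi_\sim^{-1}(A))$, and the observation above identifies this with the same quantity. The second identity is the symmetric statement for $Y$, proved identically.

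For the third identity, I would fix $A\subseteq X\sqcup Y$ and split the defining sum for $f\mid\sim$ along the decomposition of $(X\sqcup Y)/{\sim}$:
\[f\mid\sim(A)=\sum_{Z\in X/{\sim}_X}f(A\cap Z)+\sum_{Z\in Y/{\sim}_Y}f(A\cap Z).\]
For $Z\in X/{\sim}_X$ one has $A\cap Z=(A\cap X)\cap Z$ and $f(A\cap Z)=f_{\mid X}((A\cap X)\cap Z)$, and analogously for $Z\in Y/{\sim}_Y$. Summing, this gives $f_{\mid X}\mid\sim_X(A\cap X)+f_{\mid Y}\mid\sim_Y(A\cap Y)$, which is exactly $f_{\mid X}\mid\sim_X\star_1 f_{\mid Y}\mid\sim_Y(A)$ by the definition of $\star_1$.

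There is no real obstacle here: the lemma is a bookkeeping compatibility between restriction, contraction by a partitioned equivalence, and the product $\star_1$, and each identity follows from a one-line manipulation once the class decomposition $(X\sqcup Y)/{\sim}=(X/{\sim}_X)\sqcup (Y/{\sim}_Y)$ is made explicit. Alternatively, the third identity can be read off directly from the remark that $f\mid\sim=\prod^{\star_1}_{Z\in (X\sqcup Y)/{\sim}}f_{\mid Z}$ together with Lemma \ref{lemma3.4}, by grouping the product along $X$-classes and $Y$-classes.
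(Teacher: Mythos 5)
Your proposal is correct and follows essentially the same route as the paper: a direct check of the first two identities via $\varpi_\sim^{-1}(A)=\varpi_{\sim_X}^{-1}(A)$ for $A\subseteq X/{\sim}_X$, and the class decomposition $(X\sqcup Y)/{\sim}=(X/{\sim}_X)\sqcup (Y/{\sim}_Y)$ for the third. You merely write out explicitly the sum-splitting step that the paper leaves as a one-line remark.
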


\begin{proof}
Let $A\subseteq X/{\sim}_X$.
\begin{align*}
f_{\mid X}/{\sim}_X(A)&=f_{\mid X}(\varpi_{\sim_X}^{-1}(A))=f(\varpi_\sim^{-1}(A))=f/{\sim}(A)=(f/{\sim})_{\mid X/{\sim}_X}(A).
\end{align*}
The proof is similar for $Y$. As $(X\sqcup Y)/{\sim}=(X/{\sim}_X)\sqcup (Y/{\sim}_Y)$, we obtain the result for $f\mid\sim$. 
\end{proof}

\subsection{The frame}

\label{section3.2}

\begin{notation}
We now fix a set subspecies $\rmboolt$ of $\rmbool$ such that:
\begin{itemize}
\item $1\in\rmboolt(\emptyset)$.
\item If $X,Y$ are disjoint finite sets, $f\in\rmboolt(X)$, and $g\in\rmboolt(Y)$, then $f\star_1 g\in\rmboolt(X\sqcup Y)$.
\item If $Y\subseteq X$ are two finite sets and $f\in\rmboolt(X)$, then $f_{\mid Y}\in\rmboolt(Y)$.
\end{itemize}
We denote by $\bfboolt$ the linearization of $\rmboolt$, and by $\calH_{\bfboolt}$ the subspace of $\calH_{\bfbool}$ generated by isoclasses of boolean functions in $\rmboolt$. 
\end{notation}

The hypotheses on $\rmboolt$ insure (in fact, are equivalent to) that $\bfboolt$ is a twisted subbialgebra of $(\bfbool,\star_1,\Delta)$. Consequently, $\calH_{\bfboolt}$ is a subbialgebra of $(\calH_{\bfbool},\star_1,\Delta)$.\\
 
\begin{notation}
For any $f\in\rmboolt(X)$, we fix a set of equivalences $\calE(f)\subseteq\calE(X)$ which is compatible with the species structure. 
More precisely, if $\sigma:X\longrightarrow Y$ is a bijection between two finite sets and $f\in\rmboolt(X)$, then $f\circ\sigma^{-1}\in\rmboolt(Y)$ and $\calE(f\circ\sigma^{-1})=\{\sim_\sigma\mid\sim\in\calE(f)\}$.
We assume that:
\begin{itemize}
\item For any finite set $X$, for any $f\in\rmboolt(X)$, for any $\sim\in\calE(f)$, $f/{\sim}\in\rmboolt(X/{\sim})$. 
\end{itemize}
\end{notation}

The hypotheses on $\rmboolt$ insure that for any finite set $X$, for any $\sim\in\calE(X)$, for any $f\in\rmboolt(X)$, $f\mid\sim\in\rmboolt(X)$ and $f/{\sim}\in\rmbool(X/{\sim})$.
We then define a contraction-restriction coproduct $\delta^\calE$ on $\bfboolt$ as follows: for any finite set $X$, for any $f\in\rmboolt(X)$, for any $\sim\in\calE(X)$,
\[\delta^\calE_\sim(f)=\begin{cases}
f/{\sim}\otimes f\mid\sim\mbox{ if }\sim\in\calE(f),\\
0\mbox{ otherwise}.
\end{cases}\]

Of course, in general, this coproduct has no convenient property. Let us give conditions for the coassociativity, multiplicativity, existence of a counit and compatibility with $\Delta$ for $\delta^\calE$.

\subsection{General results}

\begin{prop}\label{prop3.7}
We shall say that $\calE$ satisfies the $\star_1$ condition if:
\begin{itemize}
\item $\calE(1)$ contains the unique equivalence $\sim_\emptyset\in\calE(\emptyset)$.
\item For any couple of disjoint finite sets $(X,Y)$, for any $f\in\rmboolt(X)$, $g\in\rmboolt(Y)$,
\[\calE(f\star_1 g)=\{\sim_X\sqcup\sim_Y\mid\sim_X\in\calE(f),\:\sim_Y\in\calE(g)\}.\]
\end{itemize}
The coproduct $\delta^\calE$ is compatible with the product $\star_1$ if, and only if, $\calE$ satisfies the $\star_1$ condition.
\end{prop}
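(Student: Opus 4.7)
The plan is to prove the equivalence by checking each direction separately, the central technical tool being Lemma \ref{lemma3.4}, which turns a disjoint-union splitting $\sim = \sim_X \sqcup \sim_Y$ into the $\star_1$-multiplicativity identities $(f_1 \star_1 f_2)/{\sim} = (f_1/{\sim_X}) \star_1 (f_2/{\sim_Y})$ and $(f_1 \star_1 f_2)\mid\sim = (f_1\mid\sim_X) \star_1 (f_2\mid\sim_Y)$. Once a splitting is available, the algebraic side of compatibility is automatic; the only combinatorial content is the description of the equivalences lying in $\calE(f \star_1 g)$.

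For the direction $(\Longleftarrow)$, assume the $\star_1$ condition. Since $1/{\sim_\emptyset} = 1\mid\sim_\emptyset = 1$ and $\sim_\emptyset \in \calE(1)$, one immediately has $\delta^\calE_{\sim_\emptyset}(1) = 1 \otimes 1$. Next fix disjoint finite sets $X, Y$, boolean functions $f \in \rmboolt(X)$, $g \in \rmboolt(Y)$, and an equivalence $\sim \in \calE(X \sqcup Y)$. Split into two cases. If $\sim = \sim_X \sqcup \sim_Y$, the $\star_1$ condition says that $\sim \in \calE(f \star_1 g)$ if and only if $\sim_X \in \calE(f)$ and $\sim_Y \in \calE(g)$; Lemma \ref{lemma3.4} then gives $\delta^\calE_\sim(f \star_1 g) = \delta^\calE_{\sim_X}(f) \cdot \delta^\calE_{\sim_Y}(g)$, with both sides vanishing simultaneously when either membership fails. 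If $\sim$ admits no such splitting, the $\star_1$ condition excludes $\sim$ from $\calE(f \star_1 g)$, so $\delta^\calE_\sim(f \star_1 g) = 0$, matching the required value on the right-hand side.

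For the direction $(\Longrightarrow)$, assume the compatibility relations. Applied to $f = g = 1$ with $\sim = \sim_\emptyset$, the unit condition $\delta^\calE_{\sim_\emptyset}(1) = 1 \otimes 1$ forces $\sim_\emptyset \in \calE(1)$. To describe $\calE(f \star_1 g)$, fix $\sim \in \calE(X \sqcup Y)$. If $\sim$ is not of the form $\sim_X \sqcup \sim_Y$, the compatibility relation directly gives $\delta^\calE_\sim(f \star_1 g) = 0$, so $\sim \notin \calE(f \star_1 g)$. If $\sim = \sim_X \sqcup \sim_Y$, the compatibility relation becomes $\delta^\calE_\sim(f \star_1 g) = \delta^\calE_{\sim_X}(f) \cdot \delta^\calE_{\sim_Y}(g)$. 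Combining this with Lemma \ref{lemma3.4}, one concludes that $\sim \in \calE(f \star_1 g)$ if and only if both $\sim_X \in \calE(f)$ and $\sim_Y \in \calE(g)$, using that every tensor $h/{\sim'} \otimes h\mid\sim'$ is a nonzero simple tensor of basis elements of $\bfboolt \otimes \bfboolt$.

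The argument is largely bookkeeping; the only subtlety worth flagging is the nonvanishing step in $(\Longrightarrow)$, which relies on the observation that $h/{\sim'}$ and $h\mid\sim'$ are genuine boolean functions (hence nonzero basis elements in $\bfboolt$) for every $h \in \rmboolt$ and every equivalence $\sim'$, so that the vanishing of one side of the compatibility relation genuinely forces the corresponding membership condition to fail on the other side.
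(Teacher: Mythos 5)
Your proof is correct and follows essentially the same route as the paper: both directions are handled by splitting $\sim$ according to whether it decomposes as $\sim_X\sqcup\sim_Y$, invoking Lemma \ref{lemma3.4} to identify the two sides, and comparing memberships via (non)vanishing of the resulting tensors. The only difference is that you make explicit the nonvanishing argument (boolean functions being nonzero basis elements), which the paper leaves implicit.
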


\begin{proof}
By definition of $\delta$, 
\[\delta_{\sim_\emptyset}(1)=\begin{cases}
1\otimes 1\mbox{ if }\sim_\emptyset\in\calE(\emptyset),\\
0\mbox{ otherwise}.
\end{cases}\]
So $\delta_{\sim_\emptyset}(1)=1\otimes 1$ if, and only if, $\sim_\emptyset\in\calE(\emptyset)$.
This gives the first item of the $\star_1$ condition.\\

$\Longleftarrow$. Let $X$, $Y$ be disjoint finite sets and $f\in\rmboolt(X)$, $g\in\rmboolt(Y)$. Let $\sim\in\calE(X\sqcup Y)$. We put $\sim_X=\sim\cap X^2$ and $\sim_Y=\sim\cap Y^2$.
If $\sim\neq\sim_X\sqcup\sim_Y$, by the $\star_1$ condition, $\sim\notin\calE(f\star_1 g)$, so $\delta_\sim(f\star_1 g)=0$. If $\sim=\sim_X\sqcup\sim_Y$, by the $\star_1$ condition and Lemma \ref{lemma3.4},
\begin{align*}
\delta^\calE_\sim(f\star_1 g)&=\begin{cases}
(f\star_1 g)/{\sim}\otimes (f\star_1 g)\mid\sim\mbox{ if $\sim_X\in\calE(f)$ and $\sim_Y\in\calE(g)$},\\
0\mbox{ otherwise}
\end{cases}\\
&=\begin{cases}
(f/{\sim}_X)\star_1 (g/{\sim}_Y)\otimes (f\mid\sim_X)\star_1 (g\mid\sim_Y)\mbox{ if $\sim_X\in\calE(f)$ and $\sim_Y\in\calE(g)$},\\
0\mbox{ otherwise}
\end{cases}\\
&=\delta_\sim^\calE(f)\star_1\delta_\sim^\calE(g).
\end{align*}
So $\delta^\calE$ is compatible with the product.\\

$\Longrightarrow$. Let us assume that $\delta^\calE$ is compatible with the product.
Let $X,Y$ be two disjoint finite sets, $f\in\rmboolt(X)$, $g\in\rmboolt(Y)$, $\sim\in\calE(X\sqcup Y)$.
We put $\sim_X=\sim\cap X^2$ and $\sim_Y\in\cap Y^2$. If $\sim\neq\sim_X\sqcup\sim_Y$, then
$\delta_\sim^\calE(f\star_1 g)=0$, so $\sim\notin\calE(f\star_1 g)$. Let us assume that $\sim=\sim_X\sqcup\sim_Y$. Then 
\begin{align*}
\delta^\calE_\sim(f\star_1 g)&=\begin{cases}
(f\star_1 g)/{\sim}\otimes (f\star_1 g)\mid\sim\mbox{ if $\sim\in\calE(f\star_1 g)$},\\
0\mbox{ otherwise},
\end{cases}\\
\delta^\calE_{\sim_X}(f)\star_1\delta^\calE_{\sim_Y}(g)&=\begin{cases}
(f/{\sim}_X)\star_1 (g/{\sim}_Y)\otimes (f\mid\sim_X)\star_1 (g\mid\sim_Y)\mbox{ if $\sim_X\in\calE(f)$ and $g\in\sim_Y\in\calE(g)$},\\
0\mbox{ otherwise}.
\end{cases}
\end{align*}
So $\sim\in\calE(f\star_1 g)$ if, and only if, $\sim_X\in\calE(f)$ and $\sim_Y\in\calE(g)$,
which gives the second item of the $\star_1$ condition.
\end{proof}

\begin{lemma}\label{lemma3.8}
Let us assume that $\calE$ satisfies the $\star_1$ condition. Let $X$ be a finite set and $f\in\rmboolt(X)$. For any $\sim\in\calE(f)$, $\sim\subseteq\sim_f^i$.
\end{lemma}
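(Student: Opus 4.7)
The plan is to exploit the $\star_1$ condition of Proposition \ref{prop3.7} together with the decomposition of $f$ into its indecomposable components. Let $X_1,\ldots,X_k$ denote the indecomposable components of $f$, so that
\[f=f_{\mid X_1}\star_1\cdots\star_1 f_{\mid X_k},\]
and $\sim_f^i$ is precisely the equivalence on $X$ whose classes are $X_1,\ldots,X_k$. Thus $\sim\,\subseteq\,\sim_f^i$ is equivalent to saying that no class of $\sim$ meets two distinct $X_i$'s, i.e.\ that $\sim$ decomposes as a disjoint union of equivalences on the $X_i$'s.

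First I would iterate the $\star_1$ condition: a straightforward induction on $k$, using that $\rmboolt$ is stable under $\star_1$ and that $f_{\mid X_i}\in\rmboolt(X_i)$ by the stability of $\rmboolt$ under restriction, gives
\[\calE(f)=\calE(f_{\mid X_1}\star_1\cdots\star_1 f_{\mid X_k})=\{\sim_1\sqcup\cdots\sqcup\sim_k\mid\sim_i\in\calE(f_{\mid X_i})\text{ for all }i\}.\]
The base case $k=1$ is trivial and the inductive step is an application of the second item of the $\star_1$ condition to the product $(f_{\mid X_1}\star_1\cdots\star_1 f_{\mid X_{k-1}})\star_1 f_{\mid X_k}$.

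Now take any $\sim\,\in\,\calE(f)$. By the formula above, $\sim\,=\,\sim_1\sqcup\cdots\sqcup\sim_k$ for some $\sim_i\,\in\,\calE(f_{\mid X_i})\subseteq\calE(X_i)$. In particular, if $x\sim y$, then $x$ and $y$ lie in the same $X_i$, which is to say $x\sim_f^i y$. Hence $\sim\,\subseteq\,\sim_f^i$, as claimed.

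There is no substantive obstacle here; the only point to be slightly careful about is that the iterated $\star_1$ condition genuinely follows from the binary version stated in Proposition \ref{prop3.7}, and that $\rmboolt$'s closure under restriction (which is part of the standing hypothesis on $\rmboolt$ in Section \ref{section3.2}) is needed to ensure each $f_{\mid X_i}\in\rmboolt(X_i)$ so that $\calE(f_{\mid X_i})$ is defined.
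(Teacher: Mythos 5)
Your proof is correct and follows essentially the same route as the paper: decompose $f$ into its indecomposable components, apply the (iterated) $\star_1$ condition to identify $\calE(f)$ with disjoint unions of equivalences on the components, and conclude that any $\sim\in\calE(f)$ refines $\sim_f^i$. Your remarks about iterating the binary condition and about the stability of $\rmboolt$ under restriction only make explicit what the paper leaves implicit.
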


\begin{proof}
We denote by $X_1,\ldots,X_k$ the indecomposable components of $f$. Then $f=f_{\mid X_1}\star_1\cdots\star_1 f_{\mid X_k}$. Note that $X/{\sim}_f^i=\{X_1,\ldots,X_k\}$. By the $\star_1$ condition,
\[\calE(f)=\{\sim_1\sqcup\cdots\sqcup\sim_k\mid\forall i\in [k],\:\sim_i\in\calE(f_{\mid X_i})\}.\]
 Consequently, if $\sim\in\calE(f)$, then
\[\sim=(\sim\cap X_1^2)\sqcup\cdots\sqcup (\sim\cap X_k^2)\subseteq X_1^2\sqcup\cdots\sqcup X_k^2=\sim_f^i.\qedhere\]
\end{proof}

\begin{prop}\label{prop3.9}
Let us assume that $\calE$ satisfies the $\star_1$ condition. We shall say that $\calE$ satisfies the $\delta$ condition 
if for any finite set $X$, for any $\sim\subseteq\sim'\in\calE(X)$, for any $f\in\rmboolt(X)$, the following assertions are equivalent:
\begin{enumerate}
\item $\sim\in\calE(f)$ and $\overline{\sim'}\in\calE(f/{\sim})$.
\item $\sim'\in\calE(f)$ and $\sim\in\calE(f\mid\sim')$. 
\end{enumerate}
The coproduct $\delta^\calE$ is coassociative if, and only if, $\calE$ satisfies the $\delta$ condition. 
\end{prop}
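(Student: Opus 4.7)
My strategy is to unpack both sides of each coassociativity identity required by the definition of a contraction-restriction coproduct, using Definition~\ref{defi3.1} and Lemma~\ref{lemma3.2}, and then read off what condition on $\calE$ is forced.

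First I would treat the case $\sim\subseteq\sim'$, which is the content of the first displayed identity in the coassociativity axiom. On a basis element $f\in\rmboolt(X)$, the definition of $\delta^\calE$ shows that the left-hand side vanishes unless both $\sim\in\calE(f)$ and $\overline{\sim'}\in\calE(f/{\sim})$, and in that case Lemma~\ref{lemma3.2} (through $(f/{\sim})/\overline{\sim'}=f/{\sim'}$ and $(f/{\sim})\mid\overline{\sim'}=(f\mid\sim')/{\sim}$) reduces it to
\[
f/{\sim'}\otimes (f\mid\sim')/{\sim}\otimes f\mid\sim.
\]
By the symmetric calculation, the right-hand side vanishes unless $\sim'\in\calE(f)$ and $\sim\in\calE(f\mid\sim')$, and in that case it equals the same tensor (using $(f\mid\sim')\mid\sim=f\mid\sim$ from Lemma~\ref{lemma3.2}). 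Since boolean functions are never zero in $\bfboolt$, this simple tensor is a nonzero basis element of $\bfboolt^{\otimes 3}$, so the two sides of the identity agree for every $f$ if and only if the two activation conditions are equivalent on every $f$, which is exactly the $\delta$ condition.

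Next I would dispense with the case $\sim\not\subseteq\sim'$, where coassociativity demands that $(\id\otimes\delta^\calE_\sim)\circ\delta^\calE_{\sim'}$ vanish. Suppose instead that it is nonzero on some $f$: then $\sim'\in\calE(f)$ and $\sim\in\calE(f\mid\sim')$. From the formula $f\mid\sim'=\prod^{\star_1}_{Y\in X/{\sim'}} f_{\mid Y}$, every indecomposable component of $f\mid\sim'$ lies inside a single $\sim'$-class, so $\sim_{f\mid\sim'}^i\subseteq\sim'$. Applying Lemma~\ref{lemma3.8} (which is available because we assume the $\star_1$ condition), $\sim\in\calE(f\mid\sim')$ forces $\sim\subseteq\sim_{f\mid\sim'}^i$, and therefore $\sim\subseteq\sim'$, contradicting our hypothesis. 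So this second coassociativity identity is automatic under the $\star_1$ condition.

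Combining the two cases, coassociativity is equivalent to the equivalence of the two activation conditions in the $\sim\subseteq\sim'$ setting, that is, to the $\delta$ condition. The main obstacle I foresee is the bookkeeping in the $\sim\not\subseteq\sim'$ case: the required vanishing is not visible in the $\delta$ condition as stated, and verifying it requires the inclusion $\sim_{f\mid\sim'}^i\subseteq\sim'$ extracted from the multiplicative decomposition of $f\mid\sim'$, together with Lemma~\ref{lemma3.8}.
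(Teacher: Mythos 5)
Your proof is correct and follows essentially the same route as the paper: the $\sim\subseteq\sim'$ case is handled by expanding both sides and identifying the simple tensors via Lemma~\ref{lemma3.2}, and the $\sim\not\subseteq\sim'$ case is disposed of using $\sim^i_{f\mid\sim'}\subseteq\sim'$ together with Lemma~\ref{lemma3.8}, exactly as in the paper. Your explicit remark that the simple tensors are nonzero basis elements (needed for the ``coassociativity implies $\delta$ condition'' direction) is a point the paper leaves implicit, but it is the same argument.
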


\begin{proof}
Let $X$ be a finite set, $f\in\rmboolt(X)$, and $\sim,\sim'\in\calE(X)$. Let us firstly assume that $\sim$ is not included in $\sim'$. As $\sim^i_{f\mid\sim'}\subseteq\sim'$, $\sim$ is not included in $\sim^i_{f\mid\sim'}$. 
By Lemma \ref{lemma3.8}, $\sim\notin\calE(f\mid\sim')$, so $\left(\id_{\bfboolt(X/{\sim}')}\otimes\delta_\sim^\calE\right)\circ\delta_{\sim'}^\calE(f)=0$. Let us now consider the case where $\sim\subseteq\sim'$. 
\begin{align*}
\left(\delta^\calE_{\overline{\sim'}}\otimes\id_{\bfboolt(X)}\right)\circ\delta^\calE_\sim(f)&=\begin{cases}
(f/{\sim})/\overline{\sim'}\otimes (f/{\sim})\mid\overline{\sim'}\otimes f\mid\sim\mbox{ if }\sim\in\calE(f)\mbox{ and }\overline{\sim'}\in\calE(f/{\sim}),\\
0\mbox{ otherwise}.
\end{cases}\\
\left(\id_{\bfboolt(X/{\sim'})}\otimes\delta^\calE_\sim\right)\circ\delta^\calE_{\sim'}(f)&=\begin{cases}
f/{\sim}'\otimes (f\mid\sim')/{\sim}\otimes (f\mid\sim'\mid\sim)\mbox{ if }\sim'\in\calE(f)\mbox{ and }\sim\in\calE(f\mid\sim'),\\
0\mbox{ otherwise}.
\end{cases}
\end{align*}
By Lemma \ref{lemma3.2},
\begin{align*}
(f/{\sim})/\overline{\sim'}&=f/{\sim}',&
(f/{\sim})\mid\overline{\sim'}&=\otimes (f\mid\sim')/{\sim},&
f\mid\sim&=(f\mid\sim'\mid\sim).
\end{align*}
So $\delta^\calE$ is coassociative if, and only if, $\calE$ satisfies the $\delta$ condition. 
\end{proof}

\begin{prop}\label{prop3.10}
We shall say that $\calE$ satisfies the $\Delta$ condition if for any couple $(X,Y)$ of disjoint finite sets, for any $f\in\rmboolt(X\sqcup Y)$, for any $\sim_X\in\calE(X)$ and $\sim_Y\in\calE(Y)$, the following assertions are equivalent:
\begin{enumerate}
\item $\sim_X\sqcup\sim_Y\in\calE(f)$.
\item $\sim_X\in\calE(f_{\mid X})$ and $\sim_Y\in\calE(f_{\mid Y})$.
\end{enumerate}
The coproduct $\delta$ is compatible with $\Delta$ if, and only if, $\calE$ satisfies the $\Delta$ condition.
\end{prop}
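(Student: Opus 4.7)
The plan is to expand both sides of the compatibility identity applied to a generic $f \in \rmboolt(X \sqcup Y)$ and read off the required condition on $\calE$.

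First, I would compute the left-hand side $(\Delta_{X/\sim_X, Y/\sim_Y} \otimes \id_{\bfboolt(X\sqcup Y)}) \circ \delta^\calE_{\sim_X \sqcup \sim_Y}(f)$. By the definition of $\delta^\calE$, this vanishes unless $\sim_X \sqcup \sim_Y \in \calE(f)$, and otherwise equals $(f/(\sim_X \sqcup \sim_Y))_{\mid X/\sim_X} \otimes (f/(\sim_X \sqcup \sim_Y))_{\mid Y/\sim_Y} \otimes f \mid (\sim_X \sqcup \sim_Y)$. The three identities of Lemma \ref{lemma3.6} rewrite this as
\[f_{\mid X}/{\sim_X} \otimes f_{\mid Y}/{\sim_Y} \otimes (f_{\mid X} \mid \sim_X) \star_1 (f_{\mid Y} \mid \sim_Y).\]

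Next, I would compute the right-hand side $m_{1,3,24} \circ (\delta^\calE_{\sim_X} \otimes \delta^\calE_{\sim_Y}) \circ \Delta_{X,Y}(f)$. Since $\Delta_{X,Y}(f) = f_{\mid X} \otimes f_{\mid Y}$, the element $(\delta^\calE_{\sim_X} \otimes \delta^\calE_{\sim_Y})(f_{\mid X} \otimes f_{\mid Y})$ vanishes unless both $\sim_X \in \calE(f_{\mid X})$ and $\sim_Y \in \calE(f_{\mid Y})$; when it does not vanish, it equals $f_{\mid X}/{\sim_X} \otimes f_{\mid X} \mid \sim_X \otimes f_{\mid Y}/{\sim_Y} \otimes f_{\mid Y} \mid \sim_Y$, and $m_{1,3,24}$ transforms it into exactly the same expression as the left-hand side.

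Finally, I would compare. Both sides, when nonvanishing, equal the same tensor product of basis boolean functions in $\bfboolt(X/{\sim_X}) \otimes \bfboolt(Y/{\sim_Y}) \otimes \bfboolt(X \sqcup Y)$, which is in particular nonzero. Therefore the compatibility identity holds for all $f$, $\sim_X$, $\sim_Y$ if, and only if, the two assertions ``$\sim_X \sqcup \sim_Y \in \calE(f)$'' and ``$\sim_X \in \calE(f_{\mid X})$ and $\sim_Y \in \calE(f_{\mid Y})$'' are equivalent for every such $f, \sim_X, \sim_Y$, which is exactly the $\Delta$ condition. The only subtlety is the careful bookkeeping of the three rewritings from Lemma \ref{lemma3.6}; once these are in place, the nonzero expressions on both sides coincide verbatim, so there is no deeper obstacle.
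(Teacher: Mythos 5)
Your proposal is correct and follows essentially the same route as the paper: expand both sides of the compatibility identity via the definition of $\delta^\calE$ and $\Delta$, invoke the three identities of Lemma \ref{lemma3.6} to see that the nonvanishing expressions coincide, and conclude that compatibility is equivalent to the $\Delta$ condition. Your extra remark that the nonzero terms are tensors of basis elements (hence genuinely nonzero, so a mismatch of the two conditions forces a mismatch of the two sides) is a small point the paper leaves implicit, but it is the same argument.
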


\begin{proof}
Let $X,Y$ be disjoint finite sets, $f\in\rmboolt(X\sqcup Y)$, $\sim_X\in\calE(X)$ and $\sim_Y\in\calE(Y)$. We put $\sim=\sim_X\sqcup\sim_Y$. 
\begin{align*}
\left(\Delta_{X/{\sim}_X,Y/{\sim}_Y}\otimes\id_{\bfboolt(X\sqcup Y)}\right)\circ\delta_\sim(f)&=\begin{cases}
(f/{\sim})_{\mid X/{\sim}_X}\otimes (f/{\sim})_{\mid Y/{\sim}_Y}\otimes f\mid\sim\mbox{ if }\sim\in\calE(f),\\
0\mbox{ otherwise},
\end{cases}\\
(\star_1)_{1,3,24}\circ (\delta_{\sim_X}\otimes\delta_{\sim_Y})\circ\Delta_{X,Y}(f)&=\begin{cases}
f_{\mid X}/{\sim}_X\otimes f_{\mid Y}/{\sim}_Y\otimes f_{\mid X}\mid\sim_X\star_1 f_{\mid Y}\mid\sim_Y\\
\hspace{2cm}\mbox{ if }\sim_X\in\calE(f_{\mid X})\mbox{ and }\sim_Y\in\calE(f_{\mid Y}),\\
0\mbox{ otherwise}. 
\end{cases}
\end{align*}
By Lemma \ref{lemma3.6},
\begin{align*}
f_{\mid X}/{\sim}_X&=(f/{\sim})_{\mid X/{\sim}_X},&f_{\mid Y}/{\sim}_Y&=(f/{\sim})_{\mid Y/{\sim}_Y},&
f\mid\sim&=f_{\mid X}\mid\sim_X\star_1 f_{\mid Y}\mid\sim_Y.
\end{align*}
So $\delta^\calE$ is compatible with $\Delta$ if, and only if, the $\Delta$ condition holds for $\calE$. 
\end{proof}

\begin{prop}\label{prop3.11}
Let us assume that $\calE$ satisfies the $\star_1$ condition. We shall say that $\calE$ satisfies the $\epsilon$ condition if:
\begin{enumerate}
\item For any $f\in\rmboolt(X)$, $\sim_f^i$ and $=_X$ belong to $\calE(f)$.
\item For any $f\in\rmboolt(X)$ and for any $\sim\in\calE(f)$, $f\mid\sim$ is modular if, and only if, $\sim$ is equal to $=_X$.
\item For any $f\in\rmboolt(X)$ and for any $\sim\in\calE(f)$, $f/{\sim}$ is modular if, and only if, $\sim=\sim_f^i$.
\end{enumerate}
The coproduct $\delta^\calE$ has a counit if, and only if, $\calE$ satisfies the $\epsilon_\delta$ condition.
If so, the counit is given by
\begin{align}
\label{EQ2}
&\forall f\in\rmboolt(X),&\epsilon_\delta(f)&=\begin{cases}
1\mbox{ if $f$ is modular},\\
0\mbox{ otherwise}.
\end{cases}\end{align}\end{prop}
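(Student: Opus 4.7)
The plan is to unpack both counit axioms for $\delta^\calE$ using the explicit form $\delta^\calE_\sim(f)=f/{\sim}\otimes f\mid\sim$ when $\sim\in\calE(f)$ and $0$ otherwise, and then match each axiom piece by piece against the three items of the $\epsilon$ condition. I would first record that for $\sim\in\calE(X)$ and $f\in\rmboolt(X)$,
\[\left(\id_{\bfboolt(X/\sim)}\otimes\epsilon_X\right)\circ\delta^\calE_\sim(f)=\begin{cases}\epsilon_X(f\mid\sim)\cdot f/{\sim}&\text{if }\sim\in\calE(f),\\ 0&\text{otherwise,}\end{cases}\]
and that the second axiom reduces to $\sum_{\sim\in\calE(f)}\epsilon_{X/\sim}(f/{\sim})\cdot f\mid\sim=f$.

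For the sufficiency direction, I would assume the $\epsilon$ condition and define $\epsilon_X$ by formula (EQ2). For $\sim={=_X}$, item (1) guarantees $=_X\in\calE(f)$, while $f/{=_X}=f$ and $f\mid{=_X}$ is manifestly modular, so the first axiom yields $f$. For $\sim\in\calE(f)$ with $\sim\neq{=_X}$, item (2) says $f\mid\sim$ is not modular, so $\epsilon_X(f\mid\sim)=0$ and the axiom yields $0$. For the second axiom, item (3) combined with item (1) picks out $\sim=\sim_f^i$ as the only surviving term, and $f\mid\sim_f^i=\prod_{Y\in X/{\sim_f^i}}^{\star_1}f_{\mid Y}=f$ by the very definition of $\sim_f^i$, delivering the required identity.

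For the necessity direction, assume a counit family $(\epsilon_X)$ exists. Taking $\sim={=_X}$ in the first axiom forces $=_X\in\calE(f)$ for every $f$ (otherwise the LHS would vanish while the RHS is $f$), and then $\epsilon_X(f\mid{=_X})=1$; specializing to modular $g$, for which $g=g\mid{=_X}$, gives $\epsilon_X(g)=1$. For $\sim\neq{=_X}$ in $\calE(f)$, the same axiom yields $\epsilon_X(f\mid\sim)=0$ since $f/{\sim}\neq 0$ in $\bfboolt(X/{\sim})$, which simultaneously pins $\epsilon_X$ down on the relevant basis elements and delivers item (2). Writing the second axiom as $\sum_{\sim\in\calE(f)}\epsilon_{X/\sim}(f/{\sim})\cdot f\mid\sim=f$ and using $f\mid\sim_f^i=f$, I would argue by induction on $|X|$ (so that $\epsilon_{X/\sim}$ is already the modularity indicator) that the right-hand side forces $\sim_f^i\in\calE(f)$ (completing item (1)), $\epsilon_{X/\sim_f^i}(f/{\sim_f^i})=1$ (hence $f/{\sim_f^i}$ is modular), and $\epsilon_{X/\sim}(f/{\sim})=0$ for $\sim\neq\sim_f^i$ in $\calE(f)$ (hence $f/{\sim}$ is non-modular), yielding item (3). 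The formula (EQ2) emerges alongside this induction.

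The main obstacle is the entanglement between the values of $\epsilon_X$ and the structure of $\calE(f)$ in the necessity direction: the two axioms intertwine them, so item (3) cannot be deduced from the second axiom without simultaneous control of $\epsilon$ on smaller sets. An induction on $|X|$ threaded through both axioms is the cleanest way to break this coupling; the base case $X=\emptyset$ is trivial, and the inductive step uses that the distinct boolean functions $f\mid\sim$ appearing in the second axiom cannot cancel against one another unless their scalar coefficients $\epsilon_{X/\sim}(f/{\sim})$ vanish, which is where care is required when $f$ is itself modular and several $f\mid\sim$ may coincide.
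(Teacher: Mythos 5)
Your sufficiency direction, and the first half of your necessity direction (forcing $=_X\in\calE(f)$, getting $\epsilon_X=1$ on modular functions, getting $\epsilon_X(f\mid\sim)=0$ for $\sim\in\calE(f)\setminus\{=_X\}$, and deducing item (2)), are correct and essentially identical to the paper's argument. The genuine gap is in the step where you claim that the second counit axiom ``forces $\sim_f^i\in\calE(f)$''. To extract this you must control which terms $f\mid\sim$ on the left-hand side can coincide with the basis vector $f$: one has $f\mid\sim=f$ whenever $\sim$ contains $\sim_f^i$, so a priori coarser (or incomparable) equivalences lying in $\calE(f)$ could produce the vector $f$ and nothing in your sketch singles out $\sim_f^i$. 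The paper closes exactly this point with two facts you never invoke: Lemma \ref{lemma3.8} (the $\star_1$ condition --- which is a hypothesis of the statement precisely for this purpose --- implies that every $\sim\in\calE(f)$ satisfies $\sim\subseteq\sim_f^i$) and Lemma \ref{lemma3.5} (for $\sim\subsetneq\sim_f^i$, $\ic(f\mid\sim)\geq\cl(\sim)>\ic(f)$, hence $f\mid\sim\neq f$). Only with these does the coefficient of the basis vector $f$ on the left-hand side reduce to $\epsilon_{X/\sim_f^i}(f/{\sim_f^i})$ if $\sim_f^i\in\calE(f)$ and to $0$ otherwise, which is what yields $\sim_f^i\in\calE(f)$ --- and note that this argument needs no knowledge whatsoever of the other coefficients.

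A second, smaller defect is the induction device itself: for $\sim$ equal to the equality relation $=_X$, the quotient $X/{\sim}$ has the same cardinality as $X$, so ``by induction $\epsilon_{X/\sim}$ is already the modularity indicator'' does not apply to that term, and when $f$ is non-modular its coefficient $\epsilon_X(f)$ is precisely the unknown quantity; your remark that care is needed ``when $f$ is itself modular'' points at the wrong case, since for modular $f$ one has $\sim_f^i={=_X}$ and everything is immediate. The repair is the paper's route and makes the induction unnecessary: once $\sim_f^i\in\calE(f)$ is known, apply the first counit axiom with $\sim=\sim_f^i$ (which differs from $=_X$ when $f$ is non-modular) to get $\epsilon_X(f)=\epsilon_X(f\mid\sim_f^i)=0$; this establishes formula (\ref{EQ2}) on every finite set, all coefficients in the second axiom become $0$ or $1$, and item (3) follows because a sum of basis vectors with coefficients in $\{0,1\}$ can equal the single basis vector $f$ only if it consists of the single term $\sim=\sim_f^i$.
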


\begin{proof}
$\Longrightarrow$. Let us denote by $\epsilon_{\delta^\calE}$ the counit of $\delta^\calE$. Let $f\in\rmboolt(X)$. 
If $=_X\notin\calE(f)$, then $\delta^\calE_{=_X}(f)=0$, so $\left(\id_{\bfboolt(X)}\otimes\epsilon_{\delta^\calE}\right)\circ\delta^\calE(f)=0$: this is a contradiction. So $=_X\in\calE(f)$. As $\calE$ satisfies the $\star_1$ condition, for any $\sim\in\calE(f)$, $\sim\subseteq\sim_f^i$. If $\sim\subsetneq\sim_f^i$, by Lemma \ref{lemma3.5},
\[\ic(f\mid\sim)\geq\cl(\sim)>\cl(\sim_f^i)=\ic(f),\]
so $f\mid\sim\neq f$. As
\[\sum_{\sim\in\calE(X)}\left(\epsilon_{\delta^\calE}\otimes\id_{\bfboolt(X)}\right)\circ\delta^\calE_\sim(f)
=\sum_{\sim\in\calE(f)}\epsilon_{\delta^\calE}(f/{\sim}) f\sim=f,\]
necessarily $\sim_f^i\in\calE(f)$.\\

If $f$ is modular and $\sim$ is $=_X$, then $f\mid\sim=f/{\sim}=f$, so
\[\left(\id_{\bfboolt(X)}\otimes\epsilon_{\delta^\calE}\right)\circ\delta_{=_X}^\calE(f)=\epsilon_{\delta^\calE}(f)f=f,\]
so $\epsilon_{\delta^\calE}(f)=1$. If $f$ is not modular, then $\sim_f^i$ is not the equality of $X$, so, as $f\mid\sim_f^i=f$,
\[\left(\id_{\bfboolt(X)}\otimes\epsilon_{\delta^\calE}\right)\circ\delta_{\sim_f^i}^\calE(f)=\epsilon_{\delta^\calE}(f)f/{\sim}_f^i=0,\]
and $\epsilon_{\delta^\calE}(f)=0$. We have just proved that the counit does not depend on $\calE$, and we denote now $\epsilon_\delta$
instead of $\epsilon_{\delta^\calE}$.\\

With all these partial results, we obtain that for any $\sim\in\calE(f)$,
\begin{align*}
\left(\id_{\bfboolt(X)}\otimes\epsilon_\delta\right)\circ\delta_\sim^\calE(f)&=\begin{cases}
f\mid\sim\mbox{ if $f\mid\sim$ is modular},\\
0\mbox{ otherwise},
\end{cases}\\
&=\begin{cases}
f\mbox{ if $\sim$ is the equality $=_X$ of $X$},\\
0\mbox{ otherwise}.
\end{cases}
\end{align*}
So the unique $\sim\in\calE(f)$ such that $f\mid\sim$ is modular is $=_X$. On the other side,
\begin{align*}
f&=\sum_{\sim\in\calE(X)}\left(\epsilon_\delta\otimes\id_{\bfboolt(X)}\right)\circ\delta^\calE_\sim(f)
=\sum_{\substack{\sim\in\calE(f),\\\mbox{\scriptsize $f/{\sim}$ modular}}} f\mid\sim.
\end{align*}
Hence, there exists a unique $\sim\in\calE(f)$ such that $f/{\sim}$ is modular, and it can only be $\sim_f^i$.\\

$\Longleftarrow$. With $\epsilon_\delta$ defined by (\ref{EQ2}), for any $f\in\rmboolt(X)$ and any $\sim\in\calE(X)$,
\begin{align*}
\left(\id_{\bfboolt(X)}\otimes\epsilon_\delta\right)\circ\delta_\sim^\calE(f)&=\begin{cases}
f\mid\sim\mbox{ if $f/{\sim}$ is the equality of $X$},\\
0\mbox{ otherwise},
\end{cases}\\
&=\begin{cases}
f\mbox{ if $f/{\sim}$ is the equality of $X$},\\
0\mbox{ otherwise},
\end{cases}\end{align*}
and
\begin{align*}
\sum_{\sim\in\calE(X)}\left(\epsilon_\delta\otimes\id_{\bfboolt(X)}\right)\circ\delta^\calE_\sim(f)=f\mid\sim_f^i=f,
\end{align*}
so $\epsilon_\delta$ is a counit of $\delta^\calE$.
\end{proof}

\begin{remark}
We automatically obtain that $\epsilon_\delta$ is an algebra morphism, as $f\star_1 g$ is modular if, and only if, $f$ and $g$ are modular. 
\end{remark}

Applying the functor $\calF$:

\begin{theo}
We define a second coproduct on $\calH_{\bfboolt}$ as follows: 
\begin{align*}
&\forall f\in\rmboolt([n]),&\delta^\calE(\overline{f})&=\sum_{\sim\in\calE(f)}\overline{f/{\sim}}\otimes\overline{f\mid\sim}.
\end{align*}
Then:
\begin{enumerate}
\item If $\calE$ satisfies the $\star_1$ condition, then $\delta^\calE$ is an algebra morphism. 
\item If $\calE$ satisfies the $\delta$ condition, then $\delta^\calE$ is coassociative.
\item If $\calE$ satisfies the $\epsilon$ condition, then $\delta^\calE$ has a counit, defined by
\begin{align*}
&\forall f\in\rmboolt([n]),&\epsilon_\delta(\overline{f})&=\begin{cases}
1\mbox{ if $f$ is modular},\\
0\mbox{ otherwise}.
\end{cases}\end{align*}
\item If $\calE$ satisfies the $\Delta$ condition, then
\[(\star_1)_{1,3,24}\circ\left(\delta^\calE\otimes\delta^\calE\right)\circ\Delta=\left(\Delta\otimes\id_{\calH_{\bfboolt}}\right)\circ\delta^\calE.\]
\item For any $x\in\calH_\bfbool$, 
\[\left(\varepsilon_\Delta\otimes\id_{\calH_{\bfboolt}}\right)\circ\delta^\calE(x)=\varepsilon_\Delta(x)1.\]
\end{enumerate}\end{theo}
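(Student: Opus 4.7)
The strategy is to view the coproduct $\delta^\calE$ of the statement as the image under the bosonic Fock functor $\calF$ of the species-level contraction-restriction coproduct $\delta^\calE_\sim$ constructed in Section \ref{section3.2}. Explicitly, for $f\in\rmboolt([n])$,
\[\delta^\calE(\overline{f})=\sum_{\sim\in\calE([n])}\overline{\bigl(\bfboolt(\sigma_\sim)\otimes\id_{\bfboolt([n])}\bigr)\circ\delta^\calE_\sim(f)},\]
with $\sigma_\sim:[n]/{\sim}\longrightarrow[\cl(\sim)]$ any bijection. Since $\delta^\calE_\sim(f)=0$ unless $\sim\in\calE(f)$ and otherwise equals $f/{\sim}\otimes f\mid\sim$, this recovers the formula in the statement. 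Well-definedness on isoclasses is inherited from the $\sym_n$-equivariance of $\calE$ together with the functoriality of contraction and restriction under bijections. Items (1)--(4) then follow by applying $\calF$ to each of Propositions \ref{prop3.7}, \ref{prop3.9}, \ref{prop3.11} and \ref{prop3.10} respectively.

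More concretely, for (1) Proposition \ref{prop3.7} yields $\delta^\calE_{\sim_\emptyset}(1)=1\otimes 1$ together with $\delta^\calE_\sim(f\star_1 g)=\delta^\calE_{\sim_X}(f)\star_1\delta^\calE_{\sim_Y}(g)$ when $\sim=\sim_X\sqcup\sim_Y$, and zero otherwise; summing over all equivalences of $[k+l]$ and invoking the isoclass formula for products recalled in Section \ref{section1} gives multiplicativity of $\delta^\calE$ on $\calH_{\bfboolt}$. For (2), the nested-equivalence coassociativity of Proposition \ref{prop3.9} combines, through the identification $\{\sim'\supseteq\sim\}\simeq\calE(X/{\sim})$ fixed in the Notations, into coassociativity of $\delta^\calE$ on $\calH_{\bfboolt}$. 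For (3), the species counit $\epsilon_\delta$ of Proposition \ref{prop3.11} is already the indicator of modularity, which is invariant under the $\sym_n$-action; its Fock image is therefore precisely the map displayed in the statement, and the two counit identities descend directly. For (4), Proposition \ref{prop3.10} provides the mixed compatibility at the species level; summing over $(\sim_X,\sim_Y)\in\calE(X)\times\calE(Y)$ and using that $\Delta$ on $\calH_{\bfboolt}$ is itself $\calF(\Delta)$ yields the cointeraction identity.

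Item (5) is a direct calculation and requires no hypothesis on $\calE$. Starting from $x=\overline{f}$ with $f\in\rmboolt([n])$, one expands
\[(\varepsilon_\Delta\otimes\id_{\calH_{\bfboolt}})\circ\delta^\calE(\overline{f})=\sum_{\sim\in\calE(f)}\delta_{f/{\sim},\,1}\,\overline{f\mid\sim}.\]
The Kronecker symbol is nonzero only when $[n]/{\sim}=\emptyset$, forcing $n=0$. For $n\geq 1$ both sides vanish, matching $\varepsilon_\Delta(\overline{f})\cdot 1=0$. For $n=0$ the only equivalence is $\sim_\emptyset$, and, provided $\sim_\emptyset\in\calE(1)$ (which is automatic as soon as $\delta^\calE$ is well defined on $\calH_{\bfboolt}$), both sides equal $1$.

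The only genuine difficulty I anticipate lies in items (1) and (4): one must check that the permutations $\bfboolt(\sigma_\sim)$ intertwine correctly with the product and the coproduct, so that each species-level identity survives the quotient by the $\sym_n$-actions defining the Fock functor. This is routine once one unwinds the conventions of Section \ref{section1}, but it is the step where the bookkeeping is heaviest, as the decomposition of an equivalence on $[k+l]$ into disjoint pieces on $[k]$ and $[l]$ (for (1)) and the simultaneous restriction of two equivalences (for (4)) must be reconciled with the chosen labellings $\sigma_\sim$.
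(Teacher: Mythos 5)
Your proposal is correct and takes essentially the same route as the paper, which states this theorem immediately after the words ``Applying the functor $\calF$'' and relies, exactly as you do, on pushing Propositions \ref{prop3.7}, \ref{prop3.9}, \ref{prop3.10} and \ref{prop3.11} through the bosonic Fock functor, with item (5) a direct computation. Your added caveat on item (5) --- that one needs $\sim_\emptyset\in\calE(1)$, which is really part of the $\star_1$ condition rather than a consequence of well-definedness --- is a reasonable point of care that the paper itself glosses over.
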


\subsection{A first example: weak equivalences}

\begin{defi}
Let $f\in\rmbool(X)$. We put
\[\calE^W(f)=\{\sim\in\calE(X)\mid\ic(f\mid\sim)=\cl(\sim)\}.\]
The associated coproduct is denoted by $\delta^W$, instead of $\delta^{\calE^W}$.
\end{defi}

\begin{theo}\label{theo3.14}\begin{enumerate}
\item $\calE^W$ satisfies the $\star_1$ condition.
\item $\calE^W$ satisfies the $\Delta$ condition.
\item $\calE^W$ does not satisfy the $\delta$ condition.
\item $\calE^W$ does not satisfy the $\epsilon$ condition, but, however, $\epsilon_\delta$ defined by (\ref{EQ2}) is a right counit for $\delta^W$.
\end{enumerate}\end{theo}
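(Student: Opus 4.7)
The four assertions split as two positive results (parts (1)--(2)), verified directly from Lemmas~\ref{lemma3.4} and~\ref{lemma3.5}, and two negative results (parts (3)--(4)), both handled by a single well-chosen boolean function. For (1), I first note that $\sim_\emptyset \in \calE^W(1)$ since $\ic(1\mid\sim_\emptyset)=0=\cl(\sim_\emptyset)$. If $\sim \in \calE^W(f \star_1 g)$, then Lemma~\ref{lemma3.5}(2) forces $\sim \subseteq \sim_{f \star_1 g}^i = \sim_f^i \sqcup \sim_g^i$, so no class of $\sim$ meets both $X$ and $Y$, and $\sim = \sim_X \sqcup \sim_Y$ with $\sim_X = \sim \cap X^2$, $\sim_Y = \sim \cap Y^2$. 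Lemma~\ref{lemma3.4} then splits $(f\star_1 g)\mid\sim$ as $(f\mid\sim_X) \star_1 (g\mid\sim_Y)$, yielding $\ic((f\star_1g)\mid\sim) = \ic(f\mid\sim_X)+\ic(g\mid\sim_Y)$ and $\cl(\sim)=\cl(\sim_X)+\cl(\sim_Y)$. Combined with the inequalities of Lemma~\ref{lemma3.5}(1), this gives $\sim_X \in \calE^W(f)$ and $\sim_Y \in \calE^W(g)$; the converse is immediate from the same splitting. For (2), every class of $\sim_X \sqcup \sim_Y$ lies entirely in $X$ or in $Y$, and the restriction of $f$ to a class $Z \subseteq X$ coincides with the restriction of $f|_X$ to $Z$ (similarly for $Y$), so indecomposability on all classes is equivalent to $\sim_X \in \calE^W(f|_X)$ and $\sim_Y \in \calE^W(f|_Y)$.

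For (3), and simultaneously the first half of (4), I take $X=\{1,2,3\}$ and the boolean function $f$ defined by $f(\{i\})=0$ for $i\in X$, $f(\{1,2\})=f(\{1,3\})=f(\{1,2,3\})=1$ and $f(\{2,3\})=0$. Proposition~\ref{prop2.18}(2) shows that $f$ is indecomposable: the three parenthetical disjunctions are each satisfied, as $f(\{1,2\})=1\neq 0$ witnesses the first subcondition of (a) and (b), and $f(\{1,3\})=1\neq 0$ the first subcondition of (c). Let $\sim$ have classes $\{1,2\}$ and $\{3\}$, and $\sim'$ be the full equivalence on $X$, so $\sim \subseteq \sim'$. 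Then $\sim' \in \calE^W(f)$ since $f$ is indecomposable, $f\mid\sim' = f$, and $\sim \in \calE^W(f)$ because $f|_{\{1,2\}}$ is indecomposable by Proposition~\ref{prop2.18}(1). Hence condition~2 of the $\delta$ condition holds. However, $f/\sim(\{[1],[3]\})=f(\{1,2,3\})=1=f(\{1,2\})+f(\{3\})=f/\sim([1])+f/\sim([3])$, so $f/\sim$ is decomposable by Proposition~\ref{prop2.18}(1), i.e. $\overline{\sim'}\notin\calE^W(f/\sim)$, and condition~1 fails. The same computation shows that $f/\sim$ is moreover modular, while $\sim\neq \sim_f^i$ (the latter being the full equivalence since $f$ is indecomposable); this contradicts item~3 of the $\epsilon$ condition for $\calE^W$.

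For the right-counit assertion in (4), for any $\sim\in\calE^W(f)$ one has $\ic(f\mid\sim)=\cl(\sim)$, so $f\mid\sim$ is modular if and only if $\cl(\sim)=|X|$, i.e.\ $\sim = {=_X}$. Since $f/{=_X}=f$ under the identification $X/{=_X}\cong X$, the sum $(\id\otimes\epsilon_\delta)\circ\delta^W(f) = \sum_{\sim\in\calE^W(f)}\epsilon_\delta(f\mid\sim)\,f/\sim$ collapses to the single term $f$. The main obstacle in the whole proof is locating the counterexample for (3) and item~3 of the $\epsilon$ condition: most natural candidates either make $f$ itself decomposable (so that $\sim' \notin \calE^W(f)$) or make $f/\sim$ non-modular and indecomposable; the function above is essentially forced, being indecomposable yet admitting the coarsening $\sim$ whose quotient is modular.
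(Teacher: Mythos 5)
Your proof is correct and follows essentially the same route as the paper: the positive parts (1), (2) and the right-counit claim are obtained exactly as in the paper from Lemmas \ref{lemma3.4} and \ref{lemma3.5} (together with $\sim^i_{f\star_1 g}=\sim^i_f\sqcup\sim^i_g$), and the negative parts are settled by an explicit small counterexample checked with Proposition \ref{prop2.18}. The only difference is cosmetic: you use a single function on a three-element set (allowing the additive pair $f(\{2,3\})=f(\{2\})+f(\{3\})$) to witness both the failure of the $\delta$ condition and of item 3 of the $\epsilon$ condition, whereas the paper uses the functions of Examples \ref{ex3.1} and \ref{ex3.2}, in which all pairs are non-additive; both counterexamples are valid.
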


\begin{proof}
1. Obviously, $\sim_\emptyset\in\calE^W(1)$. Let $X,Y$ be disjoint finite sets, $f\in\rmbool(X)$, $g\in\rmbool(Y)$, and $\sim\in\calE(X\sqcup Y)$. We put $\sim_X=\sim\cap X^2$ and $\sim_Y=\sim\cap Y^2$. 
If $\sim\neq\sim_X\sqcup\sim_Y$, let $Z\in (X\sqcup Y)/{\sim}$ such that both $Z\cap X$ and $Z\cap Y$ are nonempty. Therefore, 
\[(f\star_1 g)_{\mid Z}=f_{\mid X\cap Z}\star_1 g_{\mid Y\cap Z},\]
so $\ic\left((f\star_1 g)_{\mid Z}\right)\geq 2$. Consequently, $\ic(f\mid\sim)>\cl(\sim)$, and $\sim\notin\calE^W(f\star_1 g)$. If $\sim=\sim_X\star_1\sim_Y$, then, by Lemma \ref{lemma3.4},
\[(f\star_1 g)\mid\sim=(f\mid\sim_X)\star_1 (g\mid\sim_Y).\]
Moreover, $\ic(f\mid\sim_X)\geq\cl(\sim_X)$ and $\ic(f\mid\sim_Y)\geq\cl(\sim_Y)$, by Lemma \ref{lemma3.5}. We obtain that
\begin{align*}
\sim\in\calE^W(f\star_1 g)&\Longleftrightarrow\ic(f\mid\sim_X)+\ic(g\mid\sim_Y)=\cl(\sim_X)+\cl(\sim_Y)\\
&\Longleftrightarrow\ic(f\mid\sim_X)=\cl(\sim_X)\mbox{ and }\ic(g\mid\sim_Y)=\cl(\sim_Y)\\
&\Longleftrightarrow\sim_X\in\calE^W(f)\mbox{ and }\sim_Y\in\calE^W(g).
\end{align*}
So $\calE^W$ satisfies the $\star_1$ condition.\\

2. Let $f\in\rmbool(X\sqcup Y)$, $\sim_X\in\calE(X)$, and $\sim_Y\in\calE(Y)$. We put $\sim=\sim_X\sqcup\sim_Y$. Then
\[f\mid\sim=\prod^{\star_1}_{X'\in X/{\sim}_X} f_{\mid X'}\star_1\prod^{\star_1}_{Y'\in Y/{\sim}_Y} f_{\mid Y'}=\prod^{\star_1}_{X'\in X/{\sim}_X} (f_{\mid X})_{\mid X'}\star_1\prod^{\star_1}_{Y'\in Y/{\sim}_Y} (f_{\mid Y})_{\mid Y'}=(f_{\mid X}\mid\sim_X)\star_1 (f_{\mid Y}\mid\sim_Y).\]
By Lemma \ref{lemma3.5}, $\ic(f_{\mid X}\mid\sim_X)\geq\cl(\sim_X)$ and $\ic(f_{\mid Y}\mid\sim_Y)\geq\cl(\sim_Y)$. Therefore,
\begin{align*}
\sim\in\calE^W(f)&\Longleftrightarrow\ic(f_{\mid X}\mid\sim_X)+\ic(f_{\mid Y}\mid\sim_Y)=\cl(\sim_X)+\cl(\sim_Y)\\
&\Longleftrightarrow\ic(f_{\mid X}\mid\sim_X)=\cl(\sim_X)\mbox{ and }\ic(f_{\mid Y}\mid\sim_Y)=cl(\sim_Y)\\
&\Longleftrightarrow\sim_X\in\calE^W(f_{\mid X})\mbox{ and }\sim_Y\in\calE^W(f_{\mid Y}).
\end{align*} 
So $\calE^W$ satisfies the $\Delta$ condition.\\

3. Let $f\in\rmbool(X)$, and $\sim\subseteq\sim'\in\calE(X)$. In the $\delta$ condition for these elements, let us show that $1.\Longrightarrow 2$. 
Firstly, by Lemma \ref{lemma3.2}, $(f\mid\sim')\mid\sim=f\mid\sim$, so, as $\sim\in\calE^W(f)$, 
\[\ic((f\mid\sim')\mid\sim)=\ic(f\mid\sim)=\cl(\sim),\]
so $\sim\in\calE^W(f\mid\sim')$. Moreover, $\ic(f\mid\sim')\geq\cl(\sim')$, by lemma \ref{lemma3.5}. Let us assume that $\ic(f\mid\sim')>\cl(\sim')$. 
As $\ic((f\mid\sim')\mid\sim)=\cl(\sim)$, by Lemmas \ref{lemma3.5} and \ref{lemma3.2}, 
\[\ic((f/{\sim})\mid\overline{\sim'})=\ic((f\mid\sim')/{\sim})\geq\ic(f\mid\sim')>\cl(\sim'),\]
so $\overline{\sim'}\notin\calE^W(f/{\sim})$. We proved that $1.\Longrightarrow 2$. 
However, $2.$ does not imply $1.$, as can be seen in Example \ref{ex3.1} below.\\

4. Firstly, $=_X$ and $\sim_f^i$ belong to $\calE^W(f)$ for any $f\in\rmbool(X)$. If $\sim\in\calE^W(f)$, such that $f\mid\sim$ is modular, then
\[\ic(f\mid\sim)=\cl(\sim)=|X|,\]
as, firstly, $\sim\in\calE^W(f)$ and, secondly, $f\mid\sim$ is modular. So $\sim$ is $=_X$, which implies that $\epsilon_\delta$ is a right counit. See Example \ref{ex3.2} for the problem on the left.
\end{proof}

\begin{example}\label{ex3.1}
Let $\sim\in\rmbool([4])$, such that
\begin{align*}
&\forall i,j\in [4],\mbox{ with }i\neq j,&f(\{i,j\})&\neq f(\{i\})+f(\{j\}),\\
&&f(\{1,2,3\})&=f(\{1,2\})+f(\{3\}).
\end{align*}
By Proposition \ref{prop2.17} with any $x$, if $X\subseteq [4]$, nonempty, then $f_{\mid X}$ is indecomposable, so $\calE^W(f)=\calE([4])$. Let $\sim\in\calE([4])$ whose classes are $\overline{1}=\{1,2\}$, $\overline{3}=\{3\}$, and $\overline{4}=\{4\}$,
and $\sim'\in\calE([4])$, whose classes are $\{1,2,3\}$ and $\{4\}$. Then $\sim\subseteq\sim'$ and the previous observation shows that $\sim'\in\calE^W(f)$ and $\sim\in\calE^W(f\mid\sim')$. However,
$\{\overline{1},\overline{3}\}$ is a class of $\overline{\sim'}$ and
\[f/{\sim}(\{\overline{1},\overline{3}\})=f(\{1,2,3\})=f(\{1,2\})+f(\{3\})=f/{\sim}(\{\overline{1}\})+f/{\sim}(\{\overline{3}\}),\]
so $(f/{\sim})_{\mid\{\overline{1},\overline{3}\}}$ is decomposable by Proposition \ref{prop2.18}, and $\overline{\sim'}\notin\calE^W(f/{\sim})$.
\end{example}

\begin{example}\label{ex3.2}
Let $f\in\rmbool([3])$ such that
\begin{align*}
&\forall i,j\in [3],\mbox{ with }i\neq j,&f(\{i,j\})&\neq f(\{i\})+f(\{j\}),\\
&&f(\{1,2,3\})&=f(\{1,2\})+f(\{3\}).
\end{align*}
By Lemma \ref{prop2.17} with any $i\in [3]$, for any nonempty $Y\subseteq [3]$, $f_{\mid Y}$ is indecomposable. So $\calE^W(f)=\calE(X)$. Let $\sim\in\calE(X)$ whose classes are $\overline{1}=\{1,2\}$ and $\overline{3}=\{3\}$. Then $f\in\calE^W(f)$. Moreover,
\begin{align*}
f/{\sim}(\{\overline{1},\overline{3}\})&=f(\{1,2,3\})=f(\{1,2\}+f(\{3\})=f/{\sim}(\{\overline{1}\})+f/{\sim}(\{\overline{3}\}),
\end{align*} 
so $f/{\sim}$ is modular, whereas $\sim\neq\sim_f^i$.
\end{example}

Applying the functor $\calF$:

\begin{cor}
We define a coproduct on $\calH_\bfbool$ as follows: 
\begin{align*}
&\forall f\in\rmbool([n]),&\delta^W(\overline{f})&=\sum_{\sim\in\calE^W(f)}\overline{f/{\sim}}\otimes\overline{f\mid\sim}.
\end{align*}
Then:
\begin{enumerate}
\item $\delta^W$ is an algebra morphism. 
\item$\delta^W$ is not coassociative. 
\item $\epsilon_\delta$ is a right counit for $\delta^W$, but $\delta^W$ has no left counit.
\item $\delta^W$ and $\Delta$ are compatible.
\item$\delta^W$ and $\varepsilon_\Delta$ are compatible. 
\end{enumerate}\end{cor}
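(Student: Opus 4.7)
The plan is to derive the five assertions by applying the bosonic Fock functor $\calF$ to Theorem \ref{theo3.14}. The general transfer principle recalled just before Section \ref{section3.2} converts the $\star_1$, $\delta$, $\Delta$ and $\epsilon$ conditions on a family of equivalences into, respectively, multiplicativity, coassociativity, $\Delta$-compatibility, and counity of the induced coproduct on $\calH_\bfbool$. Since parts (1) and (2) of Theorem \ref{theo3.14} establish the $\star_1$ and $\Delta$ conditions for $\calE^W$, items (1), (4), and (5) of the corollary follow immediately (compatibility with $\varepsilon_\Delta$ is automatic, since the only equivalence on $\emptyset$ is $\sim_\emptyset$ and $\delta^W_{\sim_\emptyset}(1)=1\otimes 1$). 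The right-counit part of (3) transfers directly from the right-counit conclusion of Theorem \ref{theo3.14}(4).

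For (2), I will lift the species-level failure of coassociativity witnessed in Example \ref{ex3.1} to the isoclass level. For the $f\in\rmbool([4])$ of that example, with the pair $\sim\subseteq\sim'$ exhibited there, the term $\overline{f/{\sim}'}\otimes\overline{(f\mid\sim')/{\sim}}\otimes\overline{(f\mid\sim')\mid\sim}$ appears in $(\id\otimes\delta^W)\circ\delta^W(\overline{f})$, because $\sim'\in\calE^W(f)$ and $\sim\in\calE^W(f\mid\sim')$. The analogous term on the other side would require $\overline{\sim'}\in\calE^W(f/{\sim})$, which is precisely forbidden in Example \ref{ex3.1}. The only remaining check is that this discrepancy is not erased by cancellations at the Fock level; this follows because the various isoclasses occurring in the two expressions are distinguishable by basic invariants such as cardinalities, numbers of indecomposable components, and values on singletons.

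For the non-existence of a left counit in (3), I will invoke the standard coassociativity-free argument: if a left counit $\epsilon_L$ and a right counit $\epsilon_R$ of $\delta^W$ both existed, then applying $\epsilon_R$ to $(\epsilon_L\otimes\id)\circ\delta^W=\id$ and $\epsilon_L$ to $(\id\otimes\epsilon_R)\circ\delta^W=\id$ would give $\epsilon_L=(\epsilon_L\otimes\epsilon_R)\circ\delta^W=\epsilon_R$. Thus the only possible left counit is the already-identified $\epsilon_\delta$, and it suffices to exhibit $f$ with $(\epsilon_\delta\otimes\id)\circ\delta^W(\overline{f})\neq\overline{f}$. The $f\in\rmbool([3])$ of Example \ref{ex3.2} works: both the total equivalence (contributing the summand $\overline{f}$, since $\sim^i_f$ is the total equivalence and $f/\sim_f^i$ is trivially modular) and the equivalence with classes $\{1,2\},\{3\}$ (contributing $\overline{f_{\mid\{1,2\}}\star_1 f_{\mid\{3\}}}$, since $f/{\sim}$ is modular as computed in that example) yield contributions, and since $f$ is indecomposable while $f_{\mid\{1,2\}}\star_1 f_{\mid\{3\}}$ is not, their isoclasses are distinct basis vectors of $\calH_\bfbool$. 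The main obstacle throughout is verifying that the species-level counterexamples genuinely survive at the isoclass level, i.e.\ ruling out accidental equalities of isoclasses; this reduces to bookkeeping with the standard invariants just mentioned.
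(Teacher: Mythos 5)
Your overall route is the paper's: items (1), (4), (5) and the right-counit half of (3) are obtained by pushing Theorem \ref{theo3.14} through the Fock functor, and the negative statements come from Examples \ref{ex3.1} and \ref{ex3.2}. Your treatment of the left counit is a valid (and slightly cleaner) variant of the paper's: reducing to $\epsilon_\delta$ by the uniqueness of a two-sided counit, and then noting that $(\epsilon_\delta\otimes\id)\circ\delta^W(\overline{f})$ contains, besides $\overline{f}$, the decomposable isoclass $\overline{f_{\mid\{1,2\}}\star_1 f_{\mid\{3\}}}$ with a positive coefficient (all coefficients here lie in $\{0,1,2,\dots\}$, so nothing can cancel), whence it differs from $\overline{f}$.

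The weak point is item (2), where your no-cancellation justification does not work as stated. After identifying, via Lemma \ref{lemma3.2}, equivalences on $[4]/{\sim}$ with equivalences on $[4]$ coarser than $\sim$, both $(\delta^W\otimes\id)\circ\delta^W(\overline{f})$ and $(\id\otimes\delta^W)\circ\delta^W(\overline{f})$ are sums of the \emph{same} basis tensors $\overline{f/{\sim'}}\otimes\overline{(f\mid\sim')/{\sim}}\otimes\overline{f\mid\sim}$ indexed by pairs $\sim\subseteq\sim'$; the two index sets largely overlap, so the isoclasses occurring in the two expressions are certainly not globally distinguishable, and since a given basis tensor may arise from several pairs, exhibiting one pair present on one side and absent on the other does not by itself rule out equality of coefficients. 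The repair is already in the paper: the proof of Theorem \ref{theo3.14}(3) establishes that assertion 1 of the $\delta$ condition always implies assertion 2 for $\calE^W$, so the index set of $(\delta^W\otimes\id)\circ\delta^W(\overline{f})$ is contained in that of $(\id\otimes\delta^W)\circ\delta^W(\overline{f})$. Consequently each basis tensor appears on the first side with a nonnegative integer coefficient that is at most its coefficient on the second side, and the pair $(\sim,\sim')$ of Example \ref{ex3.1}, which lies in the second index set but not the first, makes the total coefficient sums differ; hence the two sides are distinct and $\delta^W$ is not coassociative. With this replacement of your "distinguishable by basic invariants" step, the proposal is complete and otherwise follows the paper's argument.
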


\begin{example}
Let us illustrate the lack of left counit for $\delta^W$. Let us assume that $\mu$ is a left counit of $\delta^W$. 
For any modular boolean function $f$, 
\[\left(\mu\otimes\id_{\calH_\bfbool}\right)\circ\delta^W(\overline{f})=\mu(\overline{f})\overline{f}=\overline{f},\]
so $\mu(\overline{f})=1$. Let $f\in\rmbool([3])$ of Example \ref{ex3.2}. 
By Proposition \ref{prop2.17}, for any $X\subseteq [3]$, nonempty, $f_{\mid X}$ is indecomposable, so $\calE^W(f)=\calE([3])$. Let us consider $\sim\in\calE([3])$ whose classes are $\overline{1}=\{1,2\}$ and $\overline{3}=\{3\}$. Then 
\[f/{\sim}(\{\overline{1},\overline{3}\})=f(\{1,2,3\})=f(\{1,2\})+f(\{3\})=f/{\sim}(\{\overline{1}\})+f/{\sim}(\{\overline{3}\}),\]
so $f/{\sim}$ is modular, and therefore $\mu(f/{\sim})=1$: $\overline{f\mid\sim}$ appears in $\left(\mu\otimes\id_{\calH_{\bfbool}}\right)\circ\delta^W(\overline{f})=\overline{f}$. But $f\mid\sim$ is not indecomposable whereas $f$ is, so $\overline{f\mid\sim}\neq\overline{f}$: this is a contradiction. So $\delta^W$ has no left counit. 
\end{example}

\subsection{A second example: strong equivalences}

\begin{defi}
Let $f\in\rmbool(X)$. We put
\[\calE^S(f)=\{\sim\in\calE(X)\mid\ic(f\mid\sim)=\cl(\sim),\:\ic(f/{\sim})=\ic(f)\}.\]
The associated coproduct is denoted by $\delta^S$, instead of $\delta^{\calE^S}$.
\end{defi}

\begin{remark}
For any $f\in\rmbool(X)$, $\calE^S(f)\subseteq\calE^W(f)$. This inclusion can be strict, see Example \ref{ex3.4} below.
\end{remark}

\begin{example}\label{ex3.4}
Let us consider the boolean function $f\in\rmbool([3])$ of Example \ref{ex3.1}:
\begin{align*}
&\forall i,j\in [3],\mbox{ with }i\neq j,&f(\{i,j\})&\neq f(\{i\})+f(\{j\}),\\
&&f(\{1,2,3\})&=f(\{1,2\})+f(\{3\}).
\end{align*}
We already observed that $\calE^W(f)=\calE([3])$. Let $\sim\in\calE(X)$ whose classes are $\overline{1}=\{1,2\}$ and $\overline{3}=\{3\}$. Then $f\in\calE^W(f)$. Moreover,
\begin{align*}
f/{\sim}(\{\overline{1},\overline{3}\})&=f(\{1,2,3\})=f(\{1,2\}+f(\{3\})=f/{\sim}(\{\overline{1}\})+f/{\sim}(\{\overline{3}\}),
\end{align*} 
so $f/{\sim}$ is modular, and $\ic(f/{\sim})=2>1=\ic(f)$. So $\sim\notin\calE^S(f)$.
\end{example}

\begin{theo}\label{theo3.17}\begin{enumerate}
\item $\calE^S$ satisfies the $\star_1$ condition.
\item $\calE^S$ does not satisfy the $\Delta$ condition.
\item $\calE^S$ satisfies the $\delta$ condition.
\item $\calE^S$ satisfies the $\epsilon$ condition.
\end{enumerate}\end{theo}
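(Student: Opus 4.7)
My plan is to verify each of the four claims separately, relying on the characterization of $\calE^S$ as the strengthening of $\calE^W$ by the extra constraint $\ic(f/{\sim}) = \ic(f)$, and on Lemmas \ref{lemma3.2}--\ref{lemma3.6} together with Theorem \ref{theo3.14} for the weak version.

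For (1), the weak part of the $\star_1$ condition is already in Theorem \ref{theo3.14}. For the additional strong constraint, Lemma \ref{lemma3.4} gives $(f\star_1 g)/{(\sim_X\sqcup\sim_Y)} = (f/{\sim_X})\star_1 (g/{\sim_Y})$, and since $\ic$ is additive under $\star_1$ and Lemma \ref{lemma3.5} yields $\ic(f/{\sim_X}) \geq \ic(f)$ and $\ic(g/{\sim_Y}) \geq \ic(g)$ whenever the weak condition holds, the global equality $\ic\bigl((f\star_1 g)/(\sim_X\sqcup\sim_Y)\bigr) = \ic(f\star_1 g)$ decouples into the two individual equalities. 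For (2), I will exhibit a counterexample using the $f \in \rmbool([3])$ of Example \ref{ex3.2} (for which every $f_{\mid Y}$ is indecomposable and $f(\{1,2,3\}) = f(\{1,2\}) + f(\{3\})$) with the split $X = \{1,2\}$, $Y = \{3\}$. Taking $\sim_X$ trivial on $X$ and the unique equivalence $\sim_Y$ on $Y$, both easily belong to $\calE^S(f_{\mid X})$ and $\calE^S(f_{\mid Y})$ (the quotients sit on a single point); but the quotient $f/{(\sim_X\sqcup\sim_Y)}$ satisfies $f/{\sim}(\{\overline{1},\overline{3}\}) = f(\{1,2,3\}) = f(\{1,2\}) + f(\{3\}) = f/{\sim}(\{\overline{1}\}) + f/{\sim}(\{\overline{3}\})$, so it is modular with $\ic=2$ while $\ic(f)=1$, whence $\sim_X\sqcup\sim_Y \notin \calE^S(f)$.

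For (3), I translate both sides using Lemma \ref{lemma3.2}: $(f/{\sim})/\overline{\sim'} = f/{\sim'}$, $(f\mid\sim')\mid\sim = f\mid\sim$, and $(f/{\sim})\mid\overline{\sim'} = (f\mid\sim')/{\sim}$. Unpacking the $\calE^S$ requirements on each side, three of the four equalities coincide, namely $\ic(f\mid\sim) = \cl(\sim)$, $\ic((f\mid\sim')/{\sim}) = \cl(\sim')$ and $\ic(f/{\sim'}) = \ic(f)$; the two sides differ only in $\ic(f/{\sim}) = \ic(f)$ (side 1) versus $\ic(f\mid\sim') = \cl(\sim')$ (side 2). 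From side 2 to side 1: the shared equality $\ic((f/{\sim})\mid\overline{\sim'}) = \cl(\overline{\sim'})$ places $\overline{\sim'}$ in $\calE^W(f/{\sim})$, so Lemma \ref{lemma3.5} gives the pinching $\ic(f) \leq \ic(f/{\sim}) \leq \ic(f/{\sim'}) = \ic(f)$. Conversely, from side 1, writing each $\sim'$-class $C$ as the union of the $\sim$-classes $D$ it contains and setting $\sim_C = \sim\cap C^2$, the identity $(f\mid\sim')/{\sim} = \star_{C} f_{\mid C}/{\sim_C}$ together with $\ic((f\mid\sim')/\sim) = \cl(\sim')$ forces each $\ic(f_{\mid C}/{\sim_C}) = 1$; but each $f_{\mid D}$ is indecomposable (from $\ic(f\mid\sim)=\cl(\sim)$), so $\sim_C \in \calE^W(f_{\mid C})$ and Lemma \ref{lemma3.5} gives $\ic(f_{\mid C}) \leq \ic(f_{\mid C}/{\sim_C}) = 1$, whence $\ic(f\mid\sim') = \sum_C \ic(f_{\mid C}) = \cl(\sim')$.

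For (4), the three items of the $\epsilon$ condition are direct: both $=_X$ and $\sim_f^i$ lie in $\calE^S(f)$ by inspection (for $\sim_f^i$, the quotient sits on $\ic(f)$ points so $\ic(f/{\sim_f^i}) \leq \ic(f)$, combined with the reverse inequality from Lemma \ref{lemma3.5}); $f\mid\sim$ is modular iff $\ic(f\mid\sim) = |X|$, which using $\ic(f\mid\sim) = \cl(\sim)$ forces $\sim = {=_X}$; and $f/{\sim}$ is modular iff $\cl(\sim) = \ic(f/{\sim}) = \ic(f) = \cl(\sim_f^i)$, which combined with the inclusion $\sim \subseteq \sim_f^i$ coming from Lemma \ref{lemma3.5} gives $\sim = \sim_f^i$. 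The main obstacle is part (3), where one must carry out the combinatorial bookkeeping of the various $\ic$-equalities at two different levels of refinement, and it is crucial to isolate the shared conditions before exploiting Lemma \ref{lemma3.5} at the right intermediate quotient.
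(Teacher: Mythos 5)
Your proposal is correct and follows essentially the same route as the paper: the same translation of both sides of the $\delta$ condition into $\ic$-equalities via Lemma \ref{lemma3.2}, the same reliance on Lemma \ref{lemma3.5} (your pinching argument is the paper's contradiction argument stated directly, and your componentwise treatment of the other direction is just the unpacked version of the paper's global use of Lemma \ref{lemma3.5} applied to $f\mid\sim'$), the same decoupling argument for the $\star_1$ condition, and the same verification of the $\epsilon$ condition. The only cosmetic difference is in item (2), where you give a single counterexample (the function of Example \ref{ex3.2} with the split $\{1,2\}\sqcup\{3\}$, in the spirit of the paper's Example \ref{ex3.5}), whereas the paper exhibits counterexamples to both implications of the $\Delta$ condition, which is more than the statement requires.
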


\begin{proof}
1. Obviously, $\sim_\emptyset\in\calE^S(1)$. Let $X,Y$ be disjoint finite sets, $f\in\rmbool(X)$, $g\in\rmbool(Y)$, $\sim\in\calE(X\sqcup Y)$. We put $\sim_X=\sim\cap X^2$ and $\sim_Y=\sim\cap Y^2$. 
If $\sim\neq\sim_X\sqcup\sim Y$, then, as $\calE^W$ satisfy the $\star_1$ condition, $\sim\notin\calE^W(f)$. As $\calE^S(f)\subseteq\calE^W(f)$, $\sim\notin\calE^S(f)$. Let us assume that $\sim=\sim_X\sqcup\sim_Y$.
By lemma \ref{lemma3.4},
\begin{align*}
(f\star_1 g)/{\sim}&=(f/{\sim}_X)\star_1 (g/{\sim}_Y),&(f\star_1 g)\mid\sim&=(f\mid\sim_X)\star_1 (g\mid\sim_Y).
\end{align*} 
Moreover, by Lemma \ref{lemma3.5}, $\ic(f\mid\sim_X)\geq\cl(\sim_X)$ and, if the equality is satisfied, then $\ic(f/{\sim}_X)\geq\ic(f)$. The same for $g$ and $Y$. Therefore,
\begin{align*}
\sim\in\calE^S(f\star_1 g)&\Longleftrightarrow\begin{cases}
\ic(f\mid\sim_X)+\ic(g\mid\sim_Y)=\cl(\sim_X)+\cl(\sim_Y)\\
\ic(f/{\sim}_X)+\ic(g/{\sim}_Y)=\ic(f)+\ic(g)\\
\end{cases}\\
&\Longleftrightarrow\begin{cases}
\ic(f\mid\sim_X)=\cl(\sim_X)\\
\ic(g\mid\sim_Y)=\cl(\sim_Y)\\
\ic(f/{\sim}_X)=\ic(f)\\
\ic(g/{\sim}_Y)=\ic(g)\\
\end{cases}\\
&\Longleftrightarrow\sim_X\in\calE^S(f)\mbox{ and }\sim_Y\in\calE^S(g).
\end{align*}
So $\calE^S$ satisfies the $\star_1$ condition.\\

2. This is illustrated by Examples \ref{ex3.5} and \ref{ex3.6} below.\\

3. Let $X$ be a finite set, $\sim\subseteq\sim'\in\calE(X)$ and $f\in\rmbool(X)$. 
Firstly, using Lemma \ref{lemma3.2}, observe that
\begin{align*}
1.&\Longleftrightarrow\begin{cases}
\ic(f\mid\sim)=\cl(\sim)\\
\ic(f/{\sim})=\ic(f)\\
\ic((f/{\sim})\mid\overline{\sim'})=\cl(\overline{\sim'})\\
\ic((f/{\sim})/\overline{\sim'})=\ic(f/{\sim})
\end{cases}
&\hspace{-10mm}&\Longleftrightarrow\begin{cases}
\ic(f/{\sim})=\ic(f/{\sim}')=\ic(f)\\
\ic(f\mid\sim)=\cl(\sim)\\ 
\ic((f\mid\sim')/{\sim})=\cl(\sim').
\end{cases}\\
2.&\Longleftrightarrow\begin{cases}
\ic(f/{\sim}')=\ic(f)\\
\ic(f\mid\sim')=\cl(\sim')\\
\ic((f\mid\sim')/{\sim})=\ic(f\mid\sim')\\
\ic((f\mid\sim')\mid\sim)=\cl(\sim)
\end{cases}
&\hspace{-10mm}&\Longleftrightarrow\begin{cases}
\ic(f/{\sim}')=\ic(f)\\
\ic(f\mid\sim')=\cl(\sim')\\
\ic((f\mid\sim')/{\sim})=\cl(\sim')\\
\ic(f\mid\sim)=\cl(\sim).
\end{cases}
\end{align*}

$1.\Longrightarrow 2$. It remains to prove that $\ic(f\mid\sim')=\cl(\sim')$. By Lemma \ref{lemma3.5}, $\ic(f\mid\sim')\geq\cl(\sim')$. 
If $\ic(f\mid\sim')>\cl(\sim')$, as $\ic((f\mid\sim')\mid\sim)=\ic(f\mid\sim)=\cl(\sim)$, by Lemma \ref{lemma3.5},
\[\cl(\sim')=\ic((f\mid\sim')/{\sim})\geq\ic(f\mid\sim')>\cl(\sim'),\]
which is a contradiction. So $\ic(f\mid\sim')=\cl(\sim')$.\\

$2.\Longrightarrow 1$. It remains to prove that $\ic(f/{\sim})=\ic(f)$. As $\ic(f\mid\sim)=\cl(\sim)$, by Lemma \ref{lemma3.5}, $\ic(f/{\sim})\geq\ic(f)$. If $\ic(f/{\sim})>\ic(f)$, as
\[\ic((f/{\sim})\mid\overline{\sim'})=\ic((f\mid\sim')/{\sim})=\cl(\overline{\sim'}),\]
Lemma \ref{lemma3.5} gives
\[\ic(f)=\ic(f/{\sim}')=\ic((f/{\sim})/\overline{\sim'})\geq\ic(f/{\sim})>\ic(f),\]
which is a contradiction. So $\ic(f/{\sim})=\ic(f)$. As a conclusion, $\calE^S$ satisfies the $\delta$ condition.\\

4. Let $f\in\rmbool(X)$. Then $f/{=_X}=f$ and $f\mid =_X$ is modular, so 
\begin{align*}
\ic(f/{=_X})&=\ic(f),&\ic(f\mid =_X)&=|X|=\cl(=_X).
\end{align*}
Hence, $=_X\in\calE^S(f)$. Moreover, $f/{\sim}_f^i$ is modular and $f\mid\sim_f^i=f$, so 
\begin{align*}
\ic(f/{\sim}_f^i)&=|X/{\sim}_f^i|=\ic(f),&\ic(f\mid\sim_f^i)&=\ic(f)=\cl(\sim_f^i).
\end{align*}
So $\sim_f^i\in\calE^S(f)$. Let $\sim\in\calE^S(f)$, such that $f\mid\sim$ is modular. Then 
\[\ic(f\mid\sim)=|X|=\cl(\sim).\]
Therefore, $\sim$ is the equality of $X$. Let $\sim\in\calE^S(f)$, such that $f/{\sim}$ is modular. By Lemma \ref{lemma3.8}, $\sim\subseteq\sim_f^i$. Moreover, 
\[\ic(f/{\sim})=\ic(f)=\cl(\sim_f^i).\]
Note that $(f\mid\sim_f^i)\mid\sim=f\mid\sim$, so $\ic((f\mid\sim_f^i)\mid\sim)=\ic(f\mid\sim)=\cl(\sim)$, as $\sim\in\calE^S(f)$. By Lemma \ref{lemma3.5}, $\ic(f/{\sim})=\cl(\sim_f^i)\geq\cl(\sim)$, so, as $\sim\subseteq\sim_f^i$,
$\sim=\sim_f^i$. As a conclusion, $\calE^S$ satisfies the counit condition.
\end{proof}

\begin{example}\label{ex3.5}
This example illustrates that in the $\Delta$ condition for $\calE^S$, 1. does not imply 2.
Let us take $X=\{1\}$ and $Y=\{2,3\}$. We consider $f\in\rmbool([3])$ such that
\begin{align*}
f(\{2,3\})&\neq f(\{2\})+f(\{3\}),&f(\{1,2\})&\neq f(\{1\})+f(\{2\}),&f(\{1,2,3\})&=f(\{1\})+f(\{2,3\}).
\end{align*}
As $|X|=1$, $f_{\mid X}$ is obviously indecomposable. By Proposition \ref{prop2.17} with $x=2$, $f$ and $f_{\mid Y}$ are indecomposable. We obtain
\[\ic(f)=\ic(f_{\mid X})=\ic(f_{\mid Y})=1.\]
Let $\sim\in\calE(\{1,2,3\})$ whose classes are $X$ and $Y$. Then $\sim_X=\sim\cap X^2=X^2$ and $\sim_Y=\sim\cap Y^2=Y^2$.
As $|X/{\sim}_X|=|Y/{\sim}_Y|=1$, $f_{\mid X}/{\sim}_X$ and $f_{\mid Y}/{\sim}_Y$ are indecomposable, so
\begin{align*}
\ic(f_{\mid X}/{\sim}_X)&=\ic(f_{\mid X})=\ic(f_{\mid Y}/{\sim}_Y)=\ic(f_{\mid Y})=1.
\end{align*} 
Moreover, $f_{\mid X}/{\sim}_X=f_{\mid X}$ and $f_{\mid Y}=f_{\mid Y}$, so
\begin{align*}
\ic(f_{\mid X}\mid\sim X)&=\ic(f_{\mid X})=1=\cl(\sim_X),&\ic(f_{\mid Y}\mid\sim Y)&=\ic(f_{\mid Y})=1=\cl(\sim_Y),
\end{align*}
We obtain that $\sim_X\in\calE^S(f_{\mid X})$ and $\sim_Y\in\calE^SW(f_{\mid Y})$. 
Let us now compute $f/{\sim}$. 
\begin{align*}
f/{\sim}(\{\overline{1}\})&=f(\{1\}),&f/{\sim}(\{\overline{2}\})&=f(\{2,3\}),& f/{\sim}(\{\overline{1},\overline{2}\})&=f(\{1,2,3\})=f(\{1\})+f(\{2,3\}).
\end{align*}
so $f/\sim$ is modular, and $\ic(f/{\sim})=2>\ic(f)=1$: $\sim\notin\calE^S(f)$. 
\end{example}

\begin{example}\label{ex3.6}
This example illustrates that in the $\Delta$ condition for $\calE^S$, 2. does not imply 1. 
Let us take $X=[3]$ and $Y=\{4\}$. We consider $f\in\rmbool([4])$ such that
\begin{align*}
&\forall i,j\in [4],\mbox{ with }i\neq j,&f(\{i,j\})&\neq f(\{i\})+f(\{j\}),\\
&&f(\{1,2,3\})&=f(\{1,2\})+f(\{3\}),\\
&&f(\{1,2,4\})&\neq f(\{1,2\})+f(\{4\}).
\end{align*}
By Proposition \ref{prop2.17} with any $x\in [4]$, any restriction of $f$ is indecomposable, which implies that $\calE^W(f)=\calE([4])$. We consider the relation $\sim$ whose classes are $[2]$, $\{3\}$ and $\{4\}$. As usual, we put $\sim_X=\sim\cap X^2$, whose classes are $[2]$ and $\{3\}$
and $\sim_Y=\sim\cap Y^2=Y^2$. As $\calE^W(f)=\calE([4])$, $\ic(f\mid\sim)=\cl(\sim)$. Moreover,
\begin{align*}
f/{\sim}(\{\overline{1},\overline{4}\})&=f(\{1,2,4\})\neq f(\{1,2\})+f(\{4\})=f/{\sim}(\{\overline{1}\})+f/{\sim}(\{\overline{4}\}),\\
f/{\sim}(\{\overline{3},\overline{4}\})&=f(\{3,4\})\neq f(\{3\})+f(\{4\})=f/{\sim}(\{\overline{3}\})+f/{\sim}(\{\overline{4}\}).
\end{align*}
By Proposition \ref{prop2.17} with $x=\overline{4}$, $f/{\sim}$ is indecomposable, so $\ic(f/{\sim})=\ic(f)=1$. Therefore, $\sim\in\calE^S(f)$. 
On the other hand, 
\[f_{\mid X}/{\sim}_X(\{\overline{1},\overline{3}\})=f(\{1,2,3\})=f(\{1,2\})+f(\{3\})=f_{\mid X}/{\sim}_X(\{\overline{1}\})+f_{\mid X}/{\sim}_X(\{\overline{3}\}).\]
By Proposition \ref{prop2.18}, $f_{\mid X}/{\sim}_X$ is decomposable, so $\ic(f_{\mid X}/{\sim}_X)=2>\ic(f_{\mid X})$: $\sim_X\notin\calE^S(f_{\mid X})$. 
\end{example}

\begin{cor}
We define a coproduct on $\calH_\bfbool$ as follows: 
\begin{align*}
&\forall f\in\rmbool([n]),&\delta^S(\overline{f})&=\sum_{\sim\in\calE^S(f)}\overline{f/{\sim}}\otimes\overline{f\mid\sim}.
\end{align*}
Then:
\begin{enumerate}
\item $(\calH_\bfbool,\star_1,\delta^S)$ is a bialgebra, of counit $\epsilon_\delta$. 
\item $\delta^S$ and $\Delta$ are not compatible.
\item $\delta^S$ and $\varepsilon_\Delta$ are compatible.
\end{enumerate}\end{cor}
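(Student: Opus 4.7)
The plan is to transfer the species-level results of Theorem \ref{theo3.17} to $\calH_\bfbool$ via the bosonic Fock functor $\calF$, invoking the general transfer theorem stated at the end of the previous subsection, and then to handle the failure of compatibility with $\Delta$ by means of an explicit counterexample.

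For item 1, Theorem \ref{theo3.17} asserts that $\calE^S$ satisfies the $\star_1$, the $\delta$, and the $\epsilon$ conditions. By Propositions \ref{prop3.7}, \ref{prop3.9} and \ref{prop3.11}, this means that at the species level $\delta^S$ is compatible with $\star_1$, coassociative, and counitary with counit given by \eqref{EQ2}. Applying the transfer theorem promotes these properties to $\calH_\bfbool=\calF(\bfbool)$: $\delta^S$ becomes an algebra morphism, coassociative, with counit $\epsilon_\delta$ defined by $\epsilon_\delta(\overline{f})=1$ if $f$ is modular and $0$ otherwise. Hence $(\calH_\bfbool,\star_1,\delta^S)$ is a bialgebra with counit $\epsilon_\delta$.

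For item 3, the fifth item of the transfer theorem at the end of the previous subsection is stated unconditionally and gives, for any $\overline{f}\in\calH_\bfbool$, the identity $(\varepsilon_\Delta\otimes\id)\circ\delta^S(\overline{f})=\varepsilon_\Delta(\overline{f})\cdot 1$. A direct verification confirms this: since $\varepsilon_\Delta(\overline{g})=\delta_{g,1}$ and $\overline{f/{\sim}}$ equals the isoclass of $1$ only when $X/{\sim}=\emptyset$, hence $X=\emptyset$ and $f=1$, the expansion $\sum_{\sim\in\calE^S(f)}\varepsilon_\Delta(\overline{f/{\sim}})\,\overline{f\mid\sim}$ vanishes unless $f=1$, in which case it reduces to $1$.

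Item 2 is the main obstacle, since the failure of the $\Delta$ condition at the species level does not automatically imply failure at the Fock level: the quotient by species isomorphisms could in principle conceal the mismatch. I will use the boolean function $f\in\rmbool([4])$ of Example \ref{ex3.6}, together with $X=[3]$, $Y=\{4\}$, $\sim_X$ the equivalence on $X$ with classes $\{1,2\},\{3\}$, and $\sim_Y=Y^2$; setting $\sim:=\sim_X\sqcup\sim_Y$, we have $\sim\in\calE^S(f)$ but $\sim_X\notin\calE^S(f_{\mid X})$. By Lemma \ref{lemma3.6}, $(f/{\sim})_{\mid X/{\sim_X}}=f_{\mid X}/\sim_X$, $(f/{\sim})_{\mid Y/{\sim_Y}}=f_{\mid Y}/\sim_Y$ and $f\mid\sim=(f_{\mid X}\mid\sim_X)\star_1(f_{\mid Y}\mid\sim_Y)$, so the contribution of $\sim$ to $(\Delta\otimes\id)\circ\delta^S(\overline{f})$, paired with the splitting $X/{\sim_X}\sqcup Y/{\sim_Y}$ of $[4]/{\sim}$, equals $\overline{f_{\mid X}/\sim_X}\otimes\overline{f_{\mid Y}/\sim_Y}\otimes\overline{(f_{\mid X}\mid\sim_X)\star_1(f_{\mid Y}\mid\sim_Y)}$. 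On $m_{1,3,24}\circ(\delta^S\otimes\delta^S)\circ\Delta(\overline{f})$, such a tensor can only arise from the splitting $[4]=X\sqcup Y$ together with an equivalence in $\calE^S(f_{\mid X})\times\calE^S(f_{\mid Y})$ matching $(\sim_X,\sim_Y)$, which is excluded. The remaining step, and the genuine difficulty, is to verify that no other splitting together with Fock-level identifications can restore this term: this will require a careful isoclass analysis of the indecomposable components of $f_{\mid X}/\sim_X$ versus $f/\sim$ restricted to $X/\sim_X$, to rule out cancellation.
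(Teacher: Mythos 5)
Your treatment of items 1 and 3 is exactly the paper's: the corollary is obtained by applying the bosonic Fock functor to Theorem \ref{theo3.17}, so the $\star_1$, $\delta$ and $\epsilon$ conditions give that $\delta^S$ is multiplicative, coassociative and counitary on $\calH_\bfbool$ with counit $\epsilon_\delta$, and the fifth item of the transfer statement gives compatibility with $\varepsilon_\Delta$. No issue there.

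The problem is item 2, where your proof is, by your own admission, unfinished. You correctly point out that the failure of the $\Delta$ condition at the species level (Examples \ref{ex3.5} and \ref{ex3.6}) does not formally imply failure of the identity $(\Delta\otimes\id)\circ\delta^S=(\star_1)_{1,3,24}\circ(\delta^S\otimes\delta^S)\circ\Delta$ on $\calH_\bfbool$, since both sides are sums over all splittings and all admissible equivalences and isoclasses could coincide; but then you stop at precisely the point where something must be checked, namely that the term produced by $\sim\in\calE^S(f)$ with $\sim_X\notin\calE^S(f_{\mid X})$ is not recreated or cancelled by other contributions. As it stands, item 2 is asserted, not proved. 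The paper glosses over this as well (it simply writes ``Applying the functor $\calF$'' after Theorem \ref{theo3.17}), so your caution is legitimate, but a blind proof must close it. Two ways to do so: either carry out the finite isoclass computation for a small explicit $f$ (the $f\in\rmbool([3])$ of Example \ref{ex3.5} is smaller than your choice and suffices: compare the components of both sides in which the first two tensor factors have prescribed degrees, e.g.\ $(1,1)$ in the first two legs, and count the occurrences of the relevant isoclasses, using that for such generic $f$ all restrictions are indecomposable so the admissible equivalences are easy to list); or argue indirectly: $(\calH_\bfbool,\star_1,\Delta)$ is connected and, by your items 1 and 3, if $\delta^S$ were moreover compatible with $\Delta$ then $(\calH_\bfbool,\star_1,\Delta,\delta^S)$ would be a connected double bialgebra, and Theorem \ref{theo1.3} would force $\delta\circ\Phi=(\Phi\otimes\Phi)\circ\delta^S$ for the morphism $\Phi=\Phi_{\epsilon_\delta}$; this is contradicted by the explicit computation at the end of Section 5, where $(\Phi\otimes\Phi)\circ\delta^S(\overline{f})\neq\delta\circ\Phi(\overline{f})$ for the boolean function of Example \ref{ex3.4}. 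Either route completes your argument; without one of them there is a genuine gap.
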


\subsection{Convenient boolean functions}

We now go back to our frame $\rmboolt$ and $\calE$, as exposed in Paragraph \ref{section3.2}.

\begin{prop}\label{prop3.19}
We assume that $\calE$ satisfies the $\star_1$, $\delta$, $\Delta$ and $\epsilon$ conditions on $\rmboolt$. 
Then for any $f\in\rmboolt$, $\calE(f)=\calE^S(f)=\calE^W(f)$.
\end{prop}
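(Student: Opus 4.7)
The plan is to prove the chain of inclusions $\calE(f)\subseteq\calE^S(f)\subseteq\calE^W(f)\subseteq\calE(f)$; since $\calE^S(f)\subseteq\calE^W(f)$ is automatic, the work reduces to $\calE(f)\subseteq\calE^S(f)$ and $\calE^W(f)\subseteq\calE(f)$. The essential lever throughout will be Lemma \ref{lemma3.8}: any relation in $\calE(g)$ lies below $\sim_g^i$, so producing the full relation $X^2$ inside $\calE(g)$ instantly proves that $g$ is indecomposable.

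First I will show $\calE(f)\subseteq\calE^W(f)$. Given $\sim\in\calE(f)$ and any class $Y\in X/{\sim}$, the set $Y$ is $\sim$-closed, so with $B=X\setminus Y$ one has $\sim=Y^2\sqcup(\sim\cap B^2)$ and the $\Delta$ condition peels off $Y^2\in\calE(f_{\mid Y})$; Lemma \ref{lemma3.8} then forces $f_{\mid Y}$ indecomposable. I upgrade this to $\calE(f)\subseteq\calE^S(f)$ by writing $f=f_{\mid X_1}\star_1\cdots\star_1 f_{\mid X_k}$ and $\sim=\sim_1\sqcup\cdots\sqcup\sim_k$ with $\sim_i\in\calE(f_{\mid X_i})$ through the $\star_1$ condition. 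On each indecomposable piece $f_{\mid X_i}$ one has $\sim_i\subseteq X_i^2=\sim_{f_{\mid X_i}}^i$, so I apply the $\delta$ condition to $\sim_i\subseteq X_i^2$: branch (2) reads $X_i^2\in\calE(f_{\mid X_i})$ (by the $\epsilon$ condition) and $\sim_i\in\calE(f_{\mid X_i}\mid X_i^2)=\calE(f_{\mid X_i})$, both valid. Branch (1) then yields $(X_i/{\sim}_i)^2\in\calE(f_{\mid X_i}/{\sim}_i)$, and Lemma \ref{lemma3.8} applied once more gives $f_{\mid X_i}/{\sim}_i$ indecomposable. Lemma \ref{lemma3.4} finally delivers $\ic(f/{\sim})=k=\ic(f)$.

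For the reverse inclusion $\calE^W(f)\subseteq\calE(f)$ I induct on $|X|$, the cases $|X|\leq 1$ being handled by the $\epsilon$ condition. In the inductive step I reduce to $f$ indecomposable via the $\star_1$ condition. Given $\sim\in\calE^W(f)$, if $\sim=X^2=\sim_f^i$ then $\epsilon$ finishes; otherwise pick any class $Y\in X/{\sim}$ so that $B=X\setminus Y$ is nonempty with $|B|<|X|$. Decomposing $\sim=Y^2\sqcup(\sim\cap B^2)$ and invoking the $\Delta$ condition, I verify $Y^2\in\calE(f_{\mid Y})$ (which holds since $f_{\mid Y}$ is indecomposable, so $Y^2=\sim_{f_{\mid Y}}^i\in\calE(f_{\mid Y})$ by $\epsilon$) and $\sim\cap B^2\in\calE(f_{\mid B})$; the latter belongs to $\calE^W(f_{\mid B})$ because its classes are exactly the $\sim$-classes other than $Y$ and hence have indecomposable restrictions, so the inductive hypothesis applied to $f_{\mid B}$ closes the argument.

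The main obstacle I anticipate is noticing that the $\Delta$ condition, though formulated for a pre-given bipartition of the underlying set, becomes usable here because any single $\sim$-class $Y$ already induces a $\sim$-respecting partition $X=Y\sqcup(X\setminus Y)$; coupled with Lemma \ref{lemma3.8}'s ability to turn ``$X^2\in\calE(g)$'' into ``$g$ is indecomposable'', this is what converts each abstract membership $\sim\in\calE(f)$ into the concrete indecomposability assertions about restrictions and contractions that define $\calE^W$ and $\calE^S$.
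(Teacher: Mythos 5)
Your proof is correct and follows essentially the same route as the paper's: the $\Delta$ condition combined with Lemma \ref{lemma3.8} to force each $\sim$-class restriction to be indecomposable, the $\delta$ and $\epsilon$ conditions applied to the pair $\sim\subseteq\sim_f^i$ (which you run componentwise through the $\star_1$ condition rather than globally) to obtain $\ic(f/{\sim})=\ic(f)$, and the $\epsilon$ plus $\Delta$ conditions to rebuild any $\sim\in\calE^W(f)$ class by class, your induction on $|X|$ being just a formalization of the paper's ``after iterations''. I see no gaps.
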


\begin{proof}
Let $X$ be a finite set, $f\in\rmbool(X)$ and $\sim\in\calE(f)$. As $\calE$ satisfies the $\star_1$ condition, by Lemma \ref{lemma3.8}, $\sim\subseteq\sim_f^i$. Moreover, $\sim_{f/{\sim}}^i\subseteq\overline{\sim_f^i}$. 
Let $\sim'$ be the unique element of $\calE(X)$ such that $\sim\subseteq\sim'$ and $\overline{\sim'}=\sim_{f/{\sim}}^i$. As $\calE$ satisfies the $\epsilon$ condition, $\overline{\sim'}\in\calE(f/{\sim})$.
As $\calE$ satisfies the $\delta$ condition ($1.\Longrightarrow 2.$), $\sim'\in\calE(f)$. By the $\epsilon$ condition, $\sim_f^i\in\calE(f)$. Moreover, $\sim'\in\calE(f)=\calE(f\mid\sim_f^i)$. By the $\delta$ condition
($2.\Longrightarrow 1$), $\overline{\sim_f^i}\in\calE(f/{\sim})$. By Lemma \ref{lemma3.8}, $\overline{\sim_f^i}\subseteq\sim_{f/{\sim}}^i$. We obtain that $\sim_{f/{\sim}}^i=\overline{\sim_f^i}$. Consequently,
\[\ic(f/{\sim})=\cl(\sim_{f/{\sim}}^i)=\cl(\overline{\sim_f^i})=\cl(\sim_f^i)=\ic(f).\] 
Let $Y\in X/{\sim}$ and $Z=X\setminus Y$. Then, by the $\Delta$ condition, $\sim_Y=\sim\cap Y^2\in\calE(f_{\mid Y})$, so $\sim_Y\subseteq\sim_{f_{\mid Y}}^i$. But $\sim_Y$ has a unique class, namely $Y$,
so $\sim_{f_{\mid Y}}$ has also a unique class: $f_{\mid Y}$ is indecomposable. Therefore, $\ic(f\mid\sim)=\cl(\sim)$. We obtain that $\calE(f)\subseteq\calE^S(f)$.\\

Let $\sim\in\calE^W(f)$. We now show that $\sim\in\calE(f)$. Let $Y\in X/{\sim}$. Then $\sim_Y=\sim\cap Y^2=Y^2=\sim_{f_{\mid Y}}^i$,
as $f_{\mid Y}$ is indecomposable. By the $\epsilon$ condition, this belongs to $\calE(f_{\mid Y})$. By the $\Delta$ condition ($2.\Longrightarrow 1.$), after iterations,
\[\sim=\bigsqcup_{Y\in X/{\sim}}\sim_Y\in\calE(f).\]
We obtain that $\calE^W(f)\subseteq\calE(f)$. For any boolean function, $\calE^S(f)\subseteq\calE^W(f)$, which gives the equality.
\end{proof}

\begin{cor}
There is no family $\calE$ on $\rmbool$ satisfying the $\star_1$, $\Delta$, $\delta$ and $\epsilon$ condition.
\end{cor}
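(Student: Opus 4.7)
The plan is to argue by contradiction using Proposition \ref{prop3.19} directly: if such a family $\calE$ existed on all of $\rmbool$ (which is trivially a valid choice of $\rmboolt$, since the three stability conditions of Paragraph \ref{section3.2} are obvious for $\rmboolt = \rmbool$), then Proposition \ref{prop3.19} would force $\calE^W(f) = \calE^S(f)$ for every boolean function $f$. So the whole task reduces to exhibiting a single boolean function $f \in \rmbool(X)$ for which $\calE^W(f) \neq \calE^S(f)$.

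Such an $f$ is already provided by Example \ref{ex3.4}: take the boolean function $f \in \rmbool([3])$ with $f(\{i,j\}) \neq f(\{i\}) + f(\{j\})$ for all distinct $i,j$, and $f(\{1,2,3\}) = f(\{1,2\}) + f(\{3\})$. Proposition \ref{prop2.17} (applied at any point) ensures that every nonempty restriction of $f$ is indecomposable, so $\calE^W(f) = \calE([3])$. On the other hand, for the equivalence $\sim$ with classes $\{1,2\}$ and $\{3\}$, the boolean function $f/{\sim}$ is modular on a two-element set, hence $\ic(f/{\sim}) = 2 > 1 = \ic(f)$, which shows $\sim \in \calE^W(f) \setminus \calE^S(f)$.

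Combining these two observations yields the contradiction. Concretely, the proof reads: suppose $\calE$ is a family on $\rmbool$ satisfying the four conditions. Apply Proposition \ref{prop3.19} to $\rmboolt = \rmbool$ to conclude that $\calE^W(f) = \calE^S(f)$ for all $f \in \rmbool$. But the boolean function of Example \ref{ex3.4} contradicts this equality, so no such $\calE$ exists.

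There is no real obstacle here, since both the structural implication (Proposition \ref{prop3.19}) and the concrete separating example (Example \ref{ex3.4}) are already established earlier in the paper; the content of the corollary is essentially a one-line combination of those two facts. The only subtlety worth mentioning explicitly is that one must point out that $\rmboolt = \rmbool$ is itself a valid choice for the frame of Paragraph \ref{section3.2}, so that Proposition \ref{prop3.19} can indeed be invoked on the whole species.
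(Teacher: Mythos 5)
Your proposal is correct and is essentially the paper's own argument: the paper's proof of this corollary consists exactly of invoking Proposition \ref{prop3.19} (with $\rmboolt=\rmbool$, for which all frame hypotheses hold trivially) together with the separating function of Example \ref{ex3.4} where $\calE^S(f)\neq\calE^W(f)$. Nothing further is needed.
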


\begin{proof}
As there exist boolean functions $f$ such that $\calE^S(f)\neq\calE^W(f)$, see Example \ref{ex3.4}.
\end{proof}

These results lead us to the following definition:

\begin{defi}\label{defi3.21}
Let $\rmboolt$ be a set subspecies of $\rmbool$.
We shall say that it is convenient if:
\begin{itemize}
\item $1\in\rmboolt(\emptyset)$
\item $\rmboolt$ is stable under $\star_1$: if $X,Y$ are disjoint finite sets, $f\in\rmboolt(X)$ and $g\in\rmboolt(Y)$, then $f\star_1 g\in\rmboolt(X\sqcup Y)$.
\item $\rmboolt$ is stable under $\Delta$: if $Y\subseteq X$ are two finite sets and $f\in\rmboolt(X)$, then $f_{\mid Y}\in\rmboolt(Y)$.
\item $\rmboolt$ is stable under $\delta^W$: if $X$ is a finite set, $f\in\rmboolt(X)$ and $\sim\in\calE^W(f)$, then $f/{\sim}\in\rmboolt(X/{\sim})$.
\item For any finite set $X$, for any $f\in\rmboolt(X)$, $\calE^W(f)=\calE^S(f)$.
\end{itemize}\end{defi}

\begin{theo}
Let $\rmboolt$ be a convenient subspecies of $\rmbool$. We denote by $\bfboolt$ its linearization and by $\calH_{\bfboolt}$ the subspace of $\calH_{\bfbool}$ generated by isoclasses of boolean functions in $\rmboolt$. 
Then:
\begin{enumerate}
\item $(\rmboolt,\star_1,\Delta)$ is a twisted bialgebra.
\item There exists a unique family $\calE$ of equivalences such that $\delta^\calE$ satisfies the $\star_1$, $\Delta$, $\delta$ and $\epsilon$ conditions on $\rmboolt$. Moreover, for any finite set $X$, and for any $f\in\rmboolt(X)$,
\begin{align*}
\calE(f)&=\calE^S(f)=\calE^W(f).
\end{align*}
\item $(\calH_{\bfboolt},\star_1,\Delta,\delta^W)=(\calH_{\bfboolt},\star_1,\Delta,\delta^S)$ is a double bialgebra. 
\end{enumerate}\end{theo}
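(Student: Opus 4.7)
The plan is to bootstrap from the general results established earlier: Theorem~\ref{theo2.19} produces the ambient twisted bialgebra $(\bfbool,\star_1,\Delta)$; Theorems~\ref{theo3.14} and \ref{theo3.17} record which of the four conditions are satisfied by $\calE^W$ and by $\calE^S$; Proposition~\ref{prop3.19} supplies uniqueness; and the bosonic Fock functor of \cite{Aguiar2010} lifts everything to $\calH_{\bfboolt}$.

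For item~(1), the three defining closure properties of a convenient subspecies (containing $1$, closed under $\star_1$, closed under restriction $f\mapsto f_{\mid Y}$) say exactly that $1\in\bfboolt(\emptyset)$ and that $\bfboolt$ is closed under the structure maps of $(\bfbool,\star_1,\Delta)$; so it inherits the twisted bialgebra axioms and $(\bfboolt,\star_1,\Delta)$ is a twisted subbialgebra.

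For item~(2), the candidate family is $\calE:=\calE^W=\calE^S$, the two coinciding on $\rmboolt$ by the last clause of Definition~\ref{defi3.21}. First I check that $\delta^\calE_\sim$ lands in $\bfboolt\otimes\bfboolt$: for $\sim\in\calE(f)$, $f/{\sim}\in\rmboolt$ is precisely the stability under $\delta^W$ listed in Definition~\ref{defi3.21}, and $f\mid\sim=\prod^{\star_1}_{Y\in X/{\sim}}f_{\mid Y}\in\rmboolt$ follows from closure under $\Delta$ and $\star_1$. Next, the $\star_1$ and $\Delta$ conditions come for free from Theorem~\ref{theo3.14} applied to $\calE^W$, while the $\delta$ and $\epsilon$ conditions come from Theorem~\ref{theo3.17} applied to $\calE^S$; the agreement $\calE^W=\calE^S$ on $\rmboolt$ lets me translate freely between the two descriptions. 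Uniqueness is then immediate from Proposition~\ref{prop3.19}: any $\calE'$ satisfying all four conditions must obey $\calE'(f)=\calE^W(f)=\calE^S(f)$ for every $f\in\rmboolt$.

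For item~(3), I feed the twisted bialgebra with contraction-restriction coproduct $(\bfboolt,\star_1,\Delta,\delta^\calE)$ obtained in~(2) into the Fock functor $\calF$. The five compatibilities of a contraction-restriction coproduct recalled in Section~\ref{section1} are exactly the four conditions verified above, plus the compatibility with $\varepsilon_\Delta$; the latter is automatic since $\rmboolt(\emptyset)=\{1\}$ forces $\bfboolt(\emptyset)=\K\cdot 1$. The general construction at the end of Section~\ref{section1} then outputs a double bialgebra $(\calH_{\bfboolt},\star_1,\Delta,\delta^\calE)$, whose second coproduct equals $\delta^W$ and $\delta^S$ computed on $\calH_{\bfboolt}$ because $\calE=\calE^W=\calE^S$ on $\rmboolt$. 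The only real point to watch, small but worth isolating, is that the notions $\calE^W(f)$ and $\calE^S(f)$, initially defined on all of $\rmbool$, do descend intrinsically to $\rmboolt$: this is precisely what the stability axioms of Definition~\ref{defi3.21} were tailored to guarantee, since every $f_{\mid Y}$ and every $f/{\sim}$ appearing in those definitions remains in $\rmboolt$.
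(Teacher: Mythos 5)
Your proposal is correct and follows essentially the same route as the paper: item (1) from the closure properties, item (2) by taking $\calE=\calE^W=\calE^S$ on $\rmboolt$ so that the $\star_1$ and $\Delta$ conditions are inherited from Theorem~\ref{theo3.14} and the $\delta$ and $\epsilon$ conditions from Theorem~\ref{theo3.17}, with uniqueness from Proposition~\ref{prop3.19}, and item (3) by applying the functor $\calF$. The extra checks you isolate (that $f/{\sim}$ and $f\mid\sim$ stay in $\rmboolt$, and the $\varepsilon_\Delta$ compatibility) are exactly what the paper's frame in Section~\ref{section3.2} and Definition~\ref{defi3.21} are designed to supply, so they are welcome detail rather than a divergence.
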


\begin{proof}
1. Immediate, as $\rmboolt$ is stable under $\star_1$ and $\Delta$.\\

2. As $\calE^W(f)=\calE^S(f)$ for any $f\in\rmboolt(X)$,
the restriction of $\calE^W$ to $\rmboolt$ satisfies the $\star_1$ and $\Delta$ conditions (as does $\calE^W$) and the $\delta$ and $\epsilon$ condition (as does $\calE^S$). By Proposition \ref{prop3.19}, it is the unique family of equivalences satisfying all these properties.\\

3. is a direct consequence of 1. and 2. after application of the functor $\calF$. 
\end{proof}

Conversely: 

\begin{theo}
Let $\rmboolt$ be a set subspecies of $\rmbool$. We denote by $\bfboolt$ its linearization. We assume that:
\begin{enumerate}
\item $\bfboolt$ is a twisted subbialgebra of $(\bfbool,\star_1,\Delta)$.
\item There exists a family $\calE$ of equivalences defined on $\rmboolt$ which satisfies the $\star_1$, $\Delta$, $\delta$ and $\epsilon$ conditions.
\end{enumerate}
Then $\rmboolt$ is convenient, and for any finite set $X$, and any $f\in\rmboolt(X)$,
\[\calE(f)=\calE^W(f)=\calE^S(f).\]
\end{theo}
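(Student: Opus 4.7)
The plan is to reduce everything to Proposition \ref{prop3.19} and the standing hypotheses on $\calE$ from Paragraph \ref{section3.2}. First I would observe that the equality $\calE(f)=\calE^W(f)=\calE^S(f)$ for every $f\in\rmboolt(X)$ is \emph{exactly} the conclusion of Proposition \ref{prop3.19}, which applies because $\calE$ satisfies the four conditions by hypothesis. So this part requires no new argument.

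Next I would verify the five bullets of Definition \ref{defi3.21} one by one. The first bullet ($1\in\rmboolt(\emptyset)$) and the second ($\star_1$-stability) hold because $\bfboolt$ is a twisted subbialgebra of $(\bfbool,\star_1,\Delta)$: the unit of the subbialgebra is forced to be $1$, and the product of two elements of $\rmboolt$ lies in $\rmboolt$. The third bullet ($\Delta$-stability, i.e.\ stability under restriction) follows from the fact that $\Delta_{X,Y}(f)=f_{\mid X}\otimes f_{\mid Y}$ must land in $\bfboolt(X)\otimes\bfboolt(Y)$, so each restriction $f_{\mid Y}$ is again in $\rmboolt$. The fifth bullet ($\calE^W(f)=\calE^S(f)$) is the equality already obtained from Proposition \ref{prop3.19}.

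The only bullet requiring the hypothesis on $\calE$ directly is the fourth: stability under $\delta^W$. Let $f\in\rmboolt(X)$ and $\sim\in\calE^W(f)$. By the equality just established, $\sim\in\calE(f)$. But the standing assumption on the family $\calE$ in Paragraph \ref{section3.2} is precisely that $f/{\sim}\in\rmboolt(X/{\sim})$ whenever $\sim\in\calE(f)$. Hence $f/{\sim}\in\rmboolt(X/{\sim})$, proving stability under $\delta^W$ (and simultaneously under $\delta^S$).

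I do not anticipate a genuine obstacle here: the theorem is essentially a bookkeeping consequence of Proposition \ref{prop3.19}, once one has matched the five bullets of Definition \ref{defi3.21} against the hypotheses. The only subtle point is making sure that the contraction stability in Definition \ref{defi3.21} is stated for $\calE^W$ while the hypothesis gives it for $\calE$; this is resolved precisely by invoking the equality $\calE(f)=\calE^W(f)$ before appealing to the standing assumption on $\calE$.
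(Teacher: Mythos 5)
Your proposal is correct and follows the same route as the paper: extract the first three bullets of Definition \ref{defi3.21} from the subbialgebra hypothesis, get $\calE(f)=\calE^W(f)=\calE^S(f)$ from Proposition \ref{prop3.19}, and then obtain $\delta^W$-stability by combining this equality with the standing contraction-stability assumption on $\calE$ from Paragraph \ref{section3.2}. The subtlety you flag (contraction stability is hypothesized for $\calE$ but needed for $\calE^W$) is exactly the point the paper's proof also resolves via the equality of families.
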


\begin{proof}
As $\bfboolt$ is a subbialgebra of $(\bfbool,\star_1,\Delta)$:
\begin{itemize}
\item $1\in\rmboolt(\emptyset)$.
\item If $X,Y$ are two disjoint finite sets, $f\in\rmboolt(X)$ and $g\in\rmboolt(Y)$, then $f\star_1 g\in\rmboolt(X\sqcup Y)$: $\rmboolt$ is stable under $\star_1$.
\item If $Y\subseteq X$ are two finite sets and $f\in\rmboolt(X)$, then $\Delta_{Y,X\setminus Y}(f)=f_{\mid Y}\otimes f_{\mid X\setminus Y}\in\bfboolt(Y)\otimes\bfboolt(X\setminus Y)$, so $f_{\mid Y}\in\rmboolt(Y)$: $\rmboolt$ is stable under $\Delta$.
\end{itemize}
By Proposition \ref{prop3.19}, for any $f\in\rmboolt(X)$, $\calE(f)=\calE^W(f)=\calE^S(f)$. Moreover, if $\sim\in\calE^W(f)$, $\delta^W_\sim(f)=f/{\sim}\otimes f\mid\sim\in\bfboolt(X/{\sim})\otimes\bfboolt(X)$, so $f/{\sim}\in\rmboolt(X/{\sim})$. We obtain that $\rmboolt$ is convenient.
\end{proof}

\begin{prop}\label{prop3.24}
There exists a convenient subspecies $\rmboolmax$, maximal for the inclusion.
\end{prop}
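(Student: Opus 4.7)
The plan is to take $\rmboolmax$ to be the union of all convenient subspecies of $\rmbool$, and then to verify that this union is itself convenient; this automatically forces it to be the (unique) maximum element for inclusion. As a preliminary, I would note that convenient subspecies exist: the subspecies $\mathcal{T}_0$ with $\mathcal{T}_0(\emptyset)=\{1\}$ and $\mathcal{T}_0(X)=\emptyset$ for every nonempty finite set $X$ trivially satisfies all five conditions of Definition \ref{defi3.21}. Setting $\rmboolmax(X):=\bigcup_{\mathcal{T}} \mathcal{T}(X)$, where $\mathcal{T}$ ranges over all convenient subspecies of $\rmbool$, the conditions of convenience that depend only on a single element transfer immediately: any $f\in\rmboolmax(X)$ belongs to some convenient $\mathcal{T}$, hence $f_{\mid Y}\in\mathcal{T}\subseteq\rmboolmax$ for every $Y\subseteq X$, $f/{\sim}\in\mathcal{T}\subseteq\rmboolmax$ for every $\sim\in\calE^W(f)$, and $\calE^W(f)=\calE^S(f)$.

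The crux of the argument is stability under $\star_1$: given $f\in\mathcal{T}_1(X)$ and $g\in\mathcal{T}_2(Y)$ with $X,Y$ disjoint, lying in two possibly different convenient subspecies, we must exhibit a convenient subspecies containing $f\star_1 g$. I would introduce $\mathcal{T}_1\boxplus\mathcal{T}_2$, the smallest subspecies containing $\mathcal{T}_1\cup\mathcal{T}_2$ and closed under $\star_1$, $\Delta$ and $\delta^W$, and then establish the concrete description: $\mathcal{T}_1\boxplus\mathcal{T}_2$ consists precisely of $\star_1$-products $h_1\star_1\cdots\star_1 h_k$ in which each $h_i$ is $\star_1$-indecomposable and lies in $\mathcal{T}_1\cup\mathcal{T}_2$. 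Closure of the set of such products under $\Delta$ follows from Lemma \ref{lemma2.4} combined with the stability of each $\mathcal{T}_j$ under $\Delta$ (so that the indecomposable components of restrictions remain in $\mathcal{T}_j$); closure under $\delta^W$ follows from the $\star_1$ condition satisfied by $\calE^W$ (Proposition \ref{prop3.7}, Theorem \ref{theo3.14}), which forces every $\sim\in\calE^W(h)$ to decompose as $\sim_1\sqcup\cdots\sqcup\sim_k$ with $\sim_i\in\calE^W(h_i)$, combined with Lemma \ref{lemma3.4} which gives $h/{\sim}=(h_1/{\sim}_1)\star_1\cdots\star_1(h_k/{\sim}_k)$ where each factor remains in its ambient $\mathcal{T}_j$ by convenience.

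Finally, I would verify the pointwise equality $\calE^W(h)=\calE^S(h)$ for every $h\in\mathcal{T}_1\boxplus\mathcal{T}_2$. Writing $h=h_1\star_1\cdots\star_1 h_k$ in indecomposable factors, the $\star_1$ conditions for both $\calE^W$ and $\calE^S$ (Theorems \ref{theo3.14} and \ref{theo3.17}) yield
\[ \calE^W(h)=\{\sim_1\sqcup\cdots\sqcup\sim_k\mid\sim_i\in\calE^W(h_i)\}, \qquad \calE^S(h)=\{\sim_1\sqcup\cdots\sqcup\sim_k\mid\sim_i\in\calE^S(h_i)\}. \]
Since each $h_i$ lies in some convenient $\mathcal{T}_j$, we have $\calE^W(h_i)=\calE^S(h_i)$, and the two sets coincide. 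Thus $\mathcal{T}_1\boxplus\mathcal{T}_2$ is convenient and contains $f\star_1 g$, which completes stability of $\rmboolmax$ under $\star_1$ and therefore the proof. The main obstacle is establishing the explicit closure description of $\mathcal{T}_1\boxplus\mathcal{T}_2$ as iterated $\star_1$-products of indecomposables drawn from $\mathcal{T}_1\cup\mathcal{T}_2$; once that description is in hand, convenience follows cleanly from the parallel $\star_1$-decompositions of $\calE^W$ and $\calE^S$.
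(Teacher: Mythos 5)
Your proposal is correct, but it is organized differently from the paper. The paper defines $\rmboolmax$ directly, by induction on the cardinality of $X$: $f\in\rmboolmax(X)$ iff $\calE^W(f)=\calE^S(f)$, all strict restrictions $f_{\mid Y}$ lie in $\rmboolmax(Y)$, and all nontrivial contractions $f/{\sim}$ with $\sim\in\calE^W(f)$ lie in $\rmboolmax(X/{\sim})$; stability under $\Delta$ and $\delta^W$ and the condition $\calE^W=\calE^S$ are then built into the definition, and maximality is proved separately by showing (again by induction on $|X|$) that every convenient subspecies is contained in $\rmboolmax$. You instead take $\rmboolmax$ to be the union of all convenient subspecies, which makes the single-element conditions and maximality automatic, and you concentrate all the work in the $\star_1$-stability. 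Interestingly, the hard step is then essentially identical in both arguments: the paper's last step forms the twisted submonoid $\rmboolt(X)=\bigcup_{\pi\in\parti(X)}\prod^{\star_1}_{Y\in\pi}\rmboolmax(Y)$ and checks it is convenient using exactly the ingredients you invoke (the $\star_1$ conditions for $\calE^W$ and $\calE^S$ from Theorems \ref{theo3.14} and \ref{theo3.17}, Lemma \ref{lemma2.4} for restrictions, Lemma \ref{lemma3.4} for contractions), then concludes $\rmboolt=\rmboolmax$ by the already-established maximality; your binary join $\mathcal{T}_1\boxplus\mathcal{T}_2$ is the same monoid-generation trick applied to two convenient subspecies instead of to $\rmboolmax$ itself. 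What your route buys is a cleaner existence-plus-maximality statement with no recursive characterization of $\rmboolmax$; what the paper's route buys is the explicit inductive description of $\rmboolmax(X)$, which it reuses later (e.g.\ in Proposition \ref{prop4.15} to compute $\rmboolmax$ on small sets). One small point to tighten: in your concrete description of $\mathcal{T}_1\boxplus\mathcal{T}_2$ as products of indecomposables of $\mathcal{T}_1\cup\mathcal{T}_2$, after restricting or contracting a factor you should either note that the factor stays indecomposable (for contractions this follows from $\calE^W(h_i)=\calE^S(h_i)$) or re-decompose it into its indecomposable components, which remain in the ambient $\mathcal{T}_j$ by its stability under $\Delta$; with that remark the closure verification is complete.
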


\begin{proof}
\textit{First step}. Let us define $\rmboolmax(X)$ by induction on $|X|$. If $|X|=0$, then $\rmboolmax(\emptyset)=\{1\}$. Let us assume that $\rmboolmax(Y)$ is defined for any set $Y$ with $|Y|<|X|$. Let $f\in\rmbool(X)$.
Then $f\in\rmboolmax(X)$ if the following conditions are verified:
\begin{enumerate}
\item $\calE^W(f)=\calE^S(f)$.
\item For any $Y\subsetneq X$, $f_{\mid Y}\in\rmboolmax(Y)$.
\item For any $\sim\in\calE^W(f)$, not equal to $=_X$, $f/{\sim}\in\rmboolmax(X/{\sim})$.
\end{enumerate}
This is well defined: if $Y\subsetneq X$, then $|Y|<|X|$ and if $\sim$ is not equal to $=_X$, then $|X/{\sim}|<|X|$. By definition, $\rmboolmax$ verifies the fourth item of Definition \ref{defi3.21}.
If $f\in\rmbool(X)$ and $Y\subseteq X$, then:
\begin{itemize}
\item If $Y=X$, $f_{\mid Y}=f\in\rmboolmax(X)$.
\item If $Y\subsetneq X$, then $f_{\mid Y}\in\rmboolmax(Y)$ by definition of $\rmboolmax$. 
\end{itemize}
So $\rmboolmax$ is stable under $\Delta$ (second item of Definition \ref{defi3.21}).
If $\sim\in\calE^W(f)$, then:
\begin{itemize}
\item If $\sim$ is $=_X$, then $f/{\sim}=f\in\rmboolmax(X)$.
\item Otherwise, $f/{\sim}\in\rmboolmax(X/{\sim})$ by definition of $\rmboolmax$. 
\end{itemize} 
So $\rmboolmax$ is stable under $\delta^W$ (third item of Definition \ref{defi3.21}).\\

\textit{Second step}. Let $\rmboolt$ be a convenient family. Let us prove that $\rmboolt(X)\subseteq\rmboolmax(X)$ for any finite set $X$ by induction on $|X|$.
If $X=\emptyset$, then $\rmboolt(\emptyset)=\{1\}=\rmboolmax(\emptyset)$. Let us assume the result for all sets $Y$ with $|Y|<|X|$. Let $f\in\rmboolt(X)$. 
As $\rmboolt$ is convenient, $\calE^S(f)=\calE^W(f)$. If $Y\subsetneq X$, then, as $\rmboolt$ is convenient, $f_{\mid Y}\in\rmboolt(Y)$. As $|Y|<|X|$, $f_{\mid Y}\in\rmboolmax(Y)$.
Let $\sim\in\calE^W(f)$, not equal to $=_X$. As $\rmboolt$ is convenient, $f/{\sim}\in\rmboolt(X/{\sim})$. As $|X/{\sim}|<|X|$, $f/{\sim}\in\rmboolmax(X/{\sim})$. Therefore, we obtain that $f\in\rmboolmax(X)$.\\

\textit{Last step}. It finally remains to prove that $\rmboolmax$ is stable under $\star_1$: together with the preceding steps, it will show that $\rmboolmax$ is indeed convenient (first and last steps), 
and maximal for the inclusion along convenient subspecies (second step). We consider the subspecies $\rmboolt$ such that for any nonempty finite set $X$,
\[\rmboolt(X)=\bigcup_{\pi\in\parti(X)}\prod_{Y\in\pi}^{\star_1}\rmboolmax(Y).\]
 We also take $\rmboolt(\emptyset)=\{1\}$. In other words, $\rmboolt$ is the twisted submonoid of $(\rmbool,\star_1)$ generated by $\rmboolmax$. It is by construction stable under $\star_1$, and contains $\rmboolmax$.

Let $f\in\rmboolt(X)$. There exist a partition $\{X_1,\ldots,X_k\}\in\parti(X)$ and $f_i\in\rmboolmax(f)$ for $i\in [k]$ such that $f=f_1\star_1\cdots\star_1 f_k$.
As $\calE^W$ and $\calE^S$ both satisfy the $\star_1$ condition, and by the first item of the definition of $\rmboolmax$ applied to $f_i$,
\begin{align*}
\calE^W(f)&=\{\sim_1\sqcup\cdots\sqcup\sim_k\mid\forall i\in [k],\:\sim_i\in\calE^W(f_i)\}\\
&=\{\sim_1\sqcup\cdots\sqcup\sim_k\mid\forall i\in [k],\:\sim_i\in\calE^S(f_i)\}=\calE^S(f).
\end{align*}
Let $Y\subseteq X$. Then
\[f_{\mid Y}={f_1}_{\mid X_1\cap Y}\star_1\ldots\star_1 {f_k}_{\mid X_k\cap Y}.\]
As $\rmboolmax$ satisfies the stability under $\Delta$ (see the first step), for any $i\in [k]$, ${f_i}_{\mid X_i\cap Y}\in\rmboolmax(X_i\cap Y)$, so $f_{\mid Y}\in\rmboolt(Y)$: $\rmboolt$ is stable under $\Delta$.
Let now $\sim\in\calE^W(f)$. We put $\sim_i=\sim\cap X_i^2$. As $\calE^W$ satisfies the $\star_1$ condition, $\sim=\sim_1\sqcup\cdots\sqcup\sim_k$, and
\[f/{\sim}=(f_1/{\sim}_1)\star_1\cdots\star_1 (f_k/{\sim}_k).\]
As $\rmboolmax$ is stable under $\delta^W$ (first step), for any $i\in [k]$, $f_i/{\sim}_i\in\rmboolmax(X_i/{\sim}_i)$, so $f/{\sim}\in\rmboolt(X/{\sim})$: $\rmboolt$ is stable under $\delta^W$, and finally $\rmboolt$ is convenient.
By the second step, $\rmboolt\subseteq\rmboolmax$. As a conclusion, $\rmboolmax=\rmboolt$, so $\rmboolmax$ is stable under $\star_1$.
\end{proof}

\section{Other convenient subspecies of boolean functions}

It seems rather difficult to explicitly describe $\rmboolmax$. We now look for smaller, but easier to handle with, convenient subspecies. Firstly, we work on the lack of counity of $\delta^W$.

\subsection{Counitary boolean functions}

\begin{lemma}\label{lemma4.1}
Let $f\in\rmbool(X)$ and $\sim\in\calE^W(f)$. Then 
\[\calE^W(f/{\sim})\subseteq\{\overline{\sim'}\mid\sim'\in\calE^W(f),\:\sim\subseteq\sim'\}.\]
\end{lemma}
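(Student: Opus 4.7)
The plan is to take any element of $\calE^W(f/{\sim})$ and exhibit it as the image of some $\sim'\in\calE^W(f)$ with $\sim\subseteq\sim'$ under the correspondence $\{\sim'\in\calE(X)\mid\sim\subseteq\sim'\}\cong\calE(X/{\sim})$. So fix $\overline{\sim'}\in\calE^W(f/{\sim})$ and let $\sim'\in\calE(X)$ be the unique equivalence refining $\sim$ that pushes forward to $\overline{\sim'}$. By Lemma \ref{lemma3.5}(1), showing $\sim'\in\calE^W(f)$ amounts to verifying that $f_{\mid W}$ is indecomposable for every class $W\in X/{\sim'}$.

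Fix such a class $W$. Its image $\overline{W}:=\varpi_\sim(W)$ is a class of $\overline{\sim'}$ in $X/{\sim}$, with $\varpi_\sim^{-1}(\overline{W})=W$. Setting $\sim_W=\sim\cap W^2$, the preceding lemma on restriction of a contraction (applied with $Y=\overline{W}$) yields
\[(f/{\sim})_{\mid\overline{W}}=(f_{\mid W})/{\sim_W}.\]
Since $\overline{\sim'}\in\calE^W(f/{\sim})$, Lemma \ref{lemma3.5}(1) applied to $f/{\sim}$ tells us that $(f/{\sim})_{\mid\overline{W}}$ is indecomposable, and hence so is $(f_{\mid W})/{\sim_W}$.

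Because $\sim\subseteq\sim'$ and $W$ is a class of $\sim'$, the classes of $\sim_W$ on $W$ are precisely those classes of $\sim$ that are contained in $W$. For any such class $Y$, $(f_{\mid W})_{\mid Y}=f_{\mid Y}$, and this is indecomposable since $\sim\in\calE^W(f)$ (again by Lemma \ref{lemma3.5}(1)). Therefore $\sim_W\in\calE^W(f_{\mid W})$, and Lemma \ref{lemma3.5}(2) applied to $f_{\mid W}$ then gives
\[\ic(f_{\mid W})\leq\ic\bigl((f_{\mid W})/{\sim_W}\bigr)=1.\]
As $W$ is nonempty this forces $\ic(f_{\mid W})=1$, so $f_{\mid W}$ is indecomposable, as wanted.

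There is no real obstacle in this argument: the two essential ingredients are the commutation $(f/{\sim})_{\mid\overline{W}}=(f_{\mid W})/{\sim_W}$, which rewrites a restriction of the contraction $f/{\sim}$ as a contraction of a restriction of $f$, and the monotonicity $\ic(g)\leq\ic(g/{\sim})$ of Lemma \ref{lemma3.5}(2), which lets us promote the indecomposability of $(f_{\mid W})/{\sim_W}$ back to the indecomposability of $f_{\mid W}$. Running this across every $W\in X/{\sim'}$ yields $\sim'\in\calE^W(f)$, and by construction $\sim\subseteq\sim'$ with image $\overline{\sim'}$ in $\calE(X/{\sim})$, which is exactly the inclusion claimed.
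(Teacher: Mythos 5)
Your argument is correct, but it follows a genuinely different route from the paper's. The paper proceeds by contradiction at the level of global counts: assuming $\sim'\notin\calE^W(f)$, it gets $\ic(f\mid\sim')>\cl(\sim')$ from Lemma \ref{lemma3.5}, then uses the identities of Lemma \ref{lemma3.2} (namely $(f\mid\sim')\mid\sim=f\mid\sim$ and $(f/{\sim})\mid\overline{\sim'}=(f\mid\sim')/{\sim}$) together with Lemma \ref{lemma3.5}(2) to force $\cl(\sim')=\ic((f\mid\sim')/{\sim})\geq\ic(f\mid\sim')>\cl(\sim')$, a contradiction. You instead verify $\sim'\in\calE^W(f)$ directly, one class $W\in X/{\sim'}$ at a time: you invoke the unlabeled lemma following Lemma \ref{lemma3.2} (restriction of a contraction) to get $(f/{\sim})_{\mid\overline{W}}=(f_{\mid W})/{\sim_W}$, observe this quotient is indecomposable because $\overline{\sim'}\in\calE^W(f/{\sim})$, check $\sim_W\in\calE^W(f_{\mid W})$ from $\sim\in\calE^W(f)$, and then Lemma \ref{lemma3.5}(2) yields $\ic(f_{\mid W})\leq\ic\bigl((f_{\mid W})/{\sim_W}\bigr)=1$. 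The localization buys a direct proof in which the monotonicity step is trivial (the quotient has a single component, hence so does $f_{\mid W}$), at the price of swapping the paper's identities for $f\mid\sim'$ for the restriction identity for $f/{\sim}$; both proofs rest on the same Lemma \ref{lemma3.5}. One cosmetic point: $\sim'$ is coarser than $\sim$ rather than a refinement of it, but your stated relation $\sim\subseteq\sim'$ is the correct one and the argument is unaffected.
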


\begin{proof}
Let $\overline{\sim'}\in\calE^W(f/{\sim})$. There exists a unique $\sim'\in\calE(X)$, containing $\sim$, such that for any $x,y\in X$, 
\[\varpi_\sim(x)\overline{\sim'}\varpi_\sim(y)\Longleftrightarrow x\sim' y.\]
Let us assume that $\sim'\notin\calE^W(f)$. By Lemma \ref{lemma3.5}, $\ic(f\mid\sim')>\cl(\sim')$. Moreover, by Lemma \ref{lemma3.2},
\[\ic((f\mid\sim')\mid\sim)=\ic(f\mid\sim)=\cl(\sim),\]
as $\sim\in\calE^W(f)$. By Lemma \ref{lemma3.5},
\[\ic((f\mid\sim')/{\sim})\geq\ic(f\mid\sim')>\cl(\sim').\]
On the other end, $\overline{\sim'}\in\calE^W(f/{\sim})$, so, by Lemma \ref{lemma3.2},
\[\ic((f/{\sim})\mid\overline{\sim'})=\ic((f\mid\sim')/{\sim})=\cl(\sim').\]
This is a contradiction, so $\sim'\in\calE^W(f)$. 
\end{proof}

In order to repair the lack of counity of $\delta^W$, we introduce counitary boolean functions:

\begin{defi}\label{defi4.2}
Let $f\in\rmbool(X)$. 
The following conditions are equivalent:
\begin{enumerate}
\item The unique $\sim\in\calE^W(f)$ such that $f/{\sim}$ is modular is $\sim_f^i$.
\item $\calE^W(f)=\calE^S(f)$.
\end{enumerate}
If $f$ satisfies these conditions, we shall say that $f$ is counitary. 
The subspecies of $\rmbool$ of counitary boolean functions is denoted by $\rmboolc$. The subspace of $\calH_\bfbool$ generated be classes of counitary boolean functions is denoted by $\calH_{\bfboolc}$.
The restrictions of $\delta^W$ and $\delta^S$ to $\bfboolc$ coincide. They have a counit, given by $\epsilon_\delta$.
\end{defi}

\begin{proof}
$2.\Longrightarrow 1$. If $\calE^W(f)=\calE^S(f)$, the fact that $\calE^S$ satisfies the $\epsilon$ condition gives immediately $1$.\\

$1.\Longrightarrow 2$. Let $\sim\in\calE^W(f)$. Let us consider the unique $\sim'\in\calE(X)$, such that $\sim\subseteq\sim'$ and $\overline{\sim'}=\sim_{f/{\sim}}^i$. 
By Lemma \ref{lemma4.1}, $\sim'\in\calE^W(f)$. Moreover, by Lemma \ref{lemma3.2},
\[f/{\sim}'=(f/{\sim})/\overline{\sim'}=(f/{\sim})/{\sim}^i_{f/{\sim}},\]
so $f/{\sim}'$ is modular. By 1., $\sim'=\sim_f^i$. Therefore,
\[\ic(f/{\sim})=\cl(\overline{\sim'})=\cl(\sim')=\cl(\sim_f^i)=\ic(f).\]
So $\sim\in\calE^S(f)$. 
\end{proof}

\begin{remark}
By definition, $\rmboolmax\subseteq\rmboolc$.
\end{remark}

\begin{lemma}
Let $f\in\rmboolc(X)$ and $\sim\in\calE^W(f)$. Then $f/{\sim}\in\rmboolc(X/{\sim})$, and $\sim_{f/{\sim}}^i=\overline{\sim^i_f}$. 
\end{lemma}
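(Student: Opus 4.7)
The plan is to apply Lemma \ref{lemma4.1}, which provides the inclusion $\calE^W(f/\sim)\subseteq\{\overline{\sim'}\mid\sim'\in\calE^W(f),\ \sim\subseteq\sim'\}$, together with the counitarity of $f$ in the form of Definition \ref{defi4.2}(1): the only $\sim'\in\calE^W(f)$ for which $f/\sim'$ is modular is $\sim_f^i$. Both conclusions of the lemma will follow by the same mechanism: any $\sim''\in\calE^W(f/\sim)$ with modular quotient lifts through Lemma \ref{lemma4.1} to an element of $\calE^W(f)$ with modular quotient, which counitarity of $f$ then identifies with $\sim_f^i$.

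First I would establish $\sim_{f/\sim}^i=\overline{\sim_f^i}$. Since $\sim_{f/\sim}^i$ lies in $\calE^W(f/\sim)$, Lemma \ref{lemma4.1} produces a unique $\sim^*\in\calE^W(f)$ with $\sim\subseteq\sim^*$ and $\overline{\sim^*}=\sim_{f/\sim}^i$. By Lemma \ref{lemma3.2}, $(f/\sim)/\overline{\sim^*}=f/\sim^*$, and this is modular since $(f/\sim)/\sim_{f/\sim}^i$ is modular by definition of $\sim^i$ on $f/\sim$. Counitarity of $f$ then forces $\sim^*=\sim_f^i$, giving $\sim_{f/\sim}^i=\overline{\sim_f^i}$.

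For counitarity of $f/\sim$, I verify condition (1) of Definition \ref{defi4.2}: any $\sim''\in\calE^W(f/\sim)$ with $(f/\sim)/\sim''$ modular must equal $\sim_{f/\sim}^i$. Given such a $\sim''$, Lemma \ref{lemma4.1} yields $\sim'\in\calE^W(f)$ with $\sim\subseteq\sim'$ and $\overline{\sim'}=\sim''$; by Lemma \ref{lemma3.2}, $(f/\sim)/\sim''=f/\sim'$ is modular, so counitarity of $f$ gives $\sim'=\sim_f^i$ and hence $\sim''=\overline{\sim_f^i}=\sim_{f/\sim}^i$ by the previous paragraph, which is exactly what is wanted. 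The only obstacle is purely notational: one has to keep straight the identification between equivalences on $X/\sim$ and equivalences on $X$ refining $\sim$, which is precisely what makes Lemma \ref{lemma3.2} interchange quotients by $\overline{\sim'}$ on $f/\sim$ with quotients by $\sim'$ on $f$; no new combinatorial input beyond Lemma \ref{lemma4.1} and the counitarity of $f$ is needed.
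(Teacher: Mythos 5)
Your proof is correct and follows essentially the same route as the paper: both parts rest on Lemma \ref{lemma4.1} to lift an element of $\calE^W(f/{\sim})$ to some $\sim'\in\calE^W(f)$ containing $\sim$, Lemma \ref{lemma3.2} to identify $(f/{\sim})/\overline{\sim'}$ with $f/{\sim'}$, and the counitarity of $f$ to force $\sim'=\sim_f^i$. The only difference is cosmetic: the paper quotes the identity $\sim_{f/{\sim}}^i=\overline{\sim_f^i}$ from the proof of Definition \ref{defi4.2} ($1.\Longrightarrow 2.$), whereas you rederive it inline by the same argument.
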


\begin{proof}
In the proof of $1.\Longrightarrow 2.$, of Definition \ref{defi4.2}, we obtained that $\sim_{f/{\sim}}^i=\overline{\sim^i_f}$. 
Let $\overline{\sim'}\in\calE^W(f)$, such that $(f/{\sim})/\overline{\sim'}$ is modular. By Lemmas \ref{lemma3.2} and \ref{lemma4.1}, $f/{\sim}'=(f/{\sim})/\overline{\sim'}$ is modular and $\sim'\in\calE^W(f)$. 
As $f$ is counitary, $\sim'=\sim_f^i$, so $\overline{\sim'}=\overline{\sim_f^i}=\sim_{f/{\sim}}^i$.
Therefore, $f/{\sim}$ is counitary.
\end{proof}

\begin{lemma}
Let $X,Y$ be two disjoint finite sets, $f\in\rmbool(X)$ and $g\in\rmbool(Y)$. Then $f\star_1 g$ is counitary if, and only if, both $f$ and $g$ are counitary. 
\end{lemma}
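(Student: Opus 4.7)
The plan is to exploit the characterization that $f$ is counitary if and only if $\calE^W(f)=\calE^S(f)$, together with the fact (established in Theorems \ref{theo3.14} and \ref{theo3.17}) that both $\calE^W$ and $\calE^S$ satisfy the $\star_1$ condition. In particular,
\begin{align*}
\calE^W(f\star_1 g)&=\{\sim_X\sqcup\sim_Y\mid\sim_X\in\calE^W(f),\:\sim_Y\in\calE^W(g)\},\\
\calE^S(f\star_1 g)&=\{\sim_X\sqcup\sim_Y\mid\sim_X\in\calE^S(f),\:\sim_Y\in\calE^S(g)\}.
\end{align*}
I would also use that $\calE^S(h)\subseteq\calE^W(h)$ for any boolean function $h$.

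For the direct implication, I would assume that $f$ and $g$ are both counitary. Then $\calE^W(f)=\calE^S(f)$ and $\calE^W(g)=\calE^S(g)$, and the two displayed formulas above immediately give $\calE^W(f\star_1 g)=\calE^S(f\star_1 g)$, so $f\star_1 g$ is counitary.

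For the converse, assume $f\star_1 g$ is counitary. I would pick an arbitrary $\sim_X\in\calE^W(f)$ and show that $\sim_X\in\calE^S(f)$. To do so, I would fix some element $\sim_Y\in\calE^S(g)$ (for instance $=_Y$, which belongs to $\calE^S(g)$ because $\calE^S$ satisfies the $\epsilon$ condition), so that $\sim_Y\in\calE^W(g)$ as well. Then $\sim_X\sqcup\sim_Y\in\calE^W(f\star_1 g)=\calE^S(f\star_1 g)$, and the $\star_1$ condition for $\calE^S$ forces $\sim_X\in\calE^S(f)$. Thus $\calE^W(f)\subseteq\calE^S(f)$, and combined with the reverse inclusion we get $\calE^W(f)=\calE^S(f)$, i.e.\ $f$ is counitary. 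The symmetric argument gives that $g$ is counitary.

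I do not anticipate a serious obstacle here: once the $\star_1$ condition is invoked for both families of equivalences, the proof is a direct manipulation of the defining identity $\calE^W=\calE^S$. The only minor point to check is the non-emptiness of $\calE^S(g)$ (respectively $\calE^S(f)$) when producing the witness $\sim_Y$ (respectively $\sim_X$), which is guaranteed by the $\epsilon$ condition for $\calE^S$.
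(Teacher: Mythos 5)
Your proof is correct and follows essentially the same route as the paper: both rely on the fact that $\calE^W$ and $\calE^S$ satisfy the $\star_1$ condition, so that $\calE^W(f\star_1 g)$ and $\calE^S(f\star_1 g)$ are the sets of disjoint unions of elements of $\calE^W(f),\calE^W(g)$ and $\calE^S(f),\calE^S(g)$ respectively. The paper simply declares the conclusion ``immediate'' from these two identities, whereas you spell out the converse direction (restricting a union back to $X^2$, with $=_Y$ as witness), which is exactly the detail being elided.
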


\begin{proof}
Recall that, as $\calE^W$ and $\calE^S$ satisfy the $\star_1$ condition,
\begin{align*}
\calE^W(f\star_1 g)=\{\sim_X\sqcup\sim_Y\mid\sim_X\in\calE^W(f),\:\sim_Y\in\calE^W(g)\},\\
\calE^S(f\star_1 g)=\{\sim_X\sqcup\sim_Y\mid\sim_X\in\calE^S(f),\:\sim_Y\in\calE^S(g)\}.
\end{align*}
The result is then immediate.
\end{proof}

\begin{remark}
As a consequence, $\bfboolc$ is a twisted subalgebra of $(\bfbool,\star_1)$. Unfortunately, it is not stable under $\Delta$ (see Example \ref{ex4.1} below), and we only have
\[\delta^W(\bfboolc)\subseteq\bfboolc\otimes\bfbool.\]
However, by definition of counitary boolean functions, $\epsilon_\delta$ is a counit for the restriction of $\delta^W$ or of $\delta^S$ to $\bfboolc$.
\end{remark}

\begin{example}\label{ex4.1}
Let $f\in\rmbool([4])$ such that
\[f(\{1,2,3\})=f(\{1,2\})+f(\{3\}),\]
and if $I,J$ are two nonempty disjoint subsets of $[4]$, with $\{I,J\}\neq\{\{1,2\},\{3\}\}$, then 
\[f(I\sqcup J)\neq f(I)+f(J).\]
Applying Proposition \ref{prop2.17} for any $x\in [4]$, we obtain that for any nonempty $Y\subseteq [4]$, $f_{\mid Y}$ is indecomposable. So $\calE^W(f)=\calE(X)$. 
If $\sim\in\calE(X)$, applying Proposition \ref{prop2.17} with any $\overline{x}\in [4]/{\sim}$ different from $\{1,2\}$ and $\{3\}$, we obtain that $f/{\sim}$ is indecomposable, so $\ic(f/{\sim})=\ic(f)=1$: $\calE^S(f)=\calE^W(f)$.
As a conclusion, $f\in\rmboolc([4])$. However, we proved in Example \ref{ex3.4} that $f_{\mid [3]}\notin\rmboolc([3])$, as $\calE^S(f_{\mid[3]})\neq\calE^W(f_{\mid[3]})$. So $\bfboolc$ is not stable under $\Delta$. Taking $\sim$ such that $[4]/\sim=\{[3],\{4\}\}$, 
$\sim\in \calE^W(f)$ and $f\mid \sim\notin \rmboolc([4])$. So $\bfboolc$ is not stable under $\delta^W$. 
\end{example}

\subsection{Rigid and hyper-rigid boolean functions}

Let us now define two convenient subspecies of $\rmbool$.

\begin{defi}\label{defi4.5}\begin{enumerate}
\item Let $f\in\rmbool(X)$ be indecomposable. We shall say that it is rigid if for any $A,B\subseteq X$, with $A\cap B=\emptyset$,
\begin{align*}
f(A\sqcup B)=f(A)+f(B)&\Longrightarrow\forall A'\subseteq A,\:\forall B'\subseteq B,\: f(A'\sqcup B')=f(A)+f(B),
\end{align*}
or, equivalently, 
\begin{align*}
f(A\sqcup B)=f(A)+f(B)&\Longrightarrow f_{\mid A\sqcup B}=f_{\mid A}\star_1 f_{\mid B}.
\end{align*}
\item Let $f\in\rmbool(X)$. We shall say that $f$ is rigid if for any indecomposable component $Y$ of $f$, $f_{\mid Y}$ is rigid. 
\end{enumerate}
The subspecies of $\rmbool$ of rigid boolean functions is denoted by $\rmbools$, and its linearization by $\bfbools$. The subspace of $\calH_{\bfbool}$ generated by isoclasses of rigid boolean functions is denoted by $\calH_{\bfbools}$. 
\end{defi}

\begin{defi}\label{defi4.6}\begin{enumerate}
\item Let $f\in\rmbool(X)$ be an indecomposable boolean function. We shall say that it is hyper-rigid if for any $A,B\subseteq X$, with $A\cap B=\emptyset$ and $A,B\neq\emptyset$, then $f(A\sqcup B)\neq f(A)+f(B)$.
\item Let $f\in\rmbool(X)$ be a boolean function. We shall say that it is hyper-rigid if for any indecomposable component $Y$ of $f$, $f_{\mid Y}$ is hyper-rigid. 
\end{enumerate}
The subspecies of $\rmbool$ of hyper-rigid boolean functions is denoted by $\rmboolhs$, and its linearization by $\bfboolhs$.
The subspace of $\calH_{\bfbool}$ generated by isoclasses of hyper-rigid boolean functions is denoted by $\calH_{\bfboolhs}$. 
\end{defi}

\begin{lemma}\label{lemma4.7}
$\rmboolhs\subseteq\rmbools$.
\end{lemma}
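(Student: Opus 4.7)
The plan is to reduce everything to the indecomposable case and then observe that hyper-rigidity trivializes the implication defining rigidity.

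First I would note that both $\rmbools$ and $\rmboolhs$ are defined componentwise: a boolean function belongs to either class exactly when each of its restrictions to an indecomposable component does. Hence it suffices to prove the inclusion for indecomposable boolean functions, i.e.\ to show that if $f \in \rmbool(X)$ is indecomposable and hyper-rigid, then $f$ is rigid in the sense of Definition \ref{defi4.5}.

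So let $f$ be indecomposable and hyper-rigid, and suppose $A, B \subseteq X$ satisfy $A \cap B = \emptyset$ and $f(A \sqcup B) = f(A) + f(B)$. The goal is to deduce $f_{\mid A \sqcup B} = f_{\mid A} \star_1 f_{\mid B}$. The key observation is that by the definition of hyper-rigidity, the equality $f(A \sqcup B) = f(A) + f(B)$ can only occur when $A = \emptyset$ or $B = \emptyset$ (since otherwise the contrapositive of Definition \ref{defi4.6} would be violated). In either of these two cases, one of the factors equals $1 \in \rmbool(\emptyset)$, which is the unit of $\star_1$ by Theorem \ref{theo2.2}, and the identity $f_{\mid A \sqcup B} = f_{\mid A} \star_1 f_{\mid B}$ holds tautologically.

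There is really no main obstacle here: hyper-rigidity is designed to make the hypothesis of the rigidity implication vacuous except in degenerate situations, and the degenerate situations are handled for free by the unit axiom for $\star_1$. The proof is therefore essentially a one-line check after unwinding Definitions \ref{defi4.5} and \ref{defi4.6}.
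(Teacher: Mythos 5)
Your proposal is correct and follows essentially the same route as the paper: reduce to the indecomposable case via the componentwise definitions, note that hyper-rigidity forces $A=\emptyset$ or $B=\emptyset$ in the hypothesis of the rigidity implication, and conclude trivially using the unit of $\star_1$. No gap.
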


\begin{proof}
Let $f\in\rmboolhs(X)$, let us prove that it belongs to $\rmbools(X)$. We firstly assume that $f$ is indecomposable. If $A,B\subseteq X$, such that $f(A\sqcup B)=f(A)+f(B)$, as $f$ is indecomposable and hyper-rigid, $A=\emptyset$ or $B=\emptyset$. 
Let us assume for example that $A=\emptyset$. Then 
\[f_{\mid A\sqcup B}=f_{\mid B}=1\star_1 f_{\mid B}=f_{\mid A}\star_1 f_{\mid B},\]
So $f$ is rigid. In the general case, if $X_1,\ldots,X_k$ are the indecomposable components of $f$, then for any $i\in [k]$, $f_{\mid X_i}$ is indecomposable and hyper-rigid, so is indecomposable and rigid. 
As a conclusion, $f$ is rigid. 
\end{proof}

\begin{prop}\label{prop4.8}
Let $X$ be a set of cardinality 1 or 2. Then $\rmbool(X)=\rmbools(X)=\rmboolhs(X)$.
\end{prop}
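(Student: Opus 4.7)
The plan is first to invoke Lemma \ref{lemma4.7}, which gives the inclusions $\rmboolhs(X)\subseteq\rmbools(X)\subseteq\rmbool(X)$ for any finite set $X$. Hence the only thing to prove is the reverse inclusion $\rmbool(X)\subseteq\rmboolhs(X)$ when $|X|\in\{1,2\}$. This reduction lets me work directly with Definition \ref{defi4.6}.

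For $|X|=1$, say $X=\{x\}$, any $f\in\rmbool(X)$ is indecomposable (as noted right before Proposition \ref{prop2.6}), so its unique indecomposable component is $X$ itself. The hyper-rigidity condition requires $f(A\sqcup B)\neq f(A)+f(B)$ for all pairs of disjoint nonempty $A,B\subseteq X$; but no such pair exists in a singleton, so the condition is vacuous, and $f\in\rmboolhs(X)$.

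For $|X|=2$, say $X=\{x,y\}$, I split into two sub-cases according to whether $f\in\rmbool(X)$ is indecomposable or not. If $f$ is indecomposable, Proposition \ref{prop2.18}(1) gives $f(\{x,y\})\neq f(\{x\})+f(\{y\})$; the only way to write $X$ as a disjoint union of two nonempty subsets is $\{x\}\sqcup\{y\}$, so this single inequality is exactly the hyper-rigidity condition, and $f$ is hyper-rigid. If $f$ is decomposable, then by Proposition \ref{prop2.18}(1) we have $f(\{x,y\})=f(\{x\})+f(\{y\})$, so by Proposition \ref{prop2.8} the indecomposable components of $f$ are the two singletons $\{x\}$ and $\{y\}$. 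Each restriction $f_{\mid\{x\}}$ and $f_{\mid\{y\}}$ is indecomposable on a singleton, hence hyper-rigid by the case $|X|=1$ already handled. Therefore $f$ is hyper-rigid.

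In every case $f\in\rmboolhs(X)$, so the three species coincide on sets of cardinality $1$ or $2$. The argument is essentially a bookkeeping on Definition \ref{defi4.6} combined with the explicit characterization of indecomposability in Proposition \ref{prop2.18}; the only mild subtlety is remembering to treat the decomposable case on a $2$-element set via its singleton components rather than via $X$ itself, but this is handled automatically by the definition of hyper-rigidity on non-indecomposable boolean functions.
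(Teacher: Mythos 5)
Your proof is correct and follows essentially the same route as the paper: the singleton case is vacuous, and for $|X|=2$ one splits according to whether $f(\{x,y\})=f(\{x\})+f(\{y\})$, using Proposition \ref{prop2.18} and the definition of hyper-rigidity on the indecomposable components. The only difference is that you make the reduction via Lemma \ref{lemma4.7} explicit, which the paper leaves implicit.
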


\begin{proof}
It is obvious if $|X|=1$. Let us assume that $X=\{x,y\}$. Let $f\in\rmbool(X)$. If $f(\{x,y\})=f(\{x\})+f(\{y\})$, then $f$ is decomposable and its indecomposable components are $\{x\}$ and $\{y\}$. Obviously, $f_{\mid\{x\}}$ and $f_{\mid\{y\}}$ are hyper-rigid, so $f$ is hyper-rigid.
Otherwise, by Proposition \ref{prop2.18}, $f$ is indecomposable and obviously hyper-rigid.
\end{proof}

\begin{lemma}\label{lemma4.9}
Let $f\in\rmbool(X)$. Let us assume that for any $A,B\subseteq X$, with $A\cap B=\emptyset$,
\begin{align*}
f(A\sqcup B)=f(A)+f(B)&\Longrightarrow\forall A'\subseteq A,\:\forall B'\subseteq B,\: f(A'\sqcup B')=f(A)+f(B). 
\end{align*}
Then $f$ is rigid.
\end{lemma}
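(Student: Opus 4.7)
The plan is to reduce directly to the definition of rigidity via restriction to indecomposable components. Let $X_1,\ldots,X_k$ be the indecomposable components of $f$. By Definition \ref{defi4.5}(2), to prove that $f$ is rigid it suffices to show that for each $i\in[k]$, the indecomposable boolean function $f_{\mid X_i}$ satisfies the rigidity implication of Definition \ref{defi4.5}(1).

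Fix $i\in[k]$ and take disjoint subsets $A,B\subseteq X_i$ satisfying $f_{\mid X_i}(A\sqcup B)=f_{\mid X_i}(A)+f_{\mid X_i}(B)$. Since $A$, $B$ and $A\sqcup B$ are all subsets of $X_i$, the values of $f_{\mid X_i}$ on these sets coincide with those of $f$, so the same equation holds for $f$. I would then apply the hypothesis of the lemma to $A, B\subseteq X$ to conclude that $f(A'\sqcup B')=f(A)+f(B)$ for every $A'\subseteq A$ and $B'\subseteq B$, or equivalently $f_{\mid A\sqcup B}=f_{\mid A}\star_1 f_{\mid B}$. Reading these evaluations back through the restriction $f_{\mid X_i}$ gives exactly the rigidity condition for $f_{\mid X_i}$, which is the desired conclusion.

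There is no genuinely difficult step: the rigidity condition is phrased purely in terms of evaluations of $f$ on subsets of $A\sqcup B$, and restricting to any subset containing $A\sqcup B$ (in particular to an indecomposable component $X_i$ with $A,B\subseteq X_i$) does not change these evaluations. The content of the lemma is therefore that the ``global'' rigidity hypothesis on $f$ automatically descends to each indecomposable component, bypassing any need to analyse how $f$ decomposes under $\star_1$. The equivalent reformulation $f_{\mid A\sqcup B}=f_{\mid A}\star_1 f_{\mid B}$ from Definition \ref{defi4.5} is what makes this transfer transparent.
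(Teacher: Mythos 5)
Your proposal is correct and is essentially the paper's own argument: in both cases one fixes an indecomposable component $Y=X_i$, notes that an additivity relation for $f_{\mid Y}$ on disjoint $A,B\subseteq Y$ is the same relation for $f$, applies the global hypothesis, and reads the conclusion back through the restriction to get rigidity of $f_{\mid Y}$. No further comment is needed.
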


\begin{proof}
Let $Y$ be an indecomposable component of $f$, and let $A,B\subseteq Y$, disjoint, such that $f_{\mid Y}(A\sqcup B)=f_{\mid Y}(A)+f_{\mid Y}(B)$. This gives $f(A\sqcup B)=f(A)+f(B)$. 
 Let $A'\subseteq A$ and $B'\subseteq B$. By hypothesis on $f$,
\[f_{\mid Y}(A'\sqcup B')=f(A'\sqcup B')=f(A')+f(B')=f_{\mid Y}(A')+f_{\mid Y}(B').\]
So all the indecomposable components of $f$ are rigid, and $f$ is rigid. 
\end{proof}

\begin{remark}
The converse is false: here is a counterexample. Let $X=[2]$ and $Y=\{3,4\}$. Let $f\in\rmbool(X)$ and $g\in\rmbool(Y)$ such that
\begin{align*}
f(\{1,2\})&\neq f(\{1\})+f(\{2\}),\\
g(\{3,4\})&\neq g(\{3\})+g(\{4\}),\\
f(\{1,2\})+g(\{3,4\})&=f(\{1\})+f(\{2\})+g(\{3\})+g(\{4\}).
\end{align*}
By Proposition \ref{prop2.18}, $f$ and $g$ are indecomposable, and rigid by Proposition \ref{prop4.8}.
So $h=f\star_1 g$ is rigid. Let $A=\{1,3\}$ and $B=\{2,4\}$. Then 
\begin{align*}
h(A\sqcup B)&=h([4])\\
&=f(\{1,2\})+g(\{3,4\})\\
&=f(\{1\})+f(\{2\})+g(\{3\})+g(\{4\})=h(\{1,3\})+h(\{2,4\}).
\end{align*}
But for $A'=\{1\}$ and $B'=\{2\}$,
\[h(A'\sqcup B')=f(\{1,2\})\neq f(\{1\})+f(\{2\})=h(\{1\})+h(\{2\}).\] 
\end{remark}

Here is a wide family of rigid boolean functions. Recall that $\theta_1$ is defined in Theorem \ref{theo2.2}.

\begin{prop}\label{prop4.10}
Let $f\in\rmbool(X)$, such that for any $Y\subseteq X$, $f(Y)\in\N$. Then $\theta_1(f)$ is rigid. 
\end{prop}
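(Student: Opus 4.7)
The plan is to reduce to Lemma \ref{lemma4.9}, which provides a sufficient condition for rigidity that does not require us to identify the indecomposable components of $\theta_1(f)$ in advance. Concretely, I will show that whenever $A,B\subseteq X$ are disjoint and $\theta_1(f)(A\sqcup B)=\theta_1(f)(A)+\theta_1(f)(B)$, the same additivity holds for all $(A',B')$ with $A'\subseteq A$ and $B'\subseteq B$.

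The computation is straightforward. Since $q=1$, for any $Y\subseteq X$ we have
\[\theta_1(f)(Y)=\sum_{C\subseteq Y}f(C).\]
Parametrizing subsets of $A\sqcup B$ as disjoint unions $C_A\sqcup C_B$ with $C_A\subseteq A$, $C_B\subseteq B$, and using $f(\emptyset)=0$, I will obtain the identity
\[\theta_1(f)(A\sqcup B)-\theta_1(f)(A)-\theta_1(f)(B)=\sum_{\substack{C_A\subseteq A,\;C_B\subseteq B\\ C_A,C_B\neq\emptyset}}f(C_A\sqcup C_B).\]

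Now comes the key use of positivity. The hypothesis $f(Y)\in\N$ for all $Y$ means the sum on the right-hand side is a sum of nonnegative integers. Hence the equality $\theta_1(f)(A\sqcup B)=\theta_1(f)(A)+\theta_1(f)(B)$ forces each term to vanish: $f(C_A\sqcup C_B)=0$ for every nonempty $C_A\subseteq A$ and nonempty $C_B\subseteq B$. Applying the same formula to any $A'\subseteq A$, $B'\subseteq B$, the corresponding difference
\[\theta_1(f)(A'\sqcup B')-\theta_1(f)(A')-\theta_1(f)(B')=\sum_{\substack{C_A\subseteq A',\;C_B\subseteq B'\\ C_A,C_B\neq\emptyset}}f(C_A\sqcup C_B)\]
is a sum over a subcollection of the already-identified zero terms, hence also vanishes. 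This gives $\theta_1(f)(A'\sqcup B')=\theta_1(f)(A')+\theta_1(f)(B')$, and Lemma \ref{lemma4.9} concludes that $\theta_1(f)$ is rigid.

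There is essentially no obstacle: the whole argument rests on the observation that $\theta_1$ turns $f$ into a sum of nonnegative contributions, so an equality between such sums is really a termwise cancellation, which is stable under passing to sub-subsets. The only minor point of care is the bookkeeping to separate the "diagonal" terms (those with $C_A=\emptyset$ or $C_B=\emptyset$) from the "cross" terms when rewriting $\theta_1(f)(A\sqcup B)-\theta_1(f)(A)-\theta_1(f)(B)$, which is handled cleanly by the convention $f(\emptyset)=0$.
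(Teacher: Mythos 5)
Your proof is correct and follows essentially the same route as the paper: expand $\theta_1(f)$ as a sum over subsets, observe that the hypothesis $\theta_1(f)(A\sqcup B)=\theta_1(f)(A)+\theta_1(f)(B)$ forces all the "cross" terms $f(C_A\sqcup C_B)$ (with $C_A\subseteq A$, $C_B\subseteq B$ both nonempty) to vanish by nonnegativity of $f$, deduce the same additivity for every $A'\subseteq A$, $B'\subseteq B$, and conclude with Lemma \ref{lemma4.9}. This matches the paper's argument step for step (the paper even phrases the positivity step with the same cancellation of cross terms), so nothing further is needed.
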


\begin{proof}
Let $\{A,B\}\in\parti(X)$ such that $\theta_1(f)(A\sqcup B)=\theta_1(f)(A)+\theta_1(f)(B)$. Then
\begin{align*}
\theta_1(f)(A)+\theta_1(f)(B)&=\theta_1(f)(A\sqcup B)\\
&=\sum_{A'\subseteq A}f(A')+\sum_{B'\subseteq B}f(B')+\sum_{\substack{Y\subseteq A\sqcup B,\\ A\cap Y\neq\emptyset,\: B\cap Y\neq\emptyset}} f(Y)\\
&=\theta_1(f)(A)+\theta_1(f)(B)+\sum_{\substack{Y\subseteq A\sqcup B,\\ A\cap Y\neq\emptyset,\: B\cap Y\neq\emptyset}} f(Y).
\end{align*}
As $\theta_1(f)(Y)\in\N$ for any $Y\subseteq X$, if $Y\subseteq A\sqcup B$, $A\cap Y\neq\emptyset$ and $B\cap Y\neq\emptyset$, then $f(Y)=0$. As a consequence, if $A'\subseteq A$ and $B'\subseteq B$,
\begin{align*}
\theta_1(f)(A'\sqcup B')&=\sum_{A''\subseteq A'}f(A'')+\sum_{B''\subseteq B'}f(B'')+0=\theta_1(f)(A')+\theta_1(f)(B').
\end{align*}
By Lemma \ref{lemma4.9}, $\theta_1(f)$ is rigid. 
\end{proof}

\begin{lemma}\label{lemma4.11}\begin{enumerate}
\item Let $X$ and $Y$ be two disjoint finite sets, $f\in\rmbool(X)$ and $g\in\rmbool(Y)$. Then $f\star_1 g$ is rigid if, and only if, $f$ and $g$ are rigid. 
\item Let $f\in\rmbools(X)$. For any $Y\subseteq X$, $f_{\mid Y}$ is rigid.
\item Let $f\in\rmbools(X)$ and $\sim\in\calE^W(f)$. Then $f/{\sim}$ is rigid.
\end{enumerate}\end{lemma}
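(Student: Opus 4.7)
For Part 1, I would use the fact (implicit in the definition of $\sim_f^i$ and consistent with the $\star_1$ condition) that the indecomposable components of $f\star_1 g$ are exactly the disjoint union of the indecomposable components of $f$ and those of $g$. So rigidity of $f\star_1 g$, which is defined component-wise, is equivalent to rigidity of every component of $f$ and of $g$, i.e., to rigidity of both $f$ and $g$.

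For Part 2, I would first reduce to the indecomposable case. Let $X_1,\ldots,X_k$ be the indecomposable components of $f$. By Lemma \ref{lemma2.4}, $f_{\mid Y}=f_{\mid X_1\cap Y}\star_1\cdots\star_1 f_{\mid X_k\cap Y}$, so by Part 1 it suffices to show: if $g\in\rmbool(Z)$ is indecomposable and rigid, and $W\subseteq Z$, then $g_{\mid W}$ is rigid. Let $W'$ be an indecomposable component of $g_{\mid W}$; I must show $g_{\mid W'}$ is rigid as an indecomposable boolean function. Take $A,B\subseteq W'$ disjoint with $g_{\mid W'}(A\sqcup B)=g_{\mid W'}(A)+g_{\mid W'}(B)$, which is the same as $g(A\sqcup B)=g(A)+g(B)$. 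Rigidity of the indecomposable function $g$ yields $g(A'\sqcup B')=g(A')+g(B')$ for all $A'\subseteq A$ and $B'\subseteq B$, and this transports back to $g_{\mid W'}$, proving its rigidity.

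For Part 3, the same strategy: let $X_1,\ldots,X_k$ be the indecomposable components of $f$. Since $\calE^W$ satisfies the $\star_1$ condition, Lemma \ref{lemma3.8} gives $\sim\subseteq\sim_f^i$, so $\sim=\sim_1\sqcup\cdots\sqcup\sim_k$ with $\sim_i=\sim\cap X_i^2\in\calE^W(f_{\mid X_i})$. By Lemma \ref{lemma3.4},
\[
f/{\sim}=(f_{\mid X_1}/{\sim_1})\star_1\cdots\star_1 (f_{\mid X_k}/{\sim_k}),
\]
so by Part 1 it suffices to prove the following: if $g\in\rmbool(Z)$ is indecomposable and rigid and $\sim\in\calE^W(g)$, then $g/{\sim}$ is rigid. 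Here the difficulty is that $g/{\sim}$ is not necessarily indecomposable, so the indecomposable definition of rigidity does not apply directly. To bypass this, I would invoke Lemma \ref{lemma4.9}: take $\overline{A},\overline{B}\subseteq Z/{\sim}$ disjoint with $g/{\sim}(\overline{A}\sqcup\overline{B})=g/{\sim}(\overline{A})+g/{\sim}(\overline{B})$, set $A=\varpi_\sim^{-1}(\overline{A})$ and $B=\varpi_\sim^{-1}(\overline{B})$, observe that $g(A\sqcup B)=g(A)+g(B)$, and apply rigidity of $g$ to obtain $g(A'\sqcup B')=g(A')+g(B')$ for all $A'\subseteq A$, $B'\subseteq B$; specializing to saturated subsets $A'=\varpi_\sim^{-1}(\overline{A}')$, $B'=\varpi_\sim^{-1}(\overline{B}')$ yields the hypothesis of Lemma \ref{lemma4.9} for $g/{\sim}$, which then concludes.

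The only genuine obstacle is this loss of indecomposability under contraction in Part 3; Lemma \ref{lemma4.9} is exactly the tool that absorbs it, since it gives a sufficient condition for rigidity that is insensitive to the indecomposable decomposition. Parts 1 and 2 are then essentially bookkeeping once the indecomposable-components description is in place.
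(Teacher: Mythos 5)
Your proposal is correct, and for Parts 1 and 2 it follows essentially the paper's own route: the indecomposable components of $f\star_1 g$ are those of $f$ and $g$, and for restrictions one shows that each indecomposable component of $f_{\mid Y}$ sits inside a single indecomposable component $X_i$ of $f$, whose rigidity then transfers. In Part 3 you deviate mildly: after the common reduction $f/{\sim}=(f_{\mid X_1})/{\sim_1}\star_1\cdots\star_1(f_{\mid X_k})/{\sim_k}$ (via $\sim\subseteq\sim_f^i$ and Lemma \ref{lemma3.4}), the paper identifies the indecomposable components of $f/{\sim}$ (each has preimage inside a single $X_i$) and verifies the definition of rigidity on them directly, whereas you sidestep that identification by pulling the additivity condition back along $\varpi_\sim$ to saturated subsets and invoking the sufficient criterion of Lemma \ref{lemma4.9}. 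Both arguments are sound; your use of Lemma \ref{lemma4.9} buys a slightly cleaner handling of the possible loss of indecomposability under contraction, at the cost of appealing to that auxiliary lemma instead of arguing from the definition alone, and you correctly note that the hypothesis $\sim\in\calE^W(f)$ is only needed for the splitting $\sim=\sim_1\sqcup\cdots\sqcup\sim_k$, not for the indecomposable case itself.
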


\begin{proof}
1. This comes from the fact that the indecomposable components of $f\star_1 g$ are the indecomposable components of $f$ and of $g$.\\

2. Let us denote by $X_1,\ldots,X_k$ the indecomposable components of $f$ and by $Y_1,\ldots,Y_l$ the ones of $f_{\mid Y}$. For any $i\in [l]$,
\[f_{\mid Y_i}=(f_{\mid X_1}\star_1\cdots\star_1 f_{\mid X_k})_{\mid Y_i}=f_{\mid X_1\cap Y_i}\star_1\cdots\star_1 f_{\mid X_k\cap Y_i}.\]
As $f_{\mid Y_i}$ is indecomposable, one, and only one, of the $X_j\cap Y_i$ is nonempty. Therefore, there exists a unique $\sigma(i)\in [k]$ such that $Y_i\subseteq X_{\sigma(i)}$.
Let $A,B\subseteq Y_i$, such that $A\cap B=\emptyset$ and $f_{\mid Y_i}(A\sqcup B)=f_{\mid Y_i}(A)+f_{\mid Y_i}(B)$, and let $A'\subseteq A$, $B'\subseteq B$. Firstly,
\[f_{\mid X_{\sigma(i)}}(A\sqcup B)=f_{\mid Y_i}(A\sqcup B)=f_{\mid Y_i}(A)+f_{\mid Y_i}(B)=f_{\mid X_{\sigma(i)}}(A)+f_{\mid X_{\sigma(i)}}(B).\]
As $f_{\mid X_{\sigma(i)}}$ is indecomposable and rigid, 
\[=f_{\mid Y_i}(A'\sqcup B')=f_{\mid X_{\sigma(i)}}(A'\sqcup B')=f_{\mid X_{\sigma(i)}}(A')+f_{\mid X_{\sigma(i)}}(B')=f_{\mid Y_i}(A')+f_{\mid Y_i}(B').\]
So $f_{\mid Y_i}$ is rigid. By definition, $f_{\mid Y}$ is rigid.\\

3. Let $X_1,\ldots,X_k$ be the indecomposable components of $f$. As $\sim\in\calE^W(f)$, $\sim\subseteq\sim_f^i$, so, putting $\sim_i=\sim\cap X_i^2$ for any $i\in [k]$, $\sim=\sim_1\sqcup\cdots\sqcup\sim_k$, and by Lemma \ref{lemma3.4},
\[f/{\sim}=(f_{\mid X_1})/{\sim}_1\star_1\cdots\star_1 (f_{\mid X_k})/{\sim}_k.\]
As a consequence, if $Y$ is an indecomposable component of $f/{\sim}$, then there exists a unique $i\in [k]$ such that $\varpi_\sim^{-1}(Y)\subseteq X_i$. 
Let $\{A,B\}\in\parti(Y)$ such that $f/{\sim}(A\sqcup B)=f/{\sim}(A)+f/{\sim}(B)$. Let $A'\subseteq A$ and $B'\subseteq B$. Firstly,
\begin{align*}
f(\varpi_\sim^{-1}(A)\sqcup\varpi_\sim^{-1}(B))&=f/{\sim}(A\sqcup B)\\
&=f/{\sim}(A)+f/{\sim}(B)\\
&=f_{\mid X_i}(\varpi_\sim^{-1}(A))+f_{\mid X_i}(\varpi_\sim^{-1}(B)).
\end{align*} 
As $f_{\mid X_i}$ is rigid and indecomposable, 
\begin{align*}
f_/{\sim}(A'\sqcup B')&=f_{\mid X_i}(\varpi_\sim^{-1}(A')\sqcup\varpi_\sim^{-1}(B'))\\
&=f_{\mid X_i}(\varpi_\sim^{-1}(A'))+f_{\mid X_i}(\varpi_\sim^{-1}(B'))\\
&=f/{\sim}(A')+f/{\sim}(B').
\end{align*}
So $f/{\sim}$ is rigid.
\end{proof}

\begin{lemma}\label{lemma4.12}\begin{enumerate}
\item Let $X$ and $Y$ be two disjoint finite sets, $f\in\rmbool(X)$ and $g\in\rmbool(Y)$. Then $f\star_1 g$ is hyper-rigid if, and only if, $f$ and $g$ are hyper-rigid. 
\item Let $f\in\rmboolhs(X)$. For any $Y\subseteq X$, $f_{\mid Y}$ is hyper-rigid.
\item Let $f\in\rmboolhs(X)$ and $\sim\in\calE^W(f)$. Then $f/{\sim}$ is hyper-rigid.
\end{enumerate}\end{lemma}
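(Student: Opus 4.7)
The plan is to mirror the proof of Lemma \ref{lemma4.11} with the (simpler) modifications appropriate to hyper-rigidity: in the indecomposable case we only need to verify the inequality $f(A\sqcup B)\neq f(A)+f(B)$ for disjoint nonempty $A,B$, rather than an implication about all sub-parts, so no propagation argument is required.

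For item 1, I would observe that the indecomposable components of $f\star_1 g$ are exactly the union of the indecomposable components of $f$ and of $g$ (using Lemma \ref{lemma2.4} and uniqueness of the indecomposable decomposition). Since hyper-rigidity is by definition a property of each indecomposable component $f_{\mid X_i}$, the equivalence is immediate.

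For item 2, let $X_1,\dots,X_k$ be the indecomposable components of $f$, and let $Y_1,\dots,Y_l$ be the indecomposable components of $f_{\mid Y}$. Exactly as in the proof of Lemma \ref{lemma4.11}(2), the identity
\[f_{\mid Y_j}=f_{\mid X_1\cap Y_j}\star_1\cdots\star_1 f_{\mid X_k\cap Y_j}\]
together with the indecomposability of $f_{\mid Y_j}$ forces a unique $\sigma(j)\in[k]$ with $Y_j\subseteq X_{\sigma(j)}$. Now given disjoint nonempty $A,B\subseteq Y_j$, they are disjoint nonempty subsets of $X_{\sigma(j)}$, and the hyper-rigidity of the indecomposable $f_{\mid X_{\sigma(j)}}$ gives $f_{\mid X_{\sigma(j)}}(A\sqcup B)\neq f_{\mid X_{\sigma(j)}}(A)+f_{\mid X_{\sigma(j)}}(B)$, hence $f_{\mid Y_j}(A\sqcup B)\neq f_{\mid Y_j}(A)+f_{\mid Y_j}(B)$. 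So each $f_{\mid Y_j}$ is hyper-rigid, and $f_{\mid Y}$ is hyper-rigid.

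For item 3, I proceed as in Lemma \ref{lemma4.11}(3): by Lemma \ref{lemma3.8} we have $\sim\,\subseteq\,\sim_f^i$, so setting $\sim_i=\sim\cap X_i^2$ we get $\sim=\sim_1\sqcup\cdots\sqcup\sim_k$ and by Lemma \ref{lemma3.4},
\[f/{\sim}=(f_{\mid X_1})/{\sim}_1\star_1\cdots\star_1(f_{\mid X_k})/{\sim}_k.\]
Consequently every indecomposable component $Y$ of $f/{\sim}$ is contained in $X_i/{\sim}_i$ for a unique $i$, and $\varpi_\sim^{-1}(Y)\subseteq X_i$. For disjoint nonempty $A,B\subseteq Y$, the preimages $\varpi_\sim^{-1}(A),\varpi_\sim^{-1}(B)\subseteq X_i$ are disjoint and nonempty; hyper-rigidity of the indecomposable $f_{\mid X_i}$ gives $f_{\mid X_i}(\varpi_\sim^{-1}(A)\sqcup\varpi_\sim^{-1}(B))\neq f_{\mid X_i}(\varpi_\sim^{-1}(A))+f_{\mid X_i}(\varpi_\sim^{-1}(B))$, which translates to $f/{\sim}(A\sqcup B)\neq f/{\sim}(A)+f/{\sim}(B)$. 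Thus $f/{\sim}$ is hyper-rigid. There is no genuine obstacle here; the only point requiring care is, as in Lemma \ref{lemma4.11}(2)--(3), the bookkeeping showing that each indecomposable component of $f_{\mid Y}$ (resp.\ $f/{\sim}$) sits inside a single indecomposable component of $f$, so that hyper-rigidity can be inherited componentwise.
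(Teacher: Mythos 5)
Your proof is correct and follows essentially the same route as the paper: componentwise reduction for item 1, and for items 2 and 3 the same bookkeeping as in Lemma \ref{lemma4.11} showing each indecomposable component of $f_{\mid Y}$ (resp.\ $f/{\sim}$) lies in a single indecomposable component of $f$, after which hyper-rigidity transfers directly since arbitrary disjoint nonempty subsets of that component are also disjoint nonempty subsets of $X_{\sigma(i)}$ (resp.\ have disjoint nonempty preimages in $X_i$). No gaps.
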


\begin{proof}
1. This comes from the fact that the indecomposable components of $f\star_1 g$ are the indecomposable components of $f$ and $g$.\\

2. Let us denote by $X_1,\ldots,X_k$ the indecomposable components of $f$ and by $Y_1,\ldots,Y_l$ the ones of $f_{\mid Y}$. As in the proof of Lemma \ref{lemma4.11}, for any $i\in [l]$, there exists a unique $\sigma(i)\in [k]$ such that $Y_i\subseteq X_{\sigma(i)}$. Let $\{A,B\}\in\parti(Y_i)$. Then, as $f_{\mid X_{\sigma(i)}}$ is indecomposable and hyper-rigid,
\begin{align*}
f_{\mid Y_i}(A\sqcup B)&=f_{\mid X_{\sigma(i)}}(A\sqcup B)\neq f_{\mid X_{\sigma(i)}}(A)+f_{\mid X_{\sigma(i)}}(A)=f_{\mid Y_i}(A)+f_{\mid Y_i}(B),
\end{align*}
so $f_{\mid Y_i}$ is hyper-rigid, and finally $f_{\mid Y}$ is hyper-rigid.\\

3. Let $Y$ be an indecomposable component of $f/{\sim}$. As in the proof of Lemma \ref{lemma4.11}, there exists an indecomposable component $X_i$ of $f$ such that $\varpi_\sim^{-1}(Y)\subseteq X_i$. Let $\{A,B\}\in\parti(Y)$. As $f_{\mid X_i}$ is indecomposable and hyper-rigid,
\begin{align*}
f/{\sim}(A\sqcup B)&=f(\varpi_\sim^{-1}(A)\sqcup\varpi_\sim^{-1}(B))\\
&=f_{\mid X_i}(\varpi_\sim^{-1}(A)\sqcup\varpi_\sim^{-1}(B))\\
&\neq f_{\mid X_i}(\varpi_\sim^{-1}(A))+f_{\mid X_i}(\varpi_\sim^{-1}(B))=f/{\sim}(A)+f/{\sim}(B).
\end{align*}
So $f/{\sim}$ is hyper-rigid.
\end{proof}

\begin{lemma}\label{lemma4.13}
Let $f\in\rmbools(X)$. Then $\calE^W(f)=\calE^S(f)$. 
\end{lemma}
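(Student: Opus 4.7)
The plan is to show the nontrivial inclusion $\calE^W(f) \subseteq \calE^S(f)$, since $\calE^S(f) \subseteq \calE^W(f)$ holds by definition. Given $\sim \in \calE^W(f)$, I need to prove $\ic(f/{\sim}) = \ic(f)$; by Lemma \ref{lemma3.5} we already have $\sim \subseteq \sim_f^i$ and $\ic(f/{\sim}) \geq \ic(f)$, so the issue is the reverse inequality.

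First I would reduce to the indecomposable case. Writing $X_1,\ldots,X_k$ for the indecomposable components of $f$ and $\sim_i = {\sim} \cap X_i^2$, the inclusion $\sim \subseteq \sim_f^i$ gives $\sim = \sim_1 \sqcup \cdots \sqcup \sim_k$, and Lemma \ref{lemma3.4} yields
\[f/{\sim} = (f_{\mid X_1})/{\sim_1} \star_1 \cdots \star_1 (f_{\mid X_k})/{\sim_k}.\]
Using $\ic(f\mid{\sim}) = \sum_i \ic(f_{\mid X_i}\mid{\sim_i})$ together with Lemma \ref{lemma3.5}, the equality $\ic(f\mid{\sim}) = \cl(\sim)$ forces $\ic(f_{\mid X_i}\mid{\sim_i}) = \cl(\sim_i)$ for each $i$, so $\sim_i \in \calE^W(f_{\mid X_i})$. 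Since each $f_{\mid X_i}$ is indecomposable and rigid by hypothesis, it therefore suffices to show: if $g$ is indecomposable and rigid and $\tau \in \calE^W(g)$, then $g/{\tau}$ is indecomposable (which gives $\ic(g/{\tau}) = 1 = \ic(g)$, and summing over $i$ concludes).

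For this key step I would argue by contradiction. Suppose $g \in \rmbools(Y)$ is indecomposable with $\tau \in \calE^W(g)$, but $g/{\tau}$ is decomposable. Then there exists a nontrivial partition $Y/{\tau} = A \sqcup B$ with $g/{\tau} = (g/{\tau})_{\mid A} \star_1 (g/{\tau})_{\mid B}$. Setting $A' = \varpi_\tau^{-1}(A)$ and $B' = \varpi_\tau^{-1}(B)$, both nonempty with $Y = A' \sqcup B'$, the definition of $g/{\tau}$ gives
\[g(A' \sqcup B') = g(Y) = g/{\tau}(A) + g/{\tau}(B) = g(A') + g(B').\]
Now the rigidity of the indecomposable boolean function $g$ applies: from $g(A' \sqcup B') = g(A') + g(B')$ we conclude $g_{\mid A' \sqcup B'} = g_{\mid A'} \star_1 g_{\mid B'}$, that is, $g = g_{\mid A'} \star_1 g_{\mid B'}$ with $A', B'$ both nonempty, contradicting the indecomposability of $g$.

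The only delicate point is verifying that the restriction $\sim_i$ lands in $\calE^W(f_{\mid X_i})$ so that the reduction is clean; everything else is direct from the definitions combined with Lemmas \ref{lemma3.4} and \ref{lemma3.5}. The core of the argument is the second paragraph above, where rigidity is exactly the property that promotes an additive identity on $f$ to a genuine $\star_1$-decomposition, and this is precisely what makes indecomposability stable under contraction by weak equivalences.
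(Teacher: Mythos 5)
Your proof is correct and follows essentially the same route as the paper: after reducing to an indecomposable component via Lemmas \ref{lemma3.4} and \ref{lemma3.5}, both arguments take a decomposition of the contracted function, pull it back along $\varpi_\sim$ to get an additive identity $f_{\mid Y}(A'\sqcup B')=f_{\mid Y}(A')+f_{\mid Y}(B')$, and invoke rigidity to upgrade this to a genuine $\star_1$-decomposition of $f_{\mid Y}$, contradicting its indecomposability. The only difference is presentational: you isolate the contraction-stability of indecomposability as a separate key step (where, as you could note, membership of $\tau$ in $\calE^W(g)$ is never actually used), while the paper runs the same contradiction directly on the offending component.
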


\begin{proof}
We already have $\calE^S(f)\subseteq\calE^W(f)$. Let $\sim\in\calE^W(f)$, let us prove that $\ic(f/{\sim})=\ic(f)$. As $\ic(f\mid\sim)=\cl(\sim)$, by Lemma \ref{lemma3.5}, $\ic(f/{\sim})\geq\ic(f)$. Let us assume that $\ic(f/{\sim})>\ic(f)$.
There exists an indecomposable component $Y\subseteq X$ of $f$, such that $f_{\mid Y}/{\sim}\cap Y^2$ is decomposable. Let us write $Y=A\sqcup B$, with $A$, $B$ non empty, such that
\[f_{\mid Y}/{\sim}\cap Y^2=(f_{\mid Y}/{\sim}\cap Y^2)_{\mid A}\star_1 (f_{\mid Y}/{\sim}\cap Y^2)_{\mid B}.\]
Then, putting $A'=\varpi_\sim^{-1}(A)$ and $B'=\varpi_\sim^{-1}(B)$,
\[f_{\mid Y}(A'\sqcup B')=f_{\mid Y}/{\sim}\cap Y^2(A\sqcup B)=f_{\mid Y}/{\sim}\cap Y^2(A)+f_{\mid Y}/{\sim}\cap Y^2(B)=f_{\mid Y}(A')+f_{\mid Y}(B').\]
As $Y$ is an indecomposable component of $f$ and $f$ is rigid, $f_{\mid Y}=f_{\mid A'}\star_1 f_{\mid B'}$ is decomposable: this is a contradiction. We deduce that $\ic(f/{\sim})=\ic(f)$, so $\sim\in\calE^S(f)$.\end{proof}

As a conclusion:

\begin{prop}\begin{enumerate}
\item The subspecies $\rmbools$ and $\rmboolhs$ are convenient. 
\item The subspaces $\calH_{\bfbools}$ and $\calH_{\bfboolhs}$ are subbialgebras of $(\calH_{\bfbool},\star_1,\Delta)$, stable under $\delta^S$ and $\delta^W$, on which these two coproducts coincide. 
Both $(\calH_{\bfbools},\star_1,\Delta,\delta^W)$ and $(\calH_{\bfboolhs},\star_1,\Delta,\delta^W)$ are double bialgebras.
\end{enumerate}\end{prop}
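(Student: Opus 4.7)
The plan is to unpack item 1 into the five defining conditions of Definition \ref{defi3.21} and observe that each one has already been checked in the preceding lemmas; item 2 will then follow at once from the double-bialgebra theorem stated right after Definition \ref{defi3.21}, applied through the Fock functor $\calF$.

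For $\rmbools$, I first note that $1\in\rmbools(\emptyset)$ vacuously, since $1$ has no indecomposable components. The three stability conditions, namely stability under $\star_1$, under restriction $f\mapsto f_{\mid Y}$, and under contraction $f\mapsto f/{\sim}$ for $\sim\in\calE^W(f)$, are respectively items 1, 2 and 3 of Lemma \ref{lemma4.11}. The remaining condition $\calE^W(f)=\calE^S(f)$ for every rigid $f$ is Lemma \ref{lemma4.13}.

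For $\rmboolhs$, the argument is strictly parallel, with Lemma \ref{lemma4.12} playing the role of Lemma \ref{lemma4.11}. The equality $\calE^W(f)=\calE^S(f)$ for hyper-rigid $f$ does not require a separate proof: the inclusion $\rmboolhs\subseteq\rmbools$ provided by Lemma \ref{lemma4.7} lets us invoke Lemma \ref{lemma4.13} directly.

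Item 2 is then immediate: once $\rmbools$ and $\rmboolhs$ are known to be convenient, applying the Fock functor and the theorem stated right after Definition \ref{defi3.21} gives that $\calH_{\bfbools}$ and $\calH_{\bfboolhs}$ are subbialgebras of $(\calH_{\bfbool},\star_1,\Delta)$ stable under $\delta^W$ and $\delta^S$, on which these two coproducts agree and endow the whole with a double bialgebra structure. There is no real obstacle here: all of the substantive work has already been done in Lemmas \ref{lemma4.7}, \ref{lemma4.11}, \ref{lemma4.12} and \ref{lemma4.13}, and the present statement amounts to a bookkeeping assembly of those results.
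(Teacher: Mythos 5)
Your proof is correct and follows exactly the paper's route: the paper's own proof is simply ``immediate consequence of Lemmas \ref{lemma4.11}, \ref{lemma4.12} and \ref{lemma4.13}'', and you have merely spelled out the bookkeeping, including the harmless observation that $\calE^W=\calE^S$ for hyper-rigid functions follows from Lemma \ref{lemma4.7} combined with Lemma \ref{lemma4.13}.
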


\begin{proof}
Immediate consequence of Lemmas \ref{lemma4.11}, \ref{lemma4.12} and \ref{lemma4.13}.
\end{proof}

\begin{prop}\label{prop4.15}
\begin{enumerate}
\item For any finite set $X$, 
\[\rmboolhs(X)\subseteq\rmbools(X)\subseteq\rmboolmax(X)\subseteq\rmboolc(X)\subseteq\rmbool(X).\]
\item Let $X$ be a finite set of cardinality $\leq 2$. Then 
\[\rmboolhs(X)=\rmbools(X)=\rmboolmax(X)=\rmboolc(X)=\rmbool(X).\]
\item Let $X=\{x,y,z\}$ be a set of cardinality 3 and $f\in\rmbool(X)$.
If $f$ is decomposable, then $f\in\rmboolhs(X)$. If $f$ is indecomposable, then 
\begin{align*}
f\in\rmboolhs(X)&\Longleftrightarrow
\left(\begin{array}{c}
f(\{x,y\})\neq f(\{x\})+f(\{y\})\\
\mbox{ and }f(\{x,y,z\})\neq f(\{x,y\})+f(\{z\})
\end{array}\right)\\
&\mbox{and }\left(\begin{array}{c}
f(\{x,z\})\neq f(\{x\})+f(\{z\})\\
\mbox{ and }f(\{x,y,z\})\neq f(\{x,z\})+f(\{y\})
\end{array}\right)\\
&\mbox{and }\left(\begin{array}{c}
f(\{y,z\})\neq f(\{y\})+f(\{z\})\\
\mbox{ and }f(\{x,y,z\})\neq f(\{y,z\})+f(\{x\})
\end{array}\right),\\
f\in\rmbools(X)&\Longleftrightarrow
\left(\begin{array}{c}
f(\{x,y,z\})\neq f(\{x,y\})+f(\{z\})
\end{array}\right)\\
&\mbox{and }\left(\begin{array}{c}
f(\{x,y,z\})\neq f(\{x,z\})+f(\{y\})
\end{array}\right)\\
&\mbox{and }\left(\begin{array}{c}
f(\{x,y,z\})\neq f(\{y,z\})+f(\{x\})
\end{array}\right),\\
f\in\rmboolmax(X)&\Longleftrightarrow
\left(\begin{array}{c}
f(\{x,y\})= f(\{x\})+f(\{y\})\\
\mbox{ or }f(\{x,y,z\})\neq f(\{x,y\})+f(\{z\})
\end{array}\right)\\
&\mbox{and }\left(\begin{array}{c}
f(\{x,z\})= f(\{x\})+f(\{z\})\\
\mbox{ or }f(\{x,y,z\})\neq f(\{x,z\})+f(\{y\})
\end{array}\right)\\
&\mbox{and }\left(\begin{array}{c}
f(\{y,z\})= f(\{y\})+f(\{z\})\\
\mbox{ or }f(\{x,y,z\})\neq f(\{y,z\})+f(\{x\})
\end{array}\right).
\end{align*}
Moreover, $\rmboolhs(X)\subsetneq\rmbools(X)\subsetneq\rmboolmax(X)=\rmboolc(X)\subsetneq\rmbool(X)$.
\end{enumerate}\end{prop}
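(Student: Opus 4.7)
My approach proceeds in three stages, matching the three parts of the statement.

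For Part (1), $\rmboolhs\subseteq\rmbools$ is Lemma~\ref{lemma4.7}; $\rmbools\subseteq\rmboolmax$ follows from the preceding proposition (which gives that $\rmbools$ is convenient) combined with the maximality of $\rmboolmax$ asserted in Proposition~\ref{prop3.24}; $\rmboolmax\subseteq\rmboolc$ is recorded in the remark after Definition~\ref{defi4.2}; and $\rmboolc\subseteq\rmbool$ is trivial. Part (2) follows at once from Proposition~\ref{prop4.8} combined with Part (1).

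For Part (3), I first dispatch the decomposable case: all indecomposable components have cardinality at most $2$, so Proposition~\ref{prop4.8} places them in $\rmboolhs$, whence $f\in\rmboolhs(X)$. In the indecomposable case I proceed by direct enumeration, exploiting that on $X=\{x,y,z\}$ the disjoint nonempty pairs $\{A,B\}$ with $A\sqcup B\subseteq X$ are exactly the three pairs of disjoint singletons and the three singleton-pair decompositions of $X$. The hyper-rigid case is then immediate: negating the six equalities $f(A\sqcup B)=f(A)+f(B)$ yields the six listed conditions. For rigidity I observe that when $|A|=|B|=1$ the implication of Definition~\ref{defi4.5} is automatic, since the two boolean functions on $A\sqcup B$ agree on $\emptyset, A, B$ by construction, and the hypothesis matches their values on $A\sqcup B$ itself; the three singleton-singleton rigidity conditions are therefore vacuous. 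For the three singleton-pair decompositions with $A\sqcup B=X$, the conclusion $f=f_{\mid A}\star_1 f_{\mid B}$ would contradict the indecomposability of $f$, so rigidity reduces exactly to the three inequalities $f(X)\neq f(A)+f(B)$ displayed.

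The characterization of $\rmboolmax(X)$ for $|X|=3$ exploits the recursive definition of $\rmboolmax$ from the proof of Proposition~\ref{prop3.24}: the conditions on proper restrictions and on non-trivial quotients both require membership in $\rmboolmax(Y)$ with $|Y|\leq 2$, where by Part (2) $\rmbool=\rmboolmax$, and are therefore automatic. This gives $\rmboolmax(X)=\rmboolc(X)$ and reduces the test to counitarity $\calE^W(f)=\calE^S(f)$. Among the five equivalences on $\{x,y,z\}$, only the three pair-singleton equivalences can contribute to $\calE^W(f)\setminus\calE^S(f)$, which happens exactly when the pair's restriction is indecomposable (so the equivalence lies in $\calE^W(f)$) while $f/{\sim}$ is decomposable (so $\ic(f/{\sim})=2>\ic(f)=1$). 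Negating this failure at each pair-singleton partition yields the three disjunctions listed.

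For the strict inclusions I exhibit explicit indecomposable $f\in\rmbool(\{x,y,z\})$. For $\rmbools\setminus\rmboolhs$ I take $f$ with all singletons equal to $0$, $f(\{x,y\})=0$, $f(\{x,z\})=f(\{y,z\})=1$ and $f(\{x,y,z\})=2$; it is rigid (all three singleton-pair sums differ from $f(X)$) but not hyper-rigid since $f(\{x,y\})=f(\{x\})+f(\{y\})$. For $\rmboolmax\setminus\rmbools$ I take $f$ with all singletons equal to $1$, $f(\{x,y\})=2$, $f(\{x,z\})=f(\{y,z\})=3$ and $f(\{x,y,z\})=3$; here $f(\{x,y,z\})=f(\{x,y\})+f(\{z\})$ defeats rigidity, but the compensating equality $f(\{x,y\})=f(\{x\})+f(\{y\})$ validates the $\rmboolmax$ disjunction at $\{\{x,y\},\{z\}\}$. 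For $\rmbool\setminus\rmboolmax$, either Example~\ref{ex3.1} or Example~\ref{ex3.2} suffices. The main obstacle lies in the middle example, where one must simultaneously ensure indecomposability, failure of rigidity at some partition, and the three $\rmboolmax$ disjunctions.
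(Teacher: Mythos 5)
Your argument is correct and follows essentially the same route as the paper: the same chain of citations for part (1), Proposition~\ref{prop4.8} for part (2), and for part (3) the same finite enumeration of disjoint nonempty pairs and of the five equivalences on a three-element set, together with the recursive construction of $\rmboolmax$ from the proof of Proposition~\ref{prop3.24} to reduce membership in $\rmboolmax(X)$ to counitarity and conclude $\rmboolmax(X)=\rmboolc(X)$. The only (immaterial) difference is that you certify the strict inclusions with explicit numerical boolean functions, whereas the paper uses families defined by (in)equalities together with Proposition~\ref{prop2.17}; your three examples do satisfy indecomposability and the part-(3) criteria, so they work.
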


\begin{proof}
We proved in Lemma \ref{lemma4.7} that for any finite set $X$, $\rmboolhs(X)\subseteq\rmbools(X)$. As $\rmbools$ is convenient, by maximality of $\rmboolmax$, $\rmbools(X)\subseteq\rmboolmax(X)$. By definition of $\rmboolc$, $\rmboolmax(X)\subseteq\rmboolc(X)$. 
Proposition \ref{prop4.8} implies the equality if $|X|=1$ or $2$.\\

Let us now consider the case $X=\{x,y,z\}$ of cardinality 3, and let $f\in\rmbool(X)$. If $f$ is indecomposable, let us take $Y,Z\subset X$ such that $f=f_{\mid Y}\star_1 f_{\mid Z}$. 
Then $|Y|,|Z|\leq 2$, so $f_{\mid Y}$ and $f_{\mid Z}$ are hyper-rigid, and by stability under $\star_1$, $f$ is hyper-rigid. Let us assume that $f$ is indecomposable. 
The definition of hyper-rigidity gives the first equivalence.\\

Let us assume that $f\in\rmbools(X)$. If $f(\{x,y,z\})=f(\{x,y\})+f(\{z\})$, as $f$ in indecomposable and rigid, then for any $A'\subseteq\{x,y\}$ and $B'\subseteq\{z\}$, $f(A'\sqcup B')=f(A')+f(B')$. We deduce that 
$f=f_{\mid\{x,y\}}\star_1 f_{\mid\{z\}}$, which contradicts the indecomposability of $f$. So $f(\{x,y,z\})\neq f(\{x,y\})+f(\{z\})$. The two other inequalities are proved in the same way.
Conversely, let $A,B\subseteq X$, with $A\cap B=\emptyset$, such that $f(A\sqcup B)=f(A)+f(B)$. The hypothesis implies that $|A|\leq 1$ and $|B|\leq 1$. If $A'\subseteq A$ and $B'\subseteq B$, 
then either $A'=A$ and $B'=B$, or $A'=\emptyset$, or $B'=\emptyset$. In all cases, as $f(\emptyset)=0$, $f(A'\sqcup B')=f(A')+f(B')$: $f$ is rigid.\\

 Let us now prove the converse implication. We shall use the characterization of $\rmboolmax$ of the proof of Proposition \ref{prop3.24}. 
As $\rmboolmax(X')=\rmbool(X')$ if $|X'|\leq 2$, items 2 and 3 are obviously satisfied for any $f\in\rmbool(X)$. So $f\in\rmboolmax(X)$ if, and only if, $\calE^W(f)=\calE^S(f)$, that is to say if, and only if, $f\in\rmboolc(X)$. 
Let us make a review of all possible elements of $\calE(X)$.
\begin{itemize}
\item $X/{\sim}=\{\{x,y,z\}\}$. Then, as $f$ is indecomposable and $f/{\sim}$ is also indecomposable, $\sim\in\calE^S(f)$.
\item $\sim$ is the equality of $X$. Then $\sim\in\calE^S(f)$.
\item $X/{\sim}=\{\{x,y\},\{z\}\}$. By Proposition \ref{prop2.18}, $\sim\in\calE^W(f)$ if, and only if, $f(\{x,y\})\neq f(\{x\})+f(\{y\})$. We put $\overline{x}=\{x,y\}$ and $\overline{z}=\{z\}$. Then
\begin{align*}
f/{\sim}(\{\overline{x},\overline{z}\})&=f(\{x,y,z\}),&f/{\sim}(\{\overline{x}\})&=f(\{x,y\}),&f/{\sim}(\{\overline{z}\})&=f(\{z\}).
\end{align*} 
By Proposition \ref{prop2.18}, $f/{\sim}$ is indecomposable if, and only if, $f(\{x,y,z\})\neq f(\{x,y\})+f(\{z\})$.
As a conclusion, $\sim\in\calE^W(f)\setminus\calE^S(f)$ if, and only if, $f(\{x,y\})\neq f(\{y\})+f(\{y\})$ and $f(\{x,y,z\})=f(\{x,y\})+f(\{z\})$.
\item $X/{\sim}=\{\{x,z\},\{y\}\}$. Similarly, $\sim\in\calE^W(f)\setminus\calE^S(f)$ if, and only if, $f(\{x,z\})\neq f(\{x\})+f(\{z\})$ and $f(\{x,y,z\})=f(\{x,z\})+f(\{y\})$.
\item $X/{\sim}=\{\{y,z\},\{x\}\}$. Similarly, $\sim\in\calE^W(f)\setminus\calE^S(f)$ if, and only if, $f(\{y,z\})\neq f(\{y\})+f(\{z\})$ and $f(\{x,y,z\})=f(\{y,z\})+f(\{x\})$.
\end{itemize}
By transposition, we obtain the announced equivalence. The inclusions are strict, as shown in the Examples below.
\end{proof}

\begin{example}
Let $f\in\rmbool([3])$, such that
\begin{align*}
f(\{1,2\})&=f(\{1\})+f(\{2\}),&f(\{1,2,3\})&\neq f(\{1,2\})+f(\{3\}),\\
f(\{1,3\})&\neq f(\{1\})+f(\{3\}),&f(\{1,2,3\})&\neq f(\{1,3\})+f(\{2\}),\\
f(\{2,3\})&\neq f(\{2\})+f(\{3\}),&f(\{1,2,3\})&\neq f(\{2,3\})+f(\{1\}).
\end{align*}
By Proposition \ref{prop2.17} with $x=3$, $f$ is indecomposable. Proposition \ref{prop4.15} shows that $f$ is rigid but not hyper-rigid.
\end{example}

\begin{example}
Let $f\in\rmbool([3])$, such that
\begin{align*}
f(\{1,2\})&=f(\{1\})+f(\{2\}),&f(\{1,2,3\})&=f(\{1,2\})+f(\{3\}),\\
f(\{1,3\})&\neq f(\{1\})+f(\{3\}),&f(\{1,2,3\})&\neq f(\{1,3\})+f(\{2\}),\\
f(\{2,3\})&\neq f(\{2\})+f(\{3\}),&f(\{1,2,3\})&\neq f(\{2,3\})+f(\{1\}).
\end{align*}
By Proposition \ref{prop2.17} with $x=3$, $f$ is indecomposable. Proposition \ref{prop4.15} shows that $f\in\rmboolmax(X)$ but is not rigid.
\end{example}

\begin{example}\label{ex4.4}
Let $f\in\rmbool([3])$, such that
\begin{align*}
f(\{1,2\})&\neq f(\{1\})+f(\{2\}),&f(\{1,2,3\})&=f(\{1,2\})+f(\{3\}).\\
f(\{1,3\})&\neq f(\{1\})+f(\{3\}),\\
f(\{2,3\})&\neq f(\{2\})+f(\{3\}).
\end{align*}
By Proposition \ref{prop2.17} with any $x$, $f$ is indecomposable. Proposition \ref{prop4.15} shows that $f\notin\rmboolmax(X)$.
\end{example}

\begin{example}
If $|X|\geq 4$, $\rmboolmax(X)\subsetneq\rmboolc(X)$, as we now show.
Let $f\in\rmbool([4])$ such that $f(\{1,2,3\})=f(\{1,2\})+f(\{3\})$ and if $\{A,B\}$ is a pair of two disjoint nonempty subsets of $[4]$, different from $\{\{1,2\},\{3\}\}$, then $f(A\sqcup B)\neq f(A)+f(B)$.
Let $\sim\in\calE([4])$. For any nonempty $Y\subseteq [4]$, by Proposition \ref{prop2.17} with any $x\in Y$, $f_{\mid Y}$ is indecomposable, so $\ic(f\mid\sim)=\cl(\sim)$. 
Let $\sim\in\calE^W(f)$. By Proposition \ref{prop2.17} with $x=\overline{4}\in X/{\sim}$, $f/{\sim}$ is indecomposable, so $\ic(f/{\sim})=1=\ic(f)$.
We obtain that $\calE^S(f)=\calE([4])$, so $\calE^S(f)=\calE^W(f)=\calE([4])$, and $f\in\rmboolc([4])$. But, as $f_{\mid [3]}\notin\rmboolmax([3])$ (see Example \ref{ex4.4}), $f\notin\rmboolmax([4])$.
\end{example}

\subsection{Examples of rigid boolean functions: hypergraphs}

Reminders on the twisted bialgebra $\bfHG$ have been given in Section \ref{section1}. 

\begin{prop}\label{prop4.16}
Let $H$ be a hypergraph on $X$. We define two boolean functions by
\begin{align*}
\iota(H)&:\left\{\begin{array}{rcl}
\calP(X)&\longrightarrow&\Z\\
A&\longmapsto&\begin{cases}
1\mbox{ if }A\in E(H),\\
0\mbox{ otherwise},
\end{cases}
\end{array}\right.\\
\gamma(H)=\theta_1\circ\iota(H)&:\left\{\begin{array}{rcl}
\calP(X)&\longrightarrow&\Z\\
A&\longmapsto&|\{Y\in E(H)\mid Y\subseteq A\}|.
\end{array}\right.\end{align*}
Then $\iota$ is a injective twisted bialgebra morphism from $(\bfHG,m,\Delta)$ to $(\bfbool,\star_0,\Delta)$ and $\gamma$ is an injective twisted bialgebra morphism from $(\bfHG,m,\Delta)$ to $(\bfbools,\star_1,\Delta)$. Moreover,
\begin{align*}
\epsilon_\delta\circ\gamma&=\epsilon_\delta.
\end{align*}\end{prop}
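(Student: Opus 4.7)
The plan is to first prove the statement for $\iota$ and then transfer it to $\gamma$ via the isomorphism $\theta_1$, finally treating the counit identity by hand.

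First I would check injectivity of $\iota$: for any hypergraph $H$ on $X$, one recovers $E(H) = \{A \subseteq X \mid \iota(H)(A) = 1\}$ from $\iota(H)$, so $\iota$ is injective. Next I would verify that $\iota$ is a twisted algebra morphism from $(\bfHG, m)$ to $(\bfbool, \star_0)$: given $G \in \rmHG(X)$ and $H \in \rmHG(Y)$ with $X \cap Y = \emptyset$, the hyperedges of $GH$ lie in $\calP(X) \sqcup \calP(Y)$, so for $A \subseteq X \sqcup Y$ one has $\iota(GH)(A) = 1$ iff $A \in E(G)$ (forcing $A \subseteq X$) or $A \in E(H)$ (forcing $A \subseteq Y$); comparing with the explicit formula for $\star_0$ recalled after Theorem \ref{theo2.2}, this is exactly $\iota(G) \star_0 \iota(H)(A)$. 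For the coproduct, I would observe that if $H \in \rmHG(X \sqcup Y)$ and $A \subseteq X$, then $\iota(H)_{\mid X}(A) = \iota(H)(A)$ equals $1$ iff $A \in E(H) \cap \calP(X) = E(H_{\mid X})$, so $\iota(H)_{\mid X} = \iota(H_{\mid X})$, and similarly for $Y$; hence $\Delta_{X,Y} \circ \iota = (\iota \otimes \iota) \circ \Delta_{X,Y}$.

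To handle $\gamma$, I would invoke Theorem \ref{theo2.19}(2), which gives that $\theta_1$ is a twisted bialgebra isomorphism from $(\bfbool, \star_0, \Delta)$ to $(\bfbool, \star_1, \Delta)$. Composing with $\iota$ yields $\gamma = \theta_1 \circ \iota$ as a twisted bialgebra morphism from $(\bfHG, m, \Delta)$ to $(\bfbool, \star_1, \Delta)$, injective since $\theta_1$ is invertible (inverse $\theta_{-1}$) and $\iota$ is injective. The formula $\gamma(H)(A) = \sum_{B \subseteq A} \iota(H)(B) = |\{Y \in E(H) \mid Y \subseteq A\}|$ then matches the definition. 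Since $\iota(H)$ takes values in $\{0, 1\} \subseteq \N$, Proposition \ref{prop4.10} applies and shows that $\gamma(H) = \theta_1(\iota(H))$ is rigid, so $\gamma$ lands in $\bfbools$.

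It remains to prove $\epsilon_\delta \circ \gamma = \epsilon_\delta$. Using the counit of the hypergraph double bialgebra (Example \ref{ex1.3}) and the counit on $\bfbools$ induced by Proposition \ref{prop3.11} (modular functions), I must show that $\gamma(H)$ is modular if and only if every hyperedge of $H$ is a singleton. One direction is immediate: if all $Y \in E(H)$ satisfy $|Y| = 1$, then for any $A \subseteq X$, $\gamma(H)(A) = |\{x \in A \mid \{x\} \in E(H)\}| = \sum_{x \in A} \gamma(H)(\{x\})$. Conversely, if some $Y_0 \in E(H)$ has $|Y_0| \geq 2$, taking $A = Y_0$ gives
\[
\gamma(H)(Y_0) \geq 1 + |\{x \in Y_0 \mid \{x\} \in E(H)\}| > \sum_{x \in Y_0} \gamma(H)(\{x\}),
\]
so $\gamma(H)$ is not modular. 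Thus $\epsilon_\delta(\gamma(H)) = \epsilon_\delta(H)$ in both cases.

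The only mildly subtle point is the last equivalence, but it reduces to the elementary inequality above; all the bialgebra compatibilities are direct calculations, and the rigidity of $\gamma(H)$ is free once Proposition \ref{prop4.10} is available.
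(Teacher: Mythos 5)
Your proposal is correct and follows essentially the same route as the paper: direct verification that $\iota$ is an injective twisted bialgebra morphism into $(\bfbool,\star_0,\Delta)$, transfer to $\gamma=\theta_1\circ\iota$ via Theorem \ref{theo2.19} with rigidity supplied by Proposition \ref{prop4.10}, and a check that $\gamma(H)$ is modular exactly when all hyperedges are singletons. The only difference is that you spell out the converse of that last equivalence (the inequality at $A=Y_0$), which the paper leaves implicit; the argument is sound.
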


\begin{proof}
Obviously, $\iota$ is injective. Let $X,Y$ be two disjoint sets. If $G\in\rmHG(X)$ and $H\in\rmHG(Y)$, then $\iota(GH)=\iota(G)\star_0\iota(H)$, so $\iota$ is a twisted algebra morphism.
If $G\in\rmbool(X\sqcup Y)$, then $\iota(G_{\mid X})=\iota(G)_{\mid X}$ and $\iota(G_{\mid Y})=\iota(G)_{\mid Y}$: this implies that $\iota$ is a twisted coalgebra morphism. 
By proposition \ref{prop4.10}, $\gamma=\theta_1\circ\iota$ takes its values in $\bfbools$. By composition, $\gamma=\theta_1\circ\iota$
is an injective twisted bialgebra morphism from $(\bfHG,m,\Delta)$ to $(\bfbools,\star_1,\Delta)$.\\

Let $H$ be a hypergraph.
\begin{align*}
\epsilon_\delta\circ\gamma(H)
&=\begin{cases}
1\mbox{ if $\gamma(H)$ is modular},\\
0\mbox{ otherwise}
\end{cases}\\
&=\begin{cases}
1\mbox{ if }\forall A\subseteq X,\: |\{Y\in E(H)\mid Y\subseteq A\}|=|\{x\in A\mid\{x\}\in E(H)\}|,\\
0\mbox{ otherwise}
\end{cases}\\
&=\begin{cases}
1\mbox{ if }\forall Y\in E(H),\: |Y|=1,\\
0\mbox{ otherwise}.
\end{cases}\\
&=\epsilon_\delta(H).\qedhere
\end{align*}\end{proof}

If $H$ is a hypergraph, then $\gamma(H)$ is rigid, but generally not hyper-rigid, as shown by the following example.

\begin{example}
Let $H$ be a hypergraph on $[3]$ with hyperedges $\{1\}$, $\{2\}$, $\{3\}$ and $[3]$. This gives
\begin{align*}
\gamma(H)(\{1\})&=1,&\gamma(H)(\{1,2\})&=2,&\gamma(H)([3])&=4.\\
\gamma(H)(\{1\})&=1,&\gamma(H)(\{1,3\})&=2,\\
\gamma(H)(\{1\})&=1,&\gamma(H)(\{2,3\})&=2.
\end{align*}
Then $\gamma(H)$ is indecomposable (see Proposition \ref{prop2.18}). As $\gamma(H)(\{1,2\})=\gamma(H)(\{1\})+\gamma(H)(\{2\})$, $\gamma(H)$ is not hyper-rigid. 
\end{example}
Applying the functor $\calF$:

\begin{theo}\label{theo4.17}
The map $\gamma$ induces an injective bialgebra morphism, also denoted by $\gamma$, from $(\calH_{\bfHG},m,\Delta)$ to $(\calH_{\bfbools},\star_1,\Delta)$, sending any isoclass $\overline{G}$ of hypergraphs to $\overline{\gamma(G)}$. Moreover,
\begin{align*}
\epsilon_\delta\circ\gamma&=\epsilon_\delta.
\end{align*}
\end{theo}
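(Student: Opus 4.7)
The plan is to obtain this theorem as a direct consequence of Proposition \ref{prop4.16} via the bosonic Fock functor $\calF$. Since $\gamma : (\bfHG, m, \Delta) \to (\bfbools, \star_1, \Delta)$ was shown there to be a morphism of twisted bialgebras, applying $\calF$ will automatically produce a bialgebra morphism $\calF(\gamma) : (\calH_{\bfHG}, m, \Delta) \to (\calH_{\bfbools}, \star_1, \Delta)$, and by the definition of $\calF$ on morphisms it will send $\overline{G}$ to $\overline{\gamma(G)}$. So the algebra and coalgebra parts of the statement come for free, with no further calculation.

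The one point that requires a short, non-automatic argument is injectivity on isoclasses. I would argue as follows: suppose $\overline{\gamma(G_1)} = \overline{\gamma(G_2)}$ in $\calH_{\bfbools}$ for two hypergraphs $G_1, G_2 \in \rmHG([n])$. By the definition of isoclasses in $\calF(\bfbool)$, there exists $\sigma \in \sym_n$ such that $\bfbool(\sigma)(\gamma(G_1)) = \gamma(G_2)$. Naturality of $\gamma$ with respect to bijections then gives $\gamma(\rmHG(\sigma)(G_1)) = \gamma(G_2)$, and the species-level injectivity of $\gamma$ on $\rmHG([n])$ (recorded in Proposition \ref{prop4.16}) forces $\rmHG(\sigma)(G_1) = G_2$, hence $\overline{G_1} = \overline{G_2}$ in $\calH_{\bfHG}$.

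Finally, the counit identity follows at once from its species analogue. For any $G \in \rmHG([n])$, using that $\epsilon_\delta$ on $\calH_{\bfHG}$ and on $\calH_{\bfbools}$ is induced from the species-level counit of the same name, one has
\[
\epsilon_\delta(\gamma(\overline{G})) = \epsilon_\delta(\overline{\gamma(G)}) = \epsilon_\delta(\gamma(G)) = \epsilon_\delta(G) = \epsilon_\delta(\overline{G}),
\]
where the middle equality is exactly the identity $\epsilon_\delta \circ \gamma = \epsilon_\delta$ already proved at the species level in Proposition \ref{prop4.16}. Since all the real content (the compatibility with products, coproducts, counits and injectivity) is packaged into that proposition, there is no genuine obstacle in this final step; the only thing one has to watch is the brief injectivity-on-isoclasses verification outlined above.
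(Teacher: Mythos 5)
Your proposal is correct and follows the same route as the paper, which obtains Theorem \ref{theo4.17} simply by applying the bosonic Fock functor $\calF$ to the twisted bialgebra morphism of Proposition \ref{prop4.16}. The only addition is that you spell out the injectivity-on-isoclasses and counit verifications, which the paper leaves implicit; these details are accurate.
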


\begin{remark}
Unfortunately, $\gamma(\bfHG)$ is not stable under $\delta$. Here is a counterexample. Let $H$ be the hypergraph on $[3]$ such that
\[E(H)=\{\{1\},\{2\},\{1,2\}\}.\]
Let $\sim\in\calE([3])$ whose classes are $\overline{1}=[2]$ and $\overline{3}=\{3\}$. Then 
\[\gamma(H)/{\sim}(\{\overline{1}\})=\gamma(H)([2])=3,\]
so $\gamma(H)/{\sim}$ is not the boolean function of any hypergraph on $\{\overline{1},\overline{3}\}$.
However,
\[\delta^W(\calH_{\bfHG})\subseteq\calH_{\bfbools}\otimes\calH_{\bfHG}.\]
\end{remark}

Let us recall the following classical definition on hypergraphs:

\begin{defi}
\item Let $H$ be a hypergraph on $X$. We shall say that it is connected if for any $X',X''$, nonempty, such that $X=X'\sqcup X''$, there exists $Y\in E(H)$ such that $Y\cap X'\neq\emptyset$ and $Y\cap X''\neq\emptyset$. 
\end{defi}

\begin{prop}\label{prop4.19}
Let $H$ be a hypergraph. Then $H$ is connected if, and only if, $\gamma(H)$ is indecomposable.
\end{prop}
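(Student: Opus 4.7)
The plan is to prove both directions directly from the definitions, by evaluating $\gamma(H)$ on well-chosen subsets and comparing with the formula for $\star_1$.

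For the reverse implication, assume $H$ is disconnected. Then there exists a partition $X = X' \sqcup X''$ with $X', X'' \neq \emptyset$ such that every hyperedge $Y \in E(H)$ satisfies $Y \subseteq X'$ or $Y \subseteq X''$. I would then check that for any $A \subseteq X$,
\[
\gamma(H)(A) = |\{Y \in E(H) \mid Y \subseteq A\}| = |\{Y \in E(H) \mid Y \subseteq A \cap X'\}| + |\{Y \in E(H) \mid Y \subseteq A \cap X''\}|,
\]
since each hyperedge $Y \subseteq A$ is entirely in $X'$ or entirely in $X''$. The right-hand side is exactly $\gamma(H)_{\mid X'}(A \cap X') + \gamma(H)_{\mid X''}(A \cap X'') = \gamma(H)_{\mid X'} \star_1 \gamma(H)_{\mid X''}(A)$, which shows $\gamma(H)$ is decomposable.

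For the forward implication, I would argue by contrapositive. Assume $\gamma(H)$ is decomposable, so there is a nontrivial partition $X = X' \sqcup X''$ with $\gamma(H) = f' \star_1 f''$ for some $f' \in \rmbool(X')$, $f'' \in \rmbool(X'')$. By Lemma \ref{lemma2.4}, necessarily $f' = \gamma(H)_{\mid X'}$ and $f'' = \gamma(H)_{\mid X''}$. Evaluating the identity $\gamma(H)(A) = \gamma(H)(A \cap X') + \gamma(H)(A \cap X'')$ at $A = X$ yields
\[
|E(H)| = |\{Y \in E(H) \mid Y \subseteq X'\}| + |\{Y \in E(H) \mid Y \subseteq X''\}|.
\]
Every $Y \in E(H)$ is counted on the left-hand side, while only hyperedges contained entirely in $X'$ or entirely in $X''$ are counted on the right. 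Hence no hyperedge $Y$ can meet both $X'$ and $X''$, which means precisely that $H$ is not connected.

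There is no real obstacle here: both directions reduce to the observation that $\gamma(H)_{\mid X'} \star_1 \gamma(H)_{\mid X''}$ counts only hyperedges staying on one side of the partition, while $\gamma(H)$ counts all hyperedges in $A$. The only point requiring care is invoking Lemma \ref{lemma2.4} to identify the factors of a decomposition with the restrictions of $\gamma(H)$; once this is done, evaluation at $A = X$ forces the nonexistence of hyperedges crossing the partition.
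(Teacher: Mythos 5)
Your proof is correct and follows essentially the same route as the paper: both directions come down to comparing $\gamma(H)$ with $\gamma(H)_{\mid X'}\star_1\gamma(H)_{\mid X''}$, noting that the latter counts only hyperedges contained entirely in one side of the partition, with Lemma \ref{lemma2.4} identifying the factors and evaluation at $A=X$ forcing every hyperedge onto one side. The only cosmetic difference is that for the decomposability of $\gamma(H)$ when $H$ is disconnected you verify the product identity on every $A$ by direct edge-counting, whereas the paper factorizes the hypergraph $H$ itself and uses the multiplicativity of $\gamma$; the content is the same.
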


\begin{proof}
$\Longrightarrow$. Let us assume that $\gamma(H)$ is decomposable and let us write $\gamma(H)=\gamma(H)_{\mid X'}\star_1\gamma(H)_{\mid X''}$, with $X'\sqcup X''=X$ and $X',X''\neq\emptyset$. As a consequence,
\[\gamma(H)(X)=\gamma(H)(X')+\gamma(H)(X'').\]
In other words, if $Y\in E(H)$, then $Y\subseteq X'$ or $Y\subseteq X''$, so $H$ is not connected.\\

$\Longleftarrow$. Let us assume that $H$ is not connected. Then we can write $H=H_{\mid X'}\star_1 H_{\mid X''}$, with $X'\sqcup X''=X$ and $X',X''\neq\emptyset$. As a consequence,
\[\gamma(H)=\gamma(H)_{\mid X'}\star_1\gamma(H)_{\mid X''},\]
so $\gamma(H)$ is decomposable.
\end{proof}

%\begin{remark}
%Let $H$ be a hypergraph on $X$. For any singleton $\{x\}$, $\gamma(H)(\{x\})=0$. Therefore,
%\begin{align*}
%\mbox{$\gamma(H)$ is modular}&\Longleftrightarrow\forall Y\subseteq X,\:\gamma(H)(Y)=0\\
%&\Longleftrightarrow E(H)=\emptyset.
%\end{align*}\end{remark}

\subsection{Examples of rigid boolean functions: matroids}

\begin{defi}\label{defi4.20}
Let $f\in\rmbool(X)$. We shall say that it is a rank function of a matroid if:
\begin{enumerate}
\item For any $A\subseteq X$, $0\leq f(A)\leq |A|$.
\item $f$ is increasing: for any $A\subseteq B\subseteq X$, $f(A)\leq f(B)$.
\item $f$ is submodular: for any $A,B\subseteq X$, $f(A\cup B)+f(A\cap B)\leq f(A)+f(B)$.
\end{enumerate}
The subspecies of $\rmbool$ of matroid rank functions is denoted by $\rmboolm$. The subspace of $\calH_\bfbool$ generated by isoclasses of elements of $\rmboolm$ is denoted by $\calH_{\bfboolm}$.
\end{defi}

\begin{lemma}\label{lemma4.21}\begin{enumerate}
\item Let $X$, $Y$ be two disjoint finite sets, $f\in\rmbool(X)$ and $g\in\rmbool(Y)$. Then $f\star_1 g\in\rmboolm(X\sqcup Y)$ if, and only if, $f\in\rmboolm(X)$ and $g\in\rmboolm(Y)$.
\item Let $Y\subseteq X$ be two finite sets and $f\in\rmboolm(X)$. Then $f_{\mid Y}\in\rmboolm(Y)$.
\end{enumerate}\end{lemma}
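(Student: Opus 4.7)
The plan is to handle item 2 first, then use it together with a direct computation to settle item 1. Both parts reduce to verifying the three axioms of Definition \ref{defi4.20}.

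For item 2, let $Y\subseteq X$ and $f\in\rmboolm(X)$. For any $A\subseteq Y$, $f_{\mid Y}(A)=f(A)$, so the bound $0\leq f_{\mid Y}(A)\leq|A|$ is inherited directly. Monotonicity of $f_{\mid Y}$ on subsets of $Y$ is a special case of the monotonicity of $f$, and submodularity for $A,B\subseteq Y$ follows from the submodularity of $f$ applied to the same sets (the unions and intersections remain inside $Y$). Hence $f_{\mid Y}\in\rmboolm(Y)$.

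For item 1, I would first do the $(\Leftarrow)$ direction by direct verification. Recalling that $(f\star_1 g)(A)=f(A\cap X)+g(A\cap Y)$, I would check each axiom: boundedness follows from $0\leq f(A\cap X)+g(A\cap Y)\leq|A\cap X|+|A\cap Y|=|A|$; monotonicity follows since $A\subseteq B$ implies $A\cap X\subseteq B\cap X$ and $A\cap Y\subseteq B\cap Y$, and then using monotonicity of $f$ and $g$; and submodularity uses the key observation that $(A\cup B)\cap X=(A\cap X)\cup(B\cap X)$ and $(A\cap B)\cap X=(A\cap X)\cap(B\cap X)$ (and similarly for $Y$), which lets one split the submodular inequality for $f\star_1 g$ into the submodular inequalities for $f$ and $g$ separately.

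For the $(\Rightarrow)$ direction, I would invoke Lemma \ref{lemma2.4}, which gives $(f\star_1 g)_{\mid X}=f$ and $(f\star_1 g)_{\mid Y}=g$. Then item 2, already established, shows that $f\in\rmboolm(X)$ and $g\in\rmboolm(Y)$. There is no real obstacle; the only thing to keep track of is the bookkeeping of intersections with $X$ and $Y$ in the submodularity computation.
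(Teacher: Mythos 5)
Your proof is correct and matches the paper's approach: the paper dismisses this lemma with ``Direct verifications,'' and your argument simply spells out those verifications (axiom-by-axiom checks for the restriction and for $\star_1$, plus Lemma \ref{lemma2.4} combined with item 2 for the converse direction of item 1). Nothing further is needed.
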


\begin{proof}
Direct verifications.
\end{proof}

However, matroid rank functions are not stable under contractions:

\begin{lemma}\label{lemma4.22}
Let $f\in\rmboolm(X)$ and $\sim\in\calE(X)$. Then $f/{\sim}$ is increasing and submodular. It is a matroid rank function if, and only if, for any $Y\in X/{\sim}$, $f(Y)\leq 1$.
\end{lemma}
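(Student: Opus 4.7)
The plan is to verify the three matroid axioms for $f/{\sim}$ directly from the definition $f/{\sim}(A) = f(\varpi_\sim^{-1}(A))$, exploiting that the preimage map $\varpi_\sim^{-1}$ commutes with unions and intersections. First I would check the two structural properties that always hold. For monotonicity, if $A \subseteq B \subseteq X/{\sim}$ then $\varpi_\sim^{-1}(A) \subseteq \varpi_\sim^{-1}(B)$, so the fact that $f$ is increasing gives $f/{\sim}(A) \leq f/{\sim}(B)$. For submodularity, the identities $\varpi_\sim^{-1}(A \cup B) = \varpi_\sim^{-1}(A) \cup \varpi_\sim^{-1}(B)$ and $\varpi_\sim^{-1}(A \cap B) = \varpi_\sim^{-1}(A) \cap \varpi_\sim^{-1}(B)$ transport the submodularity of $f$ directly onto $f/{\sim}$.

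Next I would address the equivalence concerning the matroid rank condition. Note that the positivity $f/{\sim}(A) = f(\varpi_\sim^{-1}(A)) \geq 0$ is automatic, so the only axiom that may fail is $f/{\sim}(A) \leq |A|$. For the forward direction, if $f/{\sim} \in \rmboolm(X/{\sim})$, then taking $A = \{Y\}$ for each $Y \in X/{\sim}$ gives $f(Y) = f/{\sim}(\{Y\}) \leq 1$. For the backward direction, assume $f(Y) \leq 1$ for every class $Y$. Given $A = \{Y_1, \ldots, Y_k\} \subseteq X/{\sim}$ with the $Y_i$ pairwise disjoint (as distinct equivalence classes), I would argue by induction on $k$ using submodularity of $f$: since $f \geq 0$, submodularity yields $f(U \cup Y_{i+1}) \leq f(U) + f(Y_{i+1}) - f(U \cap Y_{i+1}) \leq f(U) + f(Y_{i+1})$ for $U = Y_1 \cup \cdots \cup Y_i$. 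Iterating gives
\[
f/{\sim}(A) = f(Y_1 \cup \cdots \cup Y_k) \leq \sum_{i=1}^k f(Y_i) \leq k = |A|,
\]
which establishes the remaining axiom.

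The argument involves no real obstacle; it is a routine verification whose only subtlety is recognising that distinct equivalence classes are automatically disjoint, which allows the submodular induction to collapse cleanly without dealing with overlap terms. The content of the lemma is essentially that contraction preserves the order and submodularity of $f$ unconditionally, but can destroy the bound $f(A) \leq |A|$ precisely when a single class $Y$ already carries rank at least $2$.
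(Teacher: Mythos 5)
Your proof is correct and follows essentially the same route as the paper: pull back monotonicity and submodularity through $\varpi_\sim^{-1}$, read off $f(Y)\leq 1$ from singletons for the forward direction, and establish $f/{\sim}(A)\leq|A|$ by a submodular induction peeling off one class at a time. The only cosmetic difference is that you run the induction with the submodularity of $f$ on $X$ (using disjointness of distinct classes), whereas the paper applies the just-proved submodularity of $f/{\sim}$ on $X/{\sim}$; the two computations are the same.
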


\begin{proof}
Let $A\subseteq B\subseteq X/{\sim}$. 
\[f/{\sim}(A)=f(\varpi_\sim^{-1}(A))\leq f(\varpi_\sim^{-1}(B))=f/{\sim}(B).\]
Let $A,B\subseteq X/{\sim}$. 
\begin{align*}
f/{\sim}(A\cup B)+f/{\sim}(A\cap B)&=f(\varpi_\sim^{-1}(A)\cup\varpi_\sim^{-1}(B))+f(\varpi_\sim^{-1}(A)\cap\varpi_\sim^{-1}(B))\\
&\leq f(\varpi_\sim^{-1}(A))+f(\varpi_\sim^{-1}(B))\\
&=f/{\sim}(A)+f/{\sim}(B).
\end{align*}
Let us assume that $f/{\sim}$ is a matroid rank function. For any $Y\in X/{\sim}$,
\[f/{\sim}(\{Y\})=f(Y)\leq |\{Y\}|=1.\]
Let us assume that for any $Y\in X/{\sim}$, $f(Y)\leq 1$, and let us prove that for any $Y\subseteq X/{\sim}$, $f/{\sim}(Y)\leq |Y|$. We proceed by induction on $n=|Y|$. It is immediate if $Y=\emptyset$, and the hypothesis on $f$ if $n=1$.
Let us assume the result at rank $n-1$, with $n\geq 2$. Let us pick an element $\overline{y}\in Y$ and put $Y'=Y\setminus\{\overline{y}\}$. Then, as $f/{\sim}$ submodular, by the induction hypothesis,
\begin{align*}
f/{\sim}(Y)&\leq f/{\sim}(Y')+f/{\sim}(\{\overline{y}\})-f/{\sim}(\emptyset)\leq n-1+f(\overline{y})-0\leq n.
\end{align*}
So $f/{\sim}$ is a matroid rank function. 
\end{proof}

Let us gives this classical result on separators of matroids \cite[Lemma 5.8]{Vyuka}.
For the sake of completeness, and to avoid the use of the specific vocabulary of matroids (cycles, independents, etc) as far as possible, we give a proof of this Proposition in the appendix.\\

\begin{prop}\label{prop4.23}
Let $f\in\rmboolm(X)$, and let $A,B\subseteq X$, disjoint, such that
$f(A\sqcup B)=f(A)+f(B)$. For any $A'\subseteq A$ and any $B'\subseteq B$,
\[f(A'\sqcup B')=f(A')+f(B').\] 
\end{prop}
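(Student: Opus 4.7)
The plan is to reduce the statement to a one-sided version and then iterate. More precisely, I would first establish the intermediate claim: \emph{if $f(A\sqcup B)=f(A)+f(B)$, then $f(A'\sqcup B)=f(A')+f(B)$ for every $A'\subseteq A$}. Once this is proved, applying it a second time with the roles of $A$ and $B$ exchanged (taking $A'\sqcup B$ in the place of the hypothesis and replacing the ``$A$'' side of the argument by $B$) yields $f(A'\sqcup B')=f(A')+f(B')$ for every $B'\subseteq B$, which is the desired conclusion.

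The core of the proof is therefore the intermediate claim, and it rests entirely on two applications of the submodularity axiom. First, I would apply submodularity to the sets $A'\sqcup B$ and $A$. Since $A'\subseteq A$ and $A\cap B=\emptyset$, we have $(A'\sqcup B)\cup A=A\sqcup B$ and $(A'\sqcup B)\cap A=A'$, which gives
\[
f(A\sqcup B)+f(A')\leq f(A'\sqcup B)+f(A).
\]
Substituting the hypothesis $f(A\sqcup B)=f(A)+f(B)$ and cancelling $f(A)$ yields $f(A'\sqcup B)\geq f(A')+f(B)$. Second, I would apply submodularity directly to the disjoint sets $A'$ and $B$: since $A'\cap B=\emptyset$ and $f(\emptyset)=0$, this gives the reverse inequality $f(A'\sqcup B)\leq f(A')+f(B)$. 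Combining the two inequalities yields the equality.

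I do not expect a serious obstacle here: once one sees that the goal is a sandwich by two submodularity inequalities, everything falls out of bookkeeping, and the monotonicity and $|A|$-boundedness axioms are not even needed. The only mildly non-obvious point is the choice of test sets in the first application of submodularity (one has to pair $A'\sqcup B$ with $A$ rather than with the more tempting $B$), so I would highlight that choice, and then the second (symmetric) iteration is immediate and can be dispatched in one line.
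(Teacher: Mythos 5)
Your proof is correct, and it takes a genuinely different and substantially shorter route than the paper. The paper's appendix first develops basis machinery for rank functions (existence of a basis of any subset, and basis extension, Lemmas \ref{lemma6.2}--\ref{lemma6.3} and Proposition \ref{prop6.4}), then proves Proposition \ref{prop4.23} by choosing a basis of $B'$, extending it to bases of $A\sqcup B'$ and of $A\sqcup B$, and comparing cardinalities; this uses monotonicity and the bound $f(A)\leq|A|$ throughout. Your argument instead sandwiches $f(A'\sqcup B)$ between two submodularity inequalities: submodularity applied to the pair $(A'\sqcup B,\,A)$ gives $f(A'\sqcup B)\geq f(A')+f(B)$ after inserting the hypothesis, and submodularity applied to the disjoint pair $(A',B)$ together with $f(\emptyset)=0$ gives the reverse inequality; a second, symmetric application then handles $B'\subseteq B$. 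Both applications are set up correctly ($(A'\sqcup B)\cup A=A\sqcup B$ and $(A'\sqcup B)\cap A=A'$), and the iteration is legitimate since $A'$ and $B$ are disjoint and satisfy the same additivity hypothesis. What your approach buys is both brevity and generality: it uses only submodularity and $f(\emptyset)=0$, so it shows that \emph{every} submodular boolean function satisfies the hypothesis of Lemma \ref{lemma4.9} and is therefore rigid, with the matroid axioms $0\leq f(A)\leq|A|$ and monotonicity playing no role. The paper's longer route essentially reproduces the classical separator argument for matroids in the language of rank functions, at the cost of the auxiliary basis lemmas, which are not needed elsewhere in the paper.
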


 Lemma \ref{lemma4.21} implies that:

\begin{prop}
For any finite set $X$, $\rmboolm(X)\subseteq\rmbools(X)$.
\end{prop}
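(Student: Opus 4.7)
The plan is to reduce the statement directly to the two results already at our disposal: Proposition~\ref{prop4.23} and Lemma~\ref{lemma4.9}. Indeed, Proposition~\ref{prop4.23} asserts exactly the hypothesis of Lemma~\ref{lemma4.9} for an arbitrary matroid rank function, while Lemma~\ref{lemma4.9} converts this pointwise ``splitting propagates to subsets'' property into rigidity in the sense of Definition~\ref{defi4.5}, which already accounts for the restriction to indecomposable components.

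More precisely, I would first fix $f \in \rmboolm(X)$ and observe that $f$ is indeed a boolean function: taking $A = \emptyset$ in condition~(1) of Definition~\ref{defi4.20} gives $0 \leq f(\emptyset) \leq 0$, so $f(\emptyset) = 0$. Then, let $A, B \subseteq X$ be disjoint and assume that $f(A \sqcup B) = f(A) + f(B)$. By Proposition~\ref{prop4.23}, for every $A' \subseteq A$ and every $B' \subseteq B$ we have $f(A' \sqcup B') = f(A') + f(B')$, which is precisely the hypothesis required by Lemma~\ref{lemma4.9}. Applying Lemma~\ref{lemma4.9}, we conclude that $f \in \rmbools(X)$.

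Since no calculation beyond invoking these two results is needed, there is no real obstacle in this proposition itself; the whole substance sits in Proposition~\ref{prop4.23}, whose proof is deferred to the appendix, and in the fact that Lemma~\ref{lemma4.9} already packages the ``pass to indecomposable components'' step. The only mild subtlety worth spelling out is that Definition~\ref{defi4.5}(2) only requires rigidity on each indecomposable component $f_{\mid Y}$, not on $f$ globally; but Lemma~\ref{lemma4.9} is designed exactly to bridge this gap, since if the splitting property holds on all disjoint $A, B \subseteq X$ then a fortiori it holds on all disjoint $A, B \subseteq Y$ for any indecomposable component $Y$, which yields rigidity of $f_{\mid Y}$ and hence of $f$.
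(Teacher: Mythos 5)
Your proposal is correct and follows essentially the same route as the paper: invoke Proposition~\ref{prop4.23} to get the splitting property on all subsets $A'\subseteq A$, $B'\subseteq B$, and then conclude rigidity via Lemma~\ref{lemma4.9}. The only (harmless) difference is that the paper first restricts to $f_{\mid A\sqcup B}$ before applying Proposition~\ref{prop4.23}, a step that is unnecessary since that proposition is already stated for arbitrary disjoint subsets of $X$.
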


\begin{proof}
We shall use Lemma \ref{lemma4.9}. Let $f\in\rmboolm(X)$ and $\{A,B\}\in\parti(X)$ such that $f(A\sqcup B)=f(A)+f(B)$. We put $g=f_{\mid A\sqcup B}$. Then $g$ is the rank function of a certain matroid $M$ on $A\sqcup B$.
By Proposition \ref{prop4.23}, for any $A'\subseteq A$ and $B'\subseteq B$, 
\[f(A'\sqcup B')=g(A'\sqcup B')=g(A')+g(B')=f(A')+f(B').\]
So $f$ is rigid. 
\end{proof}

In general, matroid rank functions are not hyper-rigid, nor of the form $\theta_1(f')$ with $f'$ taking its values in $\N$ (as in Proposition \ref{prop4.10}), as shown by the following example.

\begin{example}
Let $f\in\rmboolm([3])$ defined by
\begin{align*}
f(\{1\})=f(\{2\})=f(\{3\})&=1,\\
f(\{1,2\})=f(\{1,3\})=f(\{2,3\})&=2,\\
f(\{1,2,3\})&=2.
\end{align*}
%This is the rank function of the graphic matroid on the complete graph with three vertices, or of the linear matroid associated to the family $\left(\begin{pmatrix}1\\0\end{pmatrix},\begin{pmatrix}0\\1\end{pmatrix},\begin{pmatrix}1\\1\end{pmatrix}\right)$.
As $f(\{1,2,3\})\neq f(\{i\})+f(\{j,k\})$ if $\{i,j,k\}=\{1,2,3\}$, $f$ is indecomposable. As $f(\{1,2\})=f(\{1\})+f(\{2\})$, $f$ is not hyper-rigid. 
If $\theta_1(f')=f$, then $f'=\theta_{-1}(f)$, so
\begin{align*}
f'(\{1,2,3\})&=f(\{1,2,3\})-f(\{1,2\})-f(\{1,3\})-f(\{2,3\})+f(\{1\})+f(\{2\})+f(\{3\})-f(\emptyset)\\
&=2-2-2-2+1+1+1-0\\
&=-1,
\end{align*}
so $f'$ does not take its values in $\N$. 
\end{example}

As a consequence of all these results:

\begin{theo}\label{theo4.25}
The subspace $\calH_{\bfboolm}$ is a subbialgebra of $(\calH_{\bfbools},\star_1,\Delta)$, not stable under $\delta^W$. However, 
\[\delta^W\left(\calH_{\bfboolm}\right)\subseteq\calH_{\bfbools}\otimes\calH_{\bfboolm}.\]
\end{theo}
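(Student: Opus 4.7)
The plan is to establish the three assertions of the theorem in sequence, leveraging the stability results of Lemma \ref{lemma4.21} for the bialgebra part, combined with the convenience of $\rmbools$ (already established) for the mixed containment, and producing a minimal counterexample for the failure of $\delta^W$-stability.

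First I would argue the subbialgebra claim at the species level. By Lemma \ref{lemma4.21}, $\rmboolm$ is stable under $\star_1$ and under restriction $f\mapsto f_{\mid Y}$; combined with the trivial fact that $1\in\rmboolm(\emptyset)$, this means $\bfboolm$ is closed under both the product $\star_1$ and the coproduct $\Delta$ of $\bfbools$, hence is a twisted subbialgebra of $(\bfbools,\star_1,\Delta)$. Applying the bosonic Fock functor $\calF$ then immediately yields that $\calH_{\bfboolm}$ is a subbialgebra of $(\calH_{\bfbools},\star_1,\Delta)$.

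Next I would prove the inclusion $\delta^W(\calH_{\bfboolm})\subseteq\calH_{\bfbools}\otimes\calH_{\bfboolm}$. Let $f\in\rmboolm(X)$ and $\sim\in\calE^W(f)$. On the left tensorand, since $\rmbools$ is convenient (and so stable under $\delta^W$) and $\rmboolm\subseteq\rmbools$, one has $f/{\sim}\in\rmbools(X/{\sim})$. On the right tensorand, the Remark following Definition \ref{defi3.1} gives
\[f\mid\sim=\prod^{\star_1}_{Y\in X/{\sim}}f_{\mid Y}.\]
By Lemma \ref{lemma4.21}, each $f_{\mid Y}$ is a matroid rank function, and (again by Lemma \ref{lemma4.21}) the $\star_1$-product of matroid rank functions is a matroid rank function, so $f\mid\sim\in\rmboolm(X)$. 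Passing to isoclasses via $\calF$, this yields the claimed containment.

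Finally, the non-stability will follow from a single small counterexample, built from the uniform matroid $U_{2,3}$. Let $X=[3]$ and let $f\in\rmbool(X)$ be given by $f(A)=\min(|A|,2)$. Direct verification shows $f\in\rmboolm(X)$. Moreover $f(\{i,j,k\})=2\neq 3=f(\{i\})+f(\{j,k\})$ for every partition $\{\{i\},\{j,k\}\}$ of $[3]$, and $f(\{i,j\})=2\neq 2=f(\{i\})+f(\{j\})$ fails only trivially\,---\,by Proposition \ref{prop2.18}, $f$ is indecomposable. Now take $\sim$ the coarse equivalence on $[3]$ with unique class $[3]$. Then $\cl(\sim)=1$ and $\ic(f\mid\sim)=\ic(f)=1$, so $\sim\in\calE^W(f)$. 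But $f/{\sim}$ is the boolean function on the singleton $\{[3]\}$ with value $f/{\sim}(\{[3]\})=f([3])=2$, which violates the bound $f(A)\leq|A|$ of Definition \ref{defi4.20}. Hence $f/{\sim}\notin\rmboolm(X/{\sim})$, so $\overline{f/{\sim}}\notin\calH_{\bfboolm}$; since this term appears with nonzero multiplicity in $\delta^W(\overline{f})$, we conclude $\delta^W(\calH_{\bfboolm})\not\subseteq\calH_{\bfboolm}\otimes\calH_{\bfboolm}$.

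The main delicate point is the second paragraph: one must be careful that the right tensorand is genuinely a matroid rank function and not merely an element of $\rmbools$, which boils down to using that Lemma \ref{lemma4.21} gives stability of $\rmboolm$ under both $\star_1$ and restriction simultaneously, exactly the two operations combined in $f\mid\sim$. The first and third paragraphs are straightforward once Lemmas \ref{lemma4.21} and \ref{lemma4.22} are in hand.
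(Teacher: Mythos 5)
Your proposal is correct and follows essentially the same route as the paper, which derives Theorem \ref{theo4.25} directly from Lemma \ref{lemma4.21} (stability of $\rmboolm$ under $\star_1$ and restriction, hence the subbialgebra and the right tensorand $f\mid\sim=\prod^{\star_1}_{Y\in X/{\sim}}f_{\mid Y}$), the inclusion $\rmboolm\subseteq\rmbools$ together with stability of $\rmbools$ under contractions (for the left tensorand), and Lemma \ref{lemma4.22} for the failure of $\delta^W$-stability, illustrated by exactly the $U_{2,3}$ rank function already used in the paper. Only cosmetic nitpick: your sentence ``$f(\{i,j\})=2\neq 2=f(\{i\})+f(\{j\})$ fails only trivially'' is garbled, though the intended point (that equality there is harmless because the third disjunct in each parenthesis of Proposition \ref{prop2.18} holds) is right.
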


\begin{remark}
Let $f\in\rmboolm(X)$. Then $f$ is modular if, and only if, 
\begin{align*}
&\forall A\subseteq X,&f(A)&=|\{a\in A\mid f(\{a\})=1\}|.
\end{align*}\end{remark}

\subsection{Two subfamilies of matroids}

There are two classical ways to obtain matroids: from graphs (graphic matroids), and from families of vectors (linear matroids). Let us firstly detail graphic matroids.

\begin{defi}\label{defi4.26}\begin{enumerate}
\item Let $X$ be a finite set. We here call graph on $X$ a pair $G=(V(G),e_G)$, where $V(G)$ is a finite set (set of vertices of $G$) and $e_G$ is a map from $X$ to the set $\calP_2(V(G))$ of subsets of $V(G)$ of cardinality $2$. 
In other terms, a graph on $X$ is here a multigraph whose edges are indexed by the set $X$. We admit multiple edges, but not loops.
\item Let $G$ be a graph on $X$.
\begin{enumerate}
\item The number of connected components of $G$ is denoted by $\cc(G)$.
\item If $Y$ is a subset of $X$, we denote by $G_{\mid Y}$ the graph $(V(G_{\mid Y}),{e_G}_{\mid Y})$, with
\begin{align*}
V(G_{\mid Y})&=\bigcup_{y\in Y}e_G(y).
\end{align*}
\item For any $Y\subseteq X$, we put $\rk_G(Y)=|V(G_{\mid Y})|-\cc(G_{\mid Y})$. This defines a rank function of a certain matroid on $X$. These matroids are called graphic matroids. 
\end{enumerate}
\item The subspecies of $\rmboolm$ given by rank functions of graphic matroids is denoted by $\rmboolgm$. The subspace of $\calH_{\rmboolm}$ generated by isoclasses of rank functions of graphic matroids is denoted by $\calH_{\bfboolgm}$.
\end{enumerate}\end{defi}

\begin{prop}
The subspace $\calH_{\bfboolgm}$ is a subbialgebra of $(\bfH_{\bfboolm},\star_1,\Delta)$, not stable under $\delta^W$. However, 
\[\delta^W(\calH_{\bfboolgm})\subseteq\calH_{\bfbools}\otimes\calH_{\bfboolgm}.\]
\end{prop}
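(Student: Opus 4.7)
The plan is to verify three things separately: stability under $\star_1$, stability under $\Delta$, and the form of the image under $\delta^W$; and then to produce an explicit counterexample to stability under $\delta^W$. Everything should be checked at the species level on $\rmboolgm$ and then transported to $\calH_{\bfboolgm}$ by applying the Fock functor $\calF$.

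First I would show $\rmboolgm$ is stable under $\star_1$. Given disjoint graphs $G_1$ on $X_1$ and $G_2$ on $X_2$, form $G_1 \sqcup G_2$ by taking disjoint copies of the vertex sets; then for any $A \subseteq X_1 \sqcup X_2$, writing $A_i = A \cap X_i$, one has $V((G_1\sqcup G_2)_{\mid A}) = V((G_1)_{\mid A_1}) \sqcup V((G_2)_{\mid A_2})$ and similarly for connected components, so $\rk_{G_1 \sqcup G_2}(A) = \rk_{G_1}(A_1) + \rk_{G_2}(A_2) = \rk_{G_1}\star_1\rk_{G_2}(A)$. Stability under $\Delta$ is immediate from the definition: for $Y\subseteq X$ and $A \subseteq Y$, $(G_{\mid Y})_{\mid A} = G_{\mid A}$, hence $(\rk_G)_{\mid Y} = \rk_{G_{\mid Y}}$. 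These two points give that $\calH_{\bfboolgm}$ is a subbialgebra of $(\calH_{\bfboolm},\star_1,\Delta)$.

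Next I would prove the $\delta^W$ inclusion. Let $G$ be a graph on $X$ and $\sim \in \calE^W(\rk_G)$. Using the identity $\rk_G\mid\sim = \prod^{\star_1}_{Y\in X/\sim}(\rk_G)_{\mid Y}$ from the remark after Definition 3.1, together with the previous step, I get $\rk_G\mid\sim = \rk_{\bigsqcup_{Y\in X/\sim} G_{\mid Y}}$, which belongs to $\rmboolgm(X)$. On the other side, $\rk_G \in \rmboolm \subseteq \rmbools$ (by the results of Section 4.4), and $\rmbools$ is stable under $\delta^W$ by Lemma \ref{lemma4.11}(3), so $\rk_G/\sim \in \rmbools(X/{\sim})$. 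Combining these gives the inclusion $\delta^W(\calH_{\bfboolgm}) \subseteq \calH_{\bfbools}\otimes\calH_{\bfboolgm}$.

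For non-stability under $\delta^W$, I would exhibit an explicit counterexample: take $G$ to be a triangle with edge set $X = \{1,2,3\}$, so $\rk_G(\{i\})=1$, $\rk_G(\{i,j\})=2$ and $\rk_G(X)=2$. The only nontrivial partitions of $X$ give $\rk_G(X_1)+\rk_G(X_2) = 1+2 = 3 \neq 2$, so by Proposition \ref{prop2.18} the function $\rk_G$ is indecomposable, and therefore the equivalence $\sim$ with unique class $X$ lies in $\calE^W(\rk_G)$. But then $\rk_G/{\sim}(\{\overline{x}\}) = \rk_G(X) = 2 > 1 = |\{\overline{x}\}|$, violating the matroid axiom $f(A) \leq |A|$; thus $\overline{\rk_G/\sim} \notin \calH_{\bfboolm}$, and a fortiori $\notin\calH_{\bfboolgm}$. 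The only mildly delicate point is the verification of the product formula on disjoint unions of graphs, but this is a routine count; no step poses a real obstacle.
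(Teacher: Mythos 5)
Your proposal is correct and follows essentially the same route as the paper: stability of $\rk_G$ under disjoint union of graphs and under restriction gives the subbialgebra structure, and the inclusion for $\delta^W$ then follows from $\rmboolm\subseteq\rmbools$ together with Lemma \ref{lemma4.11}. The only difference is that you make the failure of stability under $\delta^W$ explicit with the triangle example, whereas the paper leaves it to the subsequent characterization (via Lemma \ref{lemma4.22}) of when $\rk_G/{\sim}$ is a matroid rank function; both are valid.
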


\begin{proof}
Let $X,Y$ be two disjoint finite sets and let $G,H$ be two graphs on respectively $X$ and $Y$. We can without loss of generality assume that $V(G)$ and $V(H)$ are disjoint. Let $GH$ be the graph such that $V(GH)=V(G)\sqcup V(H)$ and $e_{GH}$ defined by
\begin{align*}
&\forall x\in X\sqcup Y,&e_{GH}(x)&=\begin{cases}
e_G(x)\mbox{ if }x\in X,\\
e_H(x)\mbox{ if }x\in Y.
\end{cases}\end{align*}

Then $\rk_{GH}=\rk_G\star_1\rk_H$, so $\GM$ is stable under $\star_1$. 
Let $G$ be a graph on $X$ and $A\subseteq X$. Then
\[(\rk_G)_{\mid A}=\rk_{G_{\mid A}}.\]
These two properties imply the result.
\end{proof}

\begin{prop}
Let $G$ be a graph. Then $\rk_G$ is modular if, and only if, $G$ is a forest.
\end{prop}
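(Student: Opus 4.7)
The plan is to reduce modularity of $\rk_G$ to the combinatorial identity $\rk_G(A) = |A|$ for all $A \subseteq X$, and then recognise this as exactly the forest condition via the standard cyclomatic inequality for graphs.

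First, observe that for any single edge $x \in X$, the subgraph $G_{\mid\{x\}}$ has two vertices and one connected component (we forbid loops in Definition \ref{defi4.26}), hence $\rk_G(\{x\}) = 2 - 1 = 1$. By the equivalent characterisation of modular boolean functions in Definition \ref{defi2.16}, item 4, the rank function $\rk_G$ is modular if and only if
\[
\rk_G(A) = \sum_{x \in A} \rk_G(\{x\}) = |A| \quad\text{for every } A \subseteq X.
\]

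The key ingredient is the elementary fact that, for every graph $H = (V(H), E(H))$,
\[
|E(H)| \geq |V(H)| - \cc(H),
\]
with equality if and only if $H$ is a forest. (A spanning forest of $H$ contains $|V(H)| - \cc(H)$ edges, and adding any edge outside the spanning forest creates a cycle.) Applying this to $H = G_{\mid A}$, which has edge set $A$, we obtain
\[
|A| \geq |V(G_{\mid A})| - \cc(G_{\mid A}) = \rk_G(A),
\]
with equality if and only if $G_{\mid A}$ is a forest.

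Combining, $\rk_G$ is modular iff $|A| = \rk_G(A)$ for all $A \subseteq X$, iff $G_{\mid A}$ is a forest for all $A \subseteq X$. For the forward direction, taking $A = X$ yields that $G$ itself is a forest. For the converse, if $G$ is a forest then every subgraph $G_{\mid A}$ is also a forest (no cycles can appear by removing edges), so the equality $\rk_G(A) = |A|$ holds for all $A$. There is no real obstacle here; the proof amounts to pairing the modularity criterion from Definition \ref{defi2.16} with the cyclomatic characterisation of forests.
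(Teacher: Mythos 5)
Your proof is correct and follows essentially the same route as the paper: reduce modularity to $\rk_G(A)=|A|$ for all $A$ using $\rk_G(\{x\})=1$ and Definition \ref{defi2.16}, then invoke the characterisation $|E(H)|=|V(H)|-\cc(H)$ of forests. The only detail worth adding is that, with the paper's definition, $G_{\mid X}$ omits the isolated vertices of $G$, so taking $A=X$ gives that $G$ minus its isolated vertices is a forest, whence $G$ itself is a forest --- a one-line remark that the paper makes explicitly.
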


\begin{proof}
Let us assume that $G$ is a forest. If $A\subseteq X$, then $G_{\mid A}$ is also a forest, so 
\[|V(G_{\mid A})|=|A|+\cc(G_{\mid A}).\]
So for any $A\subseteq X$, $\rk_G(A)=|A|$, and $\rk_G$ is obviously modular.\\

Let us assume that $\rk_G$ is modular. For any $x\in X$, $G_{\mid\{x\}}$ is a graph with two vertices and a single edge, so 
\[\rk_G(\{x\})=2-1=1.\]
As $\rk_G$ is modular, for any $A\subseteq X$, $\rk_G(A)=|A|$. In particular, $\rk_G(X)=|X|$.
Denoting by $G'$ the graph obtained from $G$ by deletion of the $k$ isolated vertices of $G$, 
\begin{align*}
G_{\mid X}&=G',&|V(G')|&=|V(G)|-k,&\cc(G')&=\cc(G)-k,
\end{align*}
which gives
\[\rk_G(X)=|V(G')|-\cc(G')=|E(G')|,\]
so $G'$ is a forest, which implies that $G$ is a forest too.
\end{proof}

\begin{remark}
The correspondence sending a graph to its associated graphic matroid is not injective. For example, if $F_1$ and $F_2$ are two forests with on the same set $X$, then $\rk_{F_1}=\rk_{F_2}$. 
\end{remark}

\begin{prop}
Let $G$ be a graph on $X$ and $\sim\in\calE(X)$. Then $\rk_G/{\sim}$ is the rank function of a matroid if, and only if,
\begin{align*}
&\forall x,y\in X,&x\sim y\Longrightarrow e_G(x)=e_G(y).
\end{align*}
If so, $\rk_{G/\sim}$ is the rank function of a graphic matroid.
\end{prop}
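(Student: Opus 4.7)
The plan is to reduce the question to the criterion given by Lemma \ref{lemma4.22}, prove the resulting combinatorial statement on the edge map $e_G$, and then realize $\rk_G/{\sim}$ as the rank function of an explicit graphic matroid on $X/{\sim}$.

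First, by Lemma \ref{lemma4.22}, $\rk_G/{\sim}$ is the rank function of a matroid if and only if $\rk_G(Y)\leq 1$ for every equivalence class $Y\in X/{\sim}$. So the whole statement reduces to proving:
\[\rk_G(Y)\leq 1\Longleftrightarrow\forall x,y\in Y,\: e_G(x)=e_G(y),\]
for any nonempty $Y\subseteq X$ consisting of equivalent elements (in fact the claim will follow without using the equivalence structure at all on $Y$).

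For the implication $\Leftarrow$, if all the edges $e_G(x)$ for $x\in Y$ coincide with a single pair $\{u,v\}$, then $V(G_{\mid Y})=\{u,v\}$ and $G_{\mid Y}$ is connected (every edge joins $u$ to $v$), hence
\[\rk_G(Y)=|V(G_{\mid Y})|-\cc(G_{\mid Y})=2-1=1.\]
For the implication $\Rightarrow$, I argue by contrapositive. If there exist $x_1,x_2\in Y$ with $e_G(x_1)=\{a,b\}\neq\{c,d\}=e_G(x_2)$, I distinguish two cases on $|e_G(x_1)\cap e_G(x_2)|$: if the two edges share one vertex, then $G_{\mid\{x_1,x_2\}}$ has $3$ vertices and is connected, so $\rk_G(\{x_1,x_2\})=3-1=2$; if they are vertex-disjoint, then $G_{\mid\{x_1,x_2\}}$ has $4$ vertices and $2$ connected components, so again $\rk_G(\{x_1,x_2\})=4-2=2$. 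In either case, the monotonicity of the matroid rank function $\rk_G$ (Definition \ref{defi4.20}) gives $\rk_G(Y)\geq\rk_G(\{x_1,x_2\})=2>1$. This establishes the equivalence, and therefore the first assertion.

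Assume now the condition on $\sim$ holds. Then the map $e_{G/{\sim}}:X/{\sim}\longrightarrow\calP_2(V(G))$ defined by $e_{G/{\sim}}(\overline{x})=e_G(x)$ is well defined and produces a graph $G/{\sim}$ on $X/{\sim}$. For any $A\subseteq X/{\sim}$, observe that
\[V((G/{\sim})_{\mid A})=\bigcup_{\overline{x}\in A}e_G(x)=\bigcup_{y\in\varpi_\sim^{-1}(A)}e_G(y)=V(G_{\mid\varpi_\sim^{-1}(A)}),\]
using that $e_G$ is constant on each equivalence class in $A$. The same argument shows that the two graphs $(G/{\sim})_{\mid A}$ and $G_{\mid\varpi_\sim^{-1}(A)}$ have the same set of edges viewed as unordered pairs of vertices (only the indexing differs), hence the same connected components. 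Therefore
\[\rk_{G/{\sim}}(A)=|V((G/{\sim})_{\mid A})|-\cc((G/{\sim})_{\mid A})=\rk_G(\varpi_\sim^{-1}(A))=\rk_G/{\sim}(A),\]
which shows that $\rk_G/{\sim}=\rk_{G/{\sim}}$ is indeed the rank function of a graphic matroid.

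The main subtlety is the case analysis in the implication $\Rightarrow$ (handling whether two distinct edges share a vertex or not), but since we forbid loops and edges are $2$-element subsets, both subcases are elementary.
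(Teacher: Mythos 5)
Your proposal is correct and follows essentially the same route as the paper: reduce to the criterion of Lemma \ref{lemma4.22} ($\rk_G(Y)\leq 1$ on each class), translate this into the statement that $e_G$ is constant on classes, and then realize $\rk_G/{\sim}$ as $\rk_{G/{\sim}}$ for the quotient graph with edge map $\overline{x}\mapsto e_G(x)$. The only cosmetic difference is in the elementary step showing $\rk_G(Y)\leq 1$ forces constant edges (your two-edge case analysis with monotonicity versus the paper's count $|V(G_{\mid Y})|=\cc(G_{\mid Y})+1$), which does not change the substance.
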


\begin{proof}
$\Longrightarrow$. If $\rk_G/{\sim}$ is a rank function of a matroid, then by Lemma \ref{lemma4.22}, for any $Y\in X/{\sim}$, $\rk_G(Y)\leq 1$, that is to say
\[V(G_{\mid Y})\in\{\cc(G_{\mid Y}),\:\cc(G_{\mid Y})+1\}.\]
If $V(G_{\mid Y})=\cc(G_{\mid Y})$, then $G_{\mid Y}$ has no edge, which is impossible as $Y\neq\emptyset$. So $V(G_{\mid Y})=\cc(G_{\mid Y})+1$, which implies that all the edges of $G_{\mid Y}$ have the same extremities: 
in other words, for any $x,y\in X$, if $x\sim y$, then $e_G(x)=e_G(y)$.\\

$\Longleftarrow$. If so, we can define a map $\overline{e}_G:X/{\sim}\longrightarrow\calP_2(V(G))$, such $\overline{e}_G\circ\varpi_\sim=e_G$. This defines a graph $G/{\sim}$ on $X/{\sim}$, with $V(G/{\sim})=V(G)$. 
Then $\rk_G/{\sim}=\rk_{G/{\sim}}$, so is the rank function of a graphic matroid.
\end{proof}

Let us now study linear matroids. We fix a commutative field $\L$.

\begin{defi}\label{defi4.30}\begin{enumerate}
\item Let $X$ be a finite set and $V=(v_x)_{x\in X}$ a family of vectors of a given $\L$-vector space $E$, called the underlying space of $V$. For any $Y\subseteq X$, we define
\[\rk_V(Y)=\rk((v_x)_{x\in Y}).\]
This defines a matroid rank function, associated to a certain matroid on $X$. These matroids are said to be linear over $\L$.
\item We denote by $\rmboollm$ the subspecies of $\rmboolm$ of rank functions of linear matroids over $\L$, and by $\calH_{\bfboollm}$ the subspace of $\calH_{\bfboolm}$ generated by isoclasses of rank functions of linear matroids over $\L$.
\end{enumerate}\end{defi}

\begin{prop}
The subspace $\calH_{\bfboollm}$ is a subbialgebra of $(\bfH_{\bfboolm},\star_1,\Delta)$, not stable under $\delta^W$. However, 
\[\delta^W(\calH_{\bfboollm})\subseteq\calH_{\bfbools}\otimes\calH_{\bfboollm}.\]
\end{prop}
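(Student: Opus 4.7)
The proof will mirror the structure of the graphic matroid case (Proposition in Definition \ref{defi4.26}), replacing graphs by families of vectors. The overall strategy has three components: verify stability under the product $\star_1$ and the coproduct $\Delta$, produce the partial stability of $\delta^W$, and exhibit a counterexample showing the failure of full $\delta^W$-stability.

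For the product, given families $V=(v_x)_{x\in X}$ in an $\L$-vector space $E$ and $W=(w_y)_{y\in Y}$ in $F$, I would form the family $V\sqcup W$ in $E\oplus F$ by sending $v_x$ to $(v_x,0)$ and $w_y$ to $(0,w_y)$. Since spans in the direct sum decompose into $X$- and $Y$-parts, for any $A\subseteq X\sqcup Y$ one obtains
\[
\rk_{V\sqcup W}(A)=\rk\bigl((v_x)_{x\in A\cap X}\bigr)+\rk\bigl((w_y)_{y\in A\cap Y}\bigr)=\rk_V(A\cap X)+\rk_W(A\cap Y),
\]
so $\rk_{V\sqcup W}=\rk_V\star_1\rk_W$. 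For the coproduct, if $A\subseteq X$ and $V_{\mid A}=(v_x)_{x\in A}$, then trivially $\rk_{V_{\mid A}}=(\rk_V)_{\mid A}$, so $(\rk_V)_{\mid A}\in\rmboollm(A)$. These two observations show that $\rmboollm$ is stable under $\star_1$ and under restriction, so $\calH_{\bfboollm}$ is a subbialgebra of $(\calH_{\bfboolm},\star_1,\Delta)$.

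For the inclusion $\delta^W(\calH_{\bfboollm})\subseteq\calH_{\bfbools}\otimes\calH_{\bfboollm}$, the left-hand factors $\overline{\rk_V/{\sim}}$ lie in $\calH_{\bfbools}$ because $\calH_{\bfboolm}\subseteq\calH_{\bfbools}$ is stable under contractions in the weak sense by Theorem \ref{theo4.25}. The right-hand factors are of the form
\[
\rk_V\mid\sim=\prod^{\star_1}_{Y\in X/{\sim}}(\rk_V)_{\mid Y}=\prod^{\star_1}_{Y\in X/{\sim}}\rk_{V_{\mid Y}},
\]
which belongs to $\rmboollm(X)$ by the two stability properties just established. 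Hence $\overline{\rk_V\mid\sim}\in\calH_{\bfboollm}$, which gives the inclusion.

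For the failure of stability, I would exhibit a single counterexample: let $X=[3]$ and consider $V=(e_1,e_2,e_1+e_2)$ in $\L^2$, which makes sense for any field. A direct computation gives $\rk_V(\{i\})=1$ for $i\in[3]$, $\rk_V(Y)=2$ for all two-element $Y\subseteq[3]$, and $\rk_V([3])=2$. Using Proposition \ref{prop2.18}, one checks that the three disjunctions hold because $\rk_V([3])=2$ is strictly less than $\rk_V(Y)+\rk_V(\{i\})=3$ for each splitting $[3]=Y\sqcup\{i\}$; hence $\rk_V$ is indecomposable. Taking $\sim$ to be the total equivalence on $[3]$, we get $\cl(\sim)=1=\ic(\rk_V\mid\sim)$, so $\sim\in\calE^W(\rk_V)$, yet $\rk_V/{\sim}$ sends the singleton $\{[3]\}$ to $2$, violating the bound $f(A)\leq|A|$ of Definition \ref{defi4.20}. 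Therefore $\rk_V/{\sim}$ is not a matroid rank function, and in particular its isoclass does not belong to $\calH_{\bfboollm}$; the corresponding tensor appears in $\delta^W(\overline{\rk_V})$, proving that $\calH_{\bfboollm}$ is not stable under $\delta^W$. No step presents a serious obstacle; the only care needed is checking the indecomposability claim via Proposition \ref{prop2.18}, and checking that the given $\sim$ really lies in $\calE^W(\rk_V)$.
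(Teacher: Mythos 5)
Your proof is correct and follows essentially the same route as the paper: the direct-sum construction $V\oplus W$ in $E\oplus F$ gives $\rk_{V\oplus W}=\rk_V\star_1\rk_W$, restriction of families gives stability under $\Delta$, and the partial inclusion $\delta^W(\calH_{\bfboollm})\subseteq\calH_{\bfbools}\otimes\calH_{\bfboollm}$ follows from these together with the rigidity of matroid rank functions. The explicit counterexample $(e_1,e_2,e_1+e_2)$ for the failure of $\delta^W$-stability is a detail the paper leaves implicit (its proof stops at "these two properties imply the result"), and your verification of it is sound.
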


\begin{proof}
Let $V,V'$ be two families of vectors indexed by $X$ and $X'$, of underlying spaces $E$ and $E'$.
We consider the family of vectors of $E\oplus E'$ defined by
\[V\oplus V'=(v_x)_{x\in X}\sqcup (v'_x)_{x\in X'}.\]
Then $\rk_{V\oplus V'}=\rk_V\star_1\rk_{V'}$. 
If $V$ is a family of vectors indexed by $X$ and $Y\subseteq V$, then, obviously, $(\rk_V)_{\mid Y}$ is the rank boolean function attached to $(v_x)_{x\in Y}$. These two properties imply the result.
\end{proof}

\begin{prop}
Let $V=(v_x)_{x\in X}$ be a family of vectors. Then $\rk_V$ is modular if, and only if, the family $V_0$ obtained from $V$ by deletion of the zero vectors is free.
\end{prop}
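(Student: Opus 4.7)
The plan is to use the characterization of modularity from Definition \ref{defi2.16}, item 4: $\rk_V$ is modular if, and only if, for every $A\subseteq X$,
\[\rk_V(A)=\sum_{x\in A}\rk_V(\{x\}).\]
First I would observe that $\rk_V(\{x\})=\rk((v_x))\in\{0,1\}$, with the value being $0$ when $v_x=0$ and $1$ otherwise. Writing $X_0=\{x\in X\mid v_x\neq 0\}$, so that $V_0=(v_x)_{x\in X_0}$, the right-hand side above equals $|A\cap X_0|$. Hence modularity of $\rk_V$ is equivalent to the condition
\[\forall A\subseteq X,\qquad \rk((v_x)_{x\in A})=|A\cap X_0|.\]

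For the ``if'' direction, assume $V_0$ is free. Then any subfamily of $V_0$ is free. For any $A\subseteq X$, the nonzero vectors in $(v_x)_{x\in A}$ form the subfamily $(v_x)_{x\in A\cap X_0}$, which is free, and the zero vectors contribute nothing to the rank, so $\rk((v_x)_{x\in A})=|A\cap X_0|$, giving modularity.

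For the ``only if'' direction, apply the modularity identity to $A=X_0$. The family $(v_x)_{x\in X_0}=V_0$ has no zero entry, so the right-hand side is $|X_0|$. Thus $\rk(V_0)=|X_0|$, which is exactly the statement that $V_0$ is a free family. The only step requiring a small amount of care is the identification $\rk_V(\{x\})=\mathbf{1}_{v_x\neq 0}$, which is immediate from the definition of $\rk_V$ as the rank of the corresponding subfamily; no genuine obstacle arises.
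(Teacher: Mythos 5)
Your proof is correct and follows essentially the same route as the paper: both use the characterization of modularity as $f(A)=\sum_{x\in A}f(\{x\})$, identify $\rk_V(\{x\})$ as $0$ or $1$ according to whether $v_x$ vanishes, and then handle the two directions by restricting to the set of indices of nonzero vectors (only the labelling of that set differs).
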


\begin{proof}
We denote by $X_0\subseteq X$ the subsets of indices $x$ such that $v_x=0$ and we put $X'=X\setminus X_0$. If $\rk_V$ is modular, then 
\[\rk((v_x)_{x\in X'})=\rk_V(X')=\sum_{x\in X'}\rk((v_x))=|X'|,\]
so $(v_x)_{x\in X'}$ is free. Conversely, if this family is free, then for any $Y\subseteq X'$,
\[\rk_V(Y)=|Y\cap X'|=\sum_{x\in Y\cap X'}\rk_V(\{x\})+\sum_{x\in Y\cap X_0}\rk_V(\{x\})=\sum_{x\in Y}\rk_V(\{x\}),\]
so $\rk_V$ is modular. 
\end{proof}

\begin{prop}
Let $V=(v_x)_{x\in X}$ be a family of vectors $\sim\in\calE(X)$. Then $\rk_V/{\sim}$ is the rank function of a matroid if, and only if,
\begin{align*}
&\forall x,y\in X,&x\sim y\Longrightarrow\mbox{$v_x$ and $v_y$ are colinear}.
\end{align*}
If so, $\rk_{G/\sim}$ is the rank function of a linear matroid.
\end{prop}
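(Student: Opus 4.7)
The plan is to reduce the problem to Lemma \ref{lemma4.22}, which already identifies when a contraction produces a rank function, and then to interpret that condition in linear-algebraic terms.

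First, I would invoke Lemma \ref{lemma4.22}: since $\rk_V$ is a matroid rank function, $\rk_V/{\sim}$ is automatically increasing and submodular, and it is a rank function of a matroid if, and only if, $\rk_V(Y) \leq 1$ for every $Y \in X/{\sim}$. So the task reduces to showing that the condition ``$\rk_V(Y)\leq 1$ for every $Y \in X/{\sim}$'' is equivalent to ``$x \sim y \Longrightarrow v_x$ and $v_y$ are colinear''.

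For this equivalence, I would use the standard observation that $\rk((v_x)_{x\in Y}) \leq 1$ if, and only if, all the $v_x$ with $x \in Y$ lie in a single $1$-dimensional subspace or are all zero, which is in turn equivalent to pairwise colinearity (colinearity being understood in the sense that one of the two vectors is a scalar multiple of the other, including the zero case). Both directions are routine: if all vectors in an equivalence class are pairwise colinear, they all lie in the span of any chosen nonzero representative (or the zero subspace), so their rank is at most $1$; conversely, if their rank is at most $1$, any two of them are colinear.

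For the last assertion, assuming the colinearity condition, I would construct an explicit family $W = (w_{\overline{x}})_{\overline{x} \in X/{\sim}}$ in the same underlying space $E$: for each class $\overline{x}$, pick $w_{\overline{x}} = v_y$ for some $y \in \overline{x}$ (any choice works; if all $v_y$ in the class are zero, then $w_{\overline{x}} = 0$). By the colinearity condition, for any $A \subseteq X/{\sim}$,
\[
\mathrm{span}\{v_x \mid x \in \varpi_\sim^{-1}(A)\} = \mathrm{span}\{w_{\overline{x}} \mid \overline{x} \in A\},
\]
since each class contributes vectors spanning at most the line $\L w_{\overline{x}}$. Taking dimensions gives
\[
\rk_V/{\sim}(A) = \rk_V(\varpi_\sim^{-1}(A)) = \rk_W(A),
\]
so $\rk_V/{\sim} = \rk_W$ is the rank function of a linear matroid over $\L$. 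There is no real obstacle here; the only thing to be mildly careful about is accommodating the possibility that some vectors are zero when picking representatives $w_{\overline{x}}$.
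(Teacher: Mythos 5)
Your proposal is correct and follows essentially the same route as the paper: reduce to the condition $\rk_V(Y)\leq 1$ on each class via Lemma \ref{lemma4.22}, identify this with pairwise colinearity, and then build the family $W$ of class representatives so that $\rk_V/{\sim}=\rk_W$. The only cosmetic difference is that the paper gets the ``if'' direction directly from the construction of $W$ rather than from the converse of Lemma \ref{lemma4.22}, but the content is the same.
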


\begin{proof}
$\Longrightarrow$. If $\rk_V/{\sim}$ is a rank function of a matroid, then, by Lemma \ref{lemma4.22}, for any $Y\in X/{\sim}$, $\rk((v_x)_{x\in Y})\leq 1$. As a consequence, the vectors $v_x$, with $x\in Y$, are pairwise colinear.\\

$\Longleftarrow$. Then, for any $Y\in X/{\sim}$:
\begin{itemize}
\item If $\rk((v_y)_{y\in Y})=0$, we put $w_Y=0$.
\item If $\rk((v_y)_{y\in Y})=1$, let us choose a nonzero vector $w_Y$ such that for any $y\in Y$, $w_Y$ and $v_y$ are colinear.
\end{itemize}
This defines a family $W=(w_y)_{Y\in X/{\sim}}$, and $r_V/{\sim}=r_W$, so is the rank function of a linear matroid.
\end{proof}

\section{Fundamental polynomial invariant}

\subsection{Construction and examples}

\begin{notation}
For any $k\geq 1$, we put
\[\binom{T}{k}=\frac{T(T-1)\cdots (T-k+1)}{k!}\in\K[T].\]
By convention, $\displaystyle\binom{T}{0}=1$.
\end{notation}

Let us apply Theorem  \ref{theo1.3} on $\calH_{\bfbool}$, with the character $\epsilon_\delta$.

\begin{theo}\label{theo5.1}
We define a bialgebra morphism from $(\calH_{\bfbool},\star_1,\Delta)$ to $(\K[T],m,\Delta)$ as follows:
\begin{align*}
&\forall x\in\ker(\varepsilon_\Delta),&\Phi(x)&=\sum_{k=1}^\infty\epsilon_\delta^{\otimes k}\circ\tdelta^{(k-1)}(x)\binom{T}{k}.
\end{align*}
Its restriction to $\calH_{\bfboolmax}$ is the unique double bialgebra morphism from $(\calH_{\bfboolmax},\star_1,\Delta,\delta^W)$ to $(\K[T],m,\Delta,\delta)$.
\end{theo}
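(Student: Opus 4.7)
The plan is to reduce the statement to the abstract Theorem \ref{theo1.3}. The first task is to verify the hypotheses of part (1) of that theorem, namely that $(\calH_{\bfbool},\star_1,\Delta)$ is a connected bialgebra and that $\epsilon_\delta$ (defined by $\epsilon_\delta(\overline{f})=1$ if $f$ is modular, $0$ otherwise) is a character. Connectedness is immediate from the grading of $\calH_{\bfbool}$ by the cardinality of the underlying set: $\rmbool(\emptyset)=\{1\}$, so the degree-$0$ component of $\calH_{\bfbool}$ is one-dimensional, spanned by $\overline{1}$, and the coproduct $\Delta$ is compatible with this grading. Multiplicativity of $\epsilon_\delta$ on the whole of $\calH_{\bfbool}$ was already recorded in the Remark following Proposition \ref{prop3.11}: $f\star_1 g$ is modular if and only if both $f$ and $g$ are modular.

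With these checks done, Theorem \ref{theo1.3}(1) applied to $\lambda=\epsilon_\delta$ immediately yields a (unique, by the universal property) bialgebra morphism $\Phi_{\epsilon_\delta}:(\calH_{\bfbool},\star_1,\Delta)\longrightarrow (\K[T],m,\Delta)$ satisfying $\Phi_{\epsilon_\delta}(\overline{f})(1)=\epsilon_\delta(\overline{f})$, given on $\ker(\varepsilon_\Delta)$ by the stated formula. The expression is in fact a finite sum on any homogeneous element, because $\tdelta^{(k-1)}$ strictly lowers the total grading when iterated on $\ker(\varepsilon_\Delta)$. Setting $\Phi=\Phi_{\epsilon_\delta}$ gives the first assertion.

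For the second assertion, one invokes the double-bialgebraic structure on $\calH_{\bfboolmax}$ established in the preceding theorem (the consequence of Proposition \ref{prop3.24} together with $\calE^W=\calE^S$ on $\rmboolmax$): the quadruple $(\calH_{\bfboolmax},\star_1,\Delta,\delta^W)$ is a connected double bialgebra whose counit for $\delta^W$ coincides with the restriction of $\epsilon_\delta$. Applying Theorem \ref{theo1.3}(2), the unique double bialgebra morphism from $(\calH_{\bfboolmax},\star_1,\Delta,\delta^W)$ to $(\K[T],m,\Delta,\delta)$ is $\Phi_{\epsilon_\delta}$ (computed inside $\calH_{\bfboolmax}$). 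Since the defining formula only involves $\Delta$ and $\epsilon_\delta$, which are both compatible with the inclusion $\calH_{\bfboolmax}\hookrightarrow\calH_{\bfbool}$, the restriction of $\Phi$ to $\calH_{\bfboolmax}$ agrees with this $\Phi_{\epsilon_\delta}$, giving the desired identification.

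There is essentially no hard step: everything is a direct application of Theorem \ref{theo1.3}, once connectedness, multiplicativity of $\epsilon_\delta$, and the double bialgebra structure on $\calH_{\bfboolmax}$ have been recorded. The only point requiring a moment of care is the compatibility between the two occurrences of $\Phi$ (one defined on the ambient bialgebra $\calH_{\bfbool}$, the other characterized by a universal property inside the sub-double bialgebra $\calH_{\bfboolmax}$), but this follows from the fact that the universal formula depends only on the restrictions of $\Delta$ and $\epsilon_\delta$.
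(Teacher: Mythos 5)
Your proposal is correct and follows exactly the route the paper takes: the paper introduces Theorem \ref{theo5.1} precisely as an application of Theorem \ref{theo1.3} to $\calH_{\bfbool}$ with the character $\epsilon_\delta$ (connectedness and multiplicativity of $\epsilon_\delta$ being the only hypotheses to check), and then uses the double bialgebra structure on $\calH_{\bfboolmax}$ together with Theorem \ref{theo1.3}(2) for the uniqueness statement. Your extra care about the finiteness of the sum and about the compatibility of the two occurrences of $\Phi$ under the inclusion $\calH_{\bfboolmax}\hookrightarrow\calH_{\bfbool}$ only makes explicit what the paper leaves implicit.
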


This map $\Phi$ has a combinatorial interpretation. If $f\in\rmbool(X)$, with $X$ nonempty, then by definition of $\epsilon_\delta$ and $\Delta$,
\begin{align*}
\Phi(\overline{f})&=\sum_{k=1}^\infty\sum_{\substack{X=X_1\sqcup\cdots\sqcup X_k,\\ X_1,\ldots,X_k\neq\emptyset}}\epsilon_\delta(f_{\mid X_1})\ldots\epsilon_\delta(f_{\mid X_k})\binom{T}{k}\\
&=\sum_{k=1}^\infty\sum_{\substack{X=X_1\sqcup\cdots\sqcup X_k,\\ X_1,\ldots,X_k\neq\emptyset,\\ f_{\mid X_1},\ldots f_{\mid X_k}\mbox{\scriptsize modular}}}\epsilon_\delta(f_{\mid X_1})\ldots\epsilon_\delta(f_{\mid X_k})\binom{T}{k}\\
&=\sum_{k=1}^\infty |\{c:X\twoheadrightarrow [k]\mid\forall i\in [k],\: f_{\mid c^{-1}(i)}\mbox{ modular}\}|\binom{T}{k}.
\end{align*}
Consequently:

\begin{prop}\label{prop5.2}
Let $f\in\rmbool(X)$ be a boolean function and $n\geq 1$.
Then $\Phi\left(\overline{f}\right)(n)$ is the number of maps $c:X\longrightarrow [n]$ such that for any $i\in [n]$,
$f_{\mid c^{-1}(i)}$ is modular. In particular:
\begin{enumerate}
\item If $H$ is a hypergraph, then $\Phi\left(\overline{\gamma(H)}\right)(n)$ is the number of maps $c:V(H)\longrightarrow [n]$ such that for any $Y\in E(H)$, with $|Y|\geq 2$, $c_{\mid Y}$ is not constant.
In other words, $\Phi\left(\overline{\gamma(H)}\right)$ is the chromatic polynomial $\Phichr(H)$. 
\item If $G$ is a graph, then $\Phi\left(\overline{\rk_G}\right)(n)$ is the number of maps $c:E(G)\longrightarrow [n]$ such that for any $i\in [n]$, $G_{\mid c^{-1}(i)}$ is a forest.
\item If $V=(v_x)_{x\in X}$ is a family of vectors, then $\Phi\left(\overline{\rk_V}\right)(n)$ is the number of maps $c:X\longrightarrow [n]$ such that for any $i\in [n]$, the family obtained from $(v_x)_{x\in c^{-1}(i)}$ by suppression of the eventual zero vectors is free. 
\end{enumerate}\end{prop}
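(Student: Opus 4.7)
The main statement follows immediately from the formula for $\Phi(\overline{f})$ derived just before the proposition. My plan is to evaluate the polynomial at $T=n$ using the standard fact that $\binom{n}{k}$ is the number of $k$-element subsets of $[n]$, and that every map $c:X\to[n]$ factors uniquely as a surjection $X\twoheadrightarrow S$ followed by the inclusion $S\hookrightarrow[n]$, where $S$ is the image of $c$. Since the property ``$f_{\mid c^{-1}(i)}$ is modular for all $i$'' depends only on the partition $\{c^{-1}(i)\}$, which corresponds to the surjection, we get
\begin{align*}
\Phi(\overline{f})(n)&=\sum_{k=1}^{\infty}\binom{n}{k}|\{c:X\twoheadrightarrow[k]\mid\forall i,\:f_{\mid c^{-1}(i)}\text{ modular}\}|\\
&=|\{c:X\longrightarrow[n]\mid\forall i\in[n],\:f_{\mid c^{-1}(i)}\text{ modular}\}|,
\end{align*}
as desired. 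Note that terms with $k>n$ vanish since $\binom{n}{k}=0$ there, and the $i$'s not in the image contribute trivially since $f_{\mid\emptyset}=1$ is (vacuously) modular.

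For the three particular cases, the strategy is to translate the condition ``$f_{\mid c^{-1}(i)}$ is modular'' into the stated combinatorial property, using in each case the appropriate characterization of modularity together with the compatibility of the construction with the restriction operation.

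For item 1, I need to verify that $\gamma(H)_{\mid Y}$ is modular if and only if no hyperedge $Z\in E(H)$ with $|Z|\geq 2$ is contained in $Y$. Since $\gamma(H)(\{x\})=1$ exactly when $\{x\}\in E(H)$ and $0$ otherwise, modularity of $\gamma(H)_{\mid Y}$ evaluated at $A=Y$ gives $|\{Z\in E(H)\mid Z\subseteq Y\}|=|\{x\in Y\mid\{x\}\in E(H)\}|$, which forces every hyperedge contained in $Y$ to be a singleton; conversely, if no hyperedge of size $\geq 2$ lies in $Y$, then the same holds for every $A\subseteq Y$ and modularity is immediate. Applying this for each $c^{-1}(i)$ yields precisely the non-monochromatic condition defining the chromatic polynomial of $H$ (see Example \ref{ex1.3}).

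For items 2 and 3, the key observation is that both constructions are compatible with restriction: $(\rk_G)_{\mid A}=\rk_{G_{\mid A}}$ and $(\rk_V)_{\mid A}=\rk_{(v_x)_{x\in A}}$, both established in the previous subsections. Applying the already-proven characterizations --- $\rk_G$ is modular iff $G$ is a forest, and $\rk_V$ is modular iff the family obtained from $V$ by removing zero vectors is free --- to each color class $c^{-1}(i)$ gives the stated results. The main conceptual step is the reduction from the formula in Theorem \ref{theo5.1} to the combinatorial count; no step presents a real obstacle since all ingredients have been set up.
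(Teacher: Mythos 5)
Your proposal is correct and follows essentially the same route as the paper: the paper obtains the formula $\Phi(\overline{f})=\sum_k |\{c:X\twoheadrightarrow[k]\mid f_{\mid c^{-1}(i)}\text{ modular}\}|\binom{T}{k}$ and then states the proposition as an immediate consequence, which is exactly the evaluation-at-$T=n$ argument (surjection-onto-image factorization, empty fibres being harmless) that you spell out. Your translations of modularity in the three special cases likewise match the characterizations already established in the paper (all hyperedges of size $\geq 2$ non-monochromatic, $\rk_G$ modular iff $G$ is a forest, $\rk_V$ modular iff the nonzero vectors form a free family), combined with compatibility with restriction.
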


This polynomial invariant does not distinguish indecomposable hyper-rigid boolean functions on the same set, as shown in the following result. 

\begin{prop}
Let $X$ be a finite set of cardinality $k$, and let $f\in\rmboolhs(X)$, indecomposable. Then
\[\Phi(\overline{f})=T(T-1)\cdots (T-k+1).\]
\end{prop}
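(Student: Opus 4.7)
The strategy is to apply the combinatorial formula from Proposition \ref{prop5.2} and reduce the count to injective colorings. By Proposition \ref{prop5.2}, $\Phi(\overline f)(n)$ counts the maps $c\colon X\longrightarrow [n]$ such that $f_{\mid c^{-1}(i)}$ is modular for every $i\in[n]$. The heart of the proof is the following claim: for every $Y\subseteq X$ with $|Y|\geq 2$, the restriction $f_{\mid Y}$ is not modular.

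To prove the claim, suppose for contradiction that $f_{\mid Y}$ is modular with $|Y|\ge 2$. Pick any $y\in Y$ and set $A=\{y\}$, $B=Y\setminus\{y\}$, which are disjoint nonempty subsets of $X$. By the characterization of modularity in Definition \ref{defi2.16} (item 5), applied to $A$ and $B$ as subsets of $Y$, one has $f(Y)=f_{\mid Y}(A\sqcup B)=f_{\mid Y}(A)+f_{\mid Y}(B)=f(\{y\})+f(Y\setminus\{y\})$. This directly contradicts the hyper-rigidity of the indecomposable function $f$ (Definition \ref{defi4.6}), which forbids $f(A\sqcup B)=f(A)+f(B)$ for any disjoint nonempty $A,B\subseteq X$.

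Granted the claim, a map $c\colon X\longrightarrow [n]$ contributes to $\Phi(\overline f)(n)$ if and only if $|c^{-1}(i)|\leq 1$ for every $i$, i.e., exactly when $c$ is injective. The number of injections from a set of cardinality $k$ into $[n]$ is $n(n-1)\cdots(n-k+1)$, and this formula is also valid for $n<k$ since both sides vanish. Since $\Phi(\overline f)\in\K[T]$ and $T(T-1)\cdots(T-k+1)$ are two polynomials that agree at every positive integer, they coincide in $\K[T]$, proving the statement.

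I do not expect a serious obstacle here: the hyper-rigidity axiom in Definition \ref{defi4.6} is tailor-made to block the additivity identity $f(A\sqcup B)=f(A)+f(B)$ for arbitrary disjoint nonempty subsets of $X$, which is exactly the identity that would be needed for $f_{\mid Y}$ to be modular when $|Y|\ge 2$. The only minor point to check is that the condition in Definition \ref{defi4.6} applies to all disjoint nonempty pairs $A,B\subseteq X$ and not merely to partitions of $X$, which is precisely how the definition is phrased.
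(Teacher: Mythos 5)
Your proof is correct and follows essentially the same route as the paper: invoke Proposition \ref{prop5.2}, use hyper-rigidity of the indecomposable $f$ to rule out modularity of $f_{\mid Y}$ for $|Y|\geq 2$ (the paper phrases this via indecomposability of the restrictions, you argue directly from Definition \ref{defi2.16}), and conclude by counting injections and polynomial identification.
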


\begin{proof}
As $f$ is indecomposable and hyper-rigid, for any nonempty disjoint subsets, $A,B\subseteq X$, $f(A\sqcup B)\neq f(A)+f(B)$, so, for any nonempty $Y\subseteq X$, $f_{\mid Y}$ is indecomposable. Consequently, $f_{\mid Y}$ is modular if, and only if, $|Y|\leq 1$. 
By Proposition \ref{prop5.2}, for any $n\geq 1$, $\Phi(\overline{f})(n)$ is the number of maps $c:X\longrightarrow [n]$ such that for any $i\in [n]$, $|c^{-1}(i)|\leq 1$, that is to say the number of injections $c:X\longrightarrow [n]$. Consequently, 
\[\Phi(\overline{f})(n)=n(n-1)\cdots (n-k+1).\]
This implies the result.\end{proof}

\begin{prop}
Let $X$ be a finite set and $f\in\rmbool(X)$. Then $\Phi(\overline{f})\in\Z[X]$.
\end{prop}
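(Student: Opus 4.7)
The plan is to reinterpret the formula of Proposition~\ref{prop5.2} by classifying the valid maps $c:X\longrightarrow[n]$ according to the set partition of $X$ they induce, and thereby rewrite $\Phi(\overline{f})(T)$ as a manifestly $\Z[T]$-valued sum of falling factorials. (The statement clearly intends $\Z[T]$ in place of $\Z[X]$.)

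More precisely, for every positive integer $n$, I would group the maps $c:X\longrightarrow[n]$ satisfying the condition of Proposition~\ref{prop5.2} (namely, $f_{\mid c^{-1}(i)}$ is modular for each $i\in[n]$) by the partition $\pi_c=\{c^{-1}(i)\mid i\in c(X)\}$. The validity of $c$ depends only on $\pi_c$, since it amounts to requiring $f_{\mid B}$ to be modular for every block $B\in\pi_c$. Moreover, for a fixed partition $\pi$ with $k$ blocks, the number of maps $c:X\longrightarrow[n]$ inducing $\pi$ equals $n(n-1)\cdots(n-k+1)$, since one injectively assigns a color in $[n]$ to each block. Hence
\begin{align*}
\Phi(\overline{f})(n)&=\sum_{\substack{\pi\in\parti(X),\\ \forall B\in\pi,\:f_{\mid B}\text{ modular}}}n(n-1)\cdots(n-|\pi|+1).
\end{align*}
Both sides are polynomial functions of $n$ that agree on the infinite set $\{1,2,\ldots\}$, so the identity holds in $\K[T]$; and each falling factorial $T(T-1)\cdots(T-k+1)$ lies in $\Z[T]$.

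The only subtlety worth flagging is that Theorem~\ref{theo5.1} directly expresses $\Phi(\overline{f})$ in the basis $\binom{T}{k}$, whose elements take integer values on integers but do not themselves lie in $\Z[T]$. The regrouping by \emph{unordered} partitions absorbs the denominators $k!$ exactly, because each unordered partition with $k$ blocks corresponds to $k!$ surjections $X\twoheadrightarrow[k]$, which cancel the $k!$ in $\binom{T}{k}=T(T-1)\cdots(T-k+1)/k!$. Consequently no delicate step is required, and the integrality of $\Phi(\overline{f})$ follows at once from the combinatorial interpretation already established.
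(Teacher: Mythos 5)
Your proof is correct and is essentially the paper's argument in a different guise: the paper regroups the defining sum over ordered decompositions $X=X_1\sqcup\cdots\sqcup X_k$ into unordered partitions (using cocommutativity), so the factor $k!$ cancels the denominator of $\binom{T}{k}$ and one lands on the same expansion $\sum_{\pi}\bigl(\prod_{B\in\pi}\epsilon_\delta(f_{\mid B})\bigr)\,T(T-1)\cdots(T-|\pi|+1)$ with coefficients in $\{0,1\}$. Your route through the coloring interpretation of Proposition \ref{prop5.2} plus polynomial interpolation at all $n\geq 1$ is a valid repackaging of the same $k!$-cancellation, and your flag about $\Z[T]$ versus the stated $\Z[X]$ is right.
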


\begin{proof}
We have
\[\Phi(\overline{f})=\sum_{k=1}^\infty\sum_{\substack{X=X_1\sqcup\cdots\sqcup X_k,\\ X_1,\ldots,X_k\neq\emptyset}}\epsilon_\delta(f_{\mid X_1})\ldots\epsilon_\delta(f_{\mid X_k})\binom{T}{k}.\]
By cocommutativity,
\begin{align*}
\Phi(\overline{f})&=\sum_{k=1}^\infty\sum_{\{X_1,\ldots,X_k\}\:\mbox{\scriptsize partition of }X} k!\epsilon_\delta(f_{\mid X_1})\ldots\epsilon_\delta(f_{\mid X_k})\binom{T}{k}\\
\Phi(\overline{f})&=\sum_{k=1}^\infty\sum_{\{X_1,\ldots,X_k\}\:\mbox{\scriptsize partition of }X}\epsilon_\delta(f_{\mid X_1})\ldots\epsilon_\delta(f_{\mid X_k}) T(T-1)\cdots (T-k+1).
\end{align*}
The conclusion comes from $\epsilon_\delta(f_{\mid X_1})\ldots\epsilon_\delta(f_{\mid X_k})\in\{0,1\}$. 
\end{proof}

\begin{remark}
The polynomial $\phi(\overline{f})$ is not a polynomial with alternating signs, contrarily with the chromatic polynomial of graphs. Here is a counterexample. Let $n\geq 1$ and $f\in\rmbool([n])$ such that:
\begin{align*}
&\forall Y\subseteq X,&f(Y)&=\begin{cases}
|Y|\mbox{ if }Y\neq [n],\\
0\mbox{ otherwise}.
\end{cases}\end{align*}
Therefore, if $Y\subsetneq X$, $f_{\mid Y}$ is modular, whereas $f$ is not. Consequently, for any $k\geq 0$, $\Phi(\overline{f})(k)$ is the number of maps $c:[n]\longrightarrow [k]$ which are not constant, that is to say $k^n-k$. 
We obtain $\Phi(\overline{f})=T^n-T$. 
\end{remark}

As a consequence of Theorem \ref{theo1.2}, applied to the double bialgebra $\calH_{\bfboolmax}$: 

\begin{prop}\label{prop5.5}
For any $x\in\calH_{\bfbool}$, we put $\mu_{\bfbool}(x)=\Phi(x)(-1)$. This defines a character of $\calH_{\bfbool}$. We denote by $S$ the antipode of $(\calH_\bfbool,\star_1,\Delta)$. Then, for any $x\in\calH_{\bfboolmax}$,
\[S(x)=\left(\mu_{\bfbool}\otimes\id_{\calH_{\bfboolmax}}\right)\circ\delta^W(x).\]
\end{prop}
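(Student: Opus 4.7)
The plan is to derive the formula as a direct corollary of Theorem \ref{theo1.2} applied via the double bialgebra morphism furnished by Theorem \ref{theo5.1}.

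First, I would verify that $\mu_{\bfbool}$ is a character of $(\calH_{\bfbool},\star_1)$. By Theorem \ref{theo5.1}, $\Phi$ is a bialgebra morphism from $(\calH_{\bfbool},\star_1,\Delta)$ to $(\K[T],m,\Delta)$, hence in particular an algebra morphism; composing with the character $P\mapsto P(-1)$ of $(\K[T],m)$ yields that $\mu_{\bfbool}=\mathrm{ev}_{-1}\circ\Phi$ is a character of $(\calH_{\bfbool},\star_1)$.

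Next, I would apply Theorem \ref{theo1.2} to the double bialgebra $(\calH_{\bfboolmax},\star_1,\Delta,\delta^W)$. By the second assertion of Theorem \ref{theo5.1}, the restriction of $\Phi$ is the unique double bialgebra morphism from $(\calH_{\bfboolmax},\star_1,\Delta,\delta^W)$ to $(\K[T],m,\Delta,\delta)$. Part 2 of Theorem \ref{theo1.2} then provides a convolution inverse of $\epsilon_\delta$ (for the product $*$ dual to $\Delta$) on $\calH_{\bfboolmax}$, given by $x\mapsto\Phi(x)(-1)$, which is nothing but the restriction of $\mu_{\bfbool}$. Part 1 of the same theorem then immediately yields
\[S(x)=(\mu_{\bfbool}\otimes\id_{\calH_{\bfboolmax}})\circ\delta^W(x)\]
for $x\in\calH_{\bfboolmax}$, where $S$ denotes the antipode of the subbialgebra $(\calH_{\bfboolmax},\star_1,\Delta)$. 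Here one uses that $\rmboolmax$ is stable under contraction and under $\star_1$, so that $\delta^W(\calH_{\bfboolmax})\subseteq\calH_{\bfboolmax}\otimes\calH_{\bfboolmax}$, ensuring the right-hand side is well defined.

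The only point to settle is that this antipode coincides with the restriction of the antipode of the ambient Hopf algebra $(\calH_{\bfbool},\star_1,\Delta)$. Both bialgebras are connected, being graded by the cardinality of the underlying finite set with degree-zero component $\K\cdot 1$; each therefore admits a unique antipode, and the antipode of a connected subbialgebra is automatically the restriction of the ambient one. No substantial obstacle arises in this proof: it is essentially a bookkeeping application of the general machinery recalled in Section \ref{section1}, which is precisely what motivated building the double bialgebra $\calH_{\bfboolmax}$ in the first place.
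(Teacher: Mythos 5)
Your proof is correct and follows essentially the same route as the paper, which obtains the proposition precisely as a consequence of Theorem \ref{theo1.2} applied to the double bialgebra $(\calH_{\bfboolmax},\star_1,\Delta,\delta^W)$ via the unique double bialgebra morphism $\Phi$ of Theorem \ref{theo5.1}. Your extra verification that the antipode of the connected graded subbialgebra $\calH_{\bfboolmax}$ coincides with the restriction of the antipode of $(\calH_{\bfbool},\star_1,\Delta)$ is a reasonable detail that the paper leaves implicit.
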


\subsection{Extension to counitary boolean functions}

In fact, we can do better for the compatibility of $\Phi$ and $\delta$:

\begin{prop}\label{prop5.6}
Let $X$ be a finite set, and $f\in \rmbool(X)$. We denote by $x$ its class in $\calH_{\bfboolc}$. Then 
$\delta\circ \Phi(x)=(\Phi\otimes \phi)\circ \delta^W(x)$ if, and only if, $f\in \rmboolc[X]$.
\end{prop}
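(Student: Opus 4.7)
The plan is to prove the two implications separately, using the algebraic interpretation of $\Phi$ as the unique bialgebra morphism to $\K[T]$ satisfying $\Phi(y)(1)=\epsilon_\delta(y)$, together with the combinatorial description in Proposition~\ref{prop5.2}.

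For the forward implication, I will specialize the identity to $T_1=1$. Evaluating $\Phi(y)(1)=\epsilon_\delta(y)=[y\text{ modular}]$ reduces the identity to
\[
\Phi(\overline{f})(T_2) \;=\; \sum_{\substack{\sim\in\calE^W(f)\\ f/\sim \text{ modular}}} \Phi(\overline{f\mid\sim})(T_2).
\]
Since $\sim_f^i$ always satisfies both conditions (it is in $\calE^W(f)$ by construction and $f/\sim_f^i$ is modular) and since $f\mid\sim_f^i=f$, this equivalence contributes exactly $\Phi(\overline{f})(T_2)$. After cancellation the remaining sum, over $\sim\neq\sim_f^i$ with $\sim\in\calE^W(f)$ and $f/\sim$ modular, must vanish identically in $T_2$. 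Using multiplicativity, each remaining summand $\Phi(\overline{f\mid\sim})=\prod_{Y\in X/\sim}\Phi(\overline{f_{\mid Y}})$ is a monic polynomial in $T_2$ of degree $|X|$, so a nonempty sum of such polynomials cannot be zero. This forces the index set to be empty, which is precisely the condition $\calE^W(f)=\calE^S(f)$, i.e.\ $f\in\rmboolc(X)$.

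For the backward implication, I will establish the polynomial identity in $(T_1,T_2)$ by comparing combinatorial counts at positive integer values $T_1=n$, $T_2=m$. The left side $\Phi(\overline{f})(nm)$ enumerates maps $c=(c_1,c_2)\colon X\to[n]\times[m]$ whose fibers $c_1^{-1}(i)\cap c_2^{-1}(j)$ all have $f$-modular restrictions. The right side sums, over $\sim\in\calE^W(f)$, the product of two counts: maps $c_1'\colon X/\sim\to[n]$ with each $(f/\sim)_{\mid c_1'^{-1}(i)}$ modular, and maps $c_2\colon X\to[m]$ with $f_{\mid Y\cap c_2^{-1}(j)}$ modular for every $\sim$-class $Y$ and every $j$. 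I plan to set up a bijection: given $(c_1,c_2)$, let $\sim$ be defined piecewise by $\sim|_{c_1^{-1}(i)}=\sim_{f_{\mid c_1^{-1}(i)}}^i$, and let $c_1'$ be induced by $c_1$. Then $\sim\in\calE^W(f)$ because its classes are indecomposable components of restrictions, and $(f/\sim)_{\mid c_1'^{-1}(i)}$ is modular by definition of the indecomposable-component contraction. The inverse sends $(\sim,c_1',c_2)$ to $(c_1'\circ\varpi_\sim,c_2)$.

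The main obstacle is verifying that the inverse is well-defined: given $(\sim,c_1',c_2)$ satisfying the right-hand conditions, one must show that the local restriction $\sim|_{c_1^{-1}(i)}$ is forced to be $\sim_{f_{\mid c_1^{-1}(i)}}^i$. The data only give $\sim|_{c_1^{-1}(i)}\in\calE^W(f_{\mid c_1^{-1}(i)})$ together with modularity of $(f_{\mid c_1^{-1}(i)})/\sim|_{c_1^{-1}(i)}$, and concluding uniqueness demands the counitary property applied at the restricted level. Since $\rmboolc$ is not stable under $\Delta$ (Example~\ref{ex4.1}), this is the delicate point: the argument must leverage the global condition $\sim\in\calE^W(f)$ across all fibers simultaneously, together with the counitarity of $f$ itself, to enforce fiberwise uniqueness. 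I expect this interaction between the global counitary hypothesis and the piecewise bijection to be the technical heart of the proof, and a careful polynomial-coefficient comparison (matching powers of $T_1^{(\cl(\sim'))}$ on both sides using the expansion coming from iterated $\Delta$) may be needed to bypass the naive bijection.
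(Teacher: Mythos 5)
Your forward implication is essentially the paper's argument: specializing at $T_1=1$ is the same as applying $\epsilon_\delta\otimes\id$, the cancellation of the $\sim_f^i$ term and the positivity of the leading coefficients of the $\Phi(\overline{f\mid\sim})$ force the index set $\{\sim\in\calE^W(f)\setminus\{\sim_f^i\}\mid f/{\sim}\mbox{ modular}\}$ to be empty, and this is condition 1 of Definition \ref{defi4.2}, hence $f\in\rmboolc(X)$. That half is correct.

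The backward implication, however, contains a genuine gap, and it is larger than the one you flag. Your proposed correspondence between pairs $(c_1,c_2)$ and triples $(\sim,c_1',c_2)$ is not established in \emph{either} direction. For the map from triples to pairs, you need $f_{\mid c_1^{-1}(i)\cap c_2^{-1}(j)}$ to be modular, where $c_1^{-1}(i)\cap c_2^{-1}(j)=\bigsqcup_Y \left(Y\cap c_2^{-1}(j)\right)$ runs over the classes $Y$ of $\sim$ inside $c_1^{-1}(i)$. The hypotheses only give modularity of each $f_{\mid Y\cap c_2^{-1}(j)}$ and of $(f/{\sim})_{\mid c_1'^{-1}(i)}$; but modularity of $f_{\mid A}$ and $f_{\mid B}$ for disjoint $A,B$ says nothing about $f$ on subsets meeting both $A$ and $B$, and modularity of the contraction only controls $f$ on unions of \emph{full} classes. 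So even the "easy" direction of your bijection requires an argument you have not given, and the inverse direction (forcing $\sim$ fiberwise to be the indecomposability equivalence) is, as you admit yourself, unproven. Since you explicitly defer "the technical heart of the proof" to an unspecified "careful polynomial-coefficient comparison", this half is a plan rather than a proof.

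For comparison, the paper avoids any bijection. It proves $(\Phi\otimes\Phi)\circ\delta^W(x)(k,l)=\Phi(x)(kl)$ for all $k,l\in\N^*$ by induction on $k$: the base case $k=1$ uses exactly that $\epsilon_\delta$ is a \emph{left} counit of $\delta^W$ on $\calH_{\bfboolc}$ (this is where counitarity enters, once and cleanly), and the inductive step uses that $\Phi$ is a morphism for $\star_1$ and $\Delta$ together with the cointeraction identity $\left(\Delta\otimes\id\right)\circ\delta^W=(\star_1)_{1,3,24}\circ\left(\delta^W\otimes\delta^W\right)\circ\Delta$, which holds on all of $\calH_{\bfbool}$ by Theorem \ref{theo3.14}. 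Two polynomials in two indeterminates agreeing on $\N^*\times\N^*$ are equal. If you want to salvage your approach, you should either carry out this algebraic induction or genuinely construct the bijection, proving both well-definedness statements; the latter amounts to a combinatorial re-proof of the cointeraction axiom restricted to modular fibers and is substantially harder than the one-line inverse you wrote down.
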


\begin{proof}
As usual, we identify $\K[T]\otimes\K[T]$ and $T'\K[T,T']$, through the map
\[\left\{\begin{array}{rcl}
\K[T]\otimes\K[T]&\longrightarrow&\K[T,T']\\
P(T)\otimes Q(T)&\longmapsto&P(T)Q(T').
\end{array}\right.\]
Let us assume that $f\in \rmboolc(X)$. For any $k,l\geq 1$,
\[(\Phi\otimes\Phi)\circ\delta^W(x)(k,l)=\Phi(x)(kl).\]
We proceed by induction on $k$. If $k=1$, as $\epsilon_\delta$ is a left counit for $\delta^W_{\mid\calH_{\bfboolc}}$,
\begin{align*}
(\Phi\otimes\Phi)\circ\delta^W(x)(1,l)&=(\epsilon_\delta\circ\Phi\otimes\Phi)\circ\delta^W(x)(l)\\
&=(\epsilon_\delta\otimes\Phi)\circ\delta^W(x)(l)\\
&=\Phi(x)(l).
\end{align*}
Let us assume the result at rank $k$. As $\Phi$ is compatible with the product and the coproduct $\Delta$,
\begin{align*}
(\Phi\otimes\Phi)\circ\delta^W(x)(k+1,l)&=\left(\Delta\otimes\id_{\K[T]}\right)\circ(\Phi\otimes\Phi)\circ\delta^W(x)(1,k,l)\\
&=(\Phi\otimes\Phi\otimes\Phi)\circ\left(\Delta\otimes\id_{\calH_{\bfbools}}\right)\circ\delta^W(x)(1,k,l)\\
&=(\Phi\otimes\Phi\otimes\Phi)\circ(\star_1)_{1,3,24}\circ\left(\delta^W\otimes\delta^W\right)\circ\Delta(x)(1,k,l)\\
&=m_{1,3,24}\circ (\Phi\otimes\Phi\otimes\Phi\otimes\Phi)\circ\left(\delta^W\otimes\delta^W\right)\circ\Delta(x)(1,k,l)\\
&= (\Phi\otimes\Phi\otimes\Phi\otimes\Phi)\circ\left(\delta^W\otimes\delta^W\right)\circ\Delta(x)(1,l,k,l)\\
&= (\Phi\otimes\Phi)\circ\Delta(x)(l,kl)\\
&=\Phi(x)(l+kl).
\end{align*}
We used the induction hypothesis and the case $k=1$ for the sixth equality. As a conclusion, the two polynomials in two indeterminates $(\Phi\otimes\Phi)\circ\delta^W(x)$ and $\delta\circ\Phi(x)$ coincide on $\N^*\times\N^*$, so they are equal.\\

Let us now assume that $\delta\circ \Phi(x)=(\Phi\otimes \Phi)\circ \delta^W(x)$. Then
\begin{align*}
\Phi(x)&=(\epsilon_\delta\circ \id_{\K[X]})\circ \delta\circ\Phi(x)\\
&=(\epsilon_\delta\circ\Phi\otimes\Phi)\circ \delta^W(x)\\
&=(\epsilon_\delta\otimes\Phi)\circ \delta^W(x)\\
&=\sum_{\sim\in\calE^W(b)} \epsilon_\delta(f/\sim)\Phi(\overline{f\mid\sim})\\
&=\Phi(x)+\sum_{\sim\in\calE^W(f)\setminus\{\sim_f^i\}} \epsilon_\delta(f/\sim)\Phi(\overline{f\mid\sim})\\
&=\Phi(x)+\sum_{\substack{\sim\in\calE^W(f)\setminus\{\sim_f^i\},\\\mbox{\scriptsize$f/\sim$ modular}}} \Phi(\overline{f\mid\sim}).
\end{align*}
Consequently, 
\[\sum_{\substack{\sim\in\calE^W(b)\setminus\{\sim_f^i\},\\\mbox{\scriptsize$f/\sim$ modular}}} \Phi(\overline{f\mid\sim})=0.\]
For any modular function $g$, $\Phi(\overline{g})$ is a nonzero polynomial, with a positive leading coefficient. A nonempty sum of such polynomials is not zero: therefore, if $\sim'\in \calE^W(f)$ with $f/\sim'$ modular, then $\sim'=\sim_f^i$.\\

Let now $\sim\in \calE^W(f)$. Let $\sim'\in \calE[X]$, containing $\sim$, such that $\overline{\sim'}=\sim_{f/\sim}^i$. Then 
\[f/\sim'=(f/\sim)/\overline{\sim'}=(f/\sim)/\sim_{f/\sim}^i \mbox{ is modular}.\]
Let $Y\in X/\sim'$. We assume that $f_{\mid Y}=f_{\mid Y_1}\star f_{\mid Y_2}$, with $Y=Y_1\sqcup Y_2$. As $\sim\subseteq \sim'$, $Y$ is a disjoint union of classes of $\sim$.  
As the restriction of $f$ to any class of $\sim$ is indecomposable, $Y_1$ and $Y_2$ are also disjoint unions of classes of $\sim$. Consequently, 
\[(f/\sim)_{\mid Y/\sim\cap Y^2}=\left(f_{\mid Y}\right)/\sim\cap Y^2=\left(\left(f_{\mid Y_1}\right)/\sim\cap Y_1^2\right) \star\left(\left(f_{\mid Y_2}\right)/\sim\cap Y_2^2\right).\]
As $(f/\sim)_{\mid Y/\sim\cap Y^2}$ is indecomposable (as $\overline{\sim'}=\sim_{f/\sim}^i$), $Y_1=\emptyset$ or $Y_2=\emptyset$. Hence, $f_{\mid Y}$ is indecomposable, so $\sim'\in \calE^W(f)$. 
As $f/\sim'$ is modular, necessarily $\sim'=\sim_f^i$. Consequently, $\ic(f)=\cl(\sim')=\cl(\overline{\sim'})=\ic(f/\sim)$. So $\sim\in \calE^S(f)$. 
We proved that $\calE^W(f)=\calE^S(f)$, which gives that $f\in \rmboolc(X)$. \end{proof}

\begin{example} The map $\Phi$ is not compatible on the whole $\calH_{\bfbool}$ with $\delta^W$, nor $\delta^S$, as shown by the following example.
We consider again $f\in\rmbool([3])$, as in Examples \ref{ex3.2}, \ref{ex3.1}, \ref{ex3.4}, and \ref{ex4.1}: 
\begin{align*}
&\forall i,j\in [3],\mbox{ with }i\neq j,&f(\{i,j\})&\neq f(\{i\})+f(\{j\}),\\
&&f(\{1,2,3\})&=f(\{1,2\})+f(\{3\}).
\end{align*}
Direct computations show that
\begin{align*}
(\Phi\otimes\Phi)\circ\delta^W(\overline{f})&=T(T-1)(T-2)\otimes T^3+T(3T-2)\otimes T^2(T-1)+T\otimes T(T-1)(T-2),\\
(\Phi\otimes\Phi)\circ\delta^S(\overline{f})&=T(T-1)(T-2)\otimes T^3+2T(T-1)\otimes T^2(T-1)+T\otimes T(T-1)(T-2),\\
\delta\circ\Phi(\overline{f})&=T(T-1)(T-2)\otimes T^3+3T(T-1)\otimes T^2(T-1)+T\otimes T(T-1)(T-2).
\end{align*}\end{example}

\section{Appendix : rank functions of matroids are rigid boolean functions}

Proposition \ref{prop4.23} is a well-known result in the theory of matroids \cite[Lemma 5.8]{Vyuka}. For the sake of completeness, and to avoid to use the vocabulary of matroids (independents, bases, cycles, etc), we now give a proof within the vocabulary of rank functions, as far as possible.
Let us firstly introduce some vocabulary, and some preliminary results.

\begin{defi}
Let $X$ be a finite set, and $f\in\rmboolm(X)$. Let $Y\subseteq X$. A basis of $Y$ is a subset $B_Y$ of $Y$, such that $f(Y)=f(B_Y)=|B_Y|$. 
\end{defi}

\begin{lemma}\label{lemma6.2}
Let $X$ be a finite set, and $f\in\rmboolm(X)$. Let $Y\subseteq X$, such that $f(Y)=|Y|$. Then, for any $Z\subseteq Y$, $f(Z)=|Z|$.
\end{lemma}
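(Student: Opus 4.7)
The plan is to exploit submodularity applied to the complementary decomposition $Y = Z \sqcup (Y\setminus Z)$, together with the basic bound $f(A)\le |A|$ from the definition of a matroid rank function.

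Concretely, I will set $W = Y\setminus Z$, so that $Z\cup W = Y$, $Z\cap W=\emptyset$, and $|Z|+|W|=|Y|$. Applying the submodularity axiom (third item of Definition \ref{defi4.20}) to the pair $(Z,W)$ gives
\[
f(Y) + f(\emptyset) \;=\; f(Z\cup W) + f(Z\cap W) \;\le\; f(Z) + f(W).
\]
Since $f(\emptyset)=0$ and $f(Y)=|Y|$ by hypothesis, this yields $|Y| \le f(Z) + f(W)$. On the other hand, the first axiom of Definition \ref{defi4.20} gives $f(Z)\le |Z|$ and $f(W)\le |W|$, hence $f(Z)+f(W)\le |Z|+|W|=|Y|$.

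Both inequalities combined force $f(Z)+f(W)=|Z|+|W|$, and since each summand on the left is bounded termwise by the corresponding summand on the right, we must have equality in each: $f(Z)=|Z|$ (and, as a byproduct, $f(W)=|W|$). There is no real obstacle here; the argument is entirely formal from the three defining properties of a matroid rank function, and in particular does not require any vocabulary of matroids beyond what is already introduced in Definition \ref{defi4.20}.
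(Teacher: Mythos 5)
Your proof is correct and follows essentially the same route as the paper: submodularity applied to the pair $(Z, Y\setminus Z)$ combined with the bounds $f(Z)\le |Z|$, $f(Y\setminus Z)\le |Y\setminus Z|$ and $f(\emptyset)=0$, forcing equality. The only cosmetic difference is that you conclude by termwise equality in a sandwiched sum, whereas the paper isolates $f(Z)\ge |Z|$ directly; the content is identical.
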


\begin{proof}
By definition of a rank function, $f(Z)\leq |Z|$. Moreover, as $f$ is submodular, 
\[f(Y)=f(Z\cup (Y\setminus Z))+f(\emptyset)\leq f(Z)+f(Y\setminus Z)\leq f(Z)+|Y|-|Z|.\]
Therefore, as $f(Y)=|Y|$,
\[|Y|=|Z|+|Y\setminus Z|\leq f(Z)+|Y|-|Z|.\]
So $f(Z)\geq |Z|$. 
\end{proof}

\begin{lemma}\label{lemma6.3}
Let $X$ be a finite set, and $f\in\rmboolm(X)$. Let $Z\subseteq Y\subseteq X$, and $0\leq k\leq f(Y)-f(Z)$. There exists $Z'\subseteq Y\setminus Z$, such that $f(Z')=|Z'|=k$, and $f(Z\sqcup Z')=f(Z)+|Z'|$.
\end{lemma}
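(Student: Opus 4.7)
The plan is to prove Lemma \ref{lemma6.3} by induction on $k$. The base case $k=0$ is immediate by taking $Z'=\emptyset$. For the inductive step, I would apply the induction hypothesis at $k-1$ to obtain $Z''\subseteq Y\setminus Z$ with $|Z''|=f(Z'')=k-1$ and $f(Z\sqcup Z'')=f(Z)+(k-1)$, then enlarge $Z''$ by a single well-chosen element $x\in Y\setminus(Z\sqcup Z'')$.

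The key auxiliary ingredient is a rank-augmentation lemma: if $A\subseteq B\subseteq X$ with $f(A)<f(B)$, then there exists $x\in B\setminus A$ with $f(A\cup\{x\})=f(A)+1$. Note that submodularity applied to $A$ and $\{x\}$ together with monotonicity forces $f(A\cup\{x\})\in\{f(A),f(A)+1\}$, so this is equivalent to the existence of some $x\in B\setminus A$ strictly increasing the rank. I would prove the contrapositive by induction on $|B\setminus A|$: assuming $f(A\cup\{x\})=f(A)$ for every $x\in B\setminus A$, and writing $B\setminus A=S'\sqcup\{x_0\}$, submodularity applied to $A\cup S'$ and $A\cup\{x_0\}$ (with intersection $A$ and union $B$) yields
\begin{align*}
f(B)+f(A)\le f(A\cup S')+f(A\cup\{x_0\}).
\end{align*}
The induction hypothesis gives $f(A\cup S')=f(A)$, the assumption gives $f(A\cup\{x_0\})=f(A)$, and together with monotonicity $f(B)\ge f(A)$ this forces $f(B)=f(A)$.

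Back to the inductive step of Lemma \ref{lemma6.3}: since $f(Z\sqcup Z'')=f(Z)+(k-1)<f(Z)+k\le f(Y)$, the augmentation lemma applied to $A=Z\sqcup Z''$ and $B=Y$ produces $x\in Y\setminus(Z\sqcup Z'')$ with $f(Z\sqcup Z''\sqcup\{x\})=f(Z)+k$. Setting $Z'=Z''\sqcup\{x\}\subseteq Y\setminus Z$, we immediately have $|Z'|=k$ and $f(Z\sqcup Z')=f(Z)+k=f(Z)+|Z'|$, which is the second desired equality. To obtain $f(Z')=k$, submodularity applied to the disjoint pair $Z,Z'$ gives $f(Z\sqcup Z')\le f(Z)+f(Z')$, hence $f(Z')\ge k$, while the defining bound $f(Z')\le|Z'|=k$ supplies the reverse inequality.

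The main obstacle is really just the rank-augmentation lemma, whose submodularity computation is the only nontrivial use of the matroid axioms in the argument; once it is in place, the induction is essentially bookkeeping. (Lemma \ref{lemma6.2} turns out not to be needed for this approach, as the chain $Z\sqcup Z'$ is controlled directly through $f(Z\sqcup Z')=f(Z)+f(Z')$ rather than through any independence of $Z$ itself.)
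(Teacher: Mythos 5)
Your proof is correct and follows essentially the same route as the paper: induction on $k$, with the single-element rank-augmentation step (the paper's $k=1$ case, which you package as a standalone lemma proved by the same kind of one-element-peeling submodularity induction), and the same final submodularity argument giving $f(Z')=k$. Your observation that Lemma \ref{lemma6.2} is not needed matches the paper, whose proof of this lemma also does not invoke it.
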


\begin{proof}
We proceed by induction on $k$. If $k=0$, take $Z'=\emptyset$. Let us prove the result for $k=1$. Let $z\in Y\setminus Z$. As $f$ is submodular and increasing, 
\[f(Z)\leq f(Z\sqcup\{z\})\leq f(Z)+f(\{z\})\leq f(Z)+1,\]
so $f(Z\sqcup\{z\})=f(Z)$ or $f(Z)+1$. Let us assume that for any $z\in Y\setminus Z$, $f(Z\sqcup\{z\})=f(Z)$. Let us prove that for any $Z'\subseteq Y\setminus Z$, $f(Z\sqcup Z')=f(Z)$. We proceed by induction on $|Z'|$.
If $Z'=\emptyset$, this is obvious. If $|Z'|=1$, this is our hypothesis. If $|Z'|\geq 2$, choosing $z_1\neq z_2\in Z'$, putting $Z'_i=Z\setminus\{z_i\}$ for $i\in [2]$, then 
\begin{align*}
|Z'_1|=|Z'_2|&=|Z'|-1,&Z'_1\cup Z'_2&=Z',&|Z'_1\cap Z'_2|&=|Z'|-2.
\end{align*}
As $f$ is submodular, 
\[f(Z\sqcup Z')+f(Z\sqcup (Z'_1\cap Z'_2))\leq f(Z\sqcup Z'_1)+f(Z\sqcup Z'_2).\]
By the induction hypothesis, 
\[f(Z\sqcup Z')+f(Z)\leq f(Z)+f(Z),\]
so $f(Z\sqcup Z')\leq f(Z)$. Moreover, as $f$ is increasing, $f(Z\sqcup Z')=f(Z)$.\\

From this result with $Z'=Y\setminus Z$, $f(Y)=f(Z)$, which contradicts $f(Y)-f(Z)\geq 1$. So, there exists $z\in Y\setminus Z$, such that $f(Z\sqcup\{z\})=f(Z)+1$.\\

Let us now assume the result at rank $k-1$. The induction hypothesis gives the existence of $Z''\subseteq Y\setminus Z$, of cardinality $k-1$, such that $f(Z\sqcup Z'')=f(Z)+k-1$. The case $k=1$ gives the existence of $z\in Y\setminus (Z\sqcup Z'')$, such that 
\[f(Z\sqcup Z''\sqcup\{z\})=f(Z\sqcup Z'')+1=f(Z)+k.\]
Take then $Z'=Z''\sqcup\{z\}$.\\

We proved the existence of $Z'\subseteq Z\setminus Y$, of cardinality $k$, such that $f(Z\sqcup Z')=f(Z)+k$. It remains to prove that $f(Z')=k$. We immediately have $f(Z')\leq |Z'|=k$. As $f$ is submodular,
\[f(Z\sqcup Z')=f(Z)+k\leq f(Z)+f(Z'),\]
so $f(Z')\geq k$.\end{proof}

\begin{prop}\label{prop6.4}
Let $X$ be a finite set, and $f\in\rmboolm(X)$. 
\begin{enumerate}
\item Any $Y\subseteq X$ has a basis.
\item Let $Z\subseteq Y\subseteq X$, and let $B_Z$ be a basis of $Z$. There exists $B_Y\subseteq Y\setminus Z$, such that $B_Z\sqcup B_Y$ is a basis of $Y$. 
\end{enumerate}
\end{prop}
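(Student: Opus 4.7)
The plan is to derive both items essentially from Lemma \ref{lemma6.3}, with the only real work being a submodularity argument in part (2).

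For (1), I would apply Lemma \ref{lemma6.3} with $Z_{\mathrm{Lem}}=\emptyset\subseteq Y$ and $k=f(Y)-f(\emptyset)=f(Y)$. This directly produces a subset $B_Y\subseteq Y\setminus\emptyset=Y$ satisfying $|B_Y|=f(B_Y)=f(Y)$, which is by definition a basis of $Y$.

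For (2), the naive attempt is to apply Lemma \ref{lemma6.3} with $(Z_{\mathrm{Lem}},Y_{\mathrm{Lem}})=(B_Z,Y)$ and $k=f(Y)-f(B_Z)=f(Y)-f(Z)$, which would give a basis completion $B_Y\subseteq Y\setminus B_Z$. However, the statement requires $B_Y\subseteq Y\setminus Z$, which is strictly more restrictive. To obtain this, I would instead apply Lemma \ref{lemma6.3} with $(Z_{\mathrm{Lem}},Y_{\mathrm{Lem}})=(Z,Y)$ and $k=f(Y)-f(Z)$, getting $B_Y\subseteq Y\setminus Z$ such that $|B_Y|=f(B_Y)=f(Y)-f(Z)$ and, crucially, $f(Z\sqcup B_Y)=f(Z)+|B_Y|=f(Y)$.

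It then remains to check that $B_Z\sqcup B_Y$ is a basis of $Y$, which is where the main subtlety lies. Its cardinality is $|B_Z|+|B_Y|=f(Z)+(f(Y)-f(Z))=f(Y)$, hence $f(B_Z\sqcup B_Y)\le|B_Z\sqcup B_Y|=f(Y)$. For the reverse inequality, I would apply submodularity to the pair $B_Z\sqcup B_Y$ and $Z$: since $B_Z\subseteq Z$ and $B_Y\cap Z=\emptyset$, we have $(B_Z\sqcup B_Y)\cup Z=Z\sqcup B_Y$ and $(B_Z\sqcup B_Y)\cap Z=B_Z$, so
\[
f(Z\sqcup B_Y)+f(B_Z)\le f(B_Z\sqcup B_Y)+f(Z).
\]
Substituting $f(Z\sqcup B_Y)=f(Y)$ and $f(B_Z)=f(Z)$ yields $f(Y)\le f(B_Z\sqcup B_Y)$, and combined with the upper bound above we conclude $f(B_Z\sqcup B_Y)=f(Y)=|B_Z\sqcup B_Y|$, so $B_Z\sqcup B_Y$ is indeed a basis of $Y$. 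The step I expect to be the main obstacle is precisely this submodularity trick, which converts the ``extension from $Z$'' output of Lemma \ref{lemma6.3} into the ``extension from $B_Z$'' that the proposition demands; every other step is immediate bookkeeping.
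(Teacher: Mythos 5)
Your proof is correct. It rests on the same key lemma as the paper (Lemma \ref{lemma6.3}) and the same overall skeleton (complete a basis via the lemma, then one submodularity argument), but you apply the lemma to a different pair: the paper applies it to $(B_Z,Y)$ with $k=f(Y)-f(B_Z)$, which makes $B_Z\sqcup B_Y$ a basis of $Y$ immediately but only yields $B_Y\subseteq Y\setminus B_Z$, so it must then prove $B_Y\cap Z=\emptyset$ by a submodularity estimate on $Z$ and $B_Y$ combined with Lemma \ref{lemma6.2} and monotonicity; you instead apply it to $(Z,Y)$, which gives the required inclusion $B_Y\subseteq Y\setminus Z$ for free, and you recover the basis property of $B_Z\sqcup B_Y$ by the submodularity inequality $f(Z\sqcup B_Y)+f(B_Z)\le f(B_Z\sqcup B_Y)+f(Z)$ together with the cardinality bound $f(B_Z\sqcup B_Y)\le|B_Z\sqcup B_Y|=f(Y)$. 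Your variant is, if anything, slightly leaner: it never invokes Lemma \ref{lemma6.2} in part (2), and the single submodularity application (with the correct union $Z\sqcup B_Y$ and intersection $B_Z$, valid since $B_Z\subseteq Z$ and $B_Y\cap Z=\emptyset$) does all the remaining work. Part (1) is identical to the paper's.
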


\begin{proof}
1. Apply Lemma \ref{lemma6.3} with $\emptyset\subseteq Y$ and $k=f(Y)$. Then $B_Y=Z'$ is a basis of $Y$.\\

2. Apply Lemma \ref{lemma6.3} with $B_Z\subseteq Y$ and $k=f(Y)-f(Z)=f(Y)-f(B_Z)$. There exists $B_Y\subseteq Z\setminus B_Y$, such that 
\[f(B_Y)=|B_Y|=f(Y)-f(B_Z)=f(Y)-f(Z),\]
and 
\[f(B_Z\sqcup B_Y)=f(Z)+|B_Y|=f(B_Z)+|B_Y|=f(Y).\]
Note that $f(Y)=f(B_Z)+|B_Y|=|B_Z|+|B_Y|=|B_Z\sqcup B_Y|$, so $B_Z\sqcup B_Y$ is a basis of $Y$. It remains to show that $B_Y\cap Z=\emptyset$. As $f$ is submodular,
\[f(Z\cup B_Y)+f(Z\cap B_Y)\leq f(Z)+f(B_Y)=|B_Z|+|B_Y|\]
By Lemma \ref{lemma6.2}, as $f(B_Y)=|B_Y|$, $f(Z\cap B_Y)=|Z\cap B_Y|$. As $f$ is increasing, $f(Z\cup B_Y)\geq f(B_Z\sqcup B_Y)=|B_Z|+|B_Y|$. We obtain finally
\[|B_Z|+|B_Y|+|Z\cap B_Y|\leq |B_Z|+|B_Y|,\]
so $Z\cap B_Y=\emptyset$.\end{proof}

\begin{proof}(Proposition \ref{prop4.23}).
Let us assume that $f(A\sqcup B)=f(A)+f(B)$. Let us first prove that for any $B'\subseteq B$, $f(A\sqcup B')=f(A)+f(B')$. Let us choose a basis $B_{B'}$ of $B$' (Proposition {prop6.4}, first item).
There exists $B_A\subseteq A$, such that $B_A\sqcup B_{B'}$ is a basis of $A\sqcup B'$ (Proposition {prop6.4}, second item). There exists $B_{B''}\subseteq B\setminus B'$, such that $B_A\sqcup B_{B'}\sqcup B_{B''}$ is a basis of $A\sqcup B$. We obtain
\begin{align*}
f(B')&=f(B_{B'})=|B_{B'}|,\\
f(A\sqcup B')&=f(B_A\sqcup B_{B'})=|B_A|+|B_{B'}|,\\
f(A\sqcup B)&=f(B_A\sqcup B_{B'}\sqcup B_{B''})=|B_A|+|B_{B'}|+|B_{B''}|.
\end{align*}
By Lemma \ref{lemma6.2}, applied to $B_A\sqcup B_{B'}\sqcup B_{B''}$, 
\[f(B_{B'}\sqcup B_{B''})=|B_{B'}|+|B_{B''}|.\]
Moreover, as $f$ is increasing, $f(B_A)\leq f(A)$, and $f(B_{B'}\sqcup B_{B''})\leq f(B)$. By hypothesis, 
\[f(A\sqcup B)=|B_A|+|B_{B'}|+|B_{B''}|=f(B_A)+f(B_{B'}\sqcup B_{B''})=f(A)+f(B).\]
Combined with the two preceding inequalities, we deduce that $f(A)=f(B_A)=|B_A|$ (and also $f(B_{B'}\sqcup B_{B''})=f(B)$, which we won't use). Therefore, 
\[f(A\sqcup B')=|B_A|+|B_{B'}|=f(A)+f(B').\]

Similarly, we can prove that for any $A'\subseteq A$, $f(A'\sqcup B)=f(A')+f(B)$. If $A'\subseteq A$ and $B'\subseteq B$, we obtain that $f(A'\sqcup B)=f(A')+f(B)$. Applying the first part of the proof with $A'$ instead of $A$, we obtain that $f(A'\sqcup B')=f(A')+f(B')$.\end{proof}

\bibliographystyle{amsplain}
\bibliography{biblio}

\end{document}